\newcommand{\defnword}[1]{\textbf{#1}}
\newcommand{\comment}[1]{}
\newcommand{\on}[1]{\operatorname{#1}}
\newcommand{\bb}[1]{\mathbb{#1}}
\newcommand{\mb}[1]{\mathbf{#1}}
\newcommand{\mf}[1]{\mathfrak{#1}}
\newcommand{\ord}{\on{ord}}
\newcommand{\loc}{\on{loc}}
\numberwithin{equation}{subsection}
\newtheorem{prp}[subsection]{Proposition}
\newtheorem{thm}[subsection]{Theorem}
\newtheorem*{thm*}{Theorem}
\newtheorem{lem}[subsection]{Lemma}
\newtheorem*{lem*}{Lemma}
\newtheorem{corollary}[subsection]{Corollary}
\theoremstyle{definition}
\newtheorem{defn}[subsection]{Definition}
\newtheorem*{claim}{Claim}
\theoremstyle{remark}
\newtheorem{rem}[subsection]{Remark}
\newtheorem*{assump*}{Assumption}
\newcommand{\Real}{\mathbb{R}}
\newcommand{\Int}{\mathbb{Z}}
\newcommand{\Comp}{\mathbb{C}}
\newcommand{\Adele}{\mathbb{A}}
\newcommand{\Aff}{\mathbb{A}}
\newcommand{\Field}{\mathbb{F}}
\newcommand{\Rat}{\mathbb{Q}}
\newcommand{\Lie}{\on{Lie}}
\newcommand{\rk}{\on{rk}}
\newcommand{\texte}[1]{\text{\emph{#1}}}
\newcommand{\coker}{\on{coker}}
\newcommand{\im}{\on{im}}
\newcommand{\gr}{\on{gr}}
\newcommand{\into}{\hookrightarrow}
\newcommand{\Art}{\on{Art}}
\newcommand{\Def}{\on{Def}}
\newcommand{\pr}{\bm{\pi}}
\newcommand{\Rg}{\mathscr{O}}
\newcommand{\Reg}[1]{\Rg_{#1}}
\newcommand{\Hom}{\on{Hom}}
\newcommand{\End}{\on{End}}
\newcommand{\Aut}{\on{Aut}}
\newcommand{\SHom}{\underline{\on{Hom}}}
\newcommand{\disc}{\on{disc}}
\newcommand{\Gal}{\on{Gal}}
\newcommand{\Proj}{\on{Proj}}
\newcommand{\Gm}{\mathbb{G}_m}
\newcommand{\Gmh}[1]{\mathbb{G}_{m,#1}}
\newcommand{\cris}{\on{cris}}
\newcommand{\dR}{\on{dR}}
\newcommand{\Bcris}{B_{\cris}}
\newcommand{\Bdr}{B_{\dR}}
\newcommand{\pow}[1]{[\vert#1\vert]}
\newcommand{\Spec}{\on{Spec}}
\newcommand{\Spf}{\on{Spf}}
\newcommand{\et}{\on{\acute{e}t}}
\newcommand{\KS}{\mathrm{KS}}
\newcommand{\Fr}{\on{Fr}}
\newcommand{\mx}{\mathfrak{m}}
\newcommand{\rank}{\on{rank}}
\newcommand{\fdiff}[1]{\hat{\Omega}^1_{#1}}
\newcommand{\dual}[1]{{#1}^{\vee}}
\newcommand{\Sh}{\on{Sh}}
\newcommand{\Ss}{\mathscr{S}}
\newcommand{\Res}{\on{Res}}
\newcommand{\an}{\on{an}}
\newcommand{\tr}{\on{Tr}}
\newcommand{\trd}{\on{Trd}}
\newcommand{\Tr}{\on{Tr}}
\newcommand{\GSp}{\on{GSp}}
\newcommand{\GL}{\on{GL}}
\newcommand{\SO}{\on{SO}}
\newcommand{\GSpin}{\on{GSpin}}
\newcommand{\J}{\mathcal{J}}
\begin{document}
\title[Regular integral models]{Integral canonical models for Spin Shimura varieties}


\author{Keerthi Madapusi Pera}
\email{keerthi@math.harvard.edu}
\address{Department of Mathematics\\%
1 Oxford St\\%
Harvard University\\%
Cambridge, MA 02118\\%
USA}


\begin{abstract}
We construct regular integral canonical models for Shimura varieties attached to Spin and orthogonal groups at (possibly ramified) primes $p>2$ where the level is not divisible by $p$. We exhibit these models as schemes of 'relative PEL type' over integral canonical models of larger Spin Shimura varieties with good reduction at $p$. Work of Vasiu-Zink then shows that the classical Kuga-Satake construction extends over the integral model and that the integral models we construct are canonical in a very precise sense. Our results have applications to the Tate conjecture for K3 surfaces, as well as to Kudla's program of relating intersection numbers of special cycles on orthogonal Shimura varieties to Fourier coefficients of modular forms.
\end{abstract}

\maketitle

\section*{Introduction}\label{sec:intro}
The objects of study in this paper are certain Shimura varieties attached to GSpin and special orthogonal groups. More precisely, we will consider the $\Rat$-algebraic stack $\Sh_{K_{\on{max}}}\coloneqq\Sh_{K_{\on{max}}}(G,X)$, where $G=\GSpin(V,Q)$ is attached to a quadratic space $(V,Q)$ over $\Rat$ of signature $(n,2)$ and $X$ is the space of oriented negative definite planes in $V_{\Real}$. The level sub-group $K_{\on{max}}\subset G(\Adele_f)$ is attached to a lattice $L\subset V$: it is the intersection of $G(\Adele_f)$ with $C(L\otimes\widehat{\Int})^\times$, the unit group of the Clifford algebra of $L\otimes\widehat{\Int}$. The image of $K_{\on{max}}$ in $\SO(V)(\Adele_f)$ is the \defnword{discriminant kernel}: the largest sub-group of $\SO(L)(\widehat{\Int})$ that acts trivially on the discriminant $\dual{L}/L$. Here, $\dual{L}\subset V$ is the dual lattice for $L$.

Our results can be summarized by:

\begin{thm*}\label{intro:thm:main}
Assume that $L$ has square-free discriminant. Then, over $\Int\left[\frac{1}{2}\right]$, $\Sh_{K_{\on{max}}}$ admits a regular canonical model with a regular compactification.
\end{thm*}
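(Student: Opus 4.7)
The plan is to realize $\Sh_{K_{\on{max}}}(G,X)$ as a closed substack of a larger GSpin Shimura variety $\Sh_{K^\sharp}(G^\sharp, X^\sharp)$ that enjoys good reduction at every prime $p>2$. First I would enlarge $(V,Q)$ to a quadratic space $(V^\sharp, Q^\sharp)$ of signature $(n^\sharp, 2)$ containing $V$ isometrically and admitting a maximal lattice $L^\sharp \supset L$ which is self-dual at every $p>2$. Since $L$ is maximal, its discriminant is concentrated at the primes dividing $\disc(V,Q)$, so adjoining a carefully chosen low-dimensional split orthogonal summand at each such prime absorbs the defect. The inclusion $V \into V^\sharp$ induces a map of Shimura data $(G,X) \to (G^\sharp, X^\sharp)$ under which $K_{\on{max}} = K^\sharp \cap G(\Adele_f)$, with $K^\sharp$ hyperspecial at every odd prime.

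Second, the smooth integral canonical model $\mathscr{S}_{K^\sharp}$ over $\Int[\frac{1}{2}]$ carries a Kuga-Satake abelian scheme $A^\sharp$ via its Clifford embedding into a Siegel Shimura variety. The key observation is that $\Sh_{K_{\on{max}}}$ can be characterized inside $\Sh_{K^\sharp}_\Rat$ as the locus where the orthogonal complement $W = V^\perp \subset V^\sharp$ acts by a distinguished family of ``special endomorphisms'' on the motive of $A^\sharp$. I would then define $\mathscr{S}_{K_{\on{max}}}$ as the normalization of the Zariski closure of $\Sh_{K_{\on{max}}}$ in $\mathscr{S}_{K^\sharp}$, and furnish it with a moduli-theoretic description by requiring these special endomorphisms to extend compatibly across the $p$-adic \'etale, crystalline, and de Rham realizations of $A^\sharp$. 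This is precisely what realizes $\mathscr{S}_{K_{\on{max}}}$ as ``of relative PEL type'' over $\mathscr{S}_{K^\sharp}$.

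The main obstacle is regularity at primes $p \mid \disc(L)$, where $K_{\on{max}}$ is no longer hyperspecial. I would attack this by constructing a local model: \'etale-locally at a closed point of the special fiber, $\mathscr{S}_{K_{\on{max}}}$ should be isomorphic to a parameter space of isotropic lines in a ramified orthogonal space built from $L\otimes\Rg$, compatibly with the Hodge filtration inherited from the Kuga-Satake $p$-divisible group. The resulting local model is a ramified Schubert-type variety for the orthogonal group, and proving its regularity is the technical heart of the argument: it requires a careful Dieudonn\'e-theoretic deformation analysis of the pair $(A^\sharp[p^\infty], W)$ and a combinatorial description of how the Hodge line meets $W \otimes \Rg$ over the special fiber. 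I expect this local regularity theorem to be where the maximality of $L$ plays its essential role.

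Finally, canonicity in the sense of N\'eron-type extension follows once the Kuga-Satake abelian scheme is known to extend over $\mathscr{S}_{K_{\on{max}}}$: the extension criterion of Vasiu-Zink for $p$-divisible groups attached to crystalline representations (valid for $p>2$) guarantees that any morphism from a smooth $\Int_{(p)}$-scheme into $\Sh_{K_{\on{max}},\Rat}$ extends uniquely to $\mathscr{S}_{K_{\on{max}}}$. A regular compactification is produced by taking the normalization of the closure of $\mathscr{S}_{K_{\on{max}}}$ inside a suitable toroidal compactification of $\mathscr{S}_{K^\sharp}$, after selecting compatible rational polyhedral cone decompositions for the boundary components of $(G,X)$ and $(G^\sharp, X^\sharp)$; regularity along each boundary stratum is then verified by a cusp-label variant of the same local-model analysis used in the interior.
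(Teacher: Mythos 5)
Your high-level architecture---embedding $(G,X)$ into a GSpin Shimura datum with a self-dual lattice and hyperspecial level, describing $\Sh_{K_{\on{max}}}$ as the locus where the Kuga--Satake abelian scheme of the ambient model acquires special endomorphisms, and controlling the resulting moduli scheme by an orthogonal local model---does match the paper's strategy in outline. But there is a genuine gap at precisely the step you single out as the ``technical heart.'' You assert that the local model, the quadric of isotropic lines in $L$, is regular, and you expect the maximality of $L$ to be what forces regularity. This is false. Maximality only bounds the dimension $t$ of the radical of $L\otimes\Field_p$ by $2$; when $t=2$ (equivalently, when $p^2\mid\disc(L)$), the special fiber of $\on{M}^{\loc}_G$ has two singular $\Field_{p^2}$-rational points at which $\on{M}^{\loc}_G$ fails to be regular over $\Int_{(p)}$. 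Consequently the naive integral model $\Ss^{\on{nv}}_K$ obtained exactly by your recipe is itself singular, and your deformation-theoretic analysis would terminate with the wrong conclusion at the very primes where the theorem is hard.

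The missing idea is a further resolution by a linear modification in the sense of Pappas. One fixes a real quadratic field $F/\Rat$ inert at $p$ and a self-dual $\Reg{F,(p)}$-lattice $L^{\diamond}$ with $\Reg{F,(p)}\otimes L\subset L^{\diamond}\subset\Reg{F,(p)}\otimes\dual{L}$, and defines a refined local model $\on{M}^{\on{ref}}_G\to\on{M}^{\loc}_G$ parametrizing compatible pairs of isotropic lines in $L$ and in $L^{\diamond}$; this blows up exactly the two irregular points and is regular and locally healthy. The actual model is the corresponding blow-up $\Ss_K\to\Ss^{\on{nv}}_K$. Even then, canonicity does not just fall out of Vasiu--Zink as you suggest: to get the extension property for $\Ss_K$ when $t=2$, one must introduce the auxiliary Shimura datum $(G^{\diamond},X^{\diamond})$ built from $\Res_{\Reg{F,(p)}/\Int_{(p)}}\GSpin(L^{\diamond})$, realize $\Ss_K$ as (the normalization of) a closure inside $\Ss^{\on{nv}}_K\times\Ss_{K^{\diamond}}$, and inherit the extension property from both factors. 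None of this is in your sketch. A smaller point: you also posit a single lattice $L^\sharp\supset L$ that is self-dual at \emph{every} odd prime simultaneously, without justification; the paper instead works prime by prime over $\Int_{(p)}$ (using diagonalization and binary forms that exist locally) and recovers the $\Int[1/2]$-model by uniqueness, so that global existence question never arises.
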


The precise meaning of the word `canonical' is explained in \S~\ref{sec:self-dual}. In the body of the paper, we will isolate a prime $p>2$, and work with finite quasi-projective covers $\Sh_{K}\to\Sh_{K_{\on{max}}}$ where $K\subset K_{\on{max}}$ is of the form $K_pK^p$, with $K_p=G(\Rat_p)\cap C(L\otimes\Int_p)^\times$ and $K^p\subset G(\Adele_f^p)$ a small enough compact open. We will then build canonical models over $\Int_{(p)}$ for the tower of such covers.

The basic idea of the proof is quite simple, and involves exhibiting the model as the solution to a `relative PEL' problem\footnote{We thank an anonymous referee for pointing us to \cite{vasiu:orthoii}, where Vasiu has introduced the same terminology in a different context.} over the smooth integral model of a larger Shimura variety attached to a self-dual lattice. To explain this, we work over $\Int_{(p)}$ for some odd prime $p$. When the lattice $L$ is self-dual at a prime $p$, $K_p$ is hyperspecial, and the results of \cite{kis3} already give us a smooth integral canonical model over $\Int_{(p)}$ (compactifications are dealt with in \cite{mp:toroidal}). In general, we can exhibit $L$ as a sub-lattice of a bigger lattice $\widetilde{L}$ that is self-dual at $p$, and is such that the associated quadratic space $(\widetilde{L},\widetilde{Q})$ over $\Rat$ again has signature $(m,2)$, for some $m\in\Int_{\geq 0}$. If $\Sh_{\widetilde{K}}$ is the Shimura variety attached to a $\widetilde{L}$ and a compact open $\widetilde{K}\subset\widetilde{G}(\Adele_f^p)$ containing $K$, this allows us to exhibit $\Sh_K$ as an intersection of divisors in $\Sh_{\widetilde{K}}$. Let $\Ss_{\widetilde{K}}$ be the integral canonical model for $\Sh_{\widetilde{K}}$ over $\Int_{(p)}$. We show that the divisors have a moduli interpretation, and we use this interpretation to define models for them as schemes over $\Ss_{\widetilde{K}}$. We then construct our regular models as intersections (over $\Ss_{\widetilde{K}}$) of the integral models of these divisors.

The moduli interpretation of the divisors can be described as follows: The classical Kuga-Satake construction, combined with Kisin's construction, gives us a natural polarized abelian scheme $(\widetilde{A}^\KS,\widetilde{\lambda}^\KS)$ over $\Ss_{\widetilde{K}}$. Let $\widetilde{\bm{L}}_B$ be the $\Int_{(p)}$-local system over $\Ss_{\widetilde{K},\Comp}$ attached to $\widetilde{L}_{\Int_{(p)}}$. Let $\widetilde{\bm{H}}_B$ be the first Betti cohomology of $\widetilde{A}^\KS_{\Comp}$ over $\Ss_{\widetilde{K},\Comp}$ with coefficients in $\Int_{(p)}$. Then the construction allows us to view $\widetilde{\bm{L}}_B$ as a sub-local system of $\widetilde{\bm{H}}_B\otimes\dual{\widetilde{\bm{H}}}_B$. Given an $\Sh_{\widetilde{K}}$-scheme $T$, we say that an endomorphism $f$ of $\widetilde{A}^\KS_T$ is \defnword{special} if over $T_{\Comp}$ it induces a section of $\widetilde{\bm{L}}_B$. Then the divisors mentioned above can be viewed as the loci where $\widetilde{A}^\KS$ inherits certain special endomorphisms of fixed degree. So we obtain a moduli interpretation for $\Sh_K\to\Sh_{\widetilde{K}}$: It is the locus over which $\widetilde{A}^\KS$ inherits a certain family of special endomorphisms. For an analytic viewpoint of all this in the case of orthogonal Shimura varieties, cf.~\cite{kudla:algebraic}.

Let $\widetilde{\bm{H}}_{\cris}$ be the first crystalline cohomology of $\widetilde{A}^\KS_{\Field_p}$ over $\Ss_{\widetilde{K},\Field_p}$. Then Kisin's work provides us with a canonical sub-crystal $\widetilde{\bm{L}}_{\cris}\subset\widetilde{\bm{H}}_{\cris}\otimes\dual{\widetilde{\bm{H}}}_{\cris}$ attached to the quadratic space $\widetilde{L}$. This allows us to give a definition of specialness in characteristic $p$ as well: For any $\Ss_{\widetilde{K},\Field_p}$-scheme $T$, an endomorphism $f$ of $\widetilde{A}^\KS_T$ is \defnword{special} if its crystalline realization is a section of $\widetilde{\bm{L}}_{\cris}$ at every point of $T$. We can patch together the two notions of specialness to get the notion of a special endomorphism of $\widetilde{A}^\KS$ in general.

The moduli interpretation for $\Sh_K(G,X)$ as a scheme over $\Sh_{\widetilde{K}}(\widetilde{G},\widetilde{X})$ can now be extended over $\Int_{(p)}$ to obtain a natural integral model $\Ss_K(G,X)$. To study its local properties, we study the problem of deforming special endomorphisms of $\widetilde{A}^\KS$, using ideas of Deligne~\cite{deligne:k3liftings} and Ogus~\cite{ogus:ss}. These methods help us show that the quadric $\on{M}_G^{\loc}$ of isotropic lines in $L$ is an \'etale local model for $\Ss_K(G,X)$. In the case where the discriminant of $L$ is square-free, $\on{M}_G^{\loc}$ is regular, and this completes the construction.

The above construction actually works for general lattices $L$, but then it only gives us access to a certain open locus of the desired integral model, which we denote by $\Ss^{\on{pr}}_K(G,X)$: this is still modeled by $\on{M}_G^{\loc}$.  In general, the special fiber of this open locus will miss some very important parts of the expected `true' integral model. For instance, in the situation where $L$ is maximal, and $p$ is a prime such that $p^2$ divides the discriminant of $L$, $\Ss^{\on{pr}}_K(G,X)$ will entirely \emph{exclude} the part of the supersingular locus with maximal Artin invariant\footnote{We thank the referee for pointing out many subtleties arising in this situation.}; cf.~\eqref{regular:subsec:artin_invariant} for a discussion of this phenomenon. In particular, for $n=3$, where our Shimura variety is closely related to Siegel threefolds with parahoric level structure, our theory does \emph{not} recover the integral models of  Chai-Norman~\cite{chai_norman} and G\"ortz~\cite{gortz:siegel}: the simple local models that we describe are insufficient for this purpose. We expect to fix this gap in the theory in future work.

Along these lines, we should note that, for other low values of $n$, there are direct, moduli-theoretic ways to construct integral models. For the case of Shimura curves, cf.~\cite{kudrapyang}, and for the case of certain Hilbert-Blumenthal surfaces, cf.~\cite{deligne-pappas,kudrap:hilb}. Moreover, there is work in progress by Kisin and Pappas, which will generalize the methods of \cite{kis3} to construct integral models of Shimura varieties of abelian type with general parahoric level at $p$. However, the simple and direct nature of our construction seems to be quite useful in applications, which include the Tate conjecture for K3 surfaces in odd characteristic~\cite{mp:tate}, and also forthcoming work, in collaboration with F. Andreatta, E. Goren and B. Howard, on the arithmetic intersection theory over these Shimura varieties.

In the body of the paper, we also concurrently consider the case of orthogonal Shimura varieties. These are Shimura varieties of abelian type, but of a particularly simple nature: They are finite \'etale quotients of GSpin Shimura varieties, and we can easily deduce results for them from corresponding ones for their GSpin counterparts.

\subsection*{Notational conventions}
For any prime $\ell$, $\nu_{\ell}$ will be the $\ell$-adic valuation satisfying $\nu_{\ell}(\ell)=1$. $\Adele_f$ will denote the ring of finite ad\'eles over $\Rat$, and $\widehat{\Int}\subset\Adele_f$ will be the pro-finite completion of $\Int$. Given a rational prime $p$, $\Adele_f^p$ will denote the ring of prime-to-$p$ finite ad\'eles; that is, the restricted product $\prod'_{\ell\neq p}\Rat_{\ell}$. Moreover, $\widehat{\Int}^p\subset\Adele_f^p$ will be the closure of $\Int$. Unless otherwise specified, the letter $k$ will always represent a perfect field of characteristic $p$. Given such a $k$, $W(k)$ will denote its ring of Witt vectors, and $\sigma:W(k)\to W(k)$ will be the canonical lift of the Frobenius automorphism of $k$. For any group $G$, $\underline{G}$ will denote the locally constant \'etale sheaf (over a base that will be clear from context) with values in $G$.

\subsection*{Acknowledgments}
I thank Fabrizio Andreatta, Ben Howard, Mark Kisin, Ben Moonen, George Pappas and Junecue Suh for valuable comments, insights and corrections. Above all, I owe a deep debt to the referees for their careful reading and their numerous corrections and recommendations that immeasurably improved the paper, both in exposition and content. This work was partially supported by NSF Postdoctoral Research Fellowship DMS-1204165 and an AMS Simons Travel Grant.

\setcounter{tocdepth}{1}


\section{Clifford algebras and Spin groups}\label{sec:cliff}

We will quickly summarize some standard facts about Clifford algebras and the GSpin group. A good reference is~\cite{bourbaki_algebre_ix}*{\S 9}; cf. also~\cite{bass}.

\subsection{}
Let $R$ be a commutative ring in which $2$ is invertible, and let $(L,Q)$ be a quadratic space over $R$: by this we mean a projective $R$-module $L$ of finite rank equipped with a quadratic form $Q:L\to R$. We will denote by $[\cdot,\cdot]_Q:L\otimes L\rightarrow R$ the associated symmetric bi-linear form, its relation with $Q$ being given by the formula $[v,w]_Q=Q(v+w)-Q(v)-Q(w)$.

Let $C\coloneqq C(L)$ be the associated \defnword{Clifford algebra} over $R$. It is equipped with an embedding $L\into C$, which is universal for maps $f:L\to B$ into associative $R$-algebras $B$ satisfying $f(v)^2=Q(v)$. $C$ is equipped with a natural $\Int/2\Int$-grading $C=C^+\oplus C^-$, so that $C^+$ is a sub-algebra of $C$.

\subsection{}
Suppose now that $Q$ is \defnword{non-degenerate}: that is, it induces an isomorphism $L\xrightarrow{\simeq}\dual{L}$. In this situation, we will also call the module $L$ itself \defnword{self-dual}, especially when working over discrete valuation rings.

Then we can use the Clifford algebra to define a reductive group scheme $\GSpin(L,Q)$ over $R$. For any $R$-algebra $S$, we have:
\begin{align*}
  \GSpin(L,Q)(S)&=\bigl\{x\in (C_S^+)^\times:\;x(L_S)x^{-1}=L_S\bigr\}.
\end{align*}
It is a central extension of the special orthogonal group $\SO(L,Q)$ by $\Gm$, and there is a canonical character, the \defnword{spinor norm}, $\nu:\GSpin(L,Q)\to\Gm$. The spinor norm is defined as follows: There is a canonical anti-involution $*$ on $C$: it is the unique anti-automorphism that restricts to the identity on $L$; cf.~\cite{bourbaki_algebre_ix}*{\S 9}, where it is termed `\textit{l'antiautomorphisme principal}'. For any $x\in\GSpin(L,Q)(S)$, we set $\nu(x)=x^*x$. It follows from \cite{bass}*{3.2.1} that this is indeed an element of $S^\times\subset (C_S^+)^\times$.

$\GSpin(L,Q)$ acts naturally on $C$ via left multiplication, and we will denote the resulting representation by $H$. The right multiplication action of $C$ on itself endows $H$ with a $\GSpin(L,Q)$-equivariant right $C$-module structure, and the grading on $C$ endows it with a $\GSpin(L,Q)$-stable $\Int/2\Int$-grading $H=H^+\oplus H^-$. The action of $L$ on $C$ via left multiplication provides us with a $\GSpin(L,Q)$-equivariant embedding $L\into\End_C(H)$.

\subsection{}\label{cliff:subsec:tensors}
Let $\GL_R^+(H)\subset\GL_R(H)$ be the sub-group\footnote{For the rest of the section, the word `group' will mean `$R$-group scheme'} of automorphisms that preserve the grading on $H$: this is the group of units in the algebra $\End_R^+(H)$ of graded endomorphisms of $H$. Let $\on{U}(H)\subset\GL_R^+(H)$ be the centralizer of the right $C$-action. Then, for any $R$-algebra $S$, we have $\on{U}(H)(S)=\bigl(\End_{C}^+(H_S))^\times=(C^+_S)^\times$.
By definition, $\GSpin(L,Q)$ is a sub-group of $\on{U}(H)$.

The pairing $[\varphi_1,\varphi_2]=\frac{1}{2^n}\tr(\varphi_1\circ\varphi_2)$ on $\End_R(H)$ is symmetric, non-degenerate and restricts to the pairing $[\cdot,\cdot]_Q$ on $L\subset C$. Choose an $R$-basis $e_1, \ldots, e_m$ of $L$  and let $A\in \GL_m(R)$
be the matrix whose inverse is $(  [ e_i , e_j ]_Q )$. We will use the basis $\{e_i\}$ to identify $L$ with $R^m$ and hence to view $A$ as an automorphism of $L$. Consider the endomorphism $\pr:\End_{R}(H)\rightarrow\End_{R}(H)$ given, for $\varphi\in\End_{R}(H)$ by
\[
\pr(\varphi) =   \sum_i [\varphi,e_i]\cdot Ae_i.
\]
\begin{lem}\label{cliff:lem:pr}
\mbox{}
\begin{enumerate}
 \item\label{prequiv}$\pr$ is an idempotent endomorphism of $\End_R(H)$ with image $L\subset\End_R(H)$.
 \item\label{prind}$\pr$ is the unique projector onto $L$ satisfying:
 \[
 \ker\pr=\{\varphi\in\End_R(H):\;[\varphi,v]=0\text{, for all $v\in L$}\}.
 \]
 In particular, it is independent of the choice of basis.
 \item\label{gspintensors}$\GSpin(L,Q)\subset\on{U}(H)$ is the stabilizer of $\pr:\End_R(H)\rightarrow\End_R(H)$.
\end{enumerate}
\end{lem}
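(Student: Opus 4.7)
The defining formula for $\pr$ looks ad hoc, but my plan is to show that it is nothing more than the orthogonal projection of $\End_R(H)$ onto $L$ with respect to the trace pairing $[\cdot,\cdot]$. Because this pairing restricts to $[\cdot,\cdot]_Q$ on $L$, the whole argument reduces to identifying $\{Ae_i\}$ as the $[\cdot,\cdot]_Q$-dual basis of $\{e_i\}$ inside $L$; once that is established, all three assertions will essentially be formal consequences.

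For part (\ref{prequiv}), I will first observe that since $A^{-1}=([e_i,e_j]_Q)$ is symmetric, a direct matrix calculation gives $[Ae_i,e_j]_Q=\delta_{ij}$, so $\{Ae_i\}$ really is the dual basis. Plugging $v\in L$ into the formula for $\pr$ then yields $\pr(v)=v$; since the image of $\pr$ is manifestly contained in $L\subset\End_R(H)$, this proves that $\pr$ is idempotent with image exactly $L$.

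For part (\ref{prind}), I will invoke the non-degeneracy of the trace pairing on $\End_R(H)$ together with the non-degeneracy of $[\cdot,\cdot]_Q$ on $L$ to produce the direct sum decomposition $\End_R(H)=L\oplus L^{\perp}$, where $L^\perp$ denotes the set of $\varphi$ with $[\varphi,v]=0$ for every $v\in L$. The formula for $\pr$ shows immediately that $L^\perp\subseteq\ker\pr$; combined with the previous step, this forces $\pr$ to be \emph{the} projector onto $L$ along $L^\perp$, which yields both the identification of the kernel and the basis-independence.

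For part (\ref{gspintensors}), the key point is that conjugation $\Ad(x)$ by any $x\in\on{U}(H)$ preserves the trace pairing, so $\Ad(x)$ preserves $L^\perp$ as soon as it preserves $L$. Since a projector with a fixed direct-sum decomposition commutes with an automorphism of the ambient module iff that automorphism preserves both summands, I conclude that $x$ stabilizes $\pr$ iff $xLx^{-1}=L$, which is exactly the defining condition for $x$ to lie in $\GSpin(L,Q)\subset\on{U}(H)=(C^+)^\times$. I do not anticipate any serious obstacle: the only subtlety is unwinding the inverse-Gram-matrix convention correctly to verify that $\{Ae_i\}$ is the dual basis; after that, everything reduces to standard properties of orthogonal projectors and the $\Ad$-invariance of the trace form.
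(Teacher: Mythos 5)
Your proof is correct and is essentially the paper's argument recast in slightly more conceptual language: the paper's computation $[Ae_i,e_k]_Q=\delta_{ik}$ is exactly your dual-basis observation, its description of $\ker\pr$ is your $L^\perp$, and the unstated step in its proof of (\ref{gspintensors}) (that $g\pr g^{-1}$ has the same kernel as $\pr$) is precisely the $\Ad$-invariance of the trace form that you make explicit.
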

\begin{proof}
  For (\ref{prequiv}), we note:
  \begin{align*}
    \pr(\pr(\varphi))&=\sum_{i,k}[\varphi,e_i]\cdot [Ae_i,e_k]_Q\cdot Ae_k=\sum_{i,k}[\varphi,e_i]\cdot\delta_{i,k}\cdot Ae_k=\pr(\varphi).
  \end{align*}
  An easy computation also shows that, for $1\leq k\leq m$, $\pr(e_k)=e_k$, so that the image of $\pr$ is precisely $L$.

  (\ref{prind}) is clear.

  The stabilizer in $\on{U}(H)$ of $\pr$ must preserve $L$ under the conjugation action on $\End_R(H)$; it must therefore be contained in $\GSpin(L,Q)$. On the other hand, for any $g\in\GSpin(L,Q)$, since $g$ must preserve $L$, $g\pr g^{-1}$ is again a projector onto $L$ whose kernel agrees with that of $\pr$. Hence, we must have $g\pr g^{-1}=\pr$. This shows (\ref{gspintensors}).
\end{proof}

\begin{rem}\label{cliff:rem:nondeg}
In the above situation, suppose that $R=\Int_{(p)}$ and only that $(L_{\Rat},Q)$ is non-degenerate. If we choose a basis $\{e_1,\ldots,e_m\}$ for $L$, the corresponding matrix $A$ belongs to $\GL(L_{\Rat})$ and $A\cdot L\subset L_{\Rat}$ is precisely the dual lattice $\dual{L}$. Therefore, the image of $\End_{\Int_{(p)}}(H)$ under $\pr$ is exactly $\dual{L}\subset L_{\Rat}$.
\end{rem}

\subsection{}\label{cliff:subsec:ksemb}
As shown in Th\'eor\'emes 2 and 3 of \cite{bourbaki_algebre_ix}*{\S 9}, $C$ is an Azumaya algebra over its center $Z(C)$. Here, $Z(C)=R$, if $m$ is even, and is finite \'etale of rank $2$ over $R$, if $m$ is odd. Therefore, there exists an $R$-linear \defnword{reduced trace map} $\trd:C\to R$ such that the pairing $(x,y)\mapsto\trd(xy)$ is a non-degenerate symmetric bilinear form on $C$. For any $\delta\in C^\times$ such that $\delta^*=-\delta$, the form $\psi_{\delta}(x,y)=\trd(x\delta y^*)$ defines an $R$-valued symplectic form on $H$.
\begin{lem}\label{cliff:lem:psidelta}
For $\delta$ as above, the line $[\psi_{\delta}]$ in $\Hom(H\otimes H,R)$, spanned by the symplectic form $\psi_{\delta}$, is preserved by $\GSpin(L,Q)$. The similitude character of $\GSpin(L,Q)$ obtained from its action on this line agrees with the spinor norm.
\end{lem}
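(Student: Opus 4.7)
My plan is to verify both assertions by a single direct computation. For $g \in \GSpin(L,Q)(S)$, the action on $H = C_S$ is by left multiplication, so the induced action on the bilinear form $\psi_\delta$ is (transport of structure) characterized by $\psi_\delta(gx, gy)$. I would compute this directly:
\[
\psi_\delta(gx, gy) = \trd\bigl(gx \cdot \delta \cdot (gy)^*\bigr) = \trd(gx \delta y^* g^*),
\]
using only that $*$ is an anti-involution (so $(gy)^* = y^* g^*$). The three ingredients I then invoke are: (i) the reduced trace is cyclic, $\trd(ab) = \trd(ba)$; (ii) the defining identity for the spinor norm, $\nu(g) = g^* g$, which lies in $S^\times \subset Z(C_S)$ and is therefore central in $C_S$; and (iii) $\trd$ is $R$-linear. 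Combining (i) and (ii) to cycle $g^*$ around to the front yields
\[
\trd(gx\delta y^* g^*) = \trd(g^* g \cdot x\delta y^*) = \nu(g)\, \trd(x\delta y^*) = \nu(g)\,\psi_\delta(x,y).
\]
Hence $g$ scales $\psi_\delta$ by the scalar $\nu(g)$, which shows simultaneously that the line $[\psi_\delta] \subset \Hom(H \otimes H, R)$ is $\GSpin(L,Q)$-stable and that the resulting similitude character is the spinor norm.

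Genuinely speaking, there is no hard step here once the formalism is in place; the calculation is three lines. The only point to be a little careful about is that the spinor norm is taken valued in $R^\times$ (equivalently, $g^* g \in S^\times$ rather than merely in $(C_S^+)^\times$), since we need it to be central in $C_S$ in order to commute past $x\delta y^*$ under the trace. This is exactly the content of the fact cited from \cite{bass}*{3.2.1} in the definition of $\nu$, so it comes for free. The skew-symmetry of $\psi_\delta$ and its non-degeneracy, which justify calling it symplectic and which were asserted before the lemma, are not needed for the argument; only the formula $\psi_\delta(x,y) = \trd(x\delta y^*)$ and the properties of $\trd$ and $*$ enter.
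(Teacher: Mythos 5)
Your computation is correct, and since the paper leaves this lemma without proof (the $\qed$ follows the statement directly), it is presumably exactly the three-line calculation the author has in mind: $\psi_\delta(gx,gy)=\trd(gx\delta y^*g^*)=\trd(g^*g\,x\delta y^*)=\nu(g)\psi_\delta(x,y)$, using cyclicity of $\trd$, the anti-involution property of $*$, and the fact that $\nu(g)=g^*g$ lies in $S^\times$ and is therefore a central scalar that pulls out of the trace. You correctly flag the one substantive ingredient, namely that $\nu(g)$ is genuinely a scalar rather than merely an element of $(C_S^+)^\times$, which is what licenses commuting it to the front; this was secured earlier via the cited fact from Bass. No gap.
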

\qed

\begin{defn}\label{cliff:defn:tensors}
We will need one further piece of notation: For any pair of positive integers $(r,s)$, we set
\[
  H^{\otimes(r,s)}=\underbrace{H^\vee \otimes \cdots\otimes H^\vee}_{r\on{\ times}}
 \otimes  \underbrace{H \otimes \cdots\otimes H}_{s\on{\ times}}
\]
We will also use this notation for objects in other tensor categories without comment.
\end{defn}

Note that we can now think of $\pr$ as an element of $H^{\otimes(2,2)}$.

\subsection{}\label{cliff:subsec:parabolic}
Let $G_0=\SO(L,Q)$. Since there exists a central isogeny $G\to G_0$ of reductive groups, there is a bijective correspondence between parabolic sub-groups of $G$ and those of $G_0$. We want to make this correspondence explicit on the level of linear algebra, for certain parabolic sub-groups. To each isotropic sub-space $L^1\subset L$, we can attach the parabolic sub-group $P_0\subset G_0$ that stabilizes $L^1$. We get a decreasing filtration:
\[
 0=F^{2}L\subset F^1L=L^1\subset F^0L=(L^1)^{\perp}\subset F^{-1}L=L.
\]
Since $L^1$ is isotropic, we have a canonical embedding of $R$-algebras
\[
 \wedge^\bullet L^1\into C\into\End_R(H).
\]
If $N\subset\End_R(H)$, write $\im N$ for the union of the images in $H$ of the endomorphisms in $N$. Similarly, write $\ker N$ for the intersection of the kernels of the elements of $N$. Then we have, for every integer $i=0,\ldots,r+1$:
\[
 \im(\wedge^{i}L^1)=\ker(\wedge^{r-i+1}L^1).
\]
Moreover, $\im(\wedge^iL^1)\subset\im(\wedge^{i-1}L^1)$. So we can define a descending filtration $F^\bullet H$ on $H$ by
\[
 F^iH=\im(\wedge^iL^1).
\]

Suppose that $\mu_0:\Gm\rightarrow G_0$ is a co-character splitting $F^\bullet L$. It gives rise to a splitting
\[
 L=L^1\oplus L^0\oplus L^{-1},
\]
with $F^iL=\bigoplus_{j\geq i}L^j$, and where $\mu_0(z)$ acts on $L^i$ via $z^i$. In particular, $L^{-1}$ is another isotropic direct summand of $L$ that pairs non-degenerately with $L^1$.

Take the increasing filtration $E_iH=\ker(\wedge^{i+1}L^{-1})=\im(\wedge^{r-i}L^{-1})$, and set $H^i=E_iH\cap F^iH$. One easily checks that this is a splitting of $F^\bullet H$.

Let $\mu:\Gm\to\GL(H)$ be the co-character that acts via $z^i$ on $H^i$. By construction, $H^i$ is $C$-stable, and one can check easily that $\mu(\Gm)$ preserves the grading on $H$. So $\mu$ must factor through $\on{U}(H)$. Furthermore, we find that, if $v\in L$ and $i=0,1,\ldots,r$, then:
\[
 v\cdot H^i\subset\begin{cases}
   H^{i+1}&\text{ if $v\in L^1$};\\
   H^i&\text{ if $v\in L^0$};\\
   H^{i-1}&\text{ if $v\in L^{-1}$}.
 \end{cases}
\]
This shows that:
\[
 \mu(z)v\mu(z)^{-1}=\begin{cases}
   z\cdot v&\text{ if $v\in L^1$};\\
   v&\text{ if $v\in L^0$};\\
  z^{-1}\cdot v&\text{ if $v\in L^{-1}$}.
 \end{cases}
\]
In other words, $\mu$ factors through $G$ and is a lift of $\mu_0$. This shows the following:
\begin{prp}\label{cliff:prp:filtrations}
The parabolic sub-group $P\subset G$ lifting $P_0\subset G_0$ is the stabilizer in $G$ of $F^\bullet H$.
\end{prp}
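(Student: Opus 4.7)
The plan is to prove the equality $P=P'$, where $P'\coloneqq\Stab_G(F^\bullet H)$, by establishing the two inclusions separately. The inclusion $P\subset P'$ is direct: for any $g\in P(S)$, the image of $g$ in $P_0(S)$ stabilizes $L^1_S$, so conjugation by $g\in(C_S)^\times$ preserves $L^1_S\subset L_S\subset C_S$. Since $F^iH=(\wedge^iL^1)\cdot H$ via left multiplication in $C$, one computes $g\cdot F^iH_S=\wedge^i(gL^1_Sg^{-1})\cdot(gH_S)=\wedge^iL^1_S\cdot H_S=F^iH_S$.

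For the reverse inclusion $P'\subset P$, the key input I would isolate is the linear-algebraic claim that $L\cap(L^1\cdot C)=L^1$ inside $C$. This follows from the PBW-type filtration $R=C^{\leq 0}\subset C^{\leq 1}\subset\cdots$ on the Clifford algebra whose associated graded is the exterior algebra $\wedge^\bullet L$: the inclusion $L\hookrightarrow C$ factors through $C^{\leq 1}$ with $L\cap C^{\leq 0}=0$, identifying $L$ with $\on{gr}^1C$. The left ideal $L^1\cdot C$ inherits a filtration whose $\on{gr}^{k+1}$-piece is $L^1\wedge(\wedge^kL)\subset\wedge^{k+1}L$; in particular, its $\on{gr}^1$-piece is exactly $L^1$. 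Thus any element of $L$ lying in $L^1\cdot C$ must already lie in $L^1$.

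Given this lemma, for $g\in P'$, the equality $g\cdot F^1H=F^1H$ rewrites as the equality of left ideals $(gL^1g^{-1})\cdot C=L^1\cdot C$ (using that $g\in C^\times$ so $gH=C$). Since conjugation by $g\in G$ preserves $L$, we obtain $gL^1g^{-1}\subset L\cap(L^1\cdot C)=L^1$, and applying the same argument to $g^{-1}\in P'$ yields the reverse inclusion, so $gL^1g^{-1}=L^1$. This shows that the image of $g$ in $G_0$ lies in $P_0=\Stab_{G_0}(L^1)$, so $g\in P$. Since both $P$ and $P'$ are closed subgroup schemes of $G$ and the argument works after base change to any $R$-algebra, the two coincide as subschemes.

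The main obstacle in this plan is the Clifford-algebra lemma $L\cap(L^1\cdot C)=L^1$; everything else is formal manipulation. An alternative route would be to observe that $\mu$ is a cocharacter of $G$ splitting $F^\bullet H$ and compare the Lie algebras of $P$ and $P'$ via their $\mu$-weight decompositions, but this would still require some version of the lemma, so the direct Clifford-theoretic approach is cleanest.
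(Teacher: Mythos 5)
Your proof is correct, but it takes a genuinely different route from the paper's. The paper argues by constructing an explicit cocharacter $\mu:\Gm\to\GL(H)$ acting by $z^i$ on $H^i=E_iH\cap F^iH$, verifying it factors through $G$ and lifts $\mu_0$, and then invoking the general theory of parabolics attached to cocharacters: $P=P_G(\mu)=G\cap P_{\GL(H)}(\mu)=\Stab_G(F^\bullet H)$. Your approach instead proves the two inclusions by hand on the level of points, with the entire weight carried by the Clifford-algebraic lemma $L\cap(L^1\cdot C)=L^1$. Both are valid. Your direct approach is more elementary and self-contained, whereas the paper's buys something extra: the explicit splitting cocharacter $\mu$ is itself an output of the proof and is reused later (for instance, in establishing that the Hodge filtration $F^1\bm{H}_{\dR,s}$ is $\GSpin(\bm{L}_{\dR,s})$-split in (\ref{spin:prp:kisinpadichodge})).

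Two small points. First, your closing remark that the cocharacter/Lie-algebra route ``would still require some version of the lemma'' is not right: once one has $\mu$ factoring through $G$ and splitting $F^\bullet H$, both $\Lie P$ and $\Lie\Stab_G(F^\bullet H)$ are identified with the $\mu$-non-negative part of $\Lie G$ by pure weight considerations, with no further Clifford computation needed; this is why the paper takes that route. Second, the assertion that the induced filtration on $L^1\cdot C$ has $\gr^{k+1}=L^1\wedge\wedge^kL$ is not automatic from functoriality of $\gr$ and does deserve a word of justification (you flag this yourself). The cleanest way is to work Zariski-locally where $L$ is free and pick a basis adapted to the splitting $L=L^1\oplus L^0\oplus L^{-1}$ with $L^1$ isotropic: the resulting monomial basis $e_A g_B f_D$ of $C$ identifies $L^1\cdot C$ with the span of those monomials with $A\neq\emptyset$, from which $L\cap(L^1\cdot C)=L^1$ is immediate. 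Alternatively, one can use the paper's own observation that $F^1H=\ker(\wedge^rL^1)$ and compute $L\cap\ker(e_1\cdots e_r)$ directly. Either way the lemma is correct, so once this gap is filled your proof is complete.
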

\qed

\section{Local models}\label{sec:lattice}

We fix a prime $p>2$ for the rest of this paper.

\subsection{}
Suppose that $(M,q)$ is a non-degenerate quadratic space over a field $\kappa$, and let $N\subset M$ be a sub-space. Set:
\[
 \mf{a}_{N}=\{f\in\Hom(N,M):\;[f(v),w]_q+[v,f(w)]_q=0\text{, for all $v,w\in N$}\}.
\]
\begin{lem}\label{lattice:lem:surj}
The natural map $\Lie(\SO(M))\to\mf{a}_N$ induced by restriction to $N$ is surjective.
\end{lem}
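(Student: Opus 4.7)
The approach is to re-encode everything in terms of bilinear forms via the non-degenerate pairing $[\cdot,\cdot]_q$. First, the map $F\mapsto B_F$, where $B_F(v,w):=[F(v),w]_q$, gives an isomorphism $\End(M)\xrightarrow{\sim}\dual{(M\otimes M)}$ that identifies $\Lie(\SO(M))$ with the alternating forms $\wedge^2\dual{M}$. Similarly, $f\mapsto B_f(v,m):=[f(v),m]_q$ identifies $\Hom(N,M)\xrightarrow{\sim}\dual{(N\otimes M)}$, and under this $\mf{a}_N$ corresponds precisely to the subspace of bilinear forms $B$ on $N\otimes M$ whose restriction to $N\otimes N$ is alternating. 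The restriction map in the lemma becomes the obvious restriction $\dual{(M\otimes M)}\to\dual{(N\otimes M)}$.

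The problem is thus reduced to the following linear-algebra task: extend any bilinear form $B$ on $N\otimes M$ whose restriction to $N\otimes N$ is alternating to an alternating form $\widetilde{B}$ on $M\otimes M$. For this, I fix any linear complement $N'$ of $N$ in $M$, giving $M=N\oplus N'$, and define $\widetilde{B}$ block-by-block: $\widetilde{B}:=B$ on $N\otimes N$ and $N\otimes N'$; $\widetilde{B}(w,v):=-B(v,w)$ on $N'\otimes N$; and $\widetilde{B}:=0$ on $N'\otimes N'$. A block-by-block check shows $\widetilde{B}(x,y)+\widetilde{B}(y,x)=0$ for all $x,y\in M$: the $N\otimes N$ block is alternating by hypothesis on $B$, the $N'\otimes N'$ block is trivially so, and the two mixed blocks cancel against one another under $(x,y)\mapsto(y,x)$. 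Moreover $\widetilde{B}$ restricts to $B$ on $N\otimes M$ by construction.

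Translating back, $\widetilde{B}$ corresponds to a unique $F\in\Lie(\SO(M))$, and the equality $B_F|_{N\otimes M}=B=B_f$ combined with non-degeneracy of $[\cdot,\cdot]_q$ forces $F|_N=f$. Hence $f$ lies in the image of the restriction map, proving surjectivity. There is no serious obstacle: the entire argument is an elementary manipulation, and the only point requiring care is keeping the identifications and the directions of the various pairings straight.
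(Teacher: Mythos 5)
Your proof is correct, and it takes a genuinely different route from the paper's. The paper proves surjectivity by a case analysis on the shape of $N$ relative to the form: it treats separately the cases where $q|_N$ is non-degenerate, where $N$ is isotropic, and where $N$ contains its own perpendicular with image constraints, and then glues these three cases together by passing through the radical $N_0$ of $N$. Your argument instead transports everything along the non-degenerate pairing $[\cdot,\cdot]_q$: $\Lie(\SO(M))$ becomes the space of skew-symmetric forms on $M$, $\mf a_N$ becomes the space of forms on $N\otimes M$ that are skew on $N\otimes N$, and surjectivity reduces to a one-line block extension of bilinear forms across a direct (not orthogonal) complement $N'\subset M$ of $N$, with the $N'\otimes N'$-block set to zero and the $N'\otimes N$-block set to minus the transpose of the given $N\otimes N'$-block. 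This avoids any structure theory of quadratic spaces — no Witt-style decomposition, no reduction to the radical — and works uniformly regardless of the isotropy or degeneracy of $q|_N$. The only thing to keep in mind is that "alternating" should be read as skew-symmetric, which coincides with alternating here since $2$ is invertible in $\kappa$ (the lemma is applied with $\kappa=\Field_p$, $p>2$). The gain is brevity and conceptual clarity; the cost is that it obscures the geometric picture (radicals, isotropic subspaces) that the paper's proof makes explicit and which resurfaces in later arguments in the section.
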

\begin{proof}
  We will first consider three special cases:
  \begin{itemize}
  \item \emph{$q$ restricted to $N$ is also non-degenerate}: In this case, we have an orthogonal decomposition $M=N\perp N^{\perp}$. So, given $f\in\mf{a}_N$, we can extend it to the element of $\Lie(\SO(M))$ which restricts to $0$ on $N^{\perp}$.

  \item \emph{$N$ is isotropic}: In this case, we can find a splitting $M=N\oplus(N\oplus N')^{\perp}\oplus N'$, where $N'\subset M$ is also isotropic and pairs non-degenerately with $N$, and the restriction of $q$ to $(N\oplus N')^{\perp}$ is non-degenerate. Given $f\in\mf{a}_N$, we can write it in the form $f_1\oplus f_2\oplus f_3$, where $f_1:N\to N$, $f_2:N\to (N\oplus N')^{\perp}$ and $f_3:N\to N'$. The duality between $N'$ and $N$ allow us to view $\dual{f}_1$ as a map $N'\to N'$. Similarly, $\dual{f}_2$ can be viewed as a map $(N\oplus N')^{\perp}\to N'$, and $\dual{f}_3$ as a map $N\to N'$. The condition that $f\in\mf{a}_N$ simply means that $\dual{f}_3=-f_3$. We can now extend $f$ to an element $X\in\Lie(\SO(N))$, which restricts to $-\dual{f}_2$ on $(N\oplus N')^{\perp}$ and $-\dual{f}_1$ on $N'$.

  \item \emph{$N$ contains $\perp{N}$ the image of $f$ lies in $N^{\perp}$}: Again, we can find a splitting $M=N\oplus N'$ with $N=N^{\perp}\oplus (N^{\perp}\oplus N')^{\perp}$. Here $N'$ is isotropic and pairs non-degenerately with $N^{\perp}$. Now, $f$ is of the form $f_1\oplus 0\oplus 0$, where $f_1:(N^{\perp}\oplus N')^{\perp}\to N^{\perp}$. Extend this to the element $X\in\Lie(\SO(N))$ which restricts to $-\dual{f}_1:N'\to (N^{\perp}\oplus N')^{\perp}$ on $N'$ and to $0$ on $N$.
  \end{itemize}
  In general, let $N_0\subset N$ be the radical. Since $N_0$ is isotropic, by the second case above, we can find $X_1\in\Lie(\SO(M))$ such that $X_1\vert_{N_0}\equiv f\vert_{N_0}$. Therefore, replacing $f$ with $f-X_1\vert_N$, we can assume that $f\vert_{N_0}\equiv 0$. In this case, the image of $f$ must lie within $N_0^{\perp}$. Let $\overline{f}:N/N_0\to N_0^{\perp}/N_0$ be the induced map. Now, $N/N_0$ is a non-degenerate sub-space of the non-degenerate quadratic space $N_0^{\perp}/N_0$. Therefore, from the first case treated above, we can find $\overline{X}_2\in\Lie(\SO(N_0^{\perp}/N_0))$ such that $\overline{X}_2\vert_{N/N_0}\equiv\overline{f}$. Lift $\overline{X}_2$ to an element $X_2\in\Lie(\SO(M))$ that stabilizes $N_0$. Then replacing $f$ by $f-X_2\vert_{N}$, we can assume that the image of $f$ lands in $N_0$. Extend $f$ to any map $\tilde{f}:N_0^{\perp}\to N_0$. Then by the third case above, we can find $X\in\Lie(\SO(M))$ such that $X\vert_{N_0^{\perp}}\equiv\tilde{f}$.
\end{proof}

\subsection{}\label{lattice:subsec:maximal}
Suppose that we are given a discrete valuation ring $\Rg$ of mixed characteristic $(0,p)$ with residue field $k$, and a quadratic space $(M,q)$ over $\Rg$ such that $M\bigl[p^{-1}\bigr]$ is non-degenerate over the fraction field $\Rg\bigl[p^{-1}\bigr]$. The quadratic form $q$ endows the discriminant $\disc(M)=\dual{M}/M$ with a natural non-degenerate $\Rg\bigl[p^{-1}\bigr]/\Rg$-valued quadratic form $\overline{q}$.

We say that $M$ is \defnword{maximal} if it is maximal among $\Rg$-lattices in $M\bigl[p^{-1}\bigr]$ on which $q$ takes values in $\Rg$.

\begin{lem}\label{lattice:lem:maximal}
\mbox{}
\begin{enumerate}
\item\label{maximal:equiv}The following are equivalent:
\begin{enumerate}
  \item $M$ is maximal.
  \item $\disc(M)$ is a $k$-vector space and the form $\overline{q}$ is anisotropic.
\end{enumerate}
\item Suppose that $(\tilde{M},\tilde{q})$ is a self-dual quadratic space over $\Rg$ containing $(M,q)$ isometrically as a direct summand. Then $M$ is maximal if and only if $M^{\perp}\subset\tilde{M}$ is maximal.
\end{enumerate}
\end{lem}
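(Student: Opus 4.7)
The plan is to reduce everything to properties of the discriminant module $D := \disc(M) = \dual{M}/M$ equipped with its induced form $\overline{q} : D \to \Rg[p^{-1}]/\Rg$, together with the parallel object $D^{\perp} := \disc(M^{\perp})$ for part (2). For part (1), both directions hinge on the polarization identity: if $v$ and $m$ lie in a common lattice on which $q$ takes values in $\Rg$, then $[v,m]_q = q(v+m) - q(v) - q(m) \in \Rg$. Thus if $M' \supsetneq M$ is any lattice with $q(M') \subset \Rg$, every $v \in M' \setminus M$ is forced into $\dual{M}$ with $q(v) \in \Rg$, contradicting anisotropy of $\overline{q}$; conversely, given $M$ maximal and any $v \in \dual{M}$ with $q(v) \in \Rg$, the lattice $M + \Rg v$ has $q$-values in $\Rg$, so by maximality $v \in M$. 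This establishes that $M$ is maximal iff $\overline{q}$ is anisotropic, and what remains is the assertion that anisotropy additionally forces $D$ to be a $k$-vector space. For that I would invoke the orthogonal decomposition of $(M, q)$ over $\Rg$ available for $p > 2$: write $M = \bigoplus_i \Rg f_i$ with $q(f_i) = c_i = \pi^{a_i} u_i$, $u_i \in \Rg^{\times}$, $a_i \geq 0$. If some $a_i \geq 2$ then $v := \pi^{-1} f_i$ satisfies $q(v) = \pi^{a_i - 2} u_i \in \Rg$ and $[v, f_j]_q = 2\pi^{a_i - 1} u_i \delta_{ij} \in \Rg$, placing $v$ in $\dual{M} \setminus M$ with $q(v) \in \Rg$ and contradicting anisotropy. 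Hence all $a_i \leq 1$ and $D \cong \bigoplus_i \Rg/\pi^{a_i}\Rg$ is killed by $\pi$.

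For part (2), the plan is to build a canonical $\Rg$-module isomorphism $D \cong D^{\perp}$ under which $\overline{q}$ goes to $-\overline{q^\perp}$; together with part (1) this finishes the job, since both anisotropy and annihilation by $\pi$ are insensitive to a sign. Self-duality of $\tilde{M}$ identifies $\tilde{M} \cong \Hom_{\Rg}(\tilde{M}, \Rg)$, and restricting functionals along the direct-summand inclusion $M \hookrightarrow \tilde{M}$ yields a surjection $\tilde{M} \twoheadrightarrow \dual{M}$. Under the identifications this coincides with the orthogonal projection $\pi_M : \tilde{M} \to M[p^{-1}]$ along $\tilde{M}[p^{-1}] = M[p^{-1}] \perp M^{\perp}[p^{-1}]$, and its kernel is $\tilde{M} \cap M^{\perp}[p^{-1}] = M^{\perp}$. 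Since $\pi_M$ restricts to the identity on $M$, quotienting further by $M$ produces $D \cong \tilde{M}/(M + M^{\perp})$, and by symmetry the same quotient equals $D^{\perp}$. For the form compatibility, lift $\bar{y} \in D$ to $y \in \dual{M}$, pick any preimage $x \in \tilde{M}$ of $y$, and set $y^{\perp} := x - y \in \dual{M^{\perp}}$; orthogonality of the two summands kills the cross term in $\tilde{q}(x) = q(y) + q^{\perp}(y^{\perp})$, and $\tilde{q}(x) \in \Rg$ then yields $\overline{q}(\bar{y}) = -\overline{q^\perp}(\overline{y^{\perp}})$.

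The main obstacle, I expect, is the ``$\pi$ annihilates $D$'' half of (1), which is not formal from anisotropy alone; the orthogonal decomposition over $\Rg$ (and thus the hypothesis $p > 2$) is what provides the structural input needed to extract an explicit witness $v = \pi^{-1} f_i$ contradicting maximality whenever some elementary divisor of the Gram matrix of $(M, q)$ has valuation $\geq 2$.
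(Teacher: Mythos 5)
Your proof is correct, and part (1) takes a genuinely different route from the paper's. The paper first shows directly that maximality forces $\disc(M)$ to be killed by $\pi$: choose $m\in\dual{M}$ with $\pi m\notin M$, $\pi^2 m\in M$; then $[m,\pi^2 m]\in\Rg$ gives $q(\pi m)\in\Rg$, so $M+\Rg\,\pi m$ strictly enlarges $M$. Only then does it set up a bijection between over-lattices of $M$ on which $q$ is $\Rg$-valued and $\overline{q}$-isotropic subspaces of $\disc(M)$, from which the equivalence is immediate. You instead prove ``maximal $\Leftrightarrow$ $\overline{q}$ anisotropic'' directly from the polarization identity, with no prior assumption about the torsion of $\disc(M)$, and then separately deduce ``anisotropic $\Rightarrow$ $\pi\cdot\disc(M)=0$'' from the orthogonal diagonalization of $(M,q)$ over $\Rg$ (legitimate since $2\in\Rg^\times$; the paper itself uses exactly this diagonalization later, in the proof of (\ref{regular:lem:maxemb})), by exhibiting the explicit witness $\pi^{-1}f_i$ whenever some $a_i\geq 2$. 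Both arguments are sound; the paper's bijection is more conceptually reusable, while yours is more concrete and self-contained. Your part (2) matches the paper's identification $\disc(M)\cong\tilde{M}/(M+M^\perp)\cong\disc(M^\perp)$, but you are more precise about the sign: the map carries $\overline{q}$ to the \emph{negative} of the induced form on $\disc(M^\perp)$, whereas the paper calls it an isometry; as you observe, this is harmless since both anisotropy and annihilation by $\pi$ are insensitive to a sign change.
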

\begin{proof}
 First, we claim that $M$ is maximal only if $\disc(M)$ is a $k$-vector space. Otherwise, if $\pi\in\Rg$ is a uniformizer, we can find $m\in\dual{M}$ such that $\pi\cdot m\notin M$ but $\pi^2\cdot m\in M$. But then $M+\langle \pi\cdot m\rangle$ is a lattice bigger than $M$ on which $q$ is $\Rg$-valued.

 To prove (\ref{maximal:equiv}), we can now assume that $\disc(M)$ is a $k$-vector space. Then the assignment $M'\mapsto M'/M$ sets up a bijection between the two following sets:
 \begin{itemize}
   \item Lattices $M'\subset M\bigl[p^{-1}]$ containing $M$ and on which $q$ takes values in $\Rg$.
   \item Sub-spaces of $\disc(M)$ that are isotropic for $\overline{q}$.
 \end{itemize}
 From this, the claimed equivalence is clear.

 For the final assertion, note that the identification $\tilde{M}\xrightarrow{\simeq}\dual{\tilde{M}}$ induced by $\tilde{q}$ gives us canonical isomorphisms:
 \[
  \disc(M)\xleftarrow{\simeq}\frac{\tilde{M}}{M+M^{\perp}}\xrightarrow{\simeq}\disc(M^{\perp}).
 \]
 This gives us an isomorphism $\disc(M)\xrightarrow{\simeq}\disc(M^{\perp})$ that preserves quadratic forms up to sign. We conclude now from (\ref{maximal:equiv}).
\end{proof}

\subsection{}\label{lattice:subsec:parahoric}

Let $(L,Q)$ be a quadratic space over $\Int_{(p)}$ such that $(L_{\Rat},Q)$ is non-degenerate. Suppose that we are given a self-dual quadratic space $(\widetilde{L},\widetilde{Q})$ over $\Int_{(p)}$ admitting $(L,Q)$ as a direct summand. Set $\Lambda=L^{\perp}\subset\widetilde{L}$. Set $\widetilde{G}=\GSpin(\widetilde{L},\widetilde{Q})$, and let $G\subset\widetilde{G}$ be the closed sub-group such that, for any $\Int_{(p)}$-algebra $R$, we have:
\[
  G(R)=\{g\in\widetilde{G}(R): g\rvert_{\Lambda_R}\equiv 1\}.
\]
Note that the central embedding $\Gmh{\Rat}\into G_{\Rat}=\GSpin(L_{\Rat},Q)$ is induced from an embedding $\Gm\into G$. Let $G_0$ (resp. $\widetilde{G}_0$) be the $\Int_{(p)}$-group scheme $G/\Gm$ (resp. $\widetilde{G}/\Gm$), so that $G_{0,\Rat}=\SO(L_{\Rat},Q)$.

\begin{lem}\label{lattice:lem:grpoints}
For any flat $\Int_{(p)}$-algebra $R$, we have:
    \begin{align*}
     G(R)&=G\bigl(R_{\Rat}\bigr)\cap C(L)_R^{\times}\subset C(L)_{R_{\Rat}}^\times;\\
     G_0(R)&=\left\{g\in G_0\left(R_{\Rat}\right):\;gL_R=L_R\text{ and $g$ acts trivially on $\dual{L}_R/L_R$}\right\}.
    \end{align*}
\end{lem}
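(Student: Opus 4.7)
The plan is to reduce both statements to computations over the generic fiber $R_{\Rat}$, using flatness of $R$ together with module-theoretic direct-summand properties. As a first step, I would record that $\widetilde{L}$, $L$, and $C(L)$ all sit inside $C(\widetilde{L})$ as $\Int_{(p)}$-module direct summands (extend any basis of $L$ to one of $\widetilde{L}$ and appeal to the monomial bases of the Clifford algebras); this yields, for flat $R$, the intersection formulas $C(L)_{R_{\Rat}}\cap C(\widetilde{L})_R=C(L)_R$, $L_{R_{\Rat}}\cap C(\widetilde{L})_R=L_R$, and $\widetilde{L}_{R_{\Rat}}\cap C(\widetilde{L})_R=\widetilde{L}_R$ inside $C(\widetilde{L})_{R_{\Rat}}$. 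The central auxiliary fact is that over $R_{\Rat}$, where the orthogonal decomposition $\widetilde{L}_{R_{\Rat}}=L_{R_{\Rat}}\perp\Lambda_{R_{\Rat}}$ gives the graded tensor identification $C(\widetilde{L})_{R_{\Rat}}=C(L)_{R_{\Rat}}\,\hat\otimes\, C(\Lambda)_{R_{\Rat}}$, the ordinary centralizer of $\Lambda_{R_{\Rat}}$ inside $C^+(\widetilde{L})_{R_{\Rat}}$ is exactly $C^+(L)_{R_{\Rat}}$. I would verify this by expanding a centralizing element as a sum of homogeneous tensors $a\otimes b$; the graded commutation rule $(1\otimes\lambda)(a\otimes b)=(-1)^{|a|}(a\otimes\lambda b)$ forces either $b\in Z(C(\Lambda))^+=R$ (which yields $a\otimes b\in C^+(L)$) or $b$ an odd element anticommuting with every $\lambda\in\Lambda$; the latter possibility vanishes by a short explicit calculation using that $\Lambda_{R_{\Rat}}$ is non-degenerate.

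For part (1), the forward inclusion will be: for $g\in G(R)$, the condition $g|_{\Lambda_R}=1$ forces $g$ to commute with $\Lambda_{R_{\Rat}}$, so $g\in C^+(L)_{R_{\Rat}}^\times$ by the centralizer fact, and applying the direct-summand intersection to both $g$ and $g^{-1}$ then gives $g\in C(L)_R^\times$. For the reverse inclusion, I would start from $g\in G(R_{\Rat})\cap C(L)_R^\times$; the intersection argument places $g$ in $C^+(L)_R^\times$, the elementary Clifford identity $(l_1l_2)\lambda=\lambda(l_1l_2)$ for $l_i\in L,\lambda\in\Lambda$ shows $g$ commutes with $\Lambda_R$, the intersection $L_{R_{\Rat}}\cap C(L)_R=L_R$ shows $gL_Rg^{-1}\subset L_R$, and finally for any $v\in\widetilde{L}_R$ the element $gvg^{-1}$ lies in $C(\widetilde{L})_R\cap\widetilde{L}_{R_{\Rat}}=\widetilde{L}_R$. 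Assembling these gives $g\in\widetilde{G}(R)$ with $g|_{\Lambda_R}=1$, hence $g\in G(R)$.

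For part (2), I would identify $G_0(R)$ with the isometries of $\widetilde{L}_R$ acting trivially on $\Lambda_R$ (using $\widetilde{G}_0=\SO(\widetilde{L})$); any such $g_0$ automatically preserves $\Lambda_R^{\perp}=L_R$, where the equality uses that $L\subset\widetilde{L}$ is saturated and that $\Lambda_{R_{\Rat}}^{\perp}=L_{R_{\Rat}}$. For the discriminant condition I would use the identification $\dual{L}_R=p_L(\widetilde{L}_R)$, where $p_L\colon\widetilde{L}_{R_{\Rat}}\to L_{R_{\Rat}}$ is the orthogonal projection; this follows from self-duality of $\widetilde{L}$, as any $u\in\dual{L}_R$ defines an $R$-linear functional on $L_R$ that extends to $\widetilde{L}_R$ (using the direct-summand structure) and is represented by some $\tilde u\in\widetilde{L}_R$ with $p_L(\tilde u)=u$. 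Then, for any $v=u+w\in\widetilde{L}_R$ with $u=p_L(v)\in\dual{L}_R$ and $w\in\Lambda_{R_{\Rat}}$, the computation $g_0(v)=g_0(u)+w$ gives $g_0(u)-u=g_0(v)-v\in\widetilde{L}_R\cap L_{R_{\Rat}}=L_R$, i.e., triviality on $\dual{L}_R/L_R$. The converse reverses this argument: given $g_0\in G_0(R_{\Rat})$ preserving $L_R$ and trivial on $\dual{L}_R/L_R$, extend by the identity on $\Lambda_{R_{\Rat}}$ and use the same $v=u+w$ decomposition to verify that the extension stabilizes $\widetilde{L}_R$.

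The main obstacle I anticipate is the centralizer computation over $R_{\Rat}$ in part (1); once that is in hand, everything else amounts to careful bookkeeping with the direct-summand intersections and the projection identity $\dual{L}_R=p_L(\widetilde{L}_R)$.
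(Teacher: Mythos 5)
Your proposal is correct and follows essentially the same strategy as the paper's proof: both rest on identifying $C(L)^{+}$ with the centralizer of $\Lambda$ in $C(\widetilde{L})^{+}$ (you spell out the graded-tensor computation over $R_{\Rat}$ that the paper leaves as ``easily checked using the orthogonal decomposition''), and both handle $G_0$ by writing an element of $\widetilde{L}_R$ as $u+w$ with $u\in\dual{L}_R$, $w\in\dual{\Lambda}_R$ and running the discriminant computation. The remaining differences are organizational rather than substantive: you argue the two inclusions for $G(R)$ separately where the paper identifies both sides at once with $\{g\in\widetilde{G}(R):g\text{ commutes with }\Lambda\}$, and you justify the surjectivity $p_L(\widetilde{L}_R)=\dual{L}_R$ explicitly via self-duality of $\widetilde{L}$ where the paper merely asserts the existence of the lift $w\in\dual{\Lambda}_R$.
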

\begin{proof}
  The obvious inclusion $L\into\widetilde{L}\into C(\widetilde{L})$ gives rise to a canonical map $C(L)\to C(\widetilde{L})$. We claim that this sets up an identification:
  \begin{align}\label{lattice:eqn:commutant}
   C(L)^+=\{z\in C(\widetilde{L})^+:\; vz=zv\in C(\widetilde{L})\text{, for all $v\in\Lambda$}\}.
  \end{align}
  Since both sides of this purported identity are saturated $\Int_{(p)}$-sub-modules of $C(\widetilde{L})^+$, it is enough to show:
  \begin{align}\label{lattice:eqn:commutant2}
   C(L_{\Rat})^+=\{z\in C(\widetilde{L}_{\Rat})^+:\; vz=zv\in C(\widetilde{L}_{\Rat})\text{, for all $v\in\Lambda$}\}.
  \end{align}
  This is easily checked using the orthogonal decomposition $\widetilde{L}_{\Rat}=L_{\Rat}\perp\Lambda_{\Rat}$.

  The description of $G(R)$ for a flat $\Int_{(p)}$-algebra $R$ is now clear, since both $G\bigl(R_{\Rat}\bigr)\cap C(L)_R^\times$ and $G(R)$ can be identified with the set of elements of $\widetilde{G}(R)$ that commute with the left multiplication action of $\Lambda$ on $C(\widetilde{L})_R$.

  Proving the corresponding description of $G_0(R)$ amounts to checking the following thing: Suppose that we have $g\in G_0\left(R_{\Rat}\right)$ such that $gL_R=L_R$. Then:
 \[
   g\widetilde{L}_R=\widetilde{L}_R\;\;\Leftrightarrow\;\;g\text{ acts trivially on }\dual{L}_R/L_R.
 \]

 Indeed, suppose $g\widetilde{L}_R=\widetilde{L}_R$. Given any $v\in\dual{L}_R$, there exists $w\in\dual{\Lambda}_R$ such that the element $v+w$ of $\widetilde{L}_{R_{\Rat}}=L_{R_{\Rat}}\oplus\Lambda_{R_{\Rat}}$ lies in $\widetilde{L}_R$. Therefore, since $g$ acts trivially on $\Lambda_R$, we have:
 \[
  gv-v=g(v+ w)-(v+ gw)=g(v+ w)-(v+ w)\in\widetilde{L}_R\cap L_{R_{\Rat}}=L_R.
 \]
 This shows that $g$ acts trivially on $\dual{L}_R/L_R$.

 On the other hand, suppose that $g$ acts trivially on $\dual{L}_R/L_R$. Every element $\tilde{v}\in\widetilde{L}_R$ can be written uniquely in the form $v+ w$, for some $v\in\dual{L}_R\subset L_{R_{\Rat}}$ and $w\in\dual{\Lambda}_R\subset\Lambda_{R_{\Rat}}$. We now have:
 \[
  g\tilde{v}=g(v+ w)=gv+ w=(gv-v)+v+ w=(gv-v)+\tilde{v}\in\widetilde{L}_{R_{\Rat}}.
 \]
 But, by our hypothesis on $g$, $gv-v\in L_R\subset\widetilde{L}_R$. Therefore, $g\tilde{v}\in\widetilde{L}_R$, for all $\tilde{v}\in\widetilde{L}_R$.
\end{proof}

\begin{lem}\label{lattice:lem:liealg}
Set
  \[
  \mf{a}_{\Lambda}=\{f\in\Hom(\Lambda,\widetilde{L}):\;[f(v),w]_{\widetilde{Q}}+[v,f(w)]_{\widetilde{Q}}=0\text{, for all $v,w\in\Lambda$}\}.
 \]
Then we have a short exact sequence of finite free $\Int_{(p)}$-modules:
\[
 0\to\Lie G_0\rightarrow\Lie\widetilde{G}_0\xrightarrow{X\mapsto X\vert_{\Lambda}}\mf{a}_{\Lambda}\to 0.
\]
\end{lem}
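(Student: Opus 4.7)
The plan is to check exactness at $\Lie G_0$ and $\Lie \widetilde{G}_0$ by hand, verify freeness by general considerations over the PID $\Int_{(p)}$, and then deduce surjectivity of the restriction map by reducing modulo $p$ and invoking Lemma~\ref{lattice:lem:surj} together with Nakayama. The main subtlety is controlling the base-change behavior of $\mf{a}_\Lambda$.

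\textbf{Well-definedness and the kernel.} If $X \in \Lie \widetilde{G}_0 \subset \End(\widetilde{L})$, the identity $[Xv,w]_{\widetilde{Q}} + [v,Xw]_{\widetilde{Q}} = 0$ restricted to $v,w \in \Lambda$ shows $X|_\Lambda \in \mf{a}_\Lambda$, so the restriction map is well-defined. By the description of $G_0(R)$ in Lemma~\ref{lattice:lem:grpoints}, $G_0 \subset \widetilde{G}_0$ is the closed subgroup acting trivially on $\Lambda$, so $\Lie G_0 = \{X \in \Lie \widetilde{G}_0 : X|_\Lambda = 0\}$ is precisely the kernel.

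\textbf{Freeness.} Since $\widetilde{L}$ is self-dual and $p > 2$, $\widetilde{G}_0 = \SO(\widetilde{L},\widetilde{Q})$ is reductive, hence smooth, so $\Lie \widetilde{G}_0$ is finite free. The module $\mf{a}_\Lambda$ is the kernel of the map $\Hom(\Lambda,\widetilde{L}) \to \Hom(\Lambda \otimes \Lambda, \Int_{(p)})$ sending $f$ to $(v,w) \mapsto [f(v),w]_{\widetilde{Q}} + [v,f(w)]_{\widetilde{Q}}$; as the target is torsion-free, $\mf{a}_\Lambda$ is a saturated submodule of the finite free module $\Hom(\Lambda,\widetilde{L})$, hence finite free over the PID $\Int_{(p)}$. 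The kernel $\Lie G_0$ is then finite free as well.

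\textbf{Surjectivity.} First, $\Lambda \subset \widetilde{L}$ is saturated: if $pv \in \Lambda$ with $v \in \widetilde{L}$, then $p[v,\ell]_{\widetilde{Q}} = 0$ for all $\ell \in L$, forcing $[v,\ell]_{\widetilde{Q}} = 0$ and $v \in \Lambda$. Hence $\Lambda$ is a direct summand of $\widetilde{L}$, and $\Lambda_{\Field_p}$ is a subspace of the non-degenerate quadratic space $\widetilde{L}_{\Field_p}$. Lemma~\ref{lattice:lem:surj} applied to this pair produces a surjection $\Lie\SO(\widetilde{L}_{\Field_p}) \twoheadrightarrow \mf{a}_{\Lambda_{\Field_p}}$; smoothness of $\widetilde{G}_0$ identifies the source with $(\Lie \widetilde{G}_0)_{\Field_p}$. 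The saturation of $\mf{a}_\Lambda$ in $\Hom(\Lambda,\widetilde{L})$ yields an injection $(\mf{a}_\Lambda)_{\Field_p} \hookrightarrow \mf{a}_{\Lambda_{\Field_p}}$, and the mod $p$ reduction of $X \mapsto X|_\Lambda$ factors as $(\Lie \widetilde{G}_0)_{\Field_p} \to (\mf{a}_\Lambda)_{\Field_p} \hookrightarrow \mf{a}_{\Lambda_{\Field_p}}$. Since the composite is surjective, so is the first arrow. Nakayama over the local ring $\Int_{(p)}$ then lifts this to integral surjectivity of $\Lie \widetilde{G}_0 \to \mf{a}_\Lambda$, completing the proof.
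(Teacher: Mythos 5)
Your proof is correct and follows the same route as the paper's: reduce surjectivity to the special fiber by Nakayama and then invoke Lemma~\ref{lattice:lem:surj}. The only difference is that you spell out the routine verifications the paper leaves implicit (identification of the kernel with $\Lie G_0$, freeness, and — usefully — the injection $(\mf{a}_\Lambda)_{\Field_p}\hookrightarrow\mf{a}_{\Lambda_{\Field_p}}$ needed to transfer the surjectivity back from $\mf{a}_{\Lambda_{\Field_p}}$, a base-change point the paper glosses over).
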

\begin{proof}
  The only non-obvious part is the surjectivity on the right hand side. By Nakayama's lemma, it suffices to prove this after tensoring with $\Field_p$. But then it follows from (\ref{lattice:lem:surj}).
\end{proof}

\begin{lem}\label{lattice:lem:torsor}
Write $\iota_0:\Lambda\into\widetilde{L}$ for the natural embedding. Let $R$ be a $\Int_{(p)}$-algebra. Suppose that we have another isometric embedding $\iota:\Lambda_R\into\widetilde{L}_R$ onto a local direct summand of $\widetilde{L}_R$. Then the functor $\mathcal{P}_{\iota}$ on $R$-algebras given by
    \[
      \mathcal{P}_{\iota}(B)=\bigl\{g\in\widetilde{G}_0(B): g\circ\iota_0=\iota\bigr\}
    \]
    is represented by a scheme that is affine, smooth and faithfully flat over $R$.
\end{lem}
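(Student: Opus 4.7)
The plan is to establish the three properties directly: affineness is immediate since $\mathcal{P}_{\iota}$ is represented by the closed subscheme of the affine scheme $\widetilde{G}_{0,R}$ cut out by the linear equations $g \circ \iota_0 = \iota$. The remaining work is to verify smoothness via infinitesimal lifting, and faithful flatness by checking geometric-fiberwise non-emptiness through Witt's extension theorem.

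For smoothness I would apply the infinitesimal lifting criterion. Given a square-zero surjection $B' \twoheadrightarrow B$ of $R$-algebras with kernel $I$ and a point $g \in \mathcal{P}_{\iota}(B)$, smoothness of $\widetilde{G}_0$ over $\Int_{(p)}$ produces a lift $\tilde{g} \in \widetilde{G}_0(B')$. The discrepancy $h \coloneqq \iota_{B'} - \tilde{g} \circ \iota_0 \in \Hom_B(\Lambda_B, I \otimes_B \widetilde{L}_B)$ vanishes modulo $I$, and comparing the isometric conditions on the two embeddings $\iota_{B'}$ and $\tilde{g} \circ \iota_0$ modulo $I^2 = 0$ yields the skew-adjoint relation
\[
 [h(v), \iota(w)]_{\widetilde{Q}} + [\iota(v), h(w)]_{\widetilde{Q}} = 0, \quad v, w \in \Lambda_B.
\]
Thus $h \circ \iota^{-1}: \iota(\Lambda_B) \to I \otimes_B \widetilde{L}_B$ represents a class in $\mathfrak{a}_{\iota(\Lambda_B)} \otimes_B I$. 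I would then invoke the analog of Lemma~\ref{lattice:lem:liealg} for the local direct summand $\iota(\Lambda_B) \subset \widetilde{L}_B$, whose surjectivity reduces by Nakayama to the pointwise case of Lemma~\ref{lattice:lem:surj} at residue fields, to produce $X \in \Lie \widetilde{G}_{0,B} \otimes_B I$ with $X \vert_{\iota(\Lambda_B)} = h \circ \iota^{-1}$. A direct calculation modulo $I^2$ then shows $(1 + X)\tilde{g} \in \mathcal{P}_{\iota}(B')$ is the desired lift.

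For faithful flatness, smoothness already gives flatness, so it remains to check that $\mathcal{P}_{\iota} \to \Spec R$ is surjective. Let $\bar{s}$ be a geometric point of $\Spec R$ with algebraically closed residue field $\kappa$. Then $\iota_0$ and $\iota_{\bar{s}}$ are two isometric embeddings of $\Lambda_\kappa$ onto direct summands of the non-degenerate quadratic space $(\widetilde{L}_\kappa, \widetilde{Q})$, and Witt's extension theorem produces $g_0 \in \Orth(\widetilde{L}_\kappa)$ with $g_0 \circ \iota_0 = \iota_{\bar{s}}$. If $\det g_0 = -1$, composition with a reflection through an anisotropic vector in $\iota_{\bar{s}}(\Lambda_\kappa)^{\perp}$ (which exists since $\Lambda \subsetneq \widetilde{L}$ and $\kappa$ is algebraically closed of characteristic $\neq 2$) lands in $\widetilde{G}_0(\kappa) = \SO(\widetilde{L}_\kappa)$ and still sends $\iota_0$ to $\iota_{\bar{s}}$, giving a geometric point of $\mathcal{P}_{\iota,\bar{s}}$.

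I expect the main obstacle to be the smoothness step, specifically the analog of Lemma~\ref{lattice:lem:liealg} for an arbitrary local direct summand $\iota(\Lambda_B) \subset \widetilde{L}_B$. One must check that the formation of $\mathfrak{a}_N$ is sufficiently well-behaved under base change (having locally constant fiber rank, which follows from Lemma~\ref{lattice:lem:surj} combined with finite-dimensional rank-counting) so that Nakayama upgrades the pointwise surjectivity of Lemma~\ref{lattice:lem:surj} to surjectivity over $B$.
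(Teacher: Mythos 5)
Your overall strategy (affineness for free, infinitesimal lifting for smoothness, Witt plus fiberwise non-emptiness for surjectivity) matches the paper's, but the smoothness step takes a genuinely different — and harder — route. You measure the discrepancy as $h = \iota_{B'} - \tilde{g}\circ\iota_0$, which, after the (correct) square-zero computation, produces an element of $\mathfrak{a}_{\iota(\Lambda_B)}$ for the \emph{variable} direct summand $\iota(\Lambda_B)\subset\widetilde{L}_B$. You then need to know that $\Lie\widetilde{G}_{0,B}\to\mathfrak{a}_{\iota(\Lambda_B)}$ is surjective, and you rightly flag this as the main obstacle: Lemma~\ref{lattice:lem:liealg} only treats the fixed $\Lambda\hookrightarrow\widetilde{L}$, and extending it requires checking that the formation of $\mathfrak{a}_N$ commutes with base change. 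This can be made to work — $\mathfrak{a}_N$ is the kernel of the surjection $\SHom(N,\widetilde{L})\to\Sym^2\dual{N}$ and hence a direct summand of constant rank — but it is a real extra argument, and your sketch via "rank-counting plus Nakayama" leaves that verification implicit. The paper avoids the issue entirely by pre-composing with $\tilde{g}^{-1}$: it sets $U(v) = v - \tilde{g}^{-1}\iota(v)$, so that the skew-adjoint condition reads in $\mathfrak{a}_\Lambda$ itself, where Lemma~\ref{lattice:lem:liealg} applies directly after tensoring with $I$; one then takes $g = \tilde{g}(1-X)$ rather than $(1+X)\tilde{g}$. Equivalently, you could close your gap by observing that conjugation by $g = \tilde{g}\bmod I\in\widetilde{G}_0(B)$ gives an isomorphism $\mathfrak{a}_{\Lambda_B}\xrightarrow{\sim}\mathfrak{a}_{\iota(\Lambda_B)}$ compatible with restriction from $\Lie\widetilde{G}_{0,B}$, which is exactly what the paper's change of variable encodes.

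On the flatness step, you are more careful than the paper in noticing the $\Orth$-versus-$\SO$ issue, but the justification "(which exists since $\Lambda\subsetneq\widetilde{L}$ and $\kappa$ is algebraically closed)" is not correct as stated: $\iota(\Lambda_\kappa)^\perp$ being nonzero does not by itself guarantee an anisotropic vector (it could be totally isotropic). The right reason an anisotropic vector exists in the paper's applications is that $\iota(\Lambda_\kappa)^\perp\cong L_\kappa$ by Witt cancellation, and $L_\kappa$ is never totally isotropic there because $\rank L\geq 3$ exceeds the dimension of its radical. If you want the argument to be airtight at the level of generality of the lemma, that inequality (or some substitute) needs to be invoked explicitly.
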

\begin{proof}
It is clear that $\mathcal{P}_{\iota}$ is represented by a closed sub-scheme of $\widetilde{G}_{0,R}$. Moreover, Witt's extension theorem~\cite{bourbaki_algebre_ix}*{\S 4, Th. 1} shows that $\mathcal{P}_{\iota}(\kappa)$ is non-empty for any residue field $\kappa$ of $R$. To finish the proof of the proposition, it suffices to show that $\mathcal{P}_{\iota}$ is formally smooth over $R$.

 Let $B$ be an $R$-algebra, let $I\subset B$ be a square-zero ideal, and let $B_0=B/I$. Suppose that we have $g_0\in\mathcal{P}_{\iota}(B_0)$: We want to find $g\in \mathcal{P}_{\iota}(B)$ mapping to $g_0$. Choose any $\widetilde{g}\in\widetilde{G}_0(B)$ lifting $g_0$. Consider the assignment on $\Lambda_B$ given by $v\mapsto v-\widetilde{g}^{-1}\iota(v)$. This factors through a map $U:\Lambda_{B_0}\to I\otimes_{B_0}\widetilde{L}_{B_0}$:
 \begin{align*}
   \Lambda_B\to \Lambda_{B_0}&\xrightarrow{U} I\otimes_{B_0}\widetilde{L}_{B_0}\xrightarrow{\simeq}I\otimes_B\widetilde{L}_B\into\widetilde{L}_B.
 \end{align*}

 For $v,w\in\Lambda$, we have:
 \begin{align*}
  [U(v),w]_{\widetilde{Q}}+[v,U(w)]_{\widetilde{Q}}&=[v-\widetilde{g}^{-1}\iota(v),w]_{\widetilde{Q}}+[v,w-\widetilde{g}^{-1}\iota(w)]_{\widetilde{Q}}\\
  &=[v-\widetilde{g}^{-1}\iota(v),w-\widetilde{g}^{-1}\iota(w)]_{\widetilde{Q}}+[v,w]_{\widetilde{Q}}-[\widetilde{g}^{-1}\iota(v),\widetilde{g}^{-1}\iota(w)]_{\widetilde{Q}}\\
  &=[v-\widetilde{g}^{-1}\iota(v),w-\widetilde{g}^{-1}\iota(w)]_{\widetilde{Q}}=0.
 \end{align*}
 Here, the last equality holds because $I^2=0$.

 Therefore, $U\in I\otimes_{\Int_{(p)}}\mf{a}_{\Lambda}$, and, by (\ref{lattice:lem:liealg}), we can find $X\in I\otimes_{\Int_{(p)}}\Lie(\widetilde{G}_0)$ such that $X\vert_{\Lambda_{R_0}}\equiv U$. Then $g=\widetilde{g}\circ(1-X)$ is an element of $\mathcal{P}_{\iota}(B)$ that lifts $g_0$.
\end{proof}

\begin{prp}\label{lattice:prp:smoothparahoric}
The group schemes $G$ and $G_0$ are smooth and faithfully flat over $\Int_{(p)}$. In particular, up to (unique) isomorphism, they do not depend on the choice of self-dual quadratic space $\widetilde{L}$ containing $L$.
\end{prp}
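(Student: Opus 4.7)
The plan is to deduce essentially everything from Lemma \ref{lattice:lem:torsor}. The first observation is that the functor $\mathcal{P}_{\iota_0}$ appearing in that lemma (applied to $R = \Int_{(p)}$ and $\iota = \iota_0$) coincides tautologically with $G_0$. Indeed, since the action of $\widetilde{G}$ on $\widetilde{L}$ factors through $\widetilde{G}_0$, the subfunctor $G_0 \subset \widetilde{G}_0$ has $R$-points given by those $g \in \widetilde{G}_0(R)$ that fix $\iota_0(\Lambda_R) = \Lambda_R$ pointwise, which is exactly the condition $g \circ \iota_0 = \iota_0$ defining $\mathcal{P}_{\iota_0}(R)$. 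Lemma \ref{lattice:lem:torsor} then delivers at once that $G_0$ is affine, smooth, and faithfully flat over $\Int_{(p)}$.

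For $G$, I would use that $\widetilde{G}$, being $\GSpin$ of a self-dual lattice, is already smooth over $\Int_{(p)}$, and the map $\widetilde{G} \to \widetilde{G}_0$ is the central $\Gm$-extension from \S\ref{sec:cliff}, hence smooth and surjective. Since the action of $\widetilde{G}$ on $\Lambda$ factors through $\widetilde{G}_0$, the defining condition for $G$ shows $G = \widetilde{G} \times_{\widetilde{G}_0} G_0$. Base-changing the smooth faithfully flat map $\widetilde{G} \to \widetilde{G}_0$ along $G_0 \hookrightarrow \widetilde{G}_0$ yields that $G \to G_0$ is smooth and faithfully flat, and composing with $G_0 \to \Spec \Int_{(p)}$ gives the same for $G$.

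For independence from $\widetilde{L}$, the key observation is that Lemma \ref{lattice:lem:grpoints} describes $G(R)$ and $G_0(R)$ for every flat $\Int_{(p)}$-algebra $R$ purely in terms of $L$, with no reference to $\widetilde{L}$. Given two choices $\widetilde{L}, \widetilde{L}'$ producing $G_0, G_0'$, both group schemes are smooth and flat, so their coordinate rings are flat $\Int_{(p)}$-algebras. Applying the common description of functor-of-points to $R = \mathcal{O}(G_0)$, the identity element of $G_0(\mathcal{O}(G_0))$ corresponds to an element of $G_0'(\mathcal{O}(G_0))$, i.e.\ a morphism $G_0 \to G_0'$; symmetrically one gets $G_0' \to G_0$. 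Each composition agrees with the identity on the common generic fiber $\SO(L_{\Rat},Q)$, and hence equals the identity by flatness and separatedness. The same recipe, or pulling back along $G \to G_0$, handles $G$.

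The main subtlety is the last reconstruction step: passing from equality of functors on flat algebras to an isomorphism of schemes. Smoothness (hence flatness of the coordinate rings) makes this essentially formal, and this is really the only place where we need more than the content of Lemmas \ref{lattice:lem:grpoints} and \ref{lattice:lem:torsor}; the hard analytic content of the proposition sits inside Lemma \ref{lattice:lem:torsor}, which has already been proved using Witt extension and the Lie-algebra surjectivity of Lemmas \ref{lattice:lem:surj}--\ref{lattice:lem:liealg}.
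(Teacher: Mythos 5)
Your proof is correct. For smoothness and flatness it runs on the same track as the paper: identify $G_0$ with $\mathcal{P}_{\iota_0}$ and apply Lemma~(\ref{lattice:lem:torsor}) with $\iota = \iota_0$ over $\Int_{(p)}$; and you make explicit, via the fiber product $G = \widetilde{G} \times_{\widetilde{G}_0} G_0$, what the paper compresses into ``it is enough to show $G_0$ is smooth''. (The identification $G_0 = \mathcal{P}_{\iota_0}$ is not quite tautological, since $G_0$ is defined as an fppf quotient $G/\Gm$ rather than a priori a closed subscheme of $\widetilde{G}_0$; one should observe that an fppf-local lift to $\widetilde{G}$ of a point of $\mathcal{P}_{\iota_0}$ automatically lands in $G$, so the monomorphism $G/\Gm\to\widetilde{G}_0$ is an isomorphism onto $\mathcal{P}_{\iota_0}$. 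The paper elides the same verification, so this is not a gap relative to the source.) Where you genuinely depart from the paper is the independence from $\widetilde{L}$. The paper invokes Bruhat--Tits 1.7.6, which characterizes a smooth $\Int_{(p)}$-model by its points over a strict henselization of $\Int_{(p)}$. You instead feed the coordinate ring $\mathcal{O}(G_0)$ --- flat, so Lemma~(\ref{lattice:lem:grpoints}) applies --- into the functorial description, obtain morphisms $G_0 \to G_0'$ and $G_0'\to G_0$ from the universal points, and show they are mutually inverse by comparison on the common generic fiber, which is schematically dense by flatness and separatedness (both schemes are affine). This is a clean, self-contained alternative that avoids the external citation; it costs nothing beyond what Lemma~(\ref{lattice:lem:grpoints}) already supplies. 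One terminological slip: the element of $G_0(\mathcal{O}(G_0))$ you need is the tautological (universal) point, i.e.\ the identity \emph{morphism} $G_0 \to G_0$, not the identity \emph{element} of the group --- the latter would produce the constant morphism to the identity section.
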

\begin{proof}
 It is enough to show that $G_0$ is smooth over $\Int_{(p)}$. This follows from (\ref{lattice:lem:torsor}) with $R=\Int_{(p)}$ and $\iota=\iota_0$.

 Take $R=\Int_{(p)}^{nr}$ to be a strict henselization of $\Int_{(p)}$. That $G$ and $G_0$ are independent of the choice of $\widetilde{L}$ is immediate from (\ref{lattice:lem:grpoints}) and the following assertion~\cite{bruhat_tits_ii}*{1.7.6}: Given a smooth $\Rat$-scheme $X$, a smooth $\Int_{(p)}$-model $\mf{X}$ for $X$ is determined up to unique isomorphism by its set of $R$-valued points $\mf{X}(R)\subset X\left(R_{\Rat}\right)$.
\end{proof}

\subsection{}\label{lattice:subsec:naive}
Let $\on{M}^{\loc}_G$ be the $\Int_{(p)}$-scheme such that, for every $\Int_{(p)}$-algebra $R$, we have:
\[
 \on{M}^{\loc}_G(R)=\{\text{Isotropic lines $F^1L_R\subset L_R$\}}.\footnote{A line is an $R$-sub-module that is locally a direct summand of rank $1$.}
\]

\begin{lem}\label{lattice:lem:mloc}
Let $N\subset L_{\Field_p}$ be the radical. Set $r=\dim L$, $t=\dim N$ and $s=r-t-1$. We will assume that $N\neq L_{\Field_p}$; or, equivalently, $s\geq 0$.
\begin{enumerate}
\item\label{mlocflat}$\on{M}^{\loc}_G$ is flat, projective of relative dimension $r-2$ over $\Int_{(p)}$.
\item\label{mlocsing}The singular locus of $\on{M}^{\loc}_{G,\Field_p}$ consists of lines contained in $N$, and so can be identified with $\bb{P}(N)$. It has co-dimension $s$ in $\on{M}^{\loc}_{G,\Field_p}$. \footnote{Here, $\bb{P}(N)$ denotes the space of lines in $N$.}
\item\label{mlocnorm}$\on{M}^{\loc}_{G,\overline{\Field}_p}$ is an lci variety. It is reduced if and only if $s\geq 1$. It is normal if and only if $s\geq 2$, and smooth if and only if $t=0$.
\end{enumerate}
\end{lem}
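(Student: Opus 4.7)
The scheme $\on{M}^{\loc}_G$ is the hypersurface cut out by the quadratic form $Q$ inside the smooth projective $\Int_{(p)}$-scheme $\bb{P}(L)$; the plan is to deduce all three assertions from this quadric description.

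For (\ref{mlocflat}), projectivity is immediate, and for flatness over the DVR $\Int_{(p)}$ it suffices that $Q$ does not vanish identically in either fibre. In characteristic zero this is part of the non-degeneracy hypothesis on $L_{\Rat}$; in characteristic $p$, the condition $s\geq 0$ is equivalent to $N\neq L_{\Field_p}$, which in turn (since $p\neq 2$) is equivalent to $q_{\Field_p}\not\equiv 0$. So both fibres are genuine hypersurfaces of pure codimension one, and $\on{M}^{\loc}_G$ is flat of relative dimension $r-2$.

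For (\ref{mlocsing}), I would recall that the singular locus of any quadric hypersurface $\{q=0\}$ in $\bb{P}(V)$ over a field of characteristic $\neq 2$ is the projectivisation of the radical of the associated bilinear form, using that in odd characteristic any such radical element $v$ satisfies $2q(v)=[v,v]_q=0$ and is therefore automatically isotropic. Applied to $q_{\Field_p}$, this identifies the singular locus of $\on{M}^{\loc}_{G,\Field_p}$ with $\bb{P}(N)$; it has dimension $t-1$, and hence codimension $(r-2)-(t-1)=s$ inside $\on{M}^{\loc}_{G,\Field_p}$.

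For (\ref{mlocnorm}), being the zero locus of a single nonzero section of a line bundle on the smooth scheme $\bb{P}(L_{\overline{\Field}_p})$ makes $\on{M}^{\loc}_{G,\overline{\Field}_p}$ a Cartier divisor, hence a local complete intersection; as such it is Cohen--Macaulay, so Serre's condition (S2) comes for free. Over $\overline{\Field}_p$ one can diagonalise $q_{\Field_p}$ as $x_1^2+\cdots+x_{r-t}^2$; this polynomial is square-free, and the hypersurface correspondingly reduced, precisely when $r-t\geq 2$, i.e.\ $s\geq 1$. Normality then reduces via Serre's criterion to (R1), which by (\ref{mlocsing}) amounts to the singular locus having codimension $\geq 2$, i.e.\ $s\geq 2$; and smoothness holds iff the singular locus is empty, i.e.\ $t=0$. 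None of the three parts should present serious obstacles once the quadric picture is in place; the only real point requiring attention is that the hypothesis $p>2$ is what allows us to identify the radical $N$ with the \emph{full} singular locus rather than with a mere subset of it.
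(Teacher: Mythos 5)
Your proof is correct and takes essentially the same route as the paper: the paper simply treats $\on{M}^{\loc}_G$ as the quadric hypersurface in $\bb{P}(L)$ and invokes ``easy facts about quadrics'' and ``standard criteria for reducedness and normality'' without elaboration, which is precisely what you have spelled out. The one step worth highlighting explicitly, and which you do flag, is that $p>2$ is what makes the radical of the bilinear form coincide with the singular locus of the quadric (in characteristic $2$ one would additionally need $q$ to vanish on the radical, which need not hold).
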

\begin{proof}
  (\ref{mlocflat}) is a direct consequence of the hypothesis that $N\neq L_{\Field_p}$. (\ref{mlocsing}) is an easy fact about quadrics. Finally, $\on{M}^{\loc}_{G,\Field_p}$, being a quadric, is an lci variety. The remainder of (\ref{mlocnorm}) follows from (\ref{mlocsing}) and standard criteria for reducedness and normality.
\end{proof}

We now recall some definitions and results from \cite{vasiu:zink}.
\begin{defn}\label{lattice:defn:healthy}
A regular local $\Int_{(p)}$-algebra $R$ with maximal ideal $\mx$ is \defnword{quasi-healthy} if it is faithfully flat over $\Int_{(p)}$, and if every abelian scheme over $\Spec~R\backslash\{\mx\}$ extends uniquely to an abelian scheme over $\Spec~R$.

A regular $\Int_{(p)}$-scheme $X$ is \defnword{healthy} if it is faithfully flat over $\Int_{(p)}$, and if, for every open sub-scheme $U\subset X$ containing $X_{\Rat}$ and all generic points of $X_{\Field_p}$, every abelian scheme over $U$ extends uniquely to an abelian scheme over $X$. It is \defnword{locally healthy} if, for every point $x\in X_{\Field_p}$ of co-dimension at least $2$, the complete local ring $\widehat{\Rg}_{X,x}$ is quasi-healthy.
\end{defn}

\begin{rem}\label{lattice:rem:healthy}
\begin{itemize}
\item Any regular, flat $\Int_{(p)}$-scheme of dimension at most $1$ is trivially healthy.

\item By faithfully flat descent, a regular local ring $R$ is quasi-healthy whenever its completion $\widehat{R}$ is quasi-healthy.

\item If $X$ is locally healthy, then it is healthy. Indeed, suppose that $U\subset X$ is as in the definition of `healthy' above; the complement $X\backslash U$ lies entirely in the special fiber and has co-dimension at least $2$ in $X$. The claim follows by using ascending Noetherian induction on the co-dimension of $X\backslash U$, and repeatedly using quasi-healthiness of the local rings of $X$.

    We do not know if the converse holds.
\end{itemize}
\end{rem}

\begin{thm}[(Vasiu-Zink)]\label{lattice:thm:vasiuzink}
Let $R$ be a regular local, faithfully flat $\Int_{(p)}$-algebra of dimension at least $2$.
\begin{enumerate}
\item\label{vz:main}Suppose that there exists a faithfully flat complete local $R$-algebra $\hat{R}$ that admits a surjection $\hat{R}\twoheadrightarrow W\pow{T_1,T_2}/(p-h)$, where $h\in (T_1,T_2)W\pow{T_1,T_2}$ is a power series that does not belong to the ideal $(p,T_1^p,T_2^p,T_1^{p-1}T_2^{p-1})$. Then $R$ is quasi-healthy.
\item\label{vz:easy}Let $\mx_R\subset R$ be the maximal ideal and suppose that $p\notin\mx_R^p$. Then $R$ is quasi-healthy.
\item\label{vz:smooth}If $R$ is a formally smooth complete local $\Int_{(p)}$-algebra, then $R$ is quasi-healthy.
\end{enumerate}
\end{thm}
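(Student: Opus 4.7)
This is a theorem of Vasiu-Zink; the plan I would follow is the following. First, I would reduce the problem of extending an abelian scheme $A$ over $\Spec R\setminus\{\mx\}$ to the problem of extending its $p$-divisible group $G := A[p^\infty]$. Since $R$ is regular (hence normal) of dimension at least $2$, the prime-to-$p$ Tate modules of $A$ extend by a N\'eron-Ogg-Shafarevich-type argument on the punctured spectrum, and one can then recover $A$ from an extended $G$ via Grothendieck's algebraization together with the generic fibre of $A$. By faithful flatness of $R\to\hat{R}$, descent allows one to replace $R$ by its completion $\hat{R}$ throughout.

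Second, I would translate the extension problem into linear algebra. Because $p>2$ and $\hat{R}$ is a complete regular local ring, Zink's theory of displays (or equivalently the crystalline Dieudonn\'e theory of Breuil-Kisin-Faltings) gives an anti-equivalence between $p$-divisible groups over $\hat{R}$ and finite projective modules $M$ over a suitable Witt frame for $\hat{R}$, equipped with a Frobenius $\phi$ and a Hodge filtration. Extending $G$ across $\{\mx\}$ thus becomes the question of whether a Frobenius-filtered module on the punctured spectrum extends to the full spectrum.

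The heart of the argument is this extension step, and is where the three hypotheses enter. In case (\ref{vz:smooth}), $M$ is a vector bundle on a regular formal scheme, so it extends across the codimension-$\geq 2$ closed point by a Hartogs-type argument for reflexive sheaves, and the Frobenius and filtration extend by uniqueness on the dense open. Case (\ref{vz:easy}) then follows by a Frobenius-approximation argument: the hypothesis $p\notin\mx^p$ lets one choose coordinates on $\hat{R}$ so that $\hat{R}$ looks, modulo $p$-th powers, like a formally smooth ring, and this approximation is close enough to transport the extension property from case (\ref{vz:smooth}). For the main case (\ref{vz:main}), one would use the specified surjection $\hat{R}\twoheadrightarrow W\pow{T_1,T_2}/(p-h)$ as a test direction: inside $M_{\hat{R}[p^{-1}]}$ one constructs a Frobenius-stable lattice with controlled pole order along $\{\mx\}$, and the precise hypothesis $h\notin(p,T_1^p,T_2^p,T_1^{p-1}T_2^{p-1})$ is exactly what makes this lattice both $\phi$-stable and coincident with $M$ on the punctured spectrum.

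The main obstacle in this plan is case (\ref{vz:main}). The ideal $(p,T_1^p,T_2^p,T_1^{p-1}T_2^{p-1})$ is sharply tuned: were $h$ to belong to it, the pole-order bookkeeping for $\phi$ would fail, and in fact explicit counterexamples to quasi-healthiness are known in that regime. Verifying that the $\phi$-stability and filtration estimates close up on the nose under the stated hypothesis is the genuinely technical content of the Vasiu-Zink paper, and the step I would expect to occupy most of the work. Uniqueness of the extension is comparatively formal: two extensions of $G$ agree on the dense punctured spectrum, and a $p$-divisible group on a normal Noetherian scheme is determined by its restriction to any dense open subscheme; the corresponding uniqueness for $A$ then follows from the reduction in the first paragraph.
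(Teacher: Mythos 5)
The paper does not prove this theorem; its entire ``proof'' is the citation ``Cf.\ Theorem~3 and Corollary~4 of \cite{vasiu:zink}.'' So the only thing to compare against is the actual structure of Vasiu--Zink's argument, which your sketch reorganizes in a way that is both backwards and, in one step, unsound.

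In Vasiu--Zink, the logical flow runs (\ref{vz:main}) $\Rightarrow$ (\ref{vz:easy}) $\Rightarrow$ (\ref{vz:smooth}), not the reverse. Part (\ref{vz:main}) (their Theorem~3) is the genuine content and is proven directly. Part (\ref{vz:easy}) (their Corollary~4) is then deduced by a purely commutative-algebraic verification that the hypothesis of (\ref{vz:main}) holds: if $p\notin\mx_R^p$, one can pick out a regular system of parameters and construct an explicit surjection $\hat{R}\twoheadrightarrow W\pow{T_1,T_2}/(p-h)$ with $h\notin(p,T_1^p,T_2^p,T_1^{p-1}T_2^{p-1})$. And part (\ref{vz:smooth}) is the degenerate case of (\ref{vz:easy}), since formal smoothness over $\Int_{(p)}$ forces $p\notin\mx_R^2\supset\mx_R^p$. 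Your proposal instead treats (\ref{vz:smooth}) as the base case, proven by a Hartogs-type extension of reflexive sheaves plus ``uniqueness,'' and then claims to transport this to (\ref{vz:easy}) by a ``Frobenius-approximation argument.'' That second step is the gap: there is no such approximation argument in the proof, and it is unclear how to make one work — the hypothesis $p\notin\mx_R^p$ is not used to make $\hat{R}$ resemble a smooth ring, but to produce an explicit quotient satisfying the sharp inequality in (\ref{vz:main}). Moreover, the Hartogs argument for (\ref{vz:smooth}) understates the difficulty even in that case: what is delicate is not extending the underlying module across the closed point (that is indeed automatic for a vector bundle on a regular scheme minus a codimension-$\geq 2$ locus), but showing that the Hodge filtration extends as a \emph{direct summand} and that the extended data satisfy the strong-divisibility/nilpotence conditions needed to descend to a $p$-divisible group. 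This is precisely the piece of the argument that is controlled by the quantitative hypothesis in (\ref{vz:main}) and is invisible to a naive reflexive-extension argument. The preliminary reduction to $p$-divisible groups, the passage to $\hat{R}$ by faithfully flat descent, and the formal uniqueness statement in your last paragraph are all fine and do match the standard setup.
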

\begin{proof}
  Cf.~Theorem 3 and Corollary 4 of~\cite{vasiu:zink}.
\end{proof}

\subsection{}\label{lattice:subsec:irregular}
Suppose that $(L,Q)$ is maximal. The isomorphism $p\dual{L}/pL\xrightarrow{\simeq}\disc(L)$ allows us to identify $\disc(L)$ with the radical $N\subset L_{\Field_p}$. By (\ref{lattice:lem:maximal}), $\disc(L)$ with its induced form $\overline{Q}$ has to be an anisotropic quadratic space over $\Field_p$. This implies that $t=\dim N\leq 2$. If $t=2$, $\disc(L)_{\Field_{p^2}}=N_{\Field_{p^2}}$ admits two lines that are $\overline{Q}$-isotropic. These lines, since every sub-space of $N_{\Field_{p^2}}$ is $Q$-isotropic, can be viewed as points in $\on{M}^{\loc}_G(\Field_{p^2})$. We will call these points \defnword{irregular}.

\begin{prp}\label{lattice:prp:mlochealthy}
If $t\leq 1$, then $\on{M}^{\loc}_G$ is regular and locally healthy. If $t=2$, then the same assertion holds for the complement of the irregular points defined above.
\end{prp}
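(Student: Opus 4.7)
My plan is to verify regularity and quasi-healthiness in a common local chart. Working locally, the maximality of $L$ together with $p > 2$ affords an orthogonal decomposition $L \otimes \Int_p = L^{\mathrm{sd}} \perp L^{\mathrm{rad}}$ with $L^{\mathrm{sd}}$ self-dual of rank $s = r-t$ and $L^{\mathrm{rad}}$ of rank $t$ carrying a form of the shape $p$ times a unimodular form. Choosing bases produces coordinates $(y_1,\ldots,y_s,z_1,\ldots,z_t)$ in which the defining equation of $\on{M}^{\loc}_G$ reads $Q'(y) + pA(z) = 0$, with $Q'$ and $A$ each unimodular modulo $p$.

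Away from $\bb{P}(N)$ this expression has nonvanishing differential modulo $p$, so $\on{M}^{\loc}_G$ is smooth over $\Int_{(p)}$ in that locus, which by Vasiu-Zink \ref{lattice:thm:vasiuzink}~(\ref{vz:smooth}) gives quasi-healthiness at every such point. It thus suffices to analyse a closed point $x_0 \in \bb{P}(N)$ with residue field $\kappa$. Pass to a standard affine chart $z_j = 1$ (reordering so $j = t$), lift $x_0$ to $W(\kappa)$-coordinates, and shift the remaining $z$-coordinates $w_1,\ldots,w_{t-1}$ to vanish at $x_0$. The equation becomes
\[
 Q'(y) + p\phi_0 + p\phi_1(w) + p\phi_2(w) = 0,
\]
with $\phi_i$ homogeneous of degree $i$ in $w$ and $\phi_0 \in W(\kappa)$ equal to the value of $A$ at any lift of $x_0$.

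The crux is that $\phi_0 \bmod p$ is the value of the anisotropic discriminant form $\overline{Q}$ at $x_0$: hence $\phi_0 \in W(\kappa)^\times$ precisely when $x_0$ is \emph{not} irregular, which is automatic for $t \leq 1$. At such an $x_0$ the factor $\phi_0 + \phi_1 + \phi_2$ is a unit in the completed local ring $\widehat{R}$, and one can solve
\[
 p = -\bigl(\phi_0 + \phi_1 + \phi_2\bigr)^{-1} Q'(y) \in (y)^2.
\]
Consequently $\mx_R$ is generated by the $r - 1 = \dim R$ elements $y_1,\ldots,y_s,w_1,\ldots,w_{t-1}$---using \ref{lattice:lem:mloc}~(\ref{mlocflat}) for the dimension count---proving regularity. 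The same identity puts $p \in \mx_R^2 \setminus \mx_R^3$ since $Q'$ is a genuine quadratic form in $y$; in particular $p \notin \mx_R^p$ for $p \geq 3$, and Vasiu-Zink \ref{lattice:thm:vasiuzink}~(\ref{vz:easy}) then gives quasi-healthiness of $\widehat{R}$.

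The really delicate step is the identification of $\phi_0 \bmod p$ with $\overline{Q}(x_0)$, turning ``non-irregular'' into ``$\phi_0$ is a unit.'' At a genuine irregular point one has instead $\phi_0 \in pW(\kappa)$, the expression for $p$ collapses, and a short computation shows that $pw$ becomes an independent generator of $\mx_R/\mx_R^2$; this failure of regularity is precisely why such points must be excluded in the $t = 2$ case.
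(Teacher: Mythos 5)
Your approach is essentially the same as the paper's: both set up explicit local coordinates in which the defining equation of $\on{M}^{\loc}_G$ near a singular point of the special fiber takes the shape ``(unimodular quadratic form in some variables) $+$ $p\cdot$(unit)'', deduce $p \in \mx_R^2 \setminus \mx_R^3$, and invoke the Vasiu--Zink criterion (\ref{lattice:thm:vasiuzink})(\ref{vz:easy}). The cosmetic differences are that the paper base-changes to $\Int_{p^2}$ to write the radical part concretely as $pY^2$ or $pYZ$, while you keep the unimodular form $A$ abstract; your observation identifying $\phi_0 \bmod p$ with $\overline{Q}(x_0)$ is a clean way to organize the link between ``irregular'' and ``$\phi_0$ not a unit.''

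There is, however, a genuine gap: you assert that ``it suffices to analyse a closed point $x_0 \in \bb{P}(N)$,'' but the definition of \emph{locally healthy} (\ref{lattice:defn:healthy}) requires quasi-healthiness of $\widehat{\Rg}_{X,x}$ at \emph{every} point $x$ of the special fiber of co-dimension at least $2$ --- not just closed points. For regularity the reduction to closed points is fine, since regularity of a local ring passes to its localizations; but quasi-healthiness is a statement about the \emph{complete} local ring and does not obviously localize. When $t=2$ and $r \geq 4$, the generic point $\eta$ of the singular curve $\bb{P}(N) \cong \bb{P}^1$ has co-dimension $\geq 2$ in $\on{M}^{\loc}_G$ and must be treated; the paper does so explicitly, by completing at a geometric point over $\eta$ and obtaining a ring of the form $W(k)\pow{u_1,\ldots,u_{r-2}}/(\sum_i u_i^2 + pv)$ with $v$ a Teichm\"uller unit. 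Moreover, your chart --- ``shift the $z$-coordinates $w_1,\ldots,w_{t-1}$ to vanish at $x_0$'' --- does not literally apply at $\eta$, where the remaining $z$-coordinate is a unit in the residue field $k(\eta)$ rather than an element of the maximal ideal. The conclusion is the same ($\mx_\eta = (y_1,\ldots,y_s)$, the unit $\phi_0 + \phi_1 + \phi_2$ is now just a unit living partly in the residue field, and $p \in \mx_\eta^2 \setminus \mx_\eta^3$), but you need to say this; as written the argument silently skips a case that the definition requires.

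Finally, as a minor point, your closing remark that ``$pw$ becomes an independent generator of $\mx_R/\mx_R^2$'' at an irregular point is not quite the right formulation: $pw$ is always in $\mx_R^2$. What actually happens is that the single relation $Q'(y) + p^2\phi_0' + pcw = 0$ lies entirely in $\mx_R^2$, so it imposes no condition on $\mx_R/\mx_R^2$, which therefore has dimension $r = \dim R + 1$, contradicting regularity. Since this is only a sanity check and not part of the proof of the proposition, it does not affect correctness, but the phrasing should be fixed.
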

\begin{proof}
  If $t=0$, then $\on{M}^{\loc}_G$ is smooth over $\Int_{(p)}$ and the result is immediate from (\ref{lattice:thm:vasiuzink}).

  So we assume that $t\geq 1$. The assertions in the proposition can be checked at the complete local rings of $\on{M}^{\loc}_G$ at points valued in algebraically closed fields of characteristic $p$. As such, it suffices to prove them after base-change to $\Int_{p^2}=W\bigl(\Field_{p^2}\bigr)$. Now, we can find a basis for $\Int_{p^2}\otimes L$ so that the quadratic form has the shape:
  \begin{align*}
    \sum_{i=1}^{r-1}X_i^2+pY^2,\;\text{if $t=1$}\qquad;\qquad  \sum_{i=1}^{r-2}X_i^2+pYZ,\;\text{if $t=2$}.
  \end{align*}

  The singular locus of $\Int_{p^2}\otimes\on{M}^{\loc}_G$ is precisely where $p$ and all the co-ordinates $X_i$, $1\leq i\leq r-t$, vanish. When $t=2$, the irregular points are precisely those points in the singular locus where one of $Y$ or $Z$ also vanishes.

  Set:
  \[
   U=\begin{cases}
     \Spec\frac{\Int_{p^2}[u_1,\ldots,u_{r-1}]}{(\sum_iu_i^2+p)},\qquad\qquad\;\;\text{if $t=1$};\\
     \Spec\frac{\Int_{p^2}[u_1,\ldots,u_{r-2},v]}{(\sum_iu_i^2+pv)},\qquad\qquad\text{if $t=2$}.
   \end{cases}
  \]
  Every singular point of $\Int_{p^2}\otimes\on{M}^{\loc}_G$ has a Zariski open neighborhood isomorphic to $U$.

  It is now an easy observation that $U$ is regular everywhere when $t=1$. When $t=2$, it is regular outside of points where all the co-ordinates $u_1,\ldots,u_{r-2},v$ vanish: these are precisely the irregular points defined above.

  When $t=1$, the completion of $U$ at its singular point is
  $\Int_{p^2}\pow{u_1,\ldots,u_{r-2}}/(\sum_iu_i^2+p)$. Here, we assume that $r\geq 3$, since otherwise $\dim M^{\loc}_G\leq 1$, and being healthy is a vacuous condition.

  Similarly, when $t=2$, the completion of $U$ at any regular $\overline{\Field}_p$-valued point in the singular locus of the special fiber is isomorphic to $W(\overline{\Field}_p)\pow{u_1,\ldots,u_{r-2},w}/(\sum_iu_i^2+pw+p)$. Here again we assume that $r\geq 3$.

  We also need to consider the completion of $U$ at the generic point of the singular locus. If we complete instead at an algebraically closed point over this generic point, a quick computation shows that we obtain a ring isomorphic to $W(k)\pow{u_1,\ldots,u_{r-2}}/(\sum_iu_i^2+pv)$. Here, $k$ is an algebraically closed field containing $\Field_{p^2}((v))$, and we view $v$ as an element of $W(k)$ via the Teichm\"uller lift.

  In all three cases, if $\mx$ is the maximal ideal of the complete local ring, we see that $p\in\mx^2\backslash\mx^3$. So we can conclude using (\ref{lattice:thm:vasiuzink})(\ref{vz:easy}) that the complete local rings of $U$ at any regular, singular point are quasi-healthy. Since the complete local rings at the non-singular points are also quasi-healthy by \emph{loc. cit.}, we see that the regular locus of $U$ is locally healthy. This proves the proposition.
\end{proof}

\subsection{}\label{lattice:subsec:mref}
Assume that $t=2$. We will now construct a regular, locally healthy resolution $\on{M}^{\on{ref}}_G$ of $\on{M}^{\on{loc}}_G$. Fix any quadratic extension $F/\Rat$ in which $p$ is inert. The two isotropic lines in $\disc(L)_{\Field_{p^2}}$, via the correspondence noted in the proof of (\ref{lattice:lem:maximal}), determine two self-dual lattices in $L_{F}$ containing $L_{\Reg{F,(p)}}$. Fix one of them and denote it by $L^{\diamond}$.

Given a $\Int_{(p)}$-algebra $R$, we take $\on{M}^{\on{ref}}_G(R)$ to be the set of pairs $(F^1L_R,F^1L^{\diamond}_R)$, where:
\begin{itemize}
  \item $F^1L_R\subset L_R$ is an isotropic line.
  \item $F^1L^{\diamond}_R\subset L^{\diamond}_R=L^{\diamond}\otimes_{\Int_{(p)}}R$ is an isotropic $\Reg{F,(p)}\otimes_{\Int_{(p)}}R$-sub-module that is locally a direct summand of rank $1$.
  \item Under the natural map $\Reg{F,(p)}\otimes_{\Int_{(p)}}L_R\to L^{\diamond}_R$, $\Reg{F,(p)}\otimes F^1L_R$ maps into $F^1L^{\diamond}_R$.
\end{itemize}
\begin{prp}\label{lattice:prp:mrefhealthy}
\mbox{}
\begin{enumerate}
\item\label{mref:healthy}$\on{M}^{\on{ref}}_G$ is represented by a regular, locally healthy projective $\Int_{(p)}$-scheme of relative dimension $r-2$.
\item\label{mref:birat}The natural map $q:\on{M}^{\on{ref}}_G\to\on{M}^{\on{loc}}_G$ is $G$-equivariant and an isomorphism over the regular locus of the target.
\end{enumerate}
\end{prp}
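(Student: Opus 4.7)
The plan is to verify representability first, then the easy isomorphism over the regular locus, and finally carry out the main work: the local analysis of $\on{M}^{\on{ref}}_G$ near the preimages of the irregular points.

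Representability, $G$-equivariance, and relative dimension are straightforward. $\on{M}^{\on{ref}}_G$ sits as a closed subfunctor of the fiber product over $\Int_{(p)}$ of $\on{M}^{\on{loc}}_G$ with the analogous isotropic Grassmannian for $L^{\diamond}$ (projective over $\Reg{F,(p)}$ and hence, via Weil restriction, over $\Int_{(p)}$), cut out by the Zariski-closed compatibility condition. The $G_0$-action extends uniquely to $L^{\diamond}$ since by (\ref{lattice:lem:grpoints}), $G_0$ acts trivially on $\disc(L)$ and so preserves the sublattice $L^{\diamond}$; the defining conditions are manifestly $G$-invariant, making $q$ equivariant. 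For part (\ref{mref:birat}), at any closed point outside $\bb{P}(N)\subset\on{M}^{\on{loc}}_{G,\Field_p}$ the line $F^1L_R$ has reduction not contained in the radical $N$. A short linear-algebra check then shows that the image of $\Reg{F,(p)}\otimes F^1L_R$ in $L^{\diamond}_R$ is already a rank-1 local direct summand in $L^{\diamond}_R$, and hence the unique possible choice for $F^1L^{\diamond}_R$; existence is automatic, so $q$ is an isomorphism there.

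The hard part is the local structure at the preimages of the irregular points of $\on{M}^{\on{loc}}_G$. Working over $\Int_{p^2}$ and choosing a basis of $L$ in which the quadratic form takes the Gram shape $\sum_{i=1}^{r-2} X_i^2+pYZ$, I would parameterize $F^1L^{\diamond}_R$ by projective coordinates on an $\Reg{F,(p)}$-basis of $L^{\diamond}$ subject to isotropy, and translate the compatibility condition into explicit equations relating these coordinates to $(X_i,Y,Z)$ via an auxiliary scalar $\lambda\in\Reg{F,(p)}\otimes R$. In each affine chart of this parameterization, $\on{M}^{\on{ref}}_G$ presents as a hypersurface in affine space. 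In the \emph{exceptional} chart where the $L^{\diamond}$-coordinate coming from the adjoined element $g/p$ is trivialized, the equation $p=\lambda c$ forces $\lambda=p$, yielding a formally smooth slice to which (\ref{lattice:thm:vasiuzink})(\ref{vz:smooth}) applies. In the remaining charts one obtains a regular hypersurface where $p$ lies in $\mx^3$ rather than $\mx^2\setminus\mx^3$, so the easy criterion (\ref{lattice:thm:vasiuzink})(\ref{vz:easy}) does not apply; the main obstacle of the proof becomes verifying the hypothesis of the full criterion (\ref{lattice:thm:vasiuzink})(\ref{vz:main}), namely by projecting onto a two-variable slice $W\pow{T_1,T_2}/(p-h)$ and checking that $h$ (a cubic monomial essentially of the form $T_1^2T_2$) lies outside $(p,T_1^p,T_2^p,T_1^{p-1}T_2^{p-1})$ for $p\geq 3$. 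Combining the chart-by-chart verification with the birationality above gives both parts of the proposition.
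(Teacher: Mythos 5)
Your strategy matches the paper's — formal representability and equivariance, isomorphism over the non-exceptional locus by a coordinate check, then explicit $\Proj$ charts with the Vasiu--Zink criteria — and you have identified the crux correctly: applying (\ref{lattice:thm:vasiuzink})(\ref{vz:main}) at the worst point to $p-h$ with $h$ a cubic monomial of the form $T_1^2T_2$. Two of your claims are nevertheless false as stated and would prevent the argument from closing. For part (\ref{mref:birat}), you establish the isomorphism only over $\on{M}^{\loc}_G\setminus\bb{P}(N)$, but the regular locus of $\on{M}^{\loc}_G$ is the complement of only the two irregular closed \emph{points}, and so also contains the open set of $\bb{P}(N)$ where both $Y$ and $Z$ are units. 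At such a point $F^1L_R$ does reduce into $N$, so the linear-algebra argument you give does not apply; the fix — observing that under the two factors of the map into $\Int_{p^2}\otimes_{\Reg{F,(p)}}L^{\diamond}$ the image still carries the unit coordinate $Z$ in one factor and $Y$ in the other — is routine but must be done.

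Your chart picture is also wrong. After passing to $\Proj$ there are three chart types to check, not two. $\{U\neq 0\}$ is smooth. $\{T\neq 0\}$ is a hypersurface, but $p\in\mx^3$ only at the single point lying over the irregular point; at every other point of this chart $p$ has order at most $2$, so (\ref{lattice:thm:vasiuzink})(\ref{vz:easy}) applies there. And $\{W_i\neq 0\}$ is not a hypersurface at all: it is a codimension-two complete intersection, smooth over $\Spec\frac{\Int_{p^2}[x_1,u]}{(ux_1-p)}$, where $p$ has order exactly $2$ and (\ref{vz:easy}) again suffices. Your assertion that the remaining charts are hypersurfaces with $p\in\mx^3$ would lead you to seek a surjection onto a ring $W\pow{T_1,T_2}/(p-h)$ with a cubic $h$ in places where no such presentation exists and none is needed; the full criterion (\ref{vz:main}) is required at exactly one point.
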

\begin{proof}
  Let $Q(L^{\diamond})$ be the smooth quadric over $\Reg{F,(p)}$ attached to $L^{\diamond}$, and let $\on{M}^{\loc}_{G^{\diamond}}$ be the Weil restriction of $Q(L^{\diamond})$ from $\Reg{F,(p)}$ to $\Int_{(p)}$. By definition, we have an inclusion of functors
  \[
   \on{M}^{\on{ref}}_G\into\on{M}^{\loc}_G\times_{\Int_{(p)}}\on{M}^{\loc}_{G^{\diamond}}.
  \]
  This is easily seen to be a closed immersion, so that $\on{M}^{\on{ref}}_G$ is represented by a projective $\Int_{(p)}$-scheme. Since $G$ acts trivially on $\disc(L)$ its action on $L_{F}$ preserves $L^{\diamond}$. This endows $\on{M}^{\on{ref}}_G$ with a natural $G$-action compatible with its projection onto $\on{M}^{\loc}_G$.

  Fix an identification of the completion of $\Reg{F,(p)}$ with $\Int_{p^2}$. Using the isomorphism of $\Int_{p^2}$-algebras $\Int_{p^2}\otimes_{\Int_{(p)}}\Reg{F,(p)}\xrightarrow{\simeq}\Int_{p^2}\times\Int_{p^2}$, we obtain maps
  \begin{align}\label{healthyref:eqn1}
   \Int_{p^2}\otimes_{\Int_{(p)}}L\to\Int_{p^2}\otimes_{\Int_{(p)}}L^{\diamond}=(\Int_{p^2}\otimes_{\Reg{F,(p)}}L^{\diamond})\oplus(\Int_{p^2}\otimes_{\Reg{F,(p)}}L^{\diamond}).
  \end{align}

  As in the proof of (\ref{lattice:prp:mlochealthy}), we can find compatible bases for $\Int_{p^2}\otimes_{\Int_{(p)}}L$ and $\Int_{p^2}\otimes_{\Reg{F,(p)}}L^{\diamond}$ such that the map in (\ref{healthyref:eqn1}) has the shape:
  \begin{align*}
    \Int_{p^2}\otimes_{\Int_{(p)}}L&\to (\Int_{p^2}\otimes_{\Reg{F,(p)}}L^{\diamond})\oplus(\Int_{p^2}\otimes_{\Reg{F,(p)}}L^{\diamond})\\
    (X_1,\ldots,X_{r-2},Y,Z)&\mapsto \bigl((X_1,\ldots,X_{r-2},pY,Z),(X_1,\ldots,X_{r-2},Y,pZ)\bigr);
  \end{align*}
  and so that the quadratic forms on $\Int_{p^2}\otimes_{\Int_{(p)}}L$ and $\Int_{p^2}\otimes_{\Reg{F,(p)}}L^{\diamond}$ are given by the formulas $\sum_iX_i^2+pYZ$ and $\sum_iX_i^2+YZ$, respectively.

  Let $C\subset\on{M}^{\loc}_G$ be the irregular locus: then $\Int_{p^2}\otimes C\subset\Int_{p^2}\otimes\on{M}^{\loc}_G$ can be identified with the locus in the special fiber where the co-ordinates $X_1,\ldots,X_{r-2}$, as well as (exactly) one of $Y$ or $Z$, vanish. Set $V_1=\on{M}^{\loc}_G\backslash C$; then we see that, as open sub-schemes of $\Int_{p^2}\otimes\on{M}^{\loc}_G$, we have:
  \[
  \Int_{p^2}\otimes V_1=\bigcup_{i=1}^{r-1}\{X_i\neq 0\}\cup\{YZ\neq 0\}.
  \]
  This immediately shows that, if $R$ is an $\Int_{(p)}$-algebra with $F^1L_R\in V_1(R)$, the image of $\Reg{F,(p)}\otimes_{\Int_{(p)}}F^1L_R$ in $L^{\diamond}_R$ is locally a direct summand as an $\Reg{F,(p)}\otimes_{\Int_(p)}R$-module. In other words, the map $q:q^{-1}(V_1)\to V_1$ is an isomorphism. This shows (\ref{mref:birat}).

  Using (\ref{lattice:prp:mlochealthy}), we see that $q^{-1}(V_1)$ is regular and locally healthy.

  To show that $\on{M}^{\on{ref}}_G$ is regular and locally healthy, we need to show that its complete local rings at points in characteristic $p$ are quasi-healthy regular. For this, we can now work in the neighborhood of a singular point in $\Int_{p^2}\otimes\on{M}^{\on{ref}}_G$ where one of $Y$ or $Z$ vanishes.

  Without loss of generality, we can assume that it is the co-ordinate $Y$ that vanishes, so that $Z\neq 0$. If we set $x_i=X_i/Z$ and $y=Y/Z$, just as in the proof of \emph{loc. cit.} we can work over a Zariski open affine neighborhood $V_2\subset\Int_{p^2}\otimes\on{M}^{\loc}_G$ of the form $\Spec A$, with $A=\frac{\Int_{p^2}[x_1,\ldots,x_{r-2},y]}{(\sum_ix_i^2+py)}$.

  Now, for any $\Int_{p^2}$-algebra $R$, the set $q^{-1}(V_2)(R)$ can be identified with the space of pairs $(F^1L_R,F^1(R\otimes_{\Reg{F,(p)}}L^{\diamond}))$, where:
  \begin{itemize}
    \item $F^1L_R\subset L_R$ is an isotropic line spanned by an element with co-ordinates $(x_1,\ldots,x_{r-2},y,1)$, for $x_1,\ldots,x_{r-2},y\in R$.
    \item $F^1(R\otimes_{\Reg{F,(p)}}L^{\diamond})\subset R\otimes_{\Reg{F,(p)}}L^{\diamond}$ is an isotropic line containing an element with co-ordinates $(x_1,\ldots,x_{r-2},y,p)$.
  \end{itemize}
  Therefore, we find that, as schemes over $V_2$, we have:
  \[
   q^{-1}(V_2)\xrightarrow{\simeq}\Proj A[W_1,\ldots,W_{r-2},U,T]/I,
  \]
  where $I$ is the ideal generated by the elements $\sum_{k=1}^{r-2}W_k^2+UT$, $x_jW_i-x_iW_j$, for $1\leq i,j\leq r-2$, $pW_i-x_iU,yW_i-x_iT$, for $1\leq i\leq r-2$, and $yU-pT$.

  The scheme $q^{-1}(V_2)$ is covered by three kinds of affine open sub-schemes: One where $U\neq 0$, one where $T\neq 0$, and one where $W_i\neq 0$, for some $i$. We will consider each in turn.

  Setting $w_i=W_i/U$ and $t=T/U$, we can easily see that $q^{-1}(V_2)\cap\{U\neq 0\}$ is isomorphic to:
  \[
   \Spec\frac{\Int_{p^2}[w_1,\ldots,w_{r-2},t]}{(\sum_kw_k^2+t)}.
  \]
  This is clearly smooth over $\Int_{p^2}$ and is therefore regular and locally healthy.

  Similarly, setting $w_i=W_i/T$ and $u=U/T$ instead, we find that $q^{-1}(V_2)\cap\{T\neq 0\}$ is isomorphic to:
  \[
   \Spec\frac{\Int_{p^2}[w_1,\ldots,w_{r-2},u,y]}{(\sum_kw_k^2+u,uy-p)}=\Spec\frac{\Int_{p^2}[w_1,\ldots,w_{r-2},y]}{(p+y(\sum_kw_k^2))}.
  \]
  This is a regular scheme over $\Int_{p^2}$.

  Consider the complete local ring of $q^{-1}(V_2)\cap\{T\neq 0\}$ at the point where $p$ and all the co-ordinates $y,w_1,\ldots,w_{r-2}$ vanish: It is
  \[
   R=\frac{\Int_{p^2}\pow{w_1,\ldots,w_{r-2},y}}{(p+y(\sum_kw_k^2))}.
  \]
  It admits a surjection to $\frac{\Int_{p^2}\pow{w_1,y}}{(p+yw_1^2)}$, and so we can use (\ref{lattice:thm:vasiuzink})(\ref{vz:main}) to conclude that $R$ is quasi-healthy. One can check that the order of vanishing of $p$ at all other points of $q^{-1}(V_2)\cap\{T\neq 0\}$ is at most $2$, and so we can use assertion (\ref{vz:easy}) of \emph{loc. cit.} to conclude that all complete local rings of $q^{-1}(V_2)\cap\{T\neq 0\}$ are quasi-healthy. Hence this affine open is also locally healthy.

  Finally, setting $w_j=W_j/W_1$ for $j\neq 1$, $u=U/W_1$ and $t=T/W_1$, we find that $q^{-1}(V_2)\cap\{W_1\neq 0\}$ is isomorphic to:
  \[
   \Spec\frac{\Int_{p^2}[x_1,w_2,\ldots,w_{r-2},u,t]}{(ux_1-p,1+\sum_{k=1}^{r-2}w_k^2+ut)}.
  \]
  Set $B'=\frac{\Int_{p^2}[x_1,u]}{(ux_1-p)}$; then $q^{-1}(V_2)\cap\{W_1\neq 0\}$ is smooth over $\Spec B'$, and so it suffices to check the criterion of (\ref{lattice:thm:vasiuzink})(\ref{vz:easy}) for the complete local rings of $B'$, which is easy. This shows that $q^{-1}(V_2)\cap\{W_1\neq 0\}$ is locally healthy, and an identical proof shows that $q^{-1}(V_2)\cap\{W_i\neq 0\}$ is locally healthy for any $i$.
\end{proof}

\begin{rem}\label{lattice:rem:mrefind}
  The map $\on{M}^{\on{ref}}_G\to\on{M}^{\on{loc}}_G$ is, up to unique isomorphism, independent of the choice of both $F$ and $L^{\diamond}$. In fact, it is simply the blow-up of the singular locus of $\on{M}^{\on{loc}}_G$. 
\end{rem}

\section{GSpin Shimura varieties}\label{sec:spin}

Let $(L,Q)$ be a quadratic space over $\Int_{(p)}$ of signature $(n,2)$ with $n\geq 1$. By this, we mean that the largest positive definite sub-space of $L_{\Real}$ has dimension $n$. Let $G$ (resp. $G_0$) be the smooth $\Int_{(p)}$-group scheme attached to $L$ in (\ref{lattice:subsec:parahoric}), so that $G_{\Rat}=\GSpin(L_{\Rat},Q)$ (resp. $G_{0,\Rat}=\SO(L_{\Rat},Q)$).

\subsection{}\label{spin:subsec:complex}
Let $X$ be the space of oriented negative definite $2$-planes in $L_{\Real}$. The points of $X$ correspond to certain Hodge structures of weight $0$ on the vector space $L_{\Rat}$, polarized by $Q$: Fix $\mb{h}\in X$, and suppose that $(e_{\mb{h}},f_{\mb{h}})$ is an oriented, orthogonal basis for the oriented negative definite $2$-plane attached to $\mb{h}$, with $Q(e_{\mb{h}})=Q(f_{\mb{h}})=-1$. Also fix a square root of $-1$, $\sqrt{-1}\in\Comp$. Set
\[
  L_{\mb{h}}^{p,q}=\begin{cases}
    \langle e_{\mb{h}}+\sqrt{-1}f_{\mb{h}}\rangle\subset L_{\Comp}&\text{if $(p,q)=(-1,1)$};\\
    \langle e_{\mb{h}},f_{\mb{h}}\rangle^{\perp}\subset L_{\Comp}&\text{if $(p,q)=(0,0)$};\\
    \langle e_{\mb{h}}-\sqrt{-1}f_{\mb{h}}\rangle\subset L_{\Comp}&\text{if $(p,q)=(1,-1)$};\\
    0&\text{otherwise}.
  \end{cases}
\]
Then $L_{\mb{h}}$ is a $\Int_{(p)}$-Hodge structure of weight $0$ with underlying $\Int_{(p)}$-module $L$; the associated $\Rat$-Hodge structure is polarized by $Q$. In fact, each $\mb{h}\in X$ gives rise to a unique homomorphism $\bb{S}\to G_{\Real}$ which induces the Hodge structure $L_{\mb{h}}$ on $L$ and whose restriction to the diagonal sub-group $\Gmh{\Real}\subset\bb{S}$ is the canonical central embedding $\Gmh{\Real}\into G_{\Real}$. Here, of course, $\bb{S}=\Res_{\Comp/\Real}\Gmh{\Real}$ is the Deligne torus.

The map carrying $\mb{h}$ to the line $L_{\mb{h}}^{1,-1}$ embeds $X$ as an open sub-space of the quadric $\check{X}\subset\bb{P}(L_{\Comp})$ determined by $Q$. The two connected components of $X$ are switched by complex conjugation on $\check{X}$. The pairs $(G_{\Rat},X)$ and $(G_{0,\Rat},X)$ are Shimura data, which, since $n\geq 1$, have reflex field $\Rat$.

\subsection{}\label{spin:subsec:shimura}
Fix a compact open sub-group $K\subset G(\Adele_f)$ with image $K_0\subset G_0(\Adele_f)$, and let $\Sh_K\coloneqq\Sh_K(G_{\Rat},X)$ and $\Sh_{K_0}\coloneqq\Sh_{K_0}(G_{0,\Rat},X)$ be the associated Shimura varieties over $\Rat$. We will assume that $K$ is of the form $K_pK^p$, where $K_p=G(\Int_p)\subset G(\Rat_p)$ and $K^p\subset G(\Adele_f^p)$. We will also assume that $K^p$ is chosen to be small enough, so that $\Sh_K$ is a smooth variety and not just an algebraic space. By weak approximation for $G_{\Rat}$ (which can be deduced from weak approximation for its derived group, which is simply connected~\cite{platrap}*{Theorem 7.8}), we have identifications of complex analytic varieties:
\begin{align}\label{spin:eqn:uniformization}
 \Sh_{K,\Comp}^{\an}&=G(\Int_{(p)})\backslash\bigl(X\times G(\Adele^p_f)/K^p\bigr);\\
 \Sh_{K_0,\Comp}^{\an}&=G_0(\Int_{(p)})\backslash\bigl(X\times G_0(\Adele^p_f)/K_0^p\bigr).
\end{align}
From this description, we find that the map $\Sh_K\to\Sh_{K_0}$ is a finite (\'etale) Galois cover with Galois group
\[
\Delta(K)\coloneqq\Adele_f^\times/\Rat^{>0}(K\cap\Adele_f^\times)=\Adele_f^{p,\times}/\Int_{(p)}^{>0}\bigl(K^p\cap\Adele_f^{p,\times}\bigr).
\]
Here, we are viewing $\Adele_f^\times$ as a central sub-group of $G(\Adele_f^\times)$.

\subsection{}\label{spin:subsec:variations}
Let $R\subset\Real$ be a $\Int_{(p)}$-algebra. Recall that a variation of (pure) $R$-Hodge structures over a smooth complex algebraic variety $S$ is a pair $(\bm{U}_B,F^\bullet(\bm{U}_B\otimes\Reg{S^{\an}}))$, where $\bm{U}_B$ is a local system of finite free $R$-modules over $S^{\an}$; and $F^\bullet(\bm{U}_B\otimes\Reg{S^{\an}})$ is a descending filtration by sub-vector bundles over $S^{\an}$ such that, for every point $s\in S(\Comp)$, the induced pair $(\bm{U}_{B,s},F^\bullet(\bm{U}_{B,s}\otimes_R\Comp))$ is a pure $R$-Hodge structure. An \defnword{algebraic} variation of Hodge structures over $S$ is a tuple $(\bm{U}_B,\bm{U}_{\dR},F^\bullet\bm{U}_{\dR},\iota)$, where $\bm{U}_{\dR}$ is a vector bundle over $S$ equipped with an integrable connection and a filtration $F^\bullet\bm{U}_{\dR}$; and $\iota:\bm{U}^{\an}_{\dR}\xrightarrow{\simeq}\bm{U}_B\otimes\Reg{S^{\an}}$ is a parallel isomorphism (the right hand side being endowed with the trivial integrable connection) of vector bundles over $S^{\an}$ such that the pair $(\bm{U}_B,\iota(F^\bullet\bm{U}^{\an,\dR}))$ is a variation of $R$-Hodge structures.

There is a natural exact tensor functor from the category of algebraic $R$-representations of $G$ (resp. $G_0$) to the category of algebraic variations of $R$-Hodge structures on $\Sh_{K,\Comp}$ (resp. $\Sh_{K_0,\Comp}$).\footnote{Such a result is true for any Shimura variety.} This is reasonably well-known, but, so as to fix notation, we will now briefly describe it for $G$ and $\Sh_K$; the situation for $G_0$ and $\Sh_{K_0}$ is completely analogous.

Suppose that we are given an algebraic $R$-representation $U$ of $G$. We can view the $R$-module $U$ as a representation of the discrete group $G(\Int_{(p)})$. Using the uniformization in (\ref{spin:eqn:uniformization}), we find that the constant local system $U\times X\times G(\Adele_f^p)/K^p$ (where we equip $U$ with the discrete topology) over $X\times G(\Adele_f^p)/K^p$ descends to an $R$-local system $\bm{U}_B$ over $\Sh_{K,\Comp}^{\an}$.

We also have the trivial vector bundle $(U\otimes_R\Comp)\times X$ over $X$, where we equip $U\otimes_{R}\Comp$ with the natural complex topology. This has a natural, $G(\Real)$-equivariant descending filtration by vector sub-bundles $F^\bullet((U\otimes_R\Comp)\times X)$ such that, at any point $\mb{h}\in X$, the induced filtration $F^{\bullet}(U_{\mb{h}}\otimes_R\Comp)$ is simply the Hodge filtration induced by the homomorphism $\bb{S}\to G_{\Real}$ attached to $\mb{h}$. Being $G(\Real)$-equivariant, and in particular $G(\Int_{(p)})$-equivariant, the pair $\bigl((U\otimes_R\Comp)\times X,F^\bullet((U\otimes_R\Comp)\times X)\bigr)$ descends to a filtered vector bundle over $\Sh_{K,\Comp}^{\an}$; we will denote this descent by $(\bm{U}_{\dR,\Comp}^{\an},F^\bullet\bm{U}^{\an}_{\dR,\Comp})$. By construction, $\bm{U}_{\dR,\Comp}^{\an}$ is canonically isomorphic to $\bm{U}_B\otimes_R\Reg{\Sh_{K,\Comp}^{\an}}$, and we find that the pair $(\bm{U}_B,F^\bullet\bm{U}^{\an}_{\dR,\Comp})$ is a variation of $R$-Hodge structures over $\Sh_{K,\Comp}^{\an}$.

Now, $\bm{U}_{\dR,\Comp}^{\an}$ algebraizes to an algebraic vector bundle with integrable connection $\bm{U}_{\dR,\Comp}$. This is essentially due to Baily-Borel~\cite{baily_borel}*{Theorem 10.14}; cf. also~\cite{harris:arithmetic_i}*{(3.1)}). Moreover, by the projectivity of Grassmannians, the filtration $F^\bullet\bm{U}^{\an}_{\dR,\Comp}$ algebraizes to a filtration $F^\bullet\bm{U}_{\dR,\Comp}$. This finishes our construction of the algebraic variation of $R$-Hodge structures attached to the representation $U$: we will denote it by $\mathbb{V}_{\Comp}(U)$. One can check that $U\mapsto\mathbb{V}_{\Comp}(U)$ is functorial, exact and respects tensor operations.

As shown in \cite{deligneshimura}*{\S~1.1}, the variations of Hodge structures obtained in this fashion satisfy Griffiths's transversality: The connection on $\bm{U}_{\dR,\Comp}$ carries $F^i\bm{U}_{\dR,\Comp}$ to $F^{i-1}\bm{U}_{\dR,\Comp}\otimes\Omega^1_{\Sh_{K,\Comp}/\Comp}$.

\subsection{}\label{spin:subsec:hodgetorsor}
Let $C=C(L,Q)$ be the Clifford algebra for $(L,Q)$. The above construction applied to the representation $H=C$, on which $G$ acts via left multiplication, produces an algebraic variation of $\Int_{(p)}$-Hodge structures $\mathbb{V}_{\Comp}(H)=(\bm{H}_B,\bm{H}_{\dR,\Comp},F^\bullet\bm{H}_{\dR,\Comp})$ over $\Sh_{K,\Comp}^{\an}$. Since the right $C$-action and grading on $H$ are $G$-equivariant, they are both naturally inherited by $\mathbb{V}_{\Comp}(H)$.

The tensor $\pr\in H_{\Rat}^{\otimes(2,2)}$ is $G$-invariant, and we can view it as a map of $G$-representations $\pr:\Rat\to H_{\Rat}^{\otimes(2,2)}$, with $G$ acting trivially on $\Rat$. The functoriality of our construction now shows that $\pr$ induces a map of algebraic variations of $\Rat$-Hodge structures $\mathbb{V}_{\Comp}(\Rat)\to\mathbb{V}_{\Comp}(H^{\otimes(2,2)}_{\Rat})=\mathbb{V}_{\Comp}(H_{\Rat})^{\otimes(2,2)}$.

Explicitly, this means that we have a global section $\pr_B\in H^0\bigl(\Sh_{K,\Comp}^{\an},\bm{H}_{B}^{\otimes(2,2)}\otimes\Rat\bigr)$ and a section $\pr_{\dR,\Comp}\in H^0\bigl(\Sh_{K,\Comp},F^0\bm{H}_{\dR,\Comp}^{\otimes(2,2)})$ that is parallel for the connection on $\bm{H}_{\dR,\Comp}^{\otimes(2,2)}$. Moreover, $\pr_B$ is carried to $\pr_{\dR,\Comp}$ under the comparison isomorphism
\[
 \bm{H}_{B}^{\otimes(2,2)}\otimes\Reg{\Sh_{K,\Comp}^{\an}}\xrightarrow{\simeq}\bm{H}^{\otimes(2,2),\an}_{\dR,\Comp}.
\]
In particular, for every point $s\in\Sh_K(\Comp)$, the fiber $\pr_{B,s}\in\bm{H}_{B,s}^{\otimes(2,2)}\otimes\Rat$ is a Hodge tensor.

We can view $\pr$ as an idempotent endomorphism of $\mathbb{V}_{\Comp}\bigl(H^{\otimes(1,1)}_{\Rat}\bigr)$, and by construction its image is precisely $\mathbb{V}_{\Comp}(L_{\Rat})=(\bm{L}_{B,\Rat},\bm{L}_{\dR,\Comp},F^\bullet\bm{L}_{\dR,\Comp})$, the algebraic variation of $\Rat$-Hodge structures attached to the representation $L_{\Rat}$.

In fact, it follows from (\ref{cliff:rem:nondeg}) that the image under $\pr$ of $\mathbb{V}_{\Comp}(H^{\otimes(1,1)})\subset\mathbb{V}_{\Comp}(H^{\otimes(1,1)}_{\Rat})$ is exactly the variation of $\Int_{(p)}$-Hodge structures $\mathbb{V}_{\Comp}(\dual{L})$.

Moreover, the $G$-equivariant map $L_{\Rat}\to\dual{L}_{\Rat}$ induced by the pairing gives rise to an isomorphism $\mathbb{V}_{\Comp}(L_{\Rat})\xrightarrow{\simeq}\mathbb{V}_{\Comp}(\dual{L}_{\Rat})$. This corresponds to a pairing $\mathbb{V}_{\Comp}(L_{\Rat})\otimes\mathbb{V}_{\Comp}(L_{\Rat})\to\mathbb{V}_{\Comp}(\Rat)$, which is in fact a polarization of variations of Hodge structures. $\mathbb{V}_{\Comp}(L)$ is precisely the pre-image of $\mathbb{V}_{\Comp}(\dual{L})$ under this isomorphism.

In sum, we find that we can recover the tuple $\mathbb{V}_{\Comp}(L)=(\bm{L}_B,\bm{L}_{\dR,\Comp},F^\bullet\bm{L}_{\dR,\Comp})$ from the data of $\mathbb{V}_{\Comp}(H)$ and the idempotent operator $\pr\in\mathbb{V}_{\Comp}(H_{\Rat}^{\otimes(2,2)})$.

\subsection{}\label{spin:subsec:hodgeemb}
For $\delta\in C\cap C_{\Rat}^\times$ satisfying $\delta^*=-\delta$, set $\mathcal{G}_{\delta,\Rat}=\GSp(H_{\Rat},\psi_{\delta})$, so that we have an embedding $G_{\Rat}\into\mathcal{G}_{\delta,\Rat}$. Let $\mathcal{X}$ be the space of Lagrangian sub-spaces $W\subset H_{\Comp}$ (with respect to the form $\psi_{\delta}$) such that the Hermitian form $\sqrt{-1}\psi_{\delta}(w_1,\bar{w}_2)$ restricts to a (positive or negative) definite form on $W$: this is simply the union of the Siegel half-spaces attached to $(H,\psi_{\delta})$.
\begin{lem}\label{spin:lem:ksemb}
One can choose $\delta$ so that the embedding $G_{\Rat}\into\mathcal{G}_{\delta,\Rat}$ induces an embedding of Shimura data $(G_{\Rat},X)\into (\mathcal{G}_{\delta,\Rat},\mathcal{X})$.
\end{lem}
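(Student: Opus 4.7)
The plan is to exhibit $\delta$ explicitly as a product $e_1 e_2$ of two orthogonal negative vectors in $L$, and then verify the Riemann positivity condition at one convenient base point of $X$; the rest follows by $G(\Real)^+$-equivariance.

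For the construction: since $(L_\Rat,Q)$ has signature $(n,2)$ with $n\geq 1$, we may first choose $e_1\in L$ with $Q(e_1)<0$; the orthogonal complement $e_1^\perp$ in $L_\Rat$ has signature $(n,1)$ with $n\geq 1$, and so contains rational vectors of negative norm, and clearing denominators yields $e_2\in L\cap e_1^\perp$ with $Q(e_2)<0$. Set $\delta=e_1e_2\in C^+$. The Clifford relation $e_1e_2+e_2e_1=[e_1,e_2]_Q=0$ gives $\delta^*=e_2e_1=-\delta$, while $\delta^2=-Q(e_1)Q(e_2)$ is a (negative) nonzero rational, so $\delta\in C\cap C_\Rat^\times$. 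Let $\mathbf{h}_0\in X$ be the point whose oriented negative $2$-plane is spanned in order by $(e_1,e_2)$. Setting $c=\sqrt{Q(e_1)Q(e_2)}>0$ (note $Q(e_1)Q(e_2)>0$), the explicit description of $\mathbf{h}_0$ in \S\ref{spin:subsec:complex} identifies the image of $i\in\bb{S}(\Real)$ under the attached homomorphism $\bb{S}\to G_\Real$ with $J_0:=\delta/c$, so the complex structure on $H_\Real=C_\Real$ is left multiplication by $J_0$, and $J_0^2=\delta^2/c^2=-1$. By (\ref{cliff:lem:psidelta}) the embedding $G_\Rat\into\mathcal{G}_{\delta,\Rat}$ already respects similitude characters, so the Shimura-data statement reduces to checking the Riemann positivity $\psi_\delta(x,J_0x)>0$ for $0\neq x\in H_\Real$ (possibly after replacing $\delta$ by $-\delta$).

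The core step is a positivity computation. Introduce the $\Real$-linear anti-involution $\sigma(y):=J_0 y^* J_0^{-1}$ of $C_\Real$. Using $J_0^2=-1$, one verifies $J_0x^*J_0=-\sigma(x)$, and hence
\[
\psi_\delta(x,J_0x)=\trd\bigl(x\delta(J_0x)^*\bigr)=-c\,\trd(xJ_0x^*J_0)=c\,\trd\bigl(x\,\sigma(x)\bigr).
\]
Thus positivity reduces to the claim that $(x,y)\mapsto\trd(x\,\sigma(y))$ is a positive-definite symmetric bilinear form on $C_\Real$. To verify this, fix an orthogonal basis $v_1,\ldots,v_{n+2}$ of $L_\Real$ with $v_1,v_2$ proportional to $e_1,e_2$ and $Q(v_i)=\pm 1$ according to the signature. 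A short Clifford calculation shows that $v_1,v_2$ anticommute with $J_0$ while $v_i$ for $i\geq 3$ commute with $J_0$, whence $\sigma(v_i)=\mathrm{sgn}(Q(v_i))\cdot v_i$: that is, $\sigma$ is the principal involution twisted by the sign character of $Q$ on $L_\Real$. On the monomial orthogonal basis $\{v_S\}$ of $C_\Real$ this gives $v_S\,\sigma(v_S)=\prod_{i\in S}|Q(v_i)|=1$, while $\trd(v_S\sigma(v_T))$ vanishes for $S\neq T$ because $v_Sv_T$ is then proportional to a nontrivial Clifford monomial of zero reduced trace. This establishes positive definiteness.

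Finally, the component $X^+\subseteq X$ containing $\mathbf{h}_0$ is the $G(\Real)^+$-orbit of $\mathbf{h}_0$, and conjugation by any $g\in G(\Real)^+$ transforms $\psi_\delta$ by the positive scalar $\nu(g)$, so the positivity is preserved along the orbit. The remaining component of $X$ corresponds to reversing the orientation (equivalently, replacing $J_0$ by $-J_0$) and lands in the opposite Siegel half-space inside $\mathcal{X}$. In every case the composite $\bb{S}\xrightarrow{\mathbf{h}}G_\Real\to\mathcal{G}_{\delta,\Real}$ takes values in $\mathcal{X}$, giving the desired embedding of Shimura data. The principal obstacle is the positivity identity of the third paragraph: recognising that the auxiliary anti-involution $\sigma$ absorbs the indefinite signature of $(L,Q)$ into a positive-definite ``conjugate-transpose'' trace form on $C_\Real$ is the crux of the Kuga--Satake construction, and everything else is bookkeeping.
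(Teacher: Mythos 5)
The paper asserts this lemma without proof, so there is no internal argument to compare against; what you supply is essentially the standard Kuga--Satake positivity argument, and it is correct. Two small refinements are worth recording. First, the homomorphism $\mb{h}_0:\bb{S}\to G_\Real$ sends $i$ to $\pm J_0$, not necessarily to $J_0$: these are the two lifts to $\GSpin(L_\Real)$ of the reflection $(-1)|_{\langle e_1,e_2\rangle}\oplus(+1)|_{\langle e_1,e_2\rangle^\perp}$ having spinor norm $1$. Your ``replace $\delta$ by $-\delta$'' copes with this ambiguity, but under the paper's convention that $\mathcal{X}$ is the union of the two Siegel half-spaces even that replacement is unnecessary: definiteness of $x\mapsto\psi_\delta(x,J_0x)$ already puts the Hodge filtration into $\mathcal{X}$. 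Second, the input $\trd(v_S)=0$ for every nontrivial monomial $v_S$ is not quite formal in the edge case $S=\{1,\ldots,\dim L\}$ with $\dim L$ odd: then $v_S$ is central, so it cannot be conjugated to its negative, and one must instead observe that $v_S$ is a trace-zero element of the quadratic \'etale $\Real$-algebra $Z(C_\Real)$ (indeed $v_S^2\in\Real^\times$, so $\Tr_{Z(C_\Real)/\Real}(v_S)=0$, whence $\trd(v_S)=0$). With these remarks, the proof is complete.
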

\qed

Let $\mathcal{K}_p\subset\mathcal{G}_{\delta,\Rat}(\Rat_p)$ be the stabilizer of $H_{\Int_p}$. Then we have $K_p\subset\mathcal{K}_p\cap G(\Rat_p)$. Let $K=K_pK^p\subset G(\Adele_f)$ be as above; then, for any compact open $\mathcal{K}^p\subset\mathcal{G}_{\delta}(\Adele_f^p)$ containing $K_p$, we have a finite, unramified\footnote{Note that, over $\Comp$ and locally in the complex analytic topology, this map is isomorphic to the closed immersion $X\into\mathcal{X}$.} map of canonical models of Shimura varieties over $\Rat$:
\[
  \Sh_K\to\Sh_{\mathcal{K}}(\mathcal{G}_{\delta,\Rat},\mathcal{X}).
\]
Here, $\mathcal{K}=\mathcal{K}_p\mathcal{K}^p$. We will call this a \defnword{Kuga-Satake map}.

\subsection{}\label{spin:subsec:primetop}
$\Sh_{\mathcal{K}}\coloneqq\Sh_{\mathcal{K}}(\mathcal{G}_{\delta,\Rat},\mathcal{X})$ has a natural moduli description. To describe this, we will work with abelian schemes up to prime-to-$p$ isogeny. More precisely, given a scheme $T$, the category $AV_{(p)}(T)$ of abelian schemes up to prime-to-$p$ isogeny has for its objects abelian schemes $A$ over $T$, where, for two abelian schemes $A$ and $B$ over $T$, the space of morphisms from $A$ to $B$ is the $\Int_{(p)}$-module
\[
  \Hom(A,B)_{(p)}=\Hom(A,B)\otimes\Int_{(p)}.
\]
Given an abelian scheme $A$ over $T$, a \defnword{quasi-polarization} (or simply \defnword{polarization}) of $A$ in $AV_{(p)}(T)$ will be an element $\lambda\in\Hom(A,\dual{A})_{(p)}$ that is a positive multiple of a polarization $\lambda':A\to\dual{A}$.

Given any scheme $T$, a prime $\ell\neq p$ invertible in $T$, and an abelian scheme $f:A\to T$, we can consider the associated relative first $\ell$-adic cohomology sheaf $R^1f_*\underline{\Rat}_{\ell}$. If $p$ is invertible in $T$, we can also consider the the $p$-adic sheaf $R^1f_*\underline{\Int}_p$. Both these constructions are invariants of the prime-to-$p$ isogeny class of $A$. Let $\underline{\Rat}_{\ell}(-1)$ be the Tate twist: it is the relative first $\ell$-adic cohomology of $\Gmh{T}$ over $T$.

Given an abelian scheme $f:A\to T$ over a $\Int_{(p)}$-scheme $T$, the following constructions are invariants of its prime-to-$p$ isogeny class: the $p$-divisible group $A[p^{\infty}]$; the $p$-adic \'etale cohomology $R^1f_*\underline{\Int}_p$; for $\ell\neq p$, the rational $\ell$-adic cohomology $R^1f_*\underline{\Rat}_{\ell}$; and the $\Adele_f^p$-adic cohomology $R^1f_*\underline{\Adele}_f^p$. If $T$ is an $\Field_p$-scheme, we also have the degree $1$ crystalline cohomology of $A$ over $T$: For instance, this can be viewed as the Dieudonn\'e crystal associated with the $p$-divisible group $A[p^{\infty}]$.

\subsection{}\label{spin:subsec:primetopsheaves}
For some purposes, it is useful to embed $AV_{(p)}(T)$ in the category of group schemes over $T$. We will follow \cite{harris:oscillator}*{\S~1} for this. Given an abelian scheme $A\to T$, let $\mathcal{T}^p(A)$ be the inverse system consisting of prime-to-$p$ isogenies\footnote{These are finite, flat homomorphisms, whose kernel is an \'etale group scheme of order not divisible by $p$.} $B\to A$. Then $\mathcal{T}^p(A)$ has a co-final system consisting of the finite \'etale multiplication-by-$n$ endomorphisms $[n]:A\to A$ for $n\in\Int$ with $p\nmid n$. Therefore, the inverse limit
\[
 \hat{A}^{(p)}=\varprojlim_{(B\to A)\in\mathcal{T}^p(A)}B
\]
exists in the category of group schemes over $T$. One sees that $A\mapsto\hat{A}^{(p)}$ is a fully faithful functor from $AV^{(p)}(T)$ to the category of $T$-group schemes.

The main purpose of this construction is the following abuse of terminology: Given $A\in AV^{(p)}(T)$, and a $T$-scheme $T'$, we will write $H^0(T',A)$ for the $\Int_{(p)}$-module $H^0(T',\hat{A}^{(p)})$. This allows us to speak of `sections of $A$'.

From now on we will suppress the qualifying phrase `up to prime-to-$p$ isogeny': \emph{all abelian schemes will only be considered in the prime-to-$p$ isogeny category.} The cohomological constructions we are concerned with will all be invariants of the prime-to-$p$ isogeny class.

\subsection{}
Suppose that $T$ is a $\Int_{(p)}$-scheme and $f:A\to T$ is an abelian scheme. Given a polarization $\lambda:A\to\dual{A}$, we get an induced non-degenerate Poincar\'e pairing of $\Adele_f^p$-sheaves
\[
  \psi_{\lambda}:R^1f_*\underline{\Aff}^p_{f}\otimes R^1f_*\underline{\Aff}^p_{f}\rightarrow\underline{\Aff}_{f}^p(-1).
\]

Suppose that we are also given an isomorphism
\[
 \eta:H\otimes\underline{\Adele}^p_f\xrightarrow{\simeq}R^1f_*\underline{\Aff}^p_f
\]
of $\Adele^p_f$-sheaves over $T$. Then we obtain two different non-degenerate pairings on $H\otimes\underline{\Adele}^p_f$: The first is the constant pairing into $\underline{\Adele}_f^p$ arising from $\psi_{\delta}$, which we will again call $\psi_{\delta}$; and the second is the pairing $\eta^*\psi_{\lambda}$ into $\underline{\Adele}_f^p(-1)$ obtained by pulling back $\psi_{\lambda}$ along $\eta$. In particular, the existence of $\eta$ implies that $\underline{\Adele}_f^p(-1)$ is trivializable over $T$. We say that $\eta$ \defnword{preserves polarizations} if, for some choice of isomorphism $\underline{\Adele}_f^p(-1)\xrightarrow{\simeq}\underline{\Adele}_f^p$, the pairings $\eta^*\psi_{\lambda}$ and $\psi_{\delta}$ agree.

In the above situation, we will consider the \'etale sheaf $I^p(A,\lambda)$ over $T$, whose sections are polarization preserving isomorphisms of $\Adele_f^p$-sheaves $\eta:H\otimes\underline{\Adele}^p_f\xrightarrow{\simeq}R^1f_*\underline{\Aff}^p_f$. Note that $I^p(A,\lambda)$ is a pseudo-torsor under $\mathcal{G}_{\delta}(\Adele^p_f)$ via its action through pre-composition.

For any $\Int_{(p)}$-scheme $T$, let $\mathcal{S}_{\mathcal{K}}(T)$ be the set of isomorphism classes of tuples $(A,\lambda,[\eta])$, where:
\begin{itemize}
  \item $(A,\lambda)$ is a polarized abelian scheme over $T$.
  \item $[\eta]$ is a \defnword{$\mathcal{K}^p$-level structure}: it is a section of the quotient sheaf $I^p(A,\lambda)/\mathcal{K}^p$.
\end{itemize}
For $\mathcal{K}$ sufficiently small, the functor $\mathcal{S}_{\mathcal{K}}$ is (represented by) a quasi-projective scheme over $\Int_{(p)}$, whose generic fiber is canonically identified with $\Sh_{\mathcal{K}}\coloneqq\Sh_{\mathcal{K}}(\mathcal{G}_{\delta,\Rat},\mathcal{X})$.

\subsection{}
Over $\mathcal{S}_{\mathcal{K}}$, we have the tautological tuple $(A,\lambda,[\eta])$. Let $(A^{\KS}_{\Sh_{K}},\lambda^{\KS}_{\Sh_K},[\eta^{\KS}])$ be the induced tuple over $\Sh_K$. We will refer to $A^{\KS}_{\Sh_K}$ as the \defnword{Kuga-Satake abelian scheme} over $\Sh_K$.

The identification of $\Sh_{\mathcal{K}}$ with the generic fiber of the moduli scheme $\mathcal{S}_{\mathcal{K}}$ has the following property: Over $\Sh^{\an}_{K,\Comp}$, the algebraic variation of $\Int_{(p)}$-Hodge structures obtained from the degree $1$ cohomology of $A^{\KS}_{\Sh_{K,\Comp}^{\an}}$ is canonically identified with $\mathbb{V}_{\Comp}(H)$.

Since $\mathbb{V}_{\Comp}(H)$ carries a $\Int/2\Int$-grading and a right $C$-action, we conclude that $A^{\KS}_{\Sh_{K,\Comp}^{\an}}$, and hence $A^{\KS}_{\Sh_{K,\Comp}}$, admits a canonical $\Int/2\Int$-grading and a (left) $C$-action.\footnote{The right action is converted to a left, because cohomology is a contravariant functor.}Here, we are using the (anti-)equivalence of categories between abelian schemes over $\Sh_{K,\Comp}$ and polarizable variations of $\Int_{(p)}$-Hodge structures over $\Sh_{K,\Comp}$ of weight $1$.

The above implies that the degree $1$ relative Betti cohomology of $A^{\KS}_{\Sh_{K,\Comp}^{\an}}$ with coefficients in $\Int_{(p)}$ can be identified with $\bm{H}_B$ as a $\Int_{(p)}$-local system. Similarly, the degree $1$ relative de Rham cohomology of $A^{\KS}_{\Sh_{K,\Comp}}$ can be identified with $(\bm{H}_{\dR,\Comp},F^\bullet\bm{H}_{\dR,\Comp})$ as a filtered vector bundle with integrable connection.

Let $\bm{H}_{\ell}$ (resp. $\bm{H}_p$) to be the relative degree $1$ \'etale cohomology of $A^{\KS}_{\Sh_K}$ over $\Sh_K$ with coefficients in $\Rat_{\ell}$, for $\ell\neq p$ (resp. $\Int_p$). Then, over $\Sh_{K,\Comp}^{\an}$, Artin's comparison theorem gives us canonical isomorphisms of local systems:
\begin{align}\label{spin:eqn:artincomp}
 \alpha_{\ell}:\bm{H}_B\otimes\Rat_{\ell}&\xrightarrow{\simeq}\bm{H}_{\ell}\rvert_{\Sh_{K,\Comp}^{\an}};\;\text{for $\ell\neq p$}\\
 \alpha_p:\bm{H}_B\otimes\Int_p&\xrightarrow{\simeq}\bm{H}_p\rvert_{\Sh_{K,\Comp}^{\an}}.
\end{align}
Let $(\bm{H}_{\dR,\Rat},F^\bullet\bm{H}_{\dR,\Rat})$ be the relative degree $1$ de Rham cohomology of $A^{\KS}_{\Sh_K}$ over $\Sh_K$: this is equipped with the Hodge filtration and the Gauss-Manin connection. Over $\Sh_{K,\Comp}^{\an}$, there now exists a canonical de Rham comparison isomorphism, parallel for the trivial connection on the left hand side:
\begin{align}\label{spin:eqn:derhamcomp}
\alpha_{\dR}:\bm{H}_B\otimes\Reg{\Sh_{K,\Comp}^{\an}}&\xrightarrow{\simeq}\bm{H}_{\dR,\Comp}^{\an}\xrightarrow{\simeq}\bm{H}_{\dR,\Rat}\rvert_{\Sh_{K,\Comp}^{\an}}.
\end{align}
Note that the first isomorphism in this composition carries $\pr_B\otimes 1$ to $\pr_{\dR,\Comp}$.

\begin{prp}\label{spin:prp:prdescent}
\mbox{}
\begin{enumerate}
\item~\label{prdescent:action}The structures of the $\Int/2\Int$-grading and $C$-action on $A^{\KS}_{\Sh_{K,\Comp}}$ descend (necessarily uniquely) to $A^{\KS}_{\Sh_K}$.
\item~\label{prdescent:etale}For any prime $\ell$, the global section $\alpha_{\ell}(\pr_B\otimes 1)$ of $\bm{H}_{\ell}^{\otimes(2,2)}\otimes\Rat$ over $\Sh_{K,\Comp}^{\an}$ arises from a (necessarily unique) section:
    \[
     \pr_{\ell}\in H^0\bigl(\Sh_K,\bm{H}_{\ell}^{\otimes(2,2)}\otimes\Rat\bigr).
    \]
\item~\label{prdescent:derham}The section $\pr_{\dR,\Comp}$ of $\bm{H}_{\dR,\Comp}^{\otimes(2,2)}$ descends (necessarily uniquely) to a parallel section
\[
 \pr_{\dR,\Rat}\in H^0\bigl(\Sh_K,F^0\bm{H}^{\otimes(2,2)}_{\dR,\Rat}\bigr).
\]
\end{enumerate}
\end{prp}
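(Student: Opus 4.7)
The plan is to use Galois descent from $\Sh_{K,\Comp}$ to $\Sh_K$, exploiting the canonicity of the Kuga-Satake construction and the fact that the data we want to descend arises from objects defined $G$-equivariantly over $\Rat$. The uniqueness clauses in (\ref{prdescent:action}), (\ref{prdescent:etale}) and (\ref{prdescent:derham}) are immediate from the faithfulness of base change along $\Sh_{K,\Comp} \to \Sh_K$, so only existence needs proof.

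For (\ref{prdescent:action}), I would observe that the $\Int/2\Int$-grading and left $C$-action on $A^{\KS}_{\Sh_{K,\Comp}}$ are sections of $\underline{\End}(A^{\KS}_{\Sh_K})_{(p)}$, which is a locally constant \'etale sheaf of finitely generated $\Int_{(p)}$-algebras over $\Sh_K$. Any section over $\Sh_{K,\Comp}$ automatically descends to $\Sh_{K,\overline{\Rat}}$ (since \'etale sheaves are insensitive to extensions of algebraically closed fields), and descent further to $\Sh_K$ reduces to $\Gal(\overline{\Rat}/\Rat)$-invariance. Invariance follows from canonicity: $H = C$ together with its grading and right $C$-multiplication is a structure of $G$-equivariant $\Rat$-linear maps, and the canonical-model recipe of (\ref{spin:subsec:variations}) turns such $G$-equivariant $\Rat$-linear morphisms of representations into Galois-equivariant morphisms of the associated structures over $\Sh_K$. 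Part (\ref{prdescent:etale}) follows by the same principle: $\pr_B \otimes 1$ is the image of $1$ under the $G$-equivariant $\Rat$-linear map $\pr \colon \Rat \to H_\Rat^{\otimes(2,2)}$, and $\alpha_\ell$ is canonical, so the resulting section of $\bm{H}_\ell^{\otimes(2,2)} \otimes \Rat$ over $\Sh_{K,\Comp}$ is Galois-invariant and thus descends to a global section over $\Sh_K$.

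The main obstacle is part (\ref{prdescent:derham}), because $\bm{H}_{\dR,\Rat}$ is a coherent sheaf with connection rather than a locally constant \'etale sheaf, and the two-step descent through $\overline{\Rat}$ is not formal for coherent sheaves. My plan is to pass to a smooth toroidal compactification $\overline{\Sh}_K/\Rat$, over which $\bm{H}_{\dR,\Rat}$ extends with logarithmic singularities along the boundary (cf.~\cite{harris:functorial}). Since $\overline{\Sh}_{K,\Comp}$ is proper, GAGA combined with Deligne's Riemann-Hilbert correspondence for regular singular connections~\cite{deligne:eqsdiff} shows that $\pr_{\dR,\Comp}$ comes from a global algebraic section of the extended bundle over $\overline{\Sh}_{K,\Comp}$, lying in a finite-dimensional $\Comp$-vector space of parallel sections. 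To verify $\Gal(\overline{\Rat}/\Rat)$-invariance of this section it suffices to work at a single $\overline{\Rat}$-point $s$ of each connected component of $\Sh_K$: the value of $\pr_{\dR,\Comp}$ at such a point lies in the algebraic de Rham cohomology of $A^{\KS}_s/\overline{\Rat}$, and its Galois-invariance is inherited from Galois-invariance of the corresponding \'etale realization (part (\ref{prdescent:etale})) via the $\Gal$-equivariance of the \'etale-de Rham comparison for abelian varieties over $\overline{\Rat}$.
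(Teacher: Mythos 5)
Your overall strategy — descend the \'etale realizations first using invariance of the relevant tensors under the structure group, then descend the de Rham realization via a comparison — is the same as the paper's. There are, however, two issues worth flagging, one of which is a genuine gap.

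The genuine gap is in part~(\ref{prdescent:derham}). You assert that ``the value of $\pr_{\dR,\Comp}$ at $s$ lies in the algebraic de Rham cohomology of $A^{\KS}_s/\overline{\Rat}$'' and then that Galois-invariance of the de Rham class is ``inherited from Galois-invariance of the \'etale realization via the $\Gal$-equivariance of the \'etale--de Rham comparison for abelian varieties over $\overline{\Rat}$.'' Neither of these is a formal fact. The only comparison isomorphisms in sight live over $\Comp$ (Betti--de Rham and Betti--\'etale via Artin) and over $p$-adic fields; there is no canonical ``\'etale--de Rham comparison over $\overline{\Rat}$'' that one may invoke without further argument. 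What makes both assertions true for the tensor $\pr$ is Deligne's theorem that Hodge cycles on abelian varieties are absolutely Hodge (\cite{dmos}*{Ch.~I}), together with ``Principle~B'': an absolutely Hodge cycle over an algebraically closed overfield is already defined over the algebraic closure of the base, and if one cohomological realization is Galois-fixed then all are. This is precisely the nontrivial input the paper's proof cites explicitly. Without naming it, the crucial step of your argument is hidden behind a phrase that does not by itself carry the needed content; as written, the de Rham case is not justified. (The auxiliary machinery you bring in — toroidal compactification, GAGA, the Riemann--Hilbert correspondence — is already implicit in the construction of $\bm{H}_{\dR,\Comp}$ and does nothing toward the descent to $\Rat$; the work is entirely in the absolute-Hodge-cycle theorem.)

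The second issue is minor but worth noting: your argument for the \'etale descent in parts~(\ref{prdescent:action}) and~(\ref{prdescent:etale}) relies on the phrase ``the canonical-model recipe turns $G$-equivariant $\Rat$-linear morphisms into Galois-equivariant morphisms,'' which is the right idea but is not self-evident as stated. The paper makes this concrete: over a connected component $S\subset\Sh_K$ one has the pro-\'etale torsor $\Sh_{K^p}\to\Sh_K$ inducing $\pi_1(S,\overline{s})\to K_p=G(\Int_p)$, the sheaf $\bm{H}_p$ is exactly the local system attached to the induced representation on $H\otimes\Int_p$, and the $K_p$-invariance of the grading, $C$-action and $\pr$ immediately forces them to descend along $S_\Comp\to S$. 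It is this monodromy/torsor argument (together with Galois descent for morphisms of abelian varieties via $p$-adic Tate modules) that makes ``canonicity'' precise, and it is also what supplies the one \'etale realization whose Galois-invariance you then feed into Principle~B. Once you make both ingredients explicit — the torsor description of $\bm{H}_p$ and Deligne's absolute-Hodge-cycle theorem — your proof matches the paper's.
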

\begin{proof}
  This can be extracted from \cite{kis3}*{\S 2.2}. We sketch the proof:

  There is a canonical pro-finite Galois cover $\Sh_{K^p}\to\Sh_K$ with Galois group $K_p$:
  \[
   \Sh_{K^p}=\varprojlim_{K'_p\subset K_p}\Sh_{K'_pK^p}.
  \]
  Here $K'_p$ runs over the compact open sub-groups of $K_p$.

  If we fix a connected component $S\subset\Sh_K$ and a geometric point $\overline{s}\to S$, pulling the pro-finite cover $\Sh_{K_p}$ back over $S$ gives us a $K_p$-torsor over $S$, which in turn corresponds to a map $\pi_1(S,\overline{s})\to K_p$. The restriction of $\bm{H}_p$ to $S$ is precisely the $p$-adic sheaf attached to the composite representation
  \[
   \pi_1(S,\overline{s})\to K_p=G(\Int_p)\subset\Aut(H\otimes\Int_p).
  \]

  Since the $\Int/2\Int$-grading and $C$-action on $H\otimes\Int_p$ are $K_p$-invariant, and hence $\pi_1(S,\overline{s})$-invariant, we find that the corresponding structures on $\bm{H}_B\otimes\Int_p$ over $S_{\Comp}^{\an}$ must in fact descend to structures on $\bm{H}_p$ over $S$. From this, one can deduce (\ref{prdescent:action}). The essential point is the following simple consequence of Galois descent: Suppose that $A$ and $B$ are abelian varieties over a characteristic $0$ field ${\kappa}$, and $f:A_{\overline{{\kappa}}}\to B_{\overline{{\kappa}}}$ is a map of abelian varieties over an algebraic closure $\overline{{\kappa}}/{\kappa}$ such that the induced map on $p$-adic Tate modules $T_p(f):T_p(A_{\overline{{\kappa}}})\to T_p(B_{\overline{{\kappa}}})$ is equivariant for the action of the absolute Galois group $\Gal(\overline{{\kappa}}/{\kappa})$. Then $f$ is defined over $\kappa$.

  Similarly, since the tensor $\pr\in H^{\otimes(2,2)}\otimes\Rat_p$ is also $K_p$-invariant, $\alpha_p(\pr_B\otimes 1)$ must descend to a section $\pr_p\in H^0\bigl(\Sh_K,\bm{H}_p^{\otimes(2,2)}\otimes\Rat_p\bigr)$. This shows the case $\ell=p$ of (\ref{prdescent:etale}). The de Rham case (\ref{prdescent:derham}) and the $\ell\neq p$ case of (\ref{prdescent:etale}) follow from this and the fact that all Hodge cycles on abelian varieties are absolutely Hodge~\cite{dmos}*{Ch. I}; cf. the proof of~\cite{kis3}*{2.2.2}. There are two main points~\cite{dmos}*{\S I.2}:
  \begin{itemize}
  \item Given a projective variety $X$ over $\kappa$ and an algebraically closed overfield $K\supset\overline{\kappa}$, any absolute Hodge cycle $\beta$ over $X_{K}$ is already defined over $X_{\overline{\kappa}}$; that is, the de Rham realization $\beta_{\dR}$ of $\beta$ lies in $H_{\dR}^\bullet(X_{\overline{\kappa}}/\overline{\kappa})^{\otimes}$.
  \item If one cohomological realization of $\beta$ is fixed by $\Gal(\overline{\kappa}/\kappa)$, then all of its realizations are.
  \end{itemize}
  We apply this to the situation where $\kappa$ is the function field of an irreducible component $S$ as above, $K$ is the function field of an irreducible component of $S_{\Comp}$, $X$ is the fiber of $A^{\KS}_{\Sh_K}$ over $\kappa$, and $\beta$ is the Hodge (hence absolutely Hodge) cycle over $X_K$ obtained from the fiber of the Betti realization $\pr_B$.
\end{proof}

\subsection{}\label{spin:subsec:lrealizations}
The descent of the realizations of $\pr$ proven above now allows us to descend $\mathbb{V}_{\Comp}(L)$ over $\Sh_K$:

For a prime $\ell\neq p$, let $\bm{L}_{\ell}\subset\bm{H}_{\ell}^{\otimes(1,1)}$ be the image of the idempotent operator $\pr_{\ell}$. Then $\bm{L}_{\ell}$ is a $\Rat_{\ell}$-local system over $\Sh_K$ equipped with a non-degenerate pairing $\bm{L}_{\ell}\times\bm{L}_{\ell}\to\underline{\Rat}_{\ell}$. Over $\Sh_{K,\Comp}^{\an}$, we have a canonical comparison isomorphism (respecting pairings):
\[
 \bm{L}_B\otimes\Rat_{\ell}\xrightarrow{\simeq}\bm{L}_{\ell}\rvert_{\Sh_{K,\Comp}^{\an}}.
\]

For $\ell=p$, as in (\ref{spin:subsec:hodgetorsor}), we can show that the image of $\bm{H}_p^{\otimes(1,1)}$ under $\bm{\pr}_p$ is a descent $\dual{\bm{L}}_p$ of the $\Int_p$-local system $\dual{\bm{L}}_B\otimes\Int_p$ over $\Sh_{K,\Comp}^{\an}$. It is equipped with a quadratic form with values in $\underline{\Rat}_p$, and $\bm{L}_p\subset\dual{\bm{L}}_p$ is recovered as the largest sub-local system whose pairing with $\dual{\bm{L}}_p$ takes values in $\underline{\Int}_p$. Again, we have a canonical isometric comparison isomorphism:
\[
 \bm{L}_B\otimes\Int_p\xrightarrow{\simeq}\bm{L}_p\rvert_{\Sh_{K,\Comp}^{\an}}.
\]

Similarly, let $\bm{L}_{\dR,\Rat}\subset\bm{H}_{\dR,\Rat}^{\otimes(1,1)}$ be the image of the idempotent operator $\pr_{\dR,\Rat}$. By construction, $\pr_{\dR,\Rat}$ respects the Hodge filtration and is parallel for the Gauss-Manin connection. Therefore, $\bm{L}_{\dR,\Rat}$ inherits the connection as well as a filtration $F^\bullet\bm{L}_{\dR,\Rat}$, and $(\bm{L}_{\dR,\Rat},F^\bullet\bm{L}_{\dR,\Rat})$ is a descent of $(\bm{L}_{\dR,\Comp},F^\bullet\bm{L}_{\dR,\Comp})$ as a filtered vector bundle with flat connection.

\subsection{}\label{spin:subsec:levelstructure}
Let $\bm{H}_{\Adele_f^p}$ be the $\Adele_f^p$-valued degree $1$ \'etale cohomology of $A^{\KS}_{\Sh_K}$: it can be viewed as a descent of the analytic local system $\bm{H}_B\otimes_{\Int_{(p)}}\Adele_f^p$. There is a unique idempotent operator $\pr_{\Adele_f^p}$ on $\bm{H}_{\Adele_f^p}^{\otimes(1,1)}$, such that for any prime $\ell\neq p$, the $\ell$-adic component of $\pr_{\Adele_f^p}$ is $\pr_{\ell}$. Let $\bm{L}_{\Adele_f^p}\subset\bm{H}_{\Adele_f^p}^{\otimes(1,1)}$ be the image of $\pr_{\Adele_f^p}$: Its $\ell$-adic component for any $\ell\neq p$ is simply $\bm{L}_{\ell}$.

Let $I^p_{G}$ be the sub-sheaf of $I^p(A^\KS_{\Sh_K},\lambda^\KS_{\Sh_K})$ consisting of $C$-equivariant, $\Int/2\Int$-graded isomorphisms
\[
  \eta:H\otimes\underline{\Adele}_f^p\xrightarrow{\simeq}\bm{H}_{\Adele_f^p}
\]
that carry $L\otimes\underline{\Adele}_f^p\subset H^{\otimes(1,1)}\otimes\underline{\Adele}_f^p$ onto $\bm{L}_{\Adele_f^p}\subset\bm{H}^{\otimes(1,1)}_{\Adele_f^p}$. Note that $G(\Adele_f^p)$ naturally acts on the right on $I^p_{G}$ via pre-composition, making it a torsor under $G(\Adele_f^p)$. We have a natural map of quotient sheaves
\[
  I^p_{G}/K^p\rightarrow I^p(A^\KS_{\Sh_K},\lambda^\KS_{\Rat})/\mathcal{K}^p.
\]

A section of $I^p_G/K^p$ will be called a \defnword{$K^p$-level structure}.

\begin{prp}\label{spin:prp:etalerealization}
There is a canonical $K^p$-level structure $[\eta_G]\in H^0(\Sh_K,I^p_G/K^p)$ such that $[\eta^\KS]$ is its image in $I^p(A^\KS_{\Sh_K},\lambda^\KS_{\Sh_K})/\mathcal{K}^p$.
\end{prp}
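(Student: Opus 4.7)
The plan is to construct $[\eta_G]$ first on $\Sh_{K,\Comp}^{\an}$ using the double-coset uniformization~\eqref{spin:eqn:uniformization}, check that its image in $I^p(A^{\KS},\lambda^{\KS})/\mathcal{K}^p$ is the pullback of $[\eta^{\KS}]$, and then descend to $\Sh_K$ using the canonical-model structure of the tower $\{\Sh_{K_pK^p}\}_{K^p}$.

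At a point $[h,gK^p]\in \Sh_{K,\Comp}^{\an}=G(\Int_{(p)})\backslash (X\times G(\Adele_f^p)/K^p)$, the fiber of $\bm{H}_B\otimes\underline{\Adele}_f^p$ is canonically $H\otimes\Adele_f^p$, and right-translation by $g$ gives an isomorphism
\[
\eta_{h,g}\colon H\otimes\underline{\Adele}_f^p\xrightarrow{\simeq}\bm{H}_B\otimes\underline{\Adele}_f^p,
\]
well-defined modulo $K^p$ on the right. Because $g\in G(\Adele_f^p)$ commutes with the right $C$-action, preserves the $\Int/2\Int$-grading, and fixes $L\otimes\Adele_f^p\subset H^{\otimes(1,1)}\otimes\Adele_f^p$, composition with the \'etale comparison isomorphisms~\eqref{spin:eqn:artincomp} produces a section $[\eta_{G,\Comp}]\in H^0(\Sh_{K,\Comp}^{\an},I^p_G/K^p)$. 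Comparing the uniformization of $\Sh_K$ with the Siegel moduli interpretation of $\Sh_{\mathcal{K}}$ used to build $A^{\KS}$ via the Kuga-Satake map of \S\ref{spin:subsec:hodgeemb} shows that $[\eta_{G,\Comp}]$ maps to the analytic pullback of $[\eta^{\KS}]$ in $I^p(A^{\KS},\lambda^{\KS})/\mathcal{K}^p$.

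For descent, first note that $I^p_G$ is already defined as a sub-sheaf of $I^p(A^{\KS},\lambda^{\KS})$ over $\Sh_K$ itself: by Proposition~\ref{spin:prp:prdescent}, the $C$-action and the $\Int/2\Int$-grading on $\bm{H}_{\Adele_f^p}$, together with each $\bm{L}_{\ell}$ for $\ell\neq p$, all descend from the analytification to $\Sh_K$, so that $\bm{L}_{\Adele_f^p}$ and the conditions cutting out $I^p_G$ are $\Sh_K$-defined. Hence $I^p_G/K^p$ is a finite \'etale sheaf on $\Sh_K$, and producing $[\eta_G]$ amounts to showing that $[\eta_{G,\Comp}]$ is invariant under $\Aut(\Comp/\Rat)$. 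This Galois invariance is the main obstacle: it is resolved by the canonical-model property of $\Sh_K$, which asserts that the pro-\'etale cover $\varprojlim_{K^p}\Sh_{K_pK^p}\to\Sh_{K_pK^p}$ with its right $G(\Adele_f^p)$-action is defined over $\Rat$. By construction, $[\eta_{G,\Comp}]$ is the class of the tautological trivialization of this tower, so Galois invariance can be checked at a single CM point by unwinding Deligne's reciprocity law (cf.~\cite{deligneshimura}). Uniqueness of $[\eta_G]$ given the image condition is automatic, since the fibers of $I^p_G/K^p\to I^p(A^{\KS},\lambda^{\KS})/\mathcal{K}^p$ are finite and $[\eta_{G,\Comp}]$ singles out one element in each.
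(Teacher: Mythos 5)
Your approach is essentially the same as the paper's: both proofs produce $[\eta_G]$ from the tautological trivialization over the pro-\'etale tower $\Sh_{K_p}=\varprojlim_{K^{',p}\subset K^p}\Sh_{K_pK^{',p}}$, viewed as the canonical $K^p$-reduction of structure group of the torsor $I^p_G$, and check compatibility with $[\eta^{\KS}]$ via the parallel construction of $[\eta]$ from $\mathcal{S}_{\mathcal{K}_p}\to\mathcal{S}_{\mathcal{K}}$. The paper simply works directly over $\Rat$: it identifies $I^p_G\xrightarrow{\simeq}\Sh_{K_p}\times^{K^p}G(\Adele_f^p)$ and takes the class of the tautological section. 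You instead construct the section explicitly over $\Comp$ by double cosets and then descend. That detour is fine, but the final step is confused: you say that after invoking the canonical-model property of the tower (which already says that $\Sh_{K_p}\to\Sh_K$ with its right $G(\Adele_f^p)$-action is an object over $\Rat$), Galois invariance must still be ``checked at a single CM point by unwinding Deligne's reciprocity law.'' These are not two independent steps. Deligne's reciprocity is what uniquely characterizes the canonical model, but once the tower with its Hecke action is defined over $\Rat$, the tautological section of $I^p_G$ over $\Sh_{K_p}$ is automatically a morphism of $\Rat$-schemes and its class in $I^p_G/K^p$ is automatically $\Gal$-equivariant; no further pointwise verification is needed, and appealing to CM points adds nothing. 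Deleting that clause gives an argument that matches the paper's in substance.
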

\begin{proof}
 Consider the pro-\'etale cover $\mathcal{S}_{\mathcal{K}_p}\to\mathcal{S}_{\mathcal{K}}$. Here,
 \[
  \mathcal{S}_{\mathcal{K}_p}=\varprojlim_{\mathcal{K}^{',p}\subset \mathcal{K}^p}\mathcal{S}_{\mathcal{K}_p\mathcal{K}^{',p}},
 \]
 where $\mathcal{K}^{',p}$ varies over the compact open sub-groups of $\mathcal{K}^p$. Then there is a canonical isomorphism of $\mathcal{G}_{\delta}(\Adele_f^p)$-torsors:
 \[
  I^p(A,\lambda)\xrightarrow{\simeq}\mathcal{S}_{\mathcal{K}_p}\times^{\mathcal{K}^p}\mathcal{G}_{\delta}(\Adele_f^p)\coloneqq (\mathcal{S}_{\mathcal{K}_p}\times\mathcal{G}_{\delta}(\Adele_f^p))/\mathcal{K}_p.
 \]
 The canonical $\mathcal{K}$-level structure $[\eta]$ over $\mathcal{S}_{\mathcal{K}}$ is now obtained from the tautological section $\eta\in H^0(\mathcal{S}_{\mathcal{K}_p},I^p(A,\lambda))$.

 Similarly, the $G(\Adele_f^p)$-torsor $I^p_G$ over $\Sh_K$ has a canonical reduction of structure group to a $K^p$-torsor pro-represented by the pro-finite cover $\Sh_{K_p}\to\Sh_K$ where:
 \[
  \Sh_{K_p}=\varprojlim_{K^{',p}\subset K^p}\Sh_{K_pK^{',p}}.
 \]
 Here, $K^{',p}$ varies over the compact open sub-groups of $K^p$.

 Therefore, the image $[\eta_G]\in H^0(\Sh_K,I^p_G/K^p)$ of the tautological section $\eta_G\in H^0(\Sh_{K_p},I^p_G)$ is the $K^p$-level structure we seek.
\end{proof}

\subsection{}\label{spin:subsec:derhamtorsor}
Notice that, for any algebraic representation $U$ of $G$, the $G(\Adele_f^p)$-torsor $I^p_G$ allows us to functorially descend the $\Adele_f^p$-adic local system $\bm{U}_B\otimes\Adele_f^p$ over $\Sh_K$. We take it to be the contraction product:
\[
 \bm{U}_{\Adele_f^p}\coloneqq I^p_G\times^{G(\Adele_f^p)}U(\Adele_f^p).
\]

Similarly, the $K_p$-torsor $I_{p,G}\coloneqq\Sh_{K^p}\to\Sh_{K}$ used in the proof of (\ref{spin:prp:prdescent}) allows us to functorially descend the $p$-adic sheaf $\bm{U}_B\otimes\Int_p$:
\[
 \bm{U}_p\coloneqq I_{p,G}\times^{G(\Int_p)}U(\Int_p).
\]

We can also descend the filtered vector bundle with connection $(\bm{U}_{\dR,\Comp},F^{\bullet}\bm{U}_{\dR,\Comp})$ canonically to a pair $(\bm{U}_{\dR,\Rat},F^\bullet\bm{U}_{\dR,\Rat})$ over $\Sh_K$. To do this, consider the functor $\mathcal{P}_{\dR,\Rat}$ on $\Sh_K$-schemes that assigns to any $\Sh_K$-scheme $T$ the set:
\[
  \mathcal{P}_{\dR,\Rat}(T)=\begin{pmatrix}
    C\text{-equivariant $\Int/2\Int$-graded $\Reg{T}$-module isomorphisms }\\\xi:H\otimes_{\Int_{(p)}}\Reg{T}\xrightarrow{\simeq}\bm{H}_{\dR,\Rat,T}\\\text{carrying $L\otimes\Reg{T}\subset\bm{H}^{\otimes(2,2)}\otimes\Reg{T}$ onto $\bm{L}_{\dR,\Rat,T}\subset\bm{H}^{\otimes(2,2)}_{\dR,\Rat,T}$}.
  \end{pmatrix}
\]
$G_{\Rat}$ acts on $\mathcal{P}_{\dR,\Rat}$ by pre-composition. By working over $\Sh_{K,\Comp}^{\an}$, we can show that $\mathcal{P}_{\dR,\Rat}$ is a $G_{\Rat}$-torsor over $\Sh_K$. Observe that the connection on $\bm{H}_{\dR,\Rat}$ equips $\mathcal{P}_{\dR,\Rat}$ with an integrable connection. For any representation $U$ as above, we can now take $\bm{U}_{\dR,\Rat}$ to be the contraction product:
\[
 \bm{U}_{\dR,\Rat}=\mathcal{P}_{\dR,\Rat}\times^{G_{\Rat}}U\coloneqq(\mathcal{P}_{\dR,\Rat}\times_{\Spec\Rat}U)/G_{\Rat}.
\]
Here, we are viewing $U$ as a vector bundle over $\Spec\Rat$, and $G_{\Rat}$ acts diagonally on the product $\mathcal{P}_{\dR,\Rat}\times U$.

To construct the filtration $F^\bullet\bm{U}_{\dR,\Rat}$, we fix an isotropic line $F^1L\subset L$. The stabilizer of this line is a parabolic sub-group $P_{\Rat}\subset G_{\Rat}$. Now consider the sub-functor $\mathcal{P}_{\dR,P_{\Rat}}\subset\mathcal{P}_{\dR,\Rat}$ given by:
\[
 \mathcal{P}_{\dR,P_{\Rat}}(T)=\bigl(\xi\in\mathcal{P}_{\dR,\Rat}(T):\;\xi(F^1L\otimes\Reg{T})=F^1\bm{L}_{\dR,\Rat,T}\bigr).
\]
This is a $P_{\Rat}$-torsor over $\Sh_K$.

Choose any co-character $\mu:\Gmh{\Rat}\to P_{\Rat}$ splitting the $2$-step filtration $0\subset F^1H_{\Rat}\subset H_{\Rat}$ with $F^1H_{\Rat}=\ker(F^1L_{\Rat})$ (cf.~\ref{cliff:subsec:parabolic}). On all $G_{\Rat}$-representations $U$, the action of $\mu(\Gm)$ produces a grading and hence a filtration $F^\bullet U_{\Rat}$ that is stabilized by $P_{\Rat}$, and is independent of the choice of $\mu$. We now set:
\[
 F^\bullet\bm{U}_{\dR,\Rat}\coloneqq\mathcal{P}_{\dR,P_{\Rat}}\times^{P_{\Rat}}F^\bullet U.
\]

The pair $(\bm{U}_{\dR,\Rat},F^\bullet\bm{U}_{\dR,\Rat})$ is the descent that we seek.

It is easy to check that when $U=H$ (for which it is essentially tautological) or $U=L$, these constructions agree with the ones already given above.

In sum, we have defined a functor $\bb{V}_{\Rat}$ from the category of algebraic representations $U$ of $G$ to the category of tuples
\[
\bigl(\bm{U}_B,(\bm{U}_p,\alpha_p),(\bm{U}_{\Adele^p_f},\alpha_{\Adele_f^p}),(\bm{U}_{\dR,\Rat},F^\bullet\bm{U}_{\dR,\Rat},\alpha_{\dR})\bigr),
\]
where $\bm{U}_B$ is a local system over $\Sh_{K,\Comp}^{\an}$; $\bm{U}_p$ (resp. $\bm{U}_{\Adele^p_f}$) are locally constant sheaves over $\Sh_K$ equipped with comparison isomorphisms:
\begin{align*}
 \alpha_p:\bm{U}_B\otimes\Int_p&\xrightarrow{\simeq}\bm{U}_p\rvert_{\Sh_{K,\Comp}^{\an}};\\
 \alpha_{\Adele_f^p}:\bm{U}_B\otimes\Adele^p_f&\xrightarrow{\simeq}\bm{U}_{\Adele_f^p}\rvert_{\Sh_{K,\Comp}^{\an}}.
\end{align*}
Moreover, $(\bm{U}_{\dR,\Rat},F^\bullet\bm{U}_{\dR,\Rat})$ is a filtered vector bundle over $\Sh_K$ with integrable connection, equipped with a parallel comparison isomorphism:
\begin{align*}
  \alpha_{\dR}:\bm{U}_B\otimes\Reg{\Sh_{K,\Comp}^{\an}}&\xrightarrow{\simeq}\bm{U}_{\dR,\Rat}\rvert_{\Sh_{K,\Comp}^{\an}}.
\end{align*}
Finally, we require that $\bigl(\bm{U}_B,\alpha_{\dR}^{-1}(F^\bullet\bm{U}_{\dR,\Rat}\rvert_{\Sh_{K,\Comp}^{\an}})\bigr)$ is a variation of Hodge structures over $\Sh_{K,\Comp}^{\an}$.

The functor $\bb{V}_{\Comp}$ from (\ref{spin:subsec:variations}) factors through $\bb{V}_{\Rat}$ in the obvious way.

\section{Integral canonical models I: the self-dual case}~\label{sec:self-dual}

Throughout this section, we will assume that $(L,Q)$ is a self-dual quadratic space over $\Int_{(p)}$.

\subsection{}\label{spin:subsec:intcan}
We return now to the Kuga-Satake map  $\Sh_K\to\Sh_{\mathcal{K}}$. We will always assume that $K^p$ and $\mathcal{K}^p$ are chosen such that $\Sh_{\mathcal{K}}$ admits the above description as a fine moduli scheme over $\Rat$ with integral model $\mathcal{S}_{\mathcal{K}}$ over $\Int_{(p)}$.

\begin{defn}\label{spin:defn:extension}
A pro-scheme $X$ over $\Int_{(p)}$ satisfies the \defnword{extension property} if, for any regular, locally healthy $\Int_{(p)}$-scheme $S$, any map $S\otimes\Rat\to X$ extends to a map $S\to X$.
\end{defn}

Let $\Sh_{K_p}$ (resp. $\Sh_{K_{0,p}}$) be the pro-variety attached to the inverse system $\{\Sh_{K_pK^p}\}$ (resp. $\{\Sh_{K_{0,p}K^p_0}\}$); here, $K^p$ varies over the compact open sub-groups of $G(\Adele_f^p)$.

\begin{defn}\label{spin:defn:intcanonical}
A model $\Ss_{K_p}$ (resp. $\Ss_{K_{0,p}}$) for $\Sh_{K_p}$ (resp. $\Sh_{K_{0,p}}$) over $\Int_{(p)}$ is an \defnword{integral canonical model} if it is regular, locally healthy, and has the extension property. If $\Ss_{K_p}$ is an integral canonical model for $\Sh_{K_p}$, and if $K=K_pK^p$ is a compact open in $G(\Adele_f)$, then we will call $\Ss_K\coloneqq \Ss_{K^p}/K^p$ the \defnword{integral canonical model} for $\Sh_K$. A similar convention will hold for quotients $\Ss_{K_0}\coloneqq\Ss_{K_{0,p}}/K^p_0$ by compact open sub-group $K^p_0\subset G_0(\Adele_f^p)$.
\end{defn}

For any compact open sub-group $\mathcal{K}'\subset \mathcal{K}$, with $\mathcal{K}'_p=\mathcal{K}_p$, set $K'=\mathcal{K}'\cap G(\Adele_f)$. Let $\Ss_{K'}$ be the normalization of the Zariski closure of $\Sh_{K'}$ in $\mathcal{S}_{\mathcal{K'}}$. Consider the pro-scheme:
\begin{align*}
  \Ss_{K_p}&=\varprojlim_{K'\subset K}\Ss_{K'}.
\end{align*}
Here, $K'$ varies over compact open sub-groups of $K$ with $K'_p=K_p$.

We have the following result, due (independently) to Kisin~\cite{kis3}*{2.3.8, 3.4.14}, and Vasiu~\cite{vasiu:preab}.
\begin{thm}\label{spin:thm:kisin}
$\Ss_{K_p}$ is a smooth integral canonical model for $\Sh_{K_p}$ over $\Int_{(p)}$. Moreover, the finite Galois cover $\Sh_{K_p}\to\Sh_{K_{0,p}}$ extends to a pro-finite Galois cover $\Ss_{K_p}\to\Ss_{K_{0,p}}$, where $\Ss_{K_{0,p}}$ is a smooth integral canonical model for $\Sh_{K_{0,p}}$ over $\Int_{(p)}$.
\end{thm}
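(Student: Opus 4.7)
The plan is to deduce this from the general theory developed by Kisin and Vasiu for integral canonical models of Hodge-type Shimura varieties at hyperspecial level, applied to the Kuga-Satake embedding $(G_{\Rat},X)\into(\mathcal{G}_{\delta,\Rat},\mathcal{X})$ constructed in \S\ref{spin:subsec:hodgeemb}. Since $(L,Q)$ is self-dual at $p$, the reductive $\Int_{(p)}$-group scheme $G$ from \S\ref{lattice:subsec:parahoric} is reductive with $G(\Int_p)=K_p$, so $K_p$ is hyperspecial in $G(\Rat_p)$. The Siegel integral model $\mathcal{S}_{\mathcal{K}}$ is smooth over $\Int_{(p)}$ by its moduli interpretation, and it carries the universal polarized abelian scheme $(A,\lambda)$ together with the level structure $[\eta]$.

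First, I would check that the tuple $\Ss_{K_p}=\varprojlim_{K'}\Ss_{K'}$, defined as in the statement, inherits from $\mathcal{S}_{\mathcal{K}_p}$ a pro-\'etale $K_p$-torsor and an action of $G(\Adele_f^p)$, and then verify smoothness at each finite level $\Ss_{K'}$. For smoothness, the essential point is that there is a local model diagram whose local model is $\on{M}^{\loc}_G$ from \S\ref{lattice:subsec:naive}: concretely, one shows that the Hodge filtration on $\bm{H}_{\dR,\Rat}$ over a point specializes to an isotropic line in $\bm{L}_{\dR}$, and conversely all such isotropic lines come from points. By Lemma \ref{lattice:lem:mloc}(\ref{mlocnorm}), when $L$ is self-dual at $p$ one has $t=0$ and $\on{M}^{\loc}_G$ is smooth over $\Int_{(p)}$, so $\Ss_{K'}$ is smooth. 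This requires first showing that the tensor $\pr_{\dR,\Rat}$ and its $\ell$-adic and crystalline avatars extend across $\Ss_{K'}$, which is precisely the content of Kisin's construction via absolute Hodge cycles and Faltings' deformation theory for $p$-divisible groups with crystalline Tate tensors.

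Next I would verify the extension property. Given a regular, locally healthy $S$ and a map $f_{\Rat}:S_{\Rat}\to\Sh_{K_p}$, pullback gives a Kuga-Satake abelian scheme $A^\KS_{S_{\Rat}}$ together with its level structure. By the definition of locally healthy (Definition \ref{lattice:defn:healthy}) and Theorem \ref{lattice:thm:vasiuzink}, the abelian scheme $A^\KS_{S_{\Rat}}$ extends uniquely to an abelian scheme $A^\KS_S$ over $S$; the polarization and level structure extend by the same N\'eron-type argument using flatness of $S$ over $\Int_{(p)}$. This produces a map $S\to\mathcal{S}_{\mathcal{K}_p}$, and since $S$ is normal and agrees with $\Ss_{K_p}$ generically (upon passing to a finite cover to account for the finite $K^p$), the map factors through the normalization $\Ss_{K_p}$. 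Combined with smoothness and flatness this completes the verification that $\Ss_{K_p}$ is an integral canonical model.

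Finally, to obtain $\Ss_{K_{0,p}}$, I would take the scheme-theoretic quotient of $\Ss_{K_p}$ by the profinite group $\Delta(K)$ from \S\ref{spin:subsec:shimura}, which acts through the central subgroup $\Adele_f^\times$ of $G(\Adele_f)$. Since $\Delta(K)$ acts freely on $\Sh_{K_p}$ and the quotient is $\Sh_{K_{0,p}}$, and since the action on $\Ss_{K_p}$ is by finite \'etale automorphisms at each finite level (being free in characteristic $0$ and extending by unique specialization of \'etale covers to the regular integral model), the quotient $\Ss_{K_{0,p}}=\Ss_{K_p}/\Delta(K)$ is smooth and satisfies the extension property. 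The main obstacle in this entire program is the smoothness step, which really depends on Faltings' deformation theorem and Kisin's identification of the local structure with the local model attached to the hyperspecial group $G$; everything else is essentially formal given the Vasiu-Zink extension result.
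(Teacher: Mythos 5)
The paper offers no proof of this theorem; it attributes the result to Kisin \cite{kis3}*{2.3.8, 3.4.14} and Vasiu \cite{vasiu:preab}, and the remainder of \S4 extracts only the ingredients needed downstream. Your sketch is therefore an attempt to reconstruct Kisin's argument for the $\GSpin$/$\SO$ case, and it correctly identifies the main inputs: hyperspeciality of $K_p$ from self-duality, the extension of the tensor $\pr$ via absolutely Hodge cycles and Faltings' deformation theory, the local model $\on{M}^{\loc}_G$ being smooth when $t=0$, the Vasiu-Zink theorem for the extension property, and the quotient by $\Delta(K)$ for the orthogonal case.

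Two steps in your verification of the extension property are imprecise enough to be genuine gaps. First, ``locally healthy'' (or even ``healthy'') only guarantees that abelian schemes extend from an open $U\subset S$ that already contains $S_{\Rat}$ \emph{and} all generic points of $S_{\Field_p}$; it does not by itself carry $A^{\KS}_{S_{\Rat}}$ across the codimension-one points of the special fiber of $S$. That step needs a separate good-reduction argument: because the map $S_{\Rat}\to\Sh_{K_p}$ trivializes the prime-to-$p$ \'etale cohomology of $A^{\KS}$ (the pro-variety $\Sh_{K_p}$ carries full level structure away from $p$), the $\ell$-adic Tate modules are unramified at the mixed-characteristic DVR local rings of $S$ along the special fiber, so N\'eron-Ogg-Shafarevich applies; only then does the Vasiu-Zink healthy property finish the extension. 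Second, freeness of the $\Delta(K)$-action on the integral model $\Ss_K$ does not follow from freeness on $\Sh_K$ by ``unique specialization of \'etale covers'': purity of the branch locus does not apply when the possible ramification locus is a divisor in the special fiber, so the argument as written fails. The correct justification runs through the moduli interpretation as in \S\ref{lifts:subsec:ortho}: $[z]\in\Delta(K)$ replaces $[\eta^\KS_G]$ by $[\eta^\KS_G\circ z]$, and the multiplication-by-$n(z)$ isomorphism identifies this with twisting the level structure by an element of $\widehat{\Int}^{p,\times}$, so for $K^p$ neat and small enough no nontrivial $[z]$ fixes a geometric point of $\Ss_K$. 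With these two steps repaired, the route you take is the intended one.
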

\qed

Though we will not present the proof in its entirety, we will need some of its ingredients, which we describe now. They are extracted mainly from~\cite{kis3}.

\subsection{}\label{spin:subsec:cris}
By construction, the polarized abelian scheme $(A^{\KS}_{\Sh_K},\lambda^{\KS}_{\Sh_K})$ extends to a polarized abelian scheme $(A^{\KS},\lambda^{\KS})$ over $\Ss_K$. By the theory of Ner\'on models (cf.~\cite{falchai}*{I.2.7}), $A^{\KS}$ has a unique $\Int/2\Int$-grading and $C$-action extending those on $A^{\KS}_{\Sh_K}$.

For any $\Field_p$-scheme $S$, let $(S/\Int_p)_{\cris}$ be the big crystalline site for $S$ over $\Spec~\Int_p$ (cf.~\cite{berthelot_messing}*{p. 178}), and let $\Reg{S}^{\cris}$ be the structure sheaf of $(S/\Int_p)_{\cris}$. Recall that an object in $(S/\Int_p)_{\cris}$ is a triple $(U,T,\gamma)$, where $U$ is an $S$-scheme, $U\into T$ is a nilpotent thickening of $\Int_p$-schemes with ideal of definition $\J_{(U\into T)}$, and $\gamma$ is a divided power structure on $\J_{(U\into T)}$ that is compatible with the natural divided power structure on the ideal $p\Reg{T}$. For any sheaf $G$ over $(S/\Int_p)_{\cris}$, and any object $(U,T,\gamma)$ in $(S/\Int_p)_{\cris}$, we denote by $G_T$ the restriction of $G$ to the fppf site over $T$.

Let $A^\KS_{\Field_p}$ be the fiber of $A^\KS$ over $\Ss_{K,\Field_p}$. Let $\bm{H}_{\cris}$ be the first crystalline cohomology of $A^\KS_{\Field_p}$ over $\Ss_{K,\Field_p}$. This is a crystal of locally free $\Reg{\Ss_{K,\Field_p}}^{\cris}$-modules over $(\Ss_{K,\Field_p}/\Int_p)_{\cris}$. Let $\hat{\Ss}_{K,\Int_p}$ be the completion of $\Ss_K$ along its special fiber. We then have a natural identification
\[
  \bm{H}_{\dR}\rvert_{\hat{\Ss}_{K,\Int_p}}=\varprojlim_n\bm{H}_{\cris,\Ss_{K,\Int/p^n}}
\]
of coherent sheaves over $\hat{\Ss}_{K,\Int_p}$. In other words, for any $\Ss_K$-scheme $T$ in which $p$ is nilpotent, we have a canonical identification of coherent sheaves
\[
  \bm{H}_{\dR}\rvert_T=\bm{H}_{\cris,T}.
\]

$\bm{H}_{\cris}$ has more structure: it is an $F$-crystal. More precisely, let $\Fr$ be the absolute Frobenius endomorphism on $\Ss_{K,\Field_p}$. Then $\Fr^*\bm{H}_{\cris}$ is identified with the relative crystalline cohomology of the Frobenius pull-back $\Fr^*A^\KS_{\Field_p}$, and the relative Frobenius map $A^\KS_{\Field_p}\to\Fr^*A^\KS_{\Field_p}$ induces a map of crystals
\[
  \bm{F}:\Fr^*\bm{H}_{\cris}\rightarrow\bm{H}_{\cris}.
\]
If $T$ is any $\Ss_{K,\Field_p}$-scheme, we get an induced map of coherent sheaves
\[
  \bm{F}:\Fr^*\bm{H}_{\dR}\rvert_T\rightarrow\bm{H}_{\dR}\rvert_{T}.
\]
The kernel of this map is precisely the Hodge filtration $\Fr^*F^1\bm{H}_{\dR}\rvert_{T}\subset\Fr^*\bm{H}_{\dR}\rvert_{T}$.

\subsection{}
Suppose that we have a point $s\in\Ss_K(k)$, where $k$ is a perfect field of characteristic $p$. Set $W=W(k)$; then the restriction $\bm{H}_{\cris}$ (resp. $\bm{H}_{\cris}^{\otimes(1,1)}$) to $(\Spec k/\Int_p)_{\cris}$ corresponds to the ${W}$-module $H^1_{\cris}(A^\KS_s/{W})$ (resp. $\End\bigl(H^1_{\cris}(A^\KS_s/{W})\bigr)$). If $\sigma$ is the canonical Frobenius lift on ${W}$, $\bm{F}$ induces a map
\[
  \bm{F}_s:\sigma^*\bm{H}_{\cris,s}\rightarrow \bm{H}_{\cris,s},
\]
giving $\bm{H}_{\cris,s}$ the structure of an $F$-crystal over ${W}$. Conjugation by $\bm{F}_s$ induces an $F$-isocrystal structure on $\bm{H}_{\cris,s}[p^{-1}]^{\otimes(1,1)}$.

\begin{prp}\label{spin:prp:kisinpadichodge}
Assume that $W$ admits an embedding into $\Comp$. Then there exists a canonical $F$-invariant tensor $\pr_{\cris,s}\in\bm{H}_{\cris,s}^{\otimes(2,2)}$ such that, given any finite extension $E/W_{\Rat}$, an algebraic closure $\overline{E}/E$ of $E$, and any lift $\widetilde{s}\in\Ss_K(\Reg{E})$ of $s$, the following properties hold:
\begin{enumerate}[itemsep=0.12in]
  \item\label{intphodge:bo}Let $\widetilde{s}_E\in\Sh_K(E)$ be the attached $E$-valued point. Under the Berthelot-Ogus comparison isomorphism
 \[
  \bm{H}_{\dR,\tilde{s}_E}\xrightarrow{\simeq}\bm{H}_{\cris,s}\otimes_WE
 \]
 $\pr_{\dR,\Rat,\widetilde{s}_E}\otimes 1$ is carried to $\pr_{\cris,s}\in\bm{H}_{\cris,s}^{\otimes(2,2)}$.
  \item\label{intphodge:comp}Let $\widetilde{s}_{\overline{E}}\in\Sh_K(\overline{E})$ be the attached $\overline{E}$-valued point. The crystalline comparison isomorphism
\[
\bm{H}_{p,\widetilde{s}_{\overline{E}}}\otimes_{\Int_p}\Bcris\xrightarrow{\simeq}\bm{H}_{\cris,s}\otimes_{{W}}\Bcris
\]
respects grading, is $C$-equivariant and carries $\pr_{p,\widetilde{s}_{\overline{E}}}\otimes 1$ to $\pr_{\cris,s}\otimes 1$.

\item\label{intphodge:trivial}There exists a $C$-equivariant isomorphism of $\Int/2\Int$-graded ${W}$-modules
  \[
    H\otimes_{\Int_{(p)}}{W}\xrightarrow{\simeq}\bm{H}_{\cris,s}
  \]
  carrying $\pr$ to $\pr_{\cris,s}$.
\item\label{intphodge:vcris}$\pr_{\cris,s}$ is an idempotent projector on $\bm{H}_{\cris,s}^{\otimes(1,1)}$. Set
  \[
  \bm{L}_{\cris,s}=\im~\pr_{\cris,s}\subset\bm{H}_{\cris,s}^{\otimes(1,1)}.
  \]
  Then $\bm{L}_{\cris,s}$ is a self-dual quadratic space over ${W}$ that is isometric to $L\otimes{W}$.
\item\label{intphodge:hodge}Set $\bm{L}_{\dR,s}=\bm{L}_{\cris,s}\otimes k$; then the Hodge filtration $F^1\bm{H}_{\dR,s}\subset\bm{H}_{\dR,s}$ is $\GSpin(\bm{L}_{\dR,s})$-split. More precisely, there exists a canonical isotropic line $F^1\bm{L}_{\dR,s}\subset\bm{L}_{\dR,s}$ such that
    \[
    F^1\bm{H}_{\dR,s}=\ker\bigl(F^1\bm{L}_{\dR,s}\bigr)=\im\bigl(F^1\bm{L}_{\dR,s}\bigr).
    \]
\end{enumerate}
\end{prp}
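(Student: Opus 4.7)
My plan is to define $\pr_{\cris,s}$ by transporting the \'etale tensor $\pr_p$ across the crystalline comparison isomorphism at any lift $\widetilde{s}$ of $s$, and then to deduce all the listed compatibilities from standard $p$-adic Hodge theory. Since $\Ss_K$ is smooth by Theorem \ref{spin:thm:kisin}, the given $k$-point $s$ lifts to some $\widetilde{s}\in\Ss_K(\Reg{E})$ for a suitable finite extension $E/W_{\Rat}$. Because $A^\KS_{\widetilde{s}}$ has good reduction, Fontaine--Messing--Faltings--Tsuji supplies a Galois- and Frobenius-equivariant comparison isomorphism
\[
  \bm{H}_{p,\widetilde{s}_{\overline{E}}}\otimes_{\Int_p}\Bcris\xrightarrow{\simeq}\bm{H}_{\cris,s}\otimes_W\Bcris,
\]
which is compatible after $\otimes\Bdr$ with Hodge filtrations and with the Berthelot--Ogus isomorphism $\bm{H}_{\dR,\widetilde{s}_E}\otimes_E\Bdr\xrightarrow{\simeq}\bm{H}_{\cris,s}\otimes_W\Bdr$. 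I would take $\pr_{\cris,s}$ to be the image of $\pr_{p,\widetilde{s}_{\overline{E}}}\otimes 1$; Galois invariance of $\pr_p$ then gives Frobenius invariance of $\pr_{\cris,s}$, and Berthelot--Ogus identifies the image with $\pr_{\dR,\Rat,\widetilde{s}_E}\otimes 1$. This is the rational form of (\ref{intphodge:bo})--(\ref{intphodge:comp}), and it also shows that $\pr_{\cris,s}$ is independent of $\widetilde{s}$, since $\pr_{\dR,\Rat}$ is a global section on $\Sh_K$.

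The central obstacle is to promote these to integral statements, since a priori $\pr_{\cris,s}$ lies only in $\bm{H}_{\cris,s}^{\otimes(2,2)}[p^{-1}]$. For this I would exploit that $L$ is self-dual: by Remark \ref{cliff:rem:nondeg}, $\pr$ already lies in the integral algebra $H^{\otimes(2,2)}$, and hence via the pro-\'etale $K_p$-torsor $\Sh_{K_p}\to\Sh_K$ implicit in Proposition \ref{spin:prp:etalerealization}, the class $\pr_p$ is integral as a section of $\bm{H}_p^{\otimes(2,2)}$. I would then invoke Kisin's integral refinement of Fontaine--Laffaille theory in \cite{kis3}*{\S 1.4, \S 2.2}, which in the hyperspecial setting at hand converts $G(\Int_p)$-invariant integral \'etale tensors into $F$-invariant tensors on the Dieudonn\'e crystal. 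This is exactly where the self-duality of $(L,Q)$ at $p$ is indispensable: it makes $G_{\Int_p}$ a smooth reductive group with hyperspecial generic fiber, putting us squarely in the range of Kisin's theory; the general maximal-lattice case treated later in the paper has to work significantly harder because $G_{\Int_p}$ is only parahoric there.

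The remaining assertions are then essentially formal. By Lemma \ref{cliff:lem:pr}(\ref{gspintensors}), the functor of $C$-equivariant $\Int/2\Int$-graded isomorphisms $H\otimes_{\Int_{(p)}}W\xrightarrow{\simeq}\bm{H}_{\cris,s}$ carrying $\pr$ to $\pr_{\cris,s}$ is a $G_W$-torsor; since $G$ is smooth reductive and $W$ is strictly henselian, Lang's theorem together with Hensel's lemma trivializes this torsor, proving (\ref{intphodge:trivial}). Property (\ref{intphodge:vcris}) then follows because, under any such trivialization, the image of the idempotent $\pr_{\cris,s}$ corresponds to $L\otimes W$ equipped with its self-dual form. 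For (\ref{intphodge:hodge}), the $P_{\Rat}$-torsor $\mathcal{P}_{\dR,P_{\Rat}}$ built in (\ref{spin:subsec:derhamtorsor}) shows that over $\Sh_K$ the filtration on $\bm{H}_{\dR,\Rat}$ is $G_{\Rat}$-split by an isotropic line in $\bm{L}_{\dR,\Rat}$; I would spread this line at $\widetilde{s}_E$ to an isotropic $\Reg{E}$-line in $\bm{L}_{\cris,s}\otimes_WE$ using Berthelot--Ogus, reduce mod $\mx_E$ to obtain $F^1\bm{L}_{\dR,s}\subset\bm{L}_{\dR,s}$, and then apply Proposition \ref{cliff:prp:filtrations} to identify the parabolic flag it induces on $\bm{H}_{\dR,s}$ with the Hodge filtration.
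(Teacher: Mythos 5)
Your proposal follows essentially the same route as the paper: define $\pr_{\cris,s}$ by transporting $\pr_p$ across the crystalline comparison, invoke Kisin's integral $p$-adic Hodge theory from \cite{kis3} to promote the resulting tensor from $\bm{H}_{\cris,s}^{\otimes(2,2)}[p^{-1}]$ to $\bm{H}_{\cris,s}^{\otimes(2,2)}$, and then use Lemma~\ref{cliff:lem:pr}(\ref{gspintensors}) to recognize the scheme of compatible trivializations as a torsor under the smooth group $G_W$. Two small remarks: since $k$ is algebraically closed (not merely finite), Lang's theorem is superfluous — smoothness alone gives a $k$-point of the torsor and Hensel lifts it over $W$; and in your treatment of part~(\ref{intphodge:hodge}), the assertion that the characteristic-zero isotropic line $F^1\bm{L}_{\dR,\widetilde{s}_E}\subset\bm{L}_{\cris,s}\otimes E$ extends to an $\Reg{E}$-direct-summand of $\bm{L}_{\cris,s}\otimes\Reg{E}$ is exactly what requires the additional input \cite{kis3}*{1.4.3(4)} (compatibility of the Hodge filtration with the integral crystalline tensor), not just the characteristic-zero parabolic structure — you should cite that rather than leaving it to a "spreading" argument. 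With those adjustments the proof is correct and in the same spirit as the one in the paper.
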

\begin{proof}
  Fix $E/\Rat_p$ and $\widetilde{s}$ as in the proposition. Then $\pr_{p,\widetilde{s}_{\overline{E}}}$ is a Galois-invariant tensor in $\bm{H}_{p,\widetilde{s}_{\overline{E}}}^{\otimes(2,2)}$. Therefore, under the crystalline comparison isomorphism, it is carried to an $F$-invariant tensor $\pr_{\cris,s}\in\bm{H}_{\cris,s}^{\otimes(2,2)}\bigl[p^{-1}\bigr]$. In fact, $\pr_{\cris,s}$ belongs to $\bm{H}_{\cris,s}^{\otimes(2,2)}$~\cite{kis3}*{1.3.6(1),1.4.3(1)}. We have to show that it has the desired properties.

  The comparison isomorphisms are functorial and are therefore $C$-equivariant and respect gradings. The de Rham comparison isomorphism
  \[
   \bm{H}_{p,\widetilde{s}_{\overline{E}}}\otimes_{\Int_p}\Bdr\xrightarrow{\simeq}\bm{H}_{\dR,\tilde{s}_E}\otimes_{E}\Bdr
  \]
  respects $\Int/2\Int$-grading, is $C$-equivariant and carries $\pi_{p,\widetilde{s}_{\overline{E}}}\otimes 1$ to $\pi_{\dR,\Rat,\widetilde{s}_E}\otimes 1$. This is a result of Blasius-Wintenberger~\cite{blasius}, when $\widetilde{s}_E$ arises from a point valued in a number field. For the generality we need, cf.~\cite{moonen}*{5.6.3}.

  The crystalline and de Rham comparison isomorphisms are compatible with the Berthelot-Ogus isomorphism. This implies that the isomorphism in (\ref{intphodge:bo}) carries $\pr_{\dR,\Rat,\widetilde{s}_E}$ to $\pr_{\cris,s}$. In particular, (\ref{intphodge:bo}) holds for all lifts $\widetilde{s}$ if and only if (\ref{intphodge:comp}) does. That (\ref{intphodge:bo}) is true for all lifts follows from a parallel transport argument; cf. the proof of \cite{kis3}*{2.3.5}.

  Now, \cite{kis3}*{1.4.3(3)} shows that there exists a $C$-equivariant $\Int/2\Int$-graded isomorphism
  \[
   \bm{H}_{p,\widetilde{s}_{\overline{E}}}\otimes_{\Int_p}{W}\xrightarrow{\simeq}\bm{H}_{\cris,s}
  \]
  carrying $\pr_{p,\widetilde{s}_{\overline{E}}}$ to $\pr_{\cris,s}$. The main input is (\ref{cliff:lem:pr}), which shows that $\GSpin(L)$ is the point-wise stabilizer in $\GL_{C}^+(H)$ of $\pr$.

  So to show (\ref{intphodge:trivial}) we only have to observe that there exists a $C$-equivariant graded isomorphism
  \[
   H\otimes_{\Int_{(p)}}\Int_p\xrightarrow{\simeq}\bm{H}_{p,\widetilde{s}_{\overline{E}}}
  \]
  carrying $\pr$ to $\pr_{p,\widetilde{s}_{\overline{E}}}$. Indeed, we can fix an embedding $\sigma:\overline{E}\into\Comp$ and identify $\bm{H}_{p,\widetilde{s}_{\overline{E}}}$ with the Betti cohomology $\bm{H}_{B,\sigma(\widetilde{s}_{\overline{E}})}\otimes\Int_p$. Now the assertion is clear from the construction of the sheaf $\bm{H}_B$ in (\ref{spin:subsec:variations}).

  (\ref{intphodge:vcris}) is immediate from (\ref{intphodge:trivial}), and the first assertion of (\ref{intphodge:hodge}) follows from \cite{kis3}*{1.4.3(4)}. The second assertion of (\ref{intphodge:hodge}) follows from the discussion in (\ref{cliff:subsec:parabolic}). Note that we are viewing $\bm{L}_{\dR,s}$ as a sub-space of $\bm{H}_{\dR,s}^{\otimes(1,1)}$ via the canonical identification $\bm{H}_{\cris,s}\otimes k=\bm{H}_{\dR,s}$, and we are identifying $\GSpin(\bm{L}_{\dR,s})$ with the sub-group of $C$-equivariant, graded isomorphisms of $\bm{H}_{\dR,s}$, which preserve $\bm{L}_{\dR,s}$ under conjugation.
\end{proof}

\subsection{}\label{spin:subsec:strongdiv}
Suppose that we are given a lift $\widetilde{s}\in\Ss_K(W)$. This equips $\bm{H}_{\cris,s}$ with a filtration $F^1\bm{H}_{\cris,s}$: It is the pull-back of the Hodge filtration $F^1\bm{H}_{\dR,\widetilde{s}}\subset\bm{H}_{\dR,\widetilde{s}}$. This filtration is \defnword{strongly divisible}. That is, we have:
\[
 \bm{F}_s\biggl(\sigma^*\bigl(p^{-1}F^1\bm{H}_{\cris,s}+\bm{H}_{\cris,s}\bigr)\biggr)=\bm{H}_{\cris,s}.
\]
This is equivalent to the assertion that $\on{Fr}^*F^1\bm{H}_{\dR,s}$ is the kernel of the mod-$p$ Frobenius $\on{Fr}^*\bm{H}_{\dR,s}\to\bm{H}_{\dR,s}$.

If we now endow $\bm{H}^{\otimes(1,1)}_{\cris,s}$ with its induced filtration and $\bm{H}_{\cris,s}^{\otimes(1,1)}[p^{-1}]$ with the conjugation action of $\bm{F}_s$, then we again obtain a strongly divisible module, in the sense that the following identity holds (cf.~\cite{laffaille}*{4.2}):
\begin{align}\label{spin:eqn:strongdivh}
    \bm{F}_s\biggl(\sigma^*\bigl(p^{-1}F^1\bm{H}^{\otimes(1,1)}_{\cris,s}+F^0\bm{H}^{\otimes(1,1)}_{\cris,s}+p\bm{H}^{\otimes(1,1)}_{\cris,s}\bigr)\biggr)=\bm{H}^{\otimes(1,1)}_{\cris,s}.
\end{align}
Applying the $\bm{F}$-equivariant, filtration preserving projector $\pr_{\cris,s}$ to (\ref{spin:eqn:strongdivh}) now gives us:
\begin{align}\label{spin:eqn:strongdiv}
    \bm{F}_s\biggl(\sigma^*\bigl(p^{-1}F^1\bm{L}_{\cris,s}+F^0\bm{L}_{\cris,s}+p\bm{L}_{\cris,s}\bigr)\biggr)=\bm{L}_{\cris,s}.
\end{align}

\subsection{}\label{spin:subsec:explicit}
Choose any co-character $\mu_0:\Gm\otimes k\rightarrow\GSpin(\bm{L}_{\dR,s})$ splitting the Hodge filtration, and let $\mu:\Gm\otimes{W}\rightarrow\GSpin(\bm{L}_{\cris,s})$ be any lift of $\mu_0$. It determines a lift $F^1\bm{H}_{\cris,s}\subset\bm{H}_{\cris,s}$ of the Hodge filtration as well as a splitting $\bm{H}_{\cris,s}=F^1\bm{H}_{\cris,s}\oplus\overline{F}^1\bm{H}_{\cris,s}$. Since $\mu$ factors through $\GSpin(\bm{L}_{\cris,s})$, it induces a splitting
\[
 \bm{L}_{\cris,s}=F^1\bm{L}_{\cris,s}\oplus\bm{L}^0_{\cris,s}\oplus\overline{F}^1\bm{L}_{\cris,s},
\]
where $F^1\bm{L}_{\cris,s}\subset\bm{L}_{\cris,s}$ is an isotropic line lifting $F^1\bm{L}_{\dR,s}$. We again have:
\[
 F^1\bm{H}_{\cris,s}=\ker(F^1\bm{L}_{\cris,s})=\im(F^1\bm{L}_{\cris,s}).
\]
As usual, we are viewing $\bm{L}_{\cris,s}$ as a space of endomorphisms of $\bm{H}_{\cris,s}$.

Let $U\subset\GL(\bm{H}_{\cris,s})$ be the opposite unipotent attached to this splitting: Namely, it is the unipotent radical of the parabolic sub-group attached to the filtration $\overline{F}^1\bm{H}_{\cris,s}$. Its intersection $U_G$ with $\GSpin(\bm{L}_{\cris,s})$ is again the unipotent radical of a parabolic sub-group of $\GSpin(\bm{L}_{\cris,s})$.

Let $\widehat{U}$ (resp. $\widehat{U}_G$) be the completion of $U$ (resp. $U_G$) along the identity section. Let $R$ (resp. $R_G$) be the ring of formal functions on $\widehat{U}$ (resp. $\widehat{U}_G$); then we have a surjection $R\twoheadrightarrow R_G$ of formally smooth ${W}$-algebras.

Set $\bm{H}_R=\bm{H}_{\cris,s}\otimes_{{W}}R$, and equip it with the constant filtration arising from the filtration $F^1\bm{H}_{\cris,s}$. Choose compatible isomorphisms
\[
 R\xrightarrow{\simeq}{W}\pow{t_1,\ldots,t_r};\;\;R_G\xrightarrow{\simeq}{W}\pow{t_1,\ldots,t_d}
\]
such that the identity sections are identified with the maps $t_i\mapsto 0$. Equip $R$ with the Frobenius lift $\varphi:t_i\mapsto t_i^p$. Equip $\bm{H}_R$ with the map:
\[
 \bm{F}_R:\varphi^*\bm{H}_R=\sigma^*\bm{H}_{\cris,s}\otimes_{{W}}R\xrightarrow{\bm{F}_s\otimes 1}\bm{H}_{\cris,s}\otimes_{{W}}R=\bm{H}_R\xrightarrow{g}\bm{H}_R,
\]
where $g\in\widehat{U}(R)$ is the tautological element.

By \cite{moonen}*{Thm. 4.4}, there is a unique topologically quasi-nilpotent integrable connection $\nabla_R:\bm{H}_R\to\bm{H}_R\otimes_R\fdiff{R/W}$ for which $\bm{F}_R$ is parallel. The tuple $(\bm{H}_R,\bm{F}_R,\nabla_R)$ determines, and is determined by, a unique $F$-crystal over $(\Spec(R\otimes\Field_p)/\Int_p)_{\cris}$, which we will denote by $\bm{H}$. The evaluation of $\bm{H}$ on the pro-nilpotent divided power thickening $\Spec(R\otimes\Field_p)\into\Spf R$ is identified with $\bm{H}_R$. Similarly, a change of scalars along $R\to R_G$ gives us a tuple $(\bm{H}_{R_G},\bm{F}_{R_G},\nabla_{R_G},F^1\bm{H}_{R_G})$ over $R_G$. This corresponds to a unique $F$-crystal $\bm{H}_G$ over $(\Spec(R_G\otimes\Field_p)/\Int_p)_{\cris}$.

Observe that $\bm{H}_{R_G}=\bm{H}_{\cris,s}\otimes_WR_G$ is equipped with the constant tensor $\pr_{R_G}=\pr_{\cris,s}\otimes 1\in\bm{H}_{R_G}^{\otimes(2,2)}$. We can view $\pr_{R_G}$ as an idempotent operator on $\bm{H}_{R_G}^{\otimes(1,1)}$. Write $\bm{L}_{R_G}$ for its image: this is a direct summand of $\bm{H}_{R_G}^{\otimes(1,1)}$, which can be identified with $\bm{L}_{\cris,s}\otimes R_G$.

\begin{prp}\label{spin:prp:moonen}
\mbox{}
\begin{enumerate}
\item~\label{moonen:tensor}The tensor $\pr_{R_G}\in\bm{H}_{R_G}^{\otimes(2,2)}$ is parallel, lies in $F^0\bm{H}_{R_G}^{\otimes(2,2)}$, and is $F$-invariant in $\bm{H}_{R_G}^{\otimes(2,2)}\bigl[p^{-1}\bigr]$.
\item~\label{moonen:strongdiv}The direct summand $\bm{L}_{R_G}\subset\bm{H}_{R_G}^{\otimes(1,1)}$ is stable under the connection $\nabla_{R_G}$ and $\bm{L}_{R_G}\bigl[p^{-1}\bigr]$ is stable under the conjugation action of $\bm{F}_{R_G}$. Furthermore, $\bm{L}_{R_G}$ with its induced filtration $F^\bullet\bm{L}_{R_G}$ is strongly divisible:
    \[
    \bm{F}_{R_G}\biggl(\varphi^*\bigl(p^{-1}F^1\bm{L}_{R_G}+F^0\bm{L}_{R_G}+p\bm{L}_{R_G}\bigr)\biggr)=\bm{L}_{R_G}
    \]
\end{enumerate}
\end{prp}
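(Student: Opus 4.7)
My plan is as follows.

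For part (\ref{moonen:tensor}), I exploit that $\pr_{R_G}=\pr_{\cris,s}\otimes 1$ is a constant tensor and that the filtration and Frobenius structure on $\bm{H}_R$ are built from data factoring through $\GSpin(\bm{L}_{\cris,s})$. Because the splitting cocharacter $\mu$ factors through $\GSpin(\bm{L}_{\cris,s})$, the induced weight decomposition of $\bm{H}_{\cris,s}^{\otimes(2,2)}$ is $\GSpin$-equivariant; by Lemma \ref{cliff:lem:pr}(\ref{gspintensors}) the tensor $\pr$ is $\GSpin$-fixed, hence of weight $0$, and thus lies in $F^0\bm{H}_{\cris,s}^{\otimes(2,2)}$. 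For $F$-invariance, I decompose $\bm{F}_{R_G}=g\circ(\bm{F}_s\otimes 1)$, with $g\in\widehat{U}_G(R_G)\subset\GSpin(\bm{L}_{R_G})$ the tautological element: $\bm{F}_s$ fixes $\pr_{\cris,s}$ by (\ref{spin:prp:kisinpadichodge}), and $g$ fixes $\pr_{R_G}$ by Lemma \ref{cliff:lem:pr} once more.

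The delicate step is horizontality of $\pr_{R_G}$. Setting $\omega=\nabla_{R_G}(\pr_{R_G})$ and applying $\nabla_{R_G}$ to the identity $\bm{F}_{R_G}(\varphi^*\pr_{R_G})=\pr_{R_G}$, parallelness of $\bm{F}_{R_G}$ for $\nabla_{R_G}$ yields an identity of the shape $\omega=(\bm{F}_{R_G}\otimes d\varphi)(\varphi^*\omega)$. Since $d\varphi(dt_i)=pt_i^{p-1}dt_i$ carries $\fdiff{R_G/W}$ into $p\cdot\fdiff{R_G/W}$, a single pass shows $\omega\in p(\bm{H}_{R_G}^{\otimes(2,2)}\otimes_{R_G}\fdiff{R_G/W})$; iterating places $\omega$ inside $p^n$ times this module for every $n\geq 0$, and $p$-adic separatedness of this finite free $R_G$-module forces $\omega=0$.

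Part (\ref{moonen:strongdiv}) then follows cheaply. That $\pr_{R_G}$ is horizontal and Frobenius-fixed means its image $\bm{L}_{R_G}$ is preserved by $\nabla_{R_G}$ and by the conjugation action of $\bm{F}_{R_G}$ on $\bm{H}_{R_G}^{\otimes(1,1)}[p^{-1}]$. For strong divisibility, I reduce to the closed point by Nakayama: the submodule $\bm{F}_{R_G}(\varphi^*(p^{-1}F^1\bm{L}_{R_G}+F^0\bm{L}_{R_G}+p\bm{L}_{R_G}))\subset\bm{L}_{R_G}$ is finitely generated over $R_G$, and modding out by the ideal $(t_1,\ldots,t_d)\subset R_G$ (which is contained in the maximal ideal of the local ring $R_G$) reduces the claim to the identity (\ref{spin:eqn:strongdiv}) for $\bm{L}_{\cris,s}$ with the filtration induced by $\mu$. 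The latter is valid because, by Kisin's moduli description of $\Ss_K$, every such $\mu$ arises from a genuine lift $\widetilde{s}\in\Ss_K(W)$ of $s$ whose Hodge filtration matches the one read off from $\mu$, so that the analysis of (\ref{spin:subsec:strongdiv}) applies.

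The principal obstacle is the horizontality in (\ref{moonen:tensor}), where the Frobenius-semilinear structure must be used to rigidify the connection. Every other assertion is a formal consequence of $\GSpin$-invariance via Lemma \ref{cliff:lem:pr} or of a standard Nakayama descent from the closed point.
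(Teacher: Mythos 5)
Your reductions for the $F^0$-membership and $F$-invariance in (\ref{moonen:tensor}) are correct; both come down to the $\GSpin$-invariance of $\pr$ (Lemma~\ref{cliff:lem:pr}) and the factorization $\bm{F}_{R_G}=g\circ(\bm{F}_s\otimes 1)$ with $g\in\widehat{U}_G(R_G)$. The horizontality argument, however, has a real gap: the Frobenius on $\bm{H}_{R_G}^{\otimes(2,2)}$ does \emph{not} preserve the integral lattice (each of the two factors of $(\bm{F}^\vee)^{-1}$ carries the integral dual only into $p^{-1}$ of itself), so the single factor of $p$ supplied by $d\varphi$ is more than cancelled, and one pass yields only $\omega\in p^{-1}\bigl(\bm{H}_{R_G}^{\otimes(2,2)}\otimes\fdiff{R_G/W}\bigr)$---nothing new. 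The iteration must be run $\mx$-adically with $\mx=(t_1,\ldots,t_d)$: from $d\varphi(1\otimes dt_i)=pt_i^{p-1}dt_i\in p\mx^{p-1}\fdiff{R_G/W}$ and $\varphi(\mx)\subset\mx^p$, the hypothesis $\omega\in\mx^n\bigl(\bm{H}_{R_G}^{\otimes(2,2)}\otimes\fdiff{R_G/W}\bigr)$ forces $\omega\in p^{-1}\mx^{pn+p-1}(\cdots)$, hence $\omega\in\mx^{pn+p-1}(\cdots)$ after intersecting with the integral module (allowed since $R_G/\mx^m$ is $W$-flat), and Krull's intersection theorem finishes. This is essentially what \cite{moonen}*{4.8} does; the paper just cites it. So your Dwork-style route is sound once the $p$-adic bookkeeping is replaced by $\mx$-adic, and it differs from the paper only in that the paper defers to Moonen rather than re-deriving the lemma.

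For (\ref{moonen:strongdiv}), the Nakayama reduction presupposes the integrality $\bm{F}_{R_G}\bigl(\varphi^*(p^{-1}F^1\bm{L}_{R_G}+F^0\bm{L}_{R_G}+p\bm{L}_{R_G})\bigr)\subset\bm{L}_{R_G}$, which is half the content of strong divisibility and is not something Nakayama will establish for you. The paper's one-line computation is both shorter and closes this hole: since $(\bm{L}_{R_G},F^\bullet\bm{L}_{R_G})=(\bm{L}_{\cris,s},F^\bullet\bm{L}_{\cris,s})\otimes_W R_G$ with constant filtration and $\bm{F}_{R_G}=g\circ(\bm{F}_s\otimes 1)$, one has directly
\begin{align*}
\bm{F}_{R_G}\bigl(\varphi^*(p^{-1}F^1\bm{L}_{R_G}+F^0\bm{L}_{R_G}+p\bm{L}_{R_G})\bigr)&=g\bigl((\bm{F}_s\otimes 1)\bigl(\sigma^*(p^{-1}F^1\bm{L}_{\cris,s}+F^0\bm{L}_{\cris,s}+p\bm{L}_{\cris,s})\otimes R_G\bigr)\bigr)\\
&=g(\bm{L}_{\cris,s}\otimes R_G)=\bm{L}_{R_G},
\end{align*}
using (\ref{spin:eqn:strongdiv}) and the fact that $g\in U_G(R_G)\subset\GSpin(\bm{L}_{R_G})(R_G)$ preserves $\bm{L}_{R_G}$. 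Finally, your appeal to Kisin's moduli description to justify (\ref{spin:eqn:strongdiv}) for the $\mu$-filtration is an unnecessary detour (and risks circularity with Theorem~\ref{spin:thm:kisinprec}, whose part (\ref{kisinprec:tensors}) quotes the present proposition): the equivalence noted in (\ref{spin:subsec:strongdiv}) shows that strong divisibility holds for the filtration attached to \emph{any} lift $\mu$ of $\mu_0$, whether or not it comes from a $W$-point of $\Ss_K$.
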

\begin{proof}
  (\ref{moonen:tensor}) is essentially by construction; cf.~\cite{moonen}*{4.8}. (\ref{moonen:strongdiv}) also follows by construction: We can identify $(\bm{L}_{R_G},F^\bullet\bm{L}_{R_G})$ with $(\bm{L}_{\cris,s},F^\bullet\bm{L}_{\cris,s})\otimes_WR_G$, so that $\bm{F}_{R_G}$ is identified with $g\circ (\bm{F}_s\otimes 1)$. Here, $g\in U_G(R_G)$ is the tautological element. The result now is a consequence of (\ref{spin:eqn:strongdiv}).
\end{proof}

\subsection{}
Now assume that $k=\overline{\Field}_p$, so that $s$ is a closed point. Set $R_s=\widehat{\Rg}_{\Ss_K,s}$, and let $\widehat{U}_{s}$ be the completion of $\Ss_K$ at $s$. Let $\widehat{U}'=\Spf R'$ be the universal deformation space for the abelian variety $A^\KS_s$; then $\widehat{U}_s$ is naturally identified with a closed formal sub-scheme of $\widehat{U}'$. Restricting $\bm{H}_{\cris}$ to $\Spec(R'\otimes\Field_p)$ gives rise to an $F$-crystal $\bm{H}'$ over $(\Spec(R'\otimes\Field_p)/\Int_p)_{\cris}$. Evaluating $\bm{H}'$ along the pro-nilpotent divided power thickening $\Spec(R'\otimes\Field_p)\into \Spf(R')$ gives us a finite free $R'$-module $\bm{H}_{R'}$, which can be identified with the evaluation of $\bm{H}_{\dR}$ on $\Spec R'$. The Hodge filtration on $\bm{H}_{\dR}$ equips $\bm{H}_{R'}$ with a direct summand $F^1\bm{H}_{R'}\subset\bm{H}_{R'}$.

The $W$-algebra $R$ admits the identity section $j:R\to W$ with $t_i\mapsto 0$ for all $i$. Also, by Grothendieck-Messing theory~\cite{messing}*{V.1.6}, the lift $F^1\bm{H}_{\cris,s}\subset\bm{H}_{\cris,s}$ corresponds to a lift $\tilde{s}:\Spf W\to\Spf(R')$ of $s$. In fact, we have a canonical isomorphism of $W$-modules $\bm{H}_{\dR,\tilde{s}}\xrightarrow{\simeq}\bm{H}_{\cris,s}$ that identifies $F^1\bm{H}_{\dR,\tilde{s}}$ with $F^1\bm{H}_{\cris,s}$.
Write $j':R'\to W$ for the section corresponding to the lift $\tilde{s}$.

By construction, we now have canonical isomorphisms: $\iota_0:\bm{H}_R\otimes_{R,j}W\xrightarrow{\simeq}\bm{H}_{\cris,s}$ and $\iota'_0:\bm{H}_{R'}\otimes_{R',j'}W\xrightarrow{\simeq}\bm{H}_{\cris,s}$.

\begin{thm}\label{spin:thm:kisinprec}
\mbox{}
\begin{enumerate}[itemsep=0.11in]
\item~\label{kisinprec:faltings}There exists an isomorphism (necessarily unique) of augmented $W$-algebras $f:(R,j)\xrightarrow{\simeq} (R',j')$ with the following property: Let $f^*:(\Spec(R\otimes\Field_p)/\Int_p)_{\cris}\to (\Spec(R'\otimes\Field_p)/\Int_p)_{\cris}$ be the induced map on crystalline sites. Then there exists an isomorphism (necessarily unique) of $F$-crystals $\alpha:f^*\bm{H}\xrightarrow{\simeq}\bm{H}'$ such that:
    \begin{enumerate}
    \item The induced isomorphism $\alpha_R:\bm{H}_{R}\otimes_{R,f}R'\xrightarrow{\simeq}\bm{H}_{R'}$ carries $F^1\bm{H}_{R}\otimes_{R,f}R'$ onto $F^1\bm{H}_{R'}$.
    \item The induced isomorphism $j^*\alpha_R:\bm{H}_{R}\otimes_{R,j}W\xrightarrow{\simeq}\bm{H}_{R'}\otimes_{R',j'}W$ is equal to $(\iota'_0)^{-1}\circ\iota_0$.
    \end{enumerate}
\item~\label{kisinprec:kisin}The induced isomorphism $\Spf(f):\widehat{U}'\xrightarrow{\simeq}\widehat{U}$ carries $\widehat{U}_s$ onto $\widehat{U}_G$.
\item~\label{kisinprec:tensors}The restriction of the tensor $\pr_{\dR,\Rat}$ to $\Spec(R_s\otimes\Rat)$ extends to a parallel section
\[
 \pr_{\dR,R_s}\in H^0\bigl(\Spec R_s,F^0\bm{H}_{\dR}^{\otimes(2,2)}\bigr)^{\nabla=0}.
\]
$\pr_{\dR,R_s}$ is an idempotent operator on $\bm{H}_{\dR}^{\otimes(1,1)}\rvert_{\Spec R_s}$.
\end{enumerate}
\end{thm}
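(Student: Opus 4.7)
The plan is to follow the strategy of Kisin in \cite{kis3}*{\S 1.5, 2.3}, combining the Moonen--Faltings universal crystal construction recalled in (\ref{spin:subsec:explicit}) with the smoothness and extension property of the integral canonical model from Theorem \ref{spin:thm:kisin}.

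For (\ref{kisinprec:faltings}), the key observation is that $(R,j)$ pro-represents the same deformation datum as $(R',j')$. By Grothendieck--Messing, $\Spf R'$ parametrizes lifts of the Hodge filtration on $\bm{H}_{\cris,s}$, and the restriction of $\bm{H}_{\cris}$ equips it with a universal $F$-crystal $\bm{H}'$ whose filtration varies. On the other side, $\bm{H}_R$ was built with the \emph{constant} filtration $F^1\bm{H}_{\cris,s}\otimes R$ and twisted Frobenius $\bm{F}_R = g\circ(\bm{F}_s\otimes 1)$, and conjugation by the tautological $g\in\widehat{U}(R)$ trades the twisted Frobenius for a varying Hodge filtration. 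Moonen's Theorem~4.4 gives a unique integrable connection making $\bm{F}_R$ parallel, so the universal property of $R'$ produces both $f$ and $\alpha$. Uniqueness of $\alpha$ is forced by the normalization at $j,j'$: any two choices differ by an $F$-horizontal automorphism of $\bm{H}_{\cris,s}$ acting trivially at the augmentation, hence trivial.

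Assertion (\ref{kisinprec:kisin}) is the crux. I would prove the two inclusions between $f(\widehat{U}_s)$ and $\widehat{U}_G\subset\widehat{U}$ separately. For $f(\widehat{U}_s)\subseteq\widehat{U}_G$: the de Rham tensor $\pr_{\dR,\Rat}$ is parallel on the generic fiber, and its reduction at $s$ is $\pr_{\cris,s}$ by Proposition \ref{spin:prp:kisinpadichodge}(\ref{intphodge:bo}); translated into the coordinates of $\widehat{U}$, this forces the tautological element $g$ to centralize $\pr_{\cris,s}$, which by Lemma \ref{cliff:lem:pr}(\ref{gspintensors}) means $g\in\widehat{U}_G$. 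For the reverse inclusion, Proposition \ref{spin:prp:moonen} exhibits the restriction $\bm{H}|_{\widehat{U}_G}$ as a $\GSpin$-$F$-crystal, which together with the $C$-action and the $\Int/2\Int$-grading gives a Kuga--Satake deformation of $(A^\KS_s,\lambda^\KS_s)$ over $\widehat{U}_G$. The extension property and smoothness of $\Ss_K$ (Theorem \ref{spin:thm:kisin}), together with the absoluteness of Hodge cycles invoked in Proposition \ref{spin:prp:prdescent}, then force this deformation to factor through $\widehat{U}_s$. Both spaces are formally smooth of dimension $\dim U_G$, so mutual containment is equality.

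Assertion (\ref{kisinprec:tensors}) is then almost formal: the parallel, $F$-invariant tensor $\pr_{R_G}$ of Proposition \ref{spin:prp:moonen}(\ref{moonen:tensor}) transfers via $\alpha$ to a horizontal section $\pr_{\dR,R_s}$ of $F^0\bm{H}_{\dR,R_s}^{\otimes(2,2)}$, automatically idempotent on $\bm{H}_{\dR,R_s}^{\otimes(1,1)}$. Its restriction to $\Spec(R_s\otimes\Rat)$ agrees with $\pr_{\dR,\Rat}$: both are parallel for the Gauss--Manin connection and, at any $W$-valued lift $\tilde{s}$, the Berthelot--Ogus comparison identifies each with $\pr_{\cris,s}$. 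The hardest step is the reverse inclusion in (\ref{kisinprec:kisin}); constructing the deformation on the crystalline side is direct, but promoting it to a genuine $\Ss_K$-point requires both the absoluteness of Hodge cycles and the fine uniqueness properties of the integral canonical model.
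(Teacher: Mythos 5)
Your sketches for parts (\ref{kisinprec:faltings}) and (\ref{kisinprec:tensors}) track the paper closely: (\ref{kisinprec:faltings}) is essentially Faltings's theorem (the paper cites \cite{faltings}*{\S~7}) and your treatment of (\ref{kisinprec:tensors}) --- transfer $\pr_{R_G}$ through $\alpha$, then identify the result with $\pr_{\dR,\Rat}$ on the generic fiber by the ``two parallel sections agreeing at a point agree everywhere'' argument --- is exactly the paper's sketch. For (\ref{kisinprec:kisin}) the paper simply defers to \cite{kis3}*{2.3.5}, whereas you attempt a full proof. Your reverse inclusion $\widehat{U}_G\hookrightarrow f(\widehat{U}_s)$ is the heart of Kisin's argument, and your identification of the key ingredients (the extension/normalization property of $\Ss_K$, plus absoluteness of Hodge cycles to verify that the formal deformation built over $\Spf R_G$ has the right generic fiber) is on target.

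However, your forward inclusion $f(\widehat{U}_s)\subseteq\widehat{U}_G$ is circular. You claim that, because $\pr_{\dR,\Rat}$ is parallel on the generic fiber and agrees with $\pr_{\cris,s}$ at the origin, the tautological element $g$ must centralize $\pr_{\cris,s}$. But what you have in the trivialization is the Gauss--Manin parallel transport $T$ of $\pr_{\cris,s}$ over $\Spf(R_s\otimes\Rat)$, and $T$ is \emph{a priori} not the constant section $\pr_{\cris,s}\otimes 1$. The constant section is $\nabla_R$-parallel exactly when $g$ takes values in $U_G$ (this is the content of (\ref{spin:prp:moonen})), which is what you are trying to prove; and showing that $T$ extends integrally over $\Spf R_s$ and equals the constant tensor is precisely (\ref{kisinprec:tensors}), which in the paper's logic depends on (\ref{kisinprec:kisin}). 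You cannot invoke it here without closing the loop. The closing sentence ``both spaces are formally smooth of dimension $\dim U_G$'' is circular for the same reason: formal smoothness of $\widehat{U}_s$ is the \emph{conclusion} of (\ref{kisinprec:kisin}), not a hypothesis. The non-circular route (as in \cite{kis3}) is to establish only the closed immersion $\widehat{U}_G\hookrightarrow\widehat{U}_s$, and then observe that $R_s$ is a domain (as a completion of an excellent normal local ring) which is $W$-flat of Krull dimension $\dim R_G$ (since $\Ss_K$ has relative dimension $\dim\Sh_K$ over $\Int_{(p)}$); a surjection $R_s\twoheadrightarrow R_G$ of domains of the same dimension is an isomorphism. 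The inclusion you state in the forward direction then comes for free and is not an independent step.
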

\begin{proof}
  (\ref{kisinprec:faltings}) is due to Faltings; cf.~\cite{faltings}*{\S 7}. (\ref{kisinprec:kisin}) is shown during the course of the proof of \cite{kis3}*{2.3.5}.

  (\ref{kisinprec:tensors}) is a special case of~\cite{kis3}*{2.3.9}. We sketch the proof: From $f$ and $\alpha_R$, we obtain isomorphisms $f_G:R_G\xrightarrow{\simeq}R_s$, and $\alpha_{R_G}:\bm{H}_{R_G}\otimes_{R_G,f_G}R_s\xrightarrow{\simeq}\bm{H}_{\dR,R_s}$. From (\ref{spin:prp:moonen}), we then obtain a parallel tensor:
  \[
   \alpha_{R_G}(\pr_{R_G})\in H^0\bigl(\Spec R_s,F^0\bm{H}_{\dR}^{\otimes(2,2)}\bigr)^{\nabla=0}.
  \]
  By construction, the evaluation of this tensor along the map $R_s\xrightarrow{\tilde{s}}W\into W_{\Rat}$ agrees with that of $\pr_{\dR,\Rat}$. Therefore, $\alpha_{R_G}(\pr_{R_G})\vert_{\Spec(R_s\otimes\Rat)}$ and $\pr_{\dR,\Rat}\vert_{\Spec(R_s\otimes\Rat)}$ are both parallel tensors in $\bm{H}_{\dR}^{\otimes(2,2)}\vert_{\Spec(R_s\otimes\Rat)}$ that agree at a point. They must therefore agree everywhere. Thus, $\pr_{\dR,R_s}\coloneqq\alpha_{R_G}(\pr_{R_G})$ is the extension we seek in (\ref{kisinprec:tensors}).
\end{proof}

\begin{corollary}\label{spin:cor:derhamrealization}
\mbox{}
\begin{enumerate}[itemsep=0.13in]
  \item\label{derham:pr}$\pr_{\dR,\Rat}$ extends to a parallel section $\pr_{\dR}$ of $F^0\bm{H}_{\dR}^{\otimes(2,2)}$ over $\Ss_K$. Moreover, $\pr_{\dR}$ is an idempotent projector on $\bm{H}_{\dR}^{\otimes(1,1)}$, whose image is a vector sub-bundle $\bm{L}_{\dR}$ extending $\bm{L}_{\dR,\Rat}$.
  \item\label{derham:torsor}Consider the functor $\mathcal{P}_{\dR}$ on $\Ss_K$-schemes given by
\[
  \mathcal{P}_{\dR}(T)=\begin{pmatrix}
    C\text{-equivariant $\Int/2\Int$-graded $\Reg{T}$-module isomorphisms }\\\xi:H\otimes_{\Int_{(p)}}\Reg{T}\xrightarrow{\simeq}\bm{H}_{\dR,T}\\\text{carrying $L\otimes\Reg{T}\subset H^{\otimes(1,1)}\otimes\Reg{T}$ onto $\bm{L}_{\dR}\subset\bm{H}_{\dR}^{\otimes(1,1)}$}.
  \end{pmatrix}
\]
Then $\mathcal{P}_{\dR}$ is a $G$-torsor over $\Ss_K$ with generic fiber $\mathcal{P}_{\dR,\Rat}$.
\end{enumerate}
\end{corollary}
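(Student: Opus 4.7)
For part~(\ref{derham:pr}), the plan is to extend $\pr_{\dR,\Rat}$ from $\Sh_K$ to $\Ss_K$ by gluing the local extensions produced in Theorem~\ref{spin:thm:kisinprec}(\ref{kisinprec:tensors}). By Theorem~\ref{spin:thm:kisin}, $\Ss_K$ is smooth over $\Int_{(p)}$, hence regular and Noetherian with special fiber a principal divisor; a section of the locally free sheaf $F^0\bm{H}_{\dR}^{\otimes(2,2)}$ defined over $\Sh_K$ will therefore extend to $\Ss_K$ as soon as it extends over each DVR localization $\Rg_{\Ss_K,\eta}$, for $\eta$ a generic point of $\Ss_{K,\Field_p}$.

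For each such $\eta$, I would pick a closed specialization $s\in\overline{\{\eta\}}\cap\Ss_K(\overline{\Field}_p)$. Theorem~\ref{spin:thm:kisinprec}(\ref{kisinprec:tensors}) supplies a parallel extension $\pr_{\dR,R_s}$ over $R_s=\widehat{\Rg}_{\Ss_K,s}$. Since a parallel section of a flat vector bundle on a connected base is determined by its value at any single point, the two pullbacks of $\pr_{\dR,R_s}$ to $R_s\otimes_{\Rg_{\Ss_K,s}}R_s$ must coincide, each being a parallel extension of $\pr_{\dR,\Rat}$; this is the descent datum needed to extend $\pr_{\dR,R_s}$ down to $\Rg_{\Ss_K,s}$ along the faithfully flat map $\Rg_{\Ss_K,s}\into R_s$, and localizing at $\eta$ yields the desired extension. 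Idempotency of $\pr_{\dR}$ can be checked after pullback to the $R_s$, where it holds by Theorem~\ref{spin:thm:kisinprec}(\ref{kisinprec:tensors}); the image $\bm{L}_{\dR}$ of an idempotent endomorphism of a locally free sheaf is automatically a direct summand, hence a sub-bundle extending $\bm{L}_{\dR,\Rat}$.

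For part~(\ref{derham:torsor}), $\mathcal{P}_{\dR}$ is representable by a closed subscheme of the affine $\Ss_K$-scheme of $C$-equivariant, $\Int/2\Int$-graded isomorphisms $H\otimes\Reg{\Ss_K}\xrightarrow{\simeq}\bm{H}_{\dR}$, cut out by the closed condition that $L\otimes\Reg{T}$ map onto $\bm{L}_{\dR,T}$ (both are locally free of the same rank). By Lemma~\ref{cliff:lem:pr}(\ref{gspintensors}), $G\subset\on{U}(H)$ is precisely the stabilizer of $\pr$, so $\mathcal{P}_{\dR}$ is a pseudo-$G$-torsor under pre-composition. To promote this to a torsor I would produce fppf-local sections: at any $s\in\Ss_K(\overline{\Field}_p)$, Proposition~\ref{spin:prp:kisinpadichodge}(\ref{intphodge:trivial}) supplies a $C$-equivariant, graded isomorphism $\xi_s:H\otimes W\xrightarrow{\simeq}\bm{H}_{\cris,s}$ carrying $\pr$ to $\pr_{\cris,s}$, and extending scalars to $R_s$ and composing with the isomorphism $\alpha_{R_G}$ built in the proof of Theorem~\ref{spin:thm:kisinprec}(\ref{kisinprec:tensors}) produces an isomorphism $H\otimes R_s\xrightarrow{\simeq}\bm{H}_{\dR,R_s}$ sending $\pr$ to $\pr_{\dR,R_s}=\pr_{\dR}|_{\Spec R_s}$, a section of $\mathcal{P}_{\dR}(\Spec R_s)$. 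Since $\{\Spec R_s\to\Ss_K\}$ is an fppf cover and $G$ is smooth by Proposition~\ref{lattice:prp:smoothparahoric}, $\mathcal{P}_{\dR}$ is a $G$-torsor in the \'etale topology. Agreement of generic fibers with $\mathcal{P}_{\dR,\Rat}$ is a direct comparison of defining functors.

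The hard part will be the gluing in part~(\ref{derham:pr}): the outputs of Theorem~\ref{spin:thm:kisinprec}(\ref{kisinprec:tensors}) live only over complete local rings, not Zariski opens, so producing a global parallel section requires faithfully flat descent down to each $\Rg_{\Ss_K,s}$, with the descent datum pinned down by the uniqueness of parallel sections through their value at a single point.
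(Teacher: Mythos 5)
Your proposal is correct and takes essentially the same route as the paper: reduce everything to the formal completions $\Spec R_s$ at closed points of the special fiber (where Theorem~\ref{spin:thm:kisinprec}(\ref{kisinprec:tensors}) and Proposition~\ref{spin:prp:kisinpadichodge}(\ref{intphodge:trivial}) do the work), then invoke faithful-flatness of $\Rg_{\Ss_K,s}\into R_s$ to conclude over $\Ss_K$. The paper compresses this to ``all the assertions are fppf local,'' while you spell out the intermediate steps (extension over the DVRs $\Rg_{\Ss_K,\eta}$ via normality of $\Ss_K$, then descent from $R_s$). One small simplification worth noting in part~(\ref{derham:pr}): the descent-datum/parallelism argument you invoke is unnecessary. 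Since $F^0\bm{H}_{\dR}^{\otimes(2,2)}$ is a coherent subsheaf of a sheaf that is $\Int_{(p)}$-torsion free, containment of $\pr_{\dR,\Rat}$ in the lattice descends automatically along any faithfully flat extension of $\Rg_{\Ss_K,\eta}$; you do not need to verify a cocycle condition, nor rely on parallel sections being determined by one value (a claim that requires some care on the non-Noetherian base $\Spec(R_s\otimes_{\Rg_{\Ss_K,s}}R_s)$). Similarly in part~(\ref{derham:torsor}), the family $\{\Spec R_s\to\Ss_K\}$ is not literally an fppf cover (the images are pro-open rather than open), but since $\Rg_{\Ss_K,s}\to R_s$ is faithfully flat this is good enough to check the torsor property \'etale-locally; the paper elides the same point.
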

\begin{proof}
We only need to prove these properties over the formal completion $\widehat{U}_s$ at a point $s\in\Ss_K(\overline{\Field}_p)$.  (\ref{derham:pr}) is immediate from (\ref{spin:thm:kisinprec})(\ref{kisinprec:tensors}).

To prove (\ref{derham:torsor}), it is enough to show that, if we view $\Spec R_G$ as an $\Ss_K$-scheme via the maps $\Spec R_G\xrightarrow{\simeq}\Spec R_s\to\Ss_K$ of (\ref{spin:thm:kisinprec}), the set $\mathcal{P}_{\dR}(\Spec R_G)$ is non-empty.

In other words, we have to show that there exists a $C$-equiviariant, graded isomorphism of $R_G$-modules $H\otimes_{\Int_{(p)}}R_G\xrightarrow{\simeq}\bm{H}_{R_G}$ carrying $L\otimes R_G$ onto $\bm{L}_{R_G}\coloneqq\im(\pr_{R_G})$. This is immediate from (\ref{spin:prp:kisinpadichodge})(\ref{intphodge:trivial}) and the very construction of $\bm{H}_{R_G}$.
\end{proof}

\subsection{}\label{spin:subsec:crysreal}
Since $\Ss_{K,\Int_p}$ is smooth over $\Int_p$, as a crystal, $\bm{H}_{\cris}$ is determined by $\bm{H}_{\dR}\rvert_{\hat{\Ss}_{K,\Int_p}}$ equipped with its Gauss-Manin connection. In particular, evaluation along the formal PD thickening $\Ss_{K,\Field_p}\into\hat{\Ss}_{K,\Int_p}$ gives us a canonical isomorphism:
\[
H^0\bigl((\Ss_{K,\Field_p}/\Int_p)_{\cris},\bm{H}^{\otimes}_{\cris}\bigr) \xrightarrow{\simeq} H^0\bigl(\hat{\Ss}_{K,\Int_p},\bm{H}^{\otimes}_{\dR}\bigr)^{\nabla=0}.
\]

Therefore, there exists a unique global section
\[
\pr_{\cris}\in H^0\bigl((\Ss_{K,\Field_p}/\Int_p)_{\cris},\bm{H}^{\otimes(2,2)}_{\cris}\bigr),
\]
whose evaluation at $\Ss_{K,\Field_p}\into\hat{\Ss}_{K,\Int_p}$ is the restriction of the parallel section $\pr_{\dR}\in H^0\bigl(\Ss_K,\bm{H}_{\dR}^{\otimes(2,2)}\bigr)$.

Again, we can view $\pr_{\cris}$ as an idempotent endomorphism of the crystal $\bm{H}^{\otimes(1,1)}_{\cris}$, and we denote by $\bm{L}_{\cris}$ the image of $\pr_{\cris}$ in $\bm{H}^{\otimes(1,1)}_{\cris}$. If $s\to \Ss_{K,\Field_p}$ is a point valued in an algebraically closed field $k(s)$, then the restriction of $\bm{\pr}_{\cris}$ to $(\Spec k(s)/\Int_p)_{\cris}$ determines and is determined by its evaluation along the formal thickening $\Spec k(s)\into\Spf W(k(s))$.

Suppose that we have a lift $\widetilde{s}\in\Ss_K({W})$ of $s$. Then there is a natural isomorphism $\bm{H}_{\cris,s}\xrightarrow{\simeq}\bm{H}_{\dR,\widetilde{s}}$. By the definition of $\pr_{\cris}$, its evaluation over $\Spec k(s)\into\Spf W(k(s))$ must map to the tensor $\pr_{\dR,\widetilde{s}}\in\bm{H}_{\dR,\widetilde{s}}^{\otimes(2,2)}$ under this isomorphism. In particular, we find that this evaluation is exactly the tensor $\pr_{\cris,s}$ defined in (\ref{spin:prp:kisinpadichodge}). Similarly, the restriction of $\bm{L}_{\cris}$ to $(\Spec k(s)/\Int_p)_{\cris}$ determines and is determined by the $F$-isocrystal $\bm{L}_{\cris,s}$ seen in \emph{loc. cit.}

\subsection{}\label{spin:subsec:kodairaspencer}
The Gauss-Manin connection on $\bm{H}^{\otimes(2,2)}_{\dR}$ restricts to a connection on $\bm{L}_{\dR}$. The filtration $F^\bullet\bm{L}_{\dR}$ on $\bm{L}_{\dR}$ satisfies Griffith's transversality. That is, for any integer $i$, we have
      \[
      \nabla(F^i\bm{L}_{\dR})\subset F^{i-1}\bm{L}_{\dR}\otimes\Omega^1_{\Ss_K/\Int_{(p)}}.
      \]
Indeed, this can be checked over $\Comp$, where it holds by construction. Thus we obtain $\Reg{\Ss_K}$-linear Kodaira-Spencer maps:
\begin{align}\label{spin:eqn:ks1}
       \gr^0_F\bm{L}_{\dR}&\to\gr^{-1}_F\bm{L}_{\dR}\otimes\Omega^1_{\Ss_K/\Int_{(p)}};
\end{align}
\begin{align}\label{spin:eqn:ks2}
F^1\bm{L}_{\dR}&\to\gr^0_F\bm{L}_{\dR}\otimes\Omega^1_{\Ss_K/\Int_{(p)}}.
\end{align}
\begin{prp}\label{spin:prp:kodairaspencer}
The map in (\ref{spin:eqn:ks1}) is an isomorphism and the map in (\ref{spin:eqn:ks2}) is injective with image a local direct summand of the right hand side.
\end{prp}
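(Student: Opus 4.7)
The two assertions concern $\Reg{\Ss_K}$-linear maps of vector bundles, and can be verified at the closed points of $\Ss_K$; since $\Ss_K$ is smooth over $\Int_{(p)}$, every closed point lies in the special fiber, so fix $s\in\Ss_K(\overline{\Field}_p)$ and set $k=\overline{\Field}_p$. The strategy is to pass to the formal completion $R_s=\widehat{\Rg}_{\Ss_K,s}$, use Theorem~\ref{spin:thm:kisinprec} and Corollary~\ref{spin:cor:derhamrealization} to identify the filtered bundle $(\bm{L}_{\dR},F^\bullet\bm{L}_{\dR})|_{\Spec R_s}$ with the universal deformation over $R_G$, and then recognize both Kodaira-Spencer maps at $s$ as the adjoints of the natural action of $\Lie U_G$ on $\bm{L}_{\dR,s}$.

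Concretely, via the isomorphism $R_s\xrightarrow{\simeq}R_G=W\pow{t_1,\ldots,t_{r-2}}$ of Theorem~\ref{spin:thm:kisinprec}(\ref{kisinprec:kisin}), the filtered bundle $(\bm{L}_{\dR},F^\bullet\bm{L}_{\dR})|_{\Spec R_s}$ is identified with $\bm{L}_{\cris,s}\otimes_W R_G$ equipped with the Hodge filtration $g\cdot(F^\bullet\bm{L}_{\cris,s}\otimes R_G)$, where $g\in\widehat{U}_G(R_G)$ is the tautological element and $U_G\subset\GSpin(\bm{L}_{\cris,s})$ is the opposite unipotent of \S\ref{spin:subsec:explicit} (acting on $\bm{L}_{\cris,s}$ through $\GSpin\to\SO$). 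The Zariski cotangent space at the closed point of $R_G$ is $(\Lie U_G)^\vee_k$. Differentiating $g$ at the identity shows that, by adjunction, the maps (\ref{spin:eqn:ks1}) and (\ref{spin:eqn:ks2}) at $s$ coincide with the action maps
\begin{align*}
\Lie U_G\otimes k&\longrightarrow\Hom_k(\gr^0\bm{L}_{\dR,s},\gr^{-1}\bm{L}_{\dR,s}),\quad X\mapsto X\rvert_{\gr^0\bm{L}_{\dR,s}},\\
\Lie U_G\otimes k&\longrightarrow\Hom_k(F^1\bm{L}_{\dR,s},\gr^0\bm{L}_{\dR,s}),\quad X\mapsto X\rvert_{F^1\bm{L}_{\dR,s}}.
\end{align*}

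Now, $\Lie U_G\otimes k$ is precisely the weight $-1$ summand of $\mf{so}(\bm{L}_{\dR,s})$ under $\Ad\mu$, i.e.\ the space of antisymmetric endomorphisms of $\bm{L}_{\dR,s}$ shifting the three-step grading $F^1\oplus\gr^0\oplus\gr^{-1}$ (determined by $\mu$) down by one. Using the pairings $\gr^{-1}\cong (F^1)^\vee$ and $\gr^0\cong(\gr^0)^\vee$ induced by the quadratic form, antisymmetry forces the two components $X\rvert_{F^1}$ and $X\rvert_{\gr^0}$ to be transposes of each other up to sign, so each determines $X$ uniquely, and either component can be arbitrary. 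Hence both displayed maps are isomorphisms of $k$-vector spaces of dimension $r-2=n$. Consequently (\ref{spin:eqn:ks1}), a map between vector bundles of equal rank $r-2$ that is fiberwise an isomorphism, is itself an isomorphism; and (\ref{spin:eqn:ks2}), having fiberwise nonzero restriction from a line bundle, is injective with locally free cokernel.

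The only nontrivial step is the identification of the Kodaira-Spencer maps at $s$ with the $\Lie U_G$-action: this follows by tracking through the isomorphism of Theorem~\ref{spin:thm:kisinprec}(\ref{kisinprec:faltings}) and using that, by construction in \S\ref{spin:subsec:explicit}, the filtration on $\bm{L}_{R_G}$ is obtained by applying $g$ to the constant filtration, whose first-order variation in the direction of $X\in\Lie U_G$ is precisely the adjoint action of $X$ on the graded pieces. The remaining linear-algebraic assertions are then immediate.
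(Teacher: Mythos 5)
Your approach is structurally the same as the paper's: reduce to the complete local ring at a geometric closed point, transfer via Theorem~\ref{spin:thm:kisinprec} to the explicit Faltings--Kisin model over $R_G$, identify the Kodaira-Spencer maps at the origin with the action of $\Lie U_G$, and finish by linear algebra. Your explicit antisymmetry argument for the two action maps being bijections is a clean and correct expansion of the paper's terser ``the first is obtained from the second via self-duality.''

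The gap is in the justification of the step you yourself flag as the nontrivial one. You write that ``by construction in \S\ref{spin:subsec:explicit}, the filtration on $\bm{L}_{R_G}$ is obtained by applying $g$ to the constant filtration, whose first-order variation \ldots is precisely the adjoint action.'' But this is not what the construction in \S\ref{spin:subsec:explicit} does: there the module $\bm{H}_R = \bm{H}_{\cris,s}\otimes_W R$ carries the \emph{constant} filtration $F^1\bm{H}_{\cris,s}\otimes R$, and the tautological element $g\in\widehat{U}(R)$ enters through the twisted Frobenius $\bm{F}_R = g\circ(\bm{F}_s\otimes 1)$; the connection $\nabla_R$ is then the unique one for which $\bm{F}_R$ is horizontal. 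Consequently the Kodaira-Spencer map is driven by $\nabla_R$ acting against a fixed filtration, not by differentiating a $g$-twisted filtration against a trivial connection. Showing that $\nabla_R$, evaluated at the closed point, nonetheless produces (up to sign) the canonical inclusion $\Lie U_G\into\Hom(F^1\bm{H}_{\cris,s},\gr^0_F\bm{H}_{\cris,s})$ genuinely requires unwinding the horizontality equation to first order; the paper delegates this to \cite{moonen}*{4.5} (cf.\ also~\cite{mp:thesis}*{1.4.2.2(3)}). Your ``by construction'' argument would be valid for an isomorphic presentation of the deformation (trivial connection, moving filtration), but you have not established that the moving filtration in such a frame agrees with $g\cdot F^\bullet$ to first order, and that is exactly the content of the cited computation. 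So the conclusion you need is correct and the route is right, but the stated justification does not close the loop; either cite the reference as the paper does, or carry out the first-order computation of $\nabla_R$ from $\bm{F}_R$.
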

\begin{proof}
  Both statements can be studied over the complete local ring $R_s$ at a point $s\in\Ss_K(\overline{\Field}_p)$. As in the proof of (\ref{spin:cor:derhamrealization}), we can work instead over the ring $R_G$, and study the Kodaira-Spencer maps induced by $\nabla_{R_G}$:
  \begin{align*}
    \gr^0_F\bm{L}_{R_G}\to\gr^{-1}_F\bm{L}_{R_G}\otimes\fdiff{R_G/W}\;;\; F^1\bm{L}_{R_G}\to\gr^{0}_F\bm{L}_{R_G}\otimes\fdiff{R_G/W}.
  \end{align*}

  We can identify $\fdiff{R_G/W}$ canonically with $\Hom(\Lie U_G,R_G)$, and so we can rewrite these maps equivalently as:
  \begin{align*}
    \Lie U_G\otimes R_G\to\Hom(\gr^0_F\bm{L}_{R_G},\gr^{-1}_F\bm{L}_{R_G})\;;\; \Lie U_G\otimes R_G\to\Hom(F^1\bm{L}_{R_G},\gr^0_F\bm{L}_{R_G}).
  \end{align*}
  The proposition now amounts to showing that both these maps are isomorphisms. By Nakayama's lemma, it suffices to do so after a change of scalars along the augmentation map $j:R_G\to W$. But then we obtain maps:
  \begin{align*}
    \Lie U_G\to\Hom(\gr^0_F\bm{L}_{\cris,s},\gr^{-1}_F\bm{L}_{\cris,s})\;;\;\Lie U_G\to\Hom(F^1\bm{L}_{\cris,s},\gr^0_F\bm{L}_{\cris,s}).
  \end{align*}
  We claim that the second of these maps is (up to sign) the natural map obtained by the action of $U_G$ on $\bm{L}_{\cris,s}$, and that the first is obtained from the second via the self-duality of $\bm{L}_{\cris,s}$. The proposition will follow immediately from this claim.

  Consider instead the Kodaira-Spencer map on $\bm{H}_{R_G}$: $F^1\bm{H}_{R_G}\to\gr^{1}_F\bm{H}_{R_G}\otimes\fdiff{R_G/W}$, which we can view this as a map:
  \begin{align}\label{spin:eqn:ks4}
   \Lie U_G\otimes R_G\to\Hom(F^1\bm{H}_{R_G},\gr^1_F\bm{H}_{R_G}).
  \end{align}
  Tensoring (\ref{spin:eqn:ks4}) along $j$ gives us:
  \[
   \Lie U_G\to\Hom(F^1\bm{H}_{\cris,s},\gr^1_F\bm{H}_{\cris,s}).
   \]
  To prove our claim, it is enough to show that, up to sign, this map is identified with the canonical inclusion $\Lie U_G\into\Hom(F^1\bm{H}_{\cris,s},\gr^1_F\bm{H}_{\cris,s})$ induced by the action of $U_G$ on $\bm{H}_{\cris,s}$. This last assertion can be extracted from \cite{moonen}*{4.5}; cf. also~\cite{mp:thesis}*{1.4.2.2(3)}.
\end{proof}

\subsection{}
Over $\Ss_K$ we now have two tautological line bundles: First, we have the Hodge or canonical bundle $\omega^\KS$ attached to the top exterior power of the sheaf of differentials $\Omega^1_{A^\KS/\Ss_K}$. Second, we have the line bundle $F^1\bm{L}_{\dR}$. These are closely related, as the next result shows.

We will need a little preparation. Fix an isotropic line $F^1L\subset L$, and let $P\subset G$ be the parabolic sub-group stabilizing it. Let $F^1H=\ker(F^1L)\subset H$ be the corresponding isotropic sub-space of $H$. The $G$-torsor $\mathcal{P}_{\dR}$ introduced in (\ref{spin:cor:derhamrealization})(\ref{derham:torsor}) has a natural reduction of structure group to a $P$-torsor $\mathcal{P}_{\dR,P}$. Indeed, we can take $\mathcal{P}_{\dR,P}$ to be the sub-functor of $\mathcal{P}_{\dR}$ such that, for and $\Ss_K$-scheme $T$, we have:
\[
 \mathcal{P}_{\dR,P}(T)=\{\xi\in\mathcal{P}_{\dR}(T):\;\xi(F^1H\otimes\Reg{T})=F^1\bm{H}_{\dR,T}\}.
\]
The proof of \emph{loc. cit.} shows that this is indeed a $P$-torsor. Given such a torsor, one immediately gets a functor from $\Int_{(p)}$-representations of the group scheme $P$ to vector bundles over $\Ss_K$. More precisely, given a $\Int_{(p)}$-representation $U$ of $P$, we can view it as a trivial vector bundle over $\Spec\Int_{(p)}$ with a $P$-action, and then take the corresponding $\Ss_K$-vector bundle to be the quotient $(\mathcal{P}_{\dR,P}\times_{\Spec\Int_{(p)}} U)/P$, where $P$ acts diagonally.

\begin{prp}\label{spin:prp:hodgebundle}
There exists a canonical isomorphism of line bundles:
\[
 \omega^{\KS,\otimes 2}\xrightarrow{\simeq}\bigl((F^1\bm{L}_{\dR})(-1)\bigr)^{\otimes 2^{n+1}}.
\]
Here, the $(-1)$ denotes the twist by the (trivial) line bundle attached to the spinor character $\nu:P\to\Gm$. In particular, $F^1\bm{L}_{\dR}$ is a relatively ample line bundle for $\Ss_{K}$ over $\Int_{(p)}$.
\end{prp}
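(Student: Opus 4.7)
The plan is to realize both sides as line bundles associated, via the $P$-torsor $\mathcal{P}_{\dR,P}$ introduced in the preamble, to characters of $P$; the isomorphism then reduces to a character identity in $X^*(P)$ which I would check by evaluating on two well-chosen cocharacters. Under the associated-bundle construction, $F^1\bm{L}_{\dR}=\mathcal{P}_{\dR,P}\times^P F^1L$ is attached to the character $\chi_L\colon P\to\Gm$ giving the action on the line $F^1L$, while $\omega^{\KS}=\det F^1\bm{H}_{\dR}=\mathcal{P}_{\dR,P}\times^P\det F^1H$ is attached to $\chi_H\coloneqq\det\bigl(P\to\GL(F^1H)\bigr)$. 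Since $F^1H$ has rank $\dim A^{\KS}=2^{n+1}$, the statement is equivalent to the identity $\chi_H^2=(\chi_L\cdot\nu)^{2^{n+1}}$ in $X^*(P)$.

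To verify this identity I would evaluate both sides on two cocharacters of $P$ whose images in $\Hom(\Gm,P^{\ab})\otimes\Rat$ are linearly independent: the cocharacter $\mu\colon\Gm\to P$ of~\ref{cliff:subsec:parabolic} splitting $F^1L$, and the central embedding $\eta\colon\Gm\to G$ given by scalar multiplication in $C^\times$. Fix a hyperbolic pair $(e,f)$ with $e$ spanning $F^1L$ and $[e,f]_Q=1$. Then $\mu(z)=1+(z-1)ef\in C^{+,\times}$ (as one checks using $(ef)^2=ef$), and since $\mu(z)^*=1+(z-1)fe$, a short computation gives $\mu(z)^*\mu(z)=z$, so $\nu(\mu(z))=z$; by~\ref{cliff:subsec:parabolic}, $\mu(z)$ acts with weight $1$ on both $F^1L$ and $F^1H=H^1$, so $\chi_L(\mu(z))=z$ and $\chi_H(\mu(z))=z^{2^{n+1}}$. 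On the other hand $\eta(z)$ acts trivially on $L$ by conjugation, by the scalar $z$ on $H=C$, and satisfies $\eta(z)^*\eta(z)=z^2$, giving $\chi_L(\eta(z))=1$, $\chi_H(\eta(z))=z^{2^{n+1}}$, $\nu(\eta(z))=z^2$. Both sides of the claimed identity evaluate to $z^{2^{n+2}}$ on each of $\mu$ and $\eta$; since $X^*(P)$ is torsion-free this gives the required identity.

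Relative ampleness of $F^1\bm{L}_{\dR}$ then follows from two standard inputs: $\omega^{\KS}$ is the pullback along the finite Kuga--Satake morphism $\Ss_K\to\mathcal{S}_{\mathcal{K}}$ of the relatively ample Hodge bundle on the Siegel integral model, and the line bundle attached to $\nu$ becomes trivial on a power (via the determinant of $\bm{H}_{\dR}$ together with the polarization $\lambda^{\KS}$). The main point requiring care is justifying that $\mu$ and $\eta$ span a finite-index subgroup of the cocharacter lattice of $P^{\ab}$, which reduces to noting that the Levi of $P$ has abelianization $\Gm\times\Gm$ (generated by $\GL(F^1L)$ and the central $\Gm\subset\GSpin$), a consequence of the vanishing of the character group of $\SO(L^0)$ for $n\geq 3$, with small $n$ handled directly.
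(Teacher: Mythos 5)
Your proposal takes a genuinely different route from the paper. You reduce the statement to the character identity $\chi_H^{2}=(\chi_L\nu)^{2^{n+1}}$ in $X^*(P)$ and verify it by pairing against two cocharacters, whereas the paper constructs the isomorphism directly from two pieces of equivariant linear algebra: Clifford multiplication gives $\gr^{-1}_FL\otimes F^1H\xrightarrow{\simeq}\gr^0_FH$, and the fact that $F^1H$ is Lagrangian for $\psi_\delta$ gives $\gr^0_FH\xrightarrow{\simeq}\dual{(F^1H)}(\nu)$; combining with $\det H=\det F^1H\otimes\det\gr^0_FH$ and $\dual{(\gr^{-1}_FL)}\cong F^1L$ yields the identity uniformly in $n$. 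Your cocharacter computations for $\mu$ and $\eta$ are correct (in particular $(ef)^2=ef$, $\mu(z)^*\mu(z)=z$, and the weight of $\mu$ on $F^1H$ is $1$).

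There is, however, a real gap you flag but do not close: the claim that $\mu$ and $\eta$ span a finite-index subgroup of $X_*(P^{\ab})$. For $n\geq 3$ (and $n=1$) this is fine, since $M_0^{\ab}=\Gm$ and the central extension by $\Gm$ forces $\rank M^{\ab}=2$, and the pairing matrix $\bigl(\begin{smallmatrix}1&2\\1&0\end{smallmatrix}\bigr)$ of $\{\nu,\chi_L\}$ against $\{\mu,\eta\}$ is nonsingular. But for $n=2$ one has $\SO(L^0)\cong\Gm$, so $M$ is a rank-$3$ maximal torus of $\GSpin_4$, and your two cocharacters only see a rank-$2$ subgroup. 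To finish you must additionally check that $\chi_H=\det(F^1H)$ is trivial on $\Spin(L^0)\cong\Gm$ (note $\chi_L$ and $\nu$ are automatically trivial there). This does hold --- a direct computation with a hyperbolic basis $(u,w)$ of $L^0$ and the cocharacter $z\mapsto z\,uw+z^{-1}wu$ shows the determinant of left multiplication on $eC(L)$ is $1$ --- but it is a nontrivial Clifford-algebra computation, not a soft general fact, and cannot be dismissed as routine. The torsion-freeness of $X^*(\Gm)$ does not rescue you here, since the identity would say $\chi_H^2\vert_{\Spin(L^0)}=1$, which is equivalent to, not weaker than, $\chi_H\vert_{\Spin(L^0)}=1$.

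A smaller point: your argument shows the two line bundles are attached to equal characters of $P$, hence are abstractly isomorphic, but a $P$-equivariant isomorphism of the underlying one-dimensional representations $\det(F^1H)$ and $(F^1L(\nu))^{\otimes 2^{n+1}}$ is only determined up to $\Int_{(p)}^\times$. The paper's explicit construction pins down a canonical choice. Your ampleness argument is essentially the same as the paper's.
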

\begin{proof}
  This follows from an argument of Maulik~\cite{maulik}*{\S 5}. The main point is that both bundles involved are canonical extensions over the integral canonical model of automorphic line bundles.

$\omega^{\KS}$ is the line bundle attached via $\mathcal{P}_{\dR,P}$ to the $P$-representation $\det(F^1H)$, and $F^1\bm{L}_{\dR}(-1)$ is the line bundle attached to the representation $F^1L(\nu)$.

The left multiplication map $L\otimes H\to H$ induces an isomorphism of $P$-representations
  \[
   \gr^{-1}_FL\otimes F^1H\xrightarrow{\simeq}\gr^0_FH.
  \]
  Therefore, we have a canonical isomorphism of $P$-representations
  \[
   \det(H)\xrightarrow{\simeq}\det(F^1H)\otimes\det(\gr^0_FH)\xrightarrow{\simeq}\det(F^1H)^{\otimes 2}\otimes(\gr^{-1}_FL)^{\otimes 2^{n+1}}.
  \]
  Since $\dual{(\gr^{-1}_FL)}\xrightarrow{\simeq}F^1L$, this gives us a canonical isomorphism of $P$-representations
  \[
   \det(F^1H)^{\otimes 2}\xrightarrow{\simeq}(F^1L)^{\otimes 2^{n+1}}\otimes\det(H).
  \]
  On the other hand, the symplectic form $\psi_{\delta}$ on $H$ induces a canonical isomorphism of $P$-representations
  \[
   \gr^F_0H\xrightarrow{\simeq}\dual{(F^1H)}(\nu),
  \]
  This shows that we have:
  \[
   \det(H)\xrightarrow{\simeq}\Int_{(p)}(\nu^{2^{n+1}}),
  \]
  completing the proof of the claimed isomorphism.

  The last statement of the lemma follows, since it is known that the bundle $\widetilde{\omega}^{\KS}$ is relatively ample; cf. for example~\cite{lan}*{7.2.4.1(2)}.
\end{proof}

\subsection{}\label{spin:subsec:canonical_filt}
Fix a perfect field $k$ of characteristic $p$ as usual, and set $W=W(k)$. Fix a point $s\in\Ss_K(k)$, and consider the Frobenius map $\bm{F}:\on{Fr}^*\bm{H}_{\dR,s}\to\bm{H}_{\dR,s}$: Its image is a sub-space $\bm{C}^1_s\subset\bm{H}_{\dR,s}$. We claim that there exists an isotropic line $\bm{N}^1_s\subset\bm{L}_{\dR,s}$ such that, in the notation of~\eqref{cliff:subsec:parabolic}, $\bm{C}^1_s=\ker(\bm{N}^1_s)$.

Indeed, fix a lift $\tilde{s}\in\Ss_K(W)$, and let $\mu$ be a co-character of $\GSpin(\bm{L}_{\dR,\tilde{s}})$ that splits the Hodge filtration on $\bm{H}_{\dR,\tilde{s}}$. We obtain a decomposition into weight spaces for $\mu$:
\begin{align*}
\bm{H}_{\dR,\tilde{s}}&=F^1\bm{H}_{\dR,\tilde{s}}\oplus \bm{H}^0_{\dR,\tilde{s}};\\
\bm{L}_{\dR,\tilde{s}}&=F^1\bm{L}_{\dR,\tilde{s}}\oplus \bm{L}^0_{\dR,\tilde{s}}\oplus\bm{L}^{-1}_{\dR,\tilde{s}}.
\end{align*}

By strong divisibility~\eqref{spin:subsec:strongdiv}, $\bm{C}^1_s$ is simply the image in $\bm{L}_{\dR,s}$ of $\bm{F}_s(\sigma^*\bm{H}^0_{\dR,s})$. Now, $\bm{L}^{-1}_{\dR,\tilde{s}}$ is an isotropic line satisfying
\[
 \ker(\bm{L}^{-1}_{\dR,\tilde{s}})=\bm{H}^0_{\dR,\tilde{s}}.
\]

Moreover, by~\eqref{spin:eqn:strongdiv}, $\bm{F}_s(p\sigma^*\bm{L}^{-1}_{\dR,\tilde{s}})$ is an isotropic line in $\bm{L}_{\dR,\tilde{s}}$. One can now easily check that the image of this line in $\bm{L}_{\dR,s}$ is our desired $\bm{N}^1_s$. 

\begin{lem}\label{spin:lem:conj_hodge}
With the notation as above, let $\bm{\Lambda}\subset\bm{L}_{\dR,\tilde{s}}$ be a direct summand satisfying $\bm{F}_s(\bm{\Lambda})=\bm{\Lambda}$. Let $\overline{\bm{\Lambda}}$ be the image of $\bm{\Lambda}$ in $\bm{L}_{\dR,s}$. Then $\bm{N}^1_s$ is contained in $\overline{\bm{\Lambda}}$ only if $\bm{F}^1\bm{L}_{\dR,s}$ is also contained in $\overline{\bm{\Lambda}}$.
\end{lem}
\begin{proof}
Choose a generator $w$ for $\bm{L}^{-1}_{\dR,\tilde{s}}$; then, by the construction above, the image of $\bm{F}_s(pw)$ in $\bm{L}_{\dR,s}$ is a generator for $\bm{N}^1_s$. Therefore, if $\bm{N}^1_s$ is contained in $\overline{\bm{\Lambda}}$, then we can find $v\in\bm{\Lambda}$ such that $\bm{F}_s(pw)-v\in p\bm{L}_{\dR,\tilde{s}}$. 

But, by our hypothesis, $v=\bm{F}_s(v')$, for $v'\in\bm{\Lambda}$. Then, by~\eqref{spin:eqn:strongdiv}, we must have:
\[
 pw-v'\in F^1\bm{L}_{\dR,\tilde{s}}+p\bm{L}_{\dR,\tilde{s}}.
\]
This implies that $F^1\bm{L}_{\dR,s}$ is generated by the image of $v'\in\bm{\Lambda}$.
\end{proof}

\section{Special endomorphisms}\label{sec:special}

We will now drop the self-duality assumption on $(L,Q)$ until further notice.

\subsection{}
Let $T$ be an $\Sh_{K,\Comp}$-scheme; then functoriality of cohomology gives us a natural map
\[
  \End(A^\KS_T)_{(p)}\rightarrow H^0\bigl(T^{\an},\bm{H}_{B}^{\otimes(1,1)}\bigr).
\]
\begin{defn}\label{special:defn:complex}
An endomorphism $f\in\End(A^\KS_T)_{(p)}$ is \defnword{special} if it gives rise to a section of $\bm{L}_B\subset\bm{H}_B^{\otimes(1,1)}$ under the above map. It follows from the definition that $f$ is special if and only if its fiber $f_s$ at every point $s\to T$ is special. In fact, it is enough to require this for one point $s$ in each connected component of $T^{\an}$.
\end{defn}

Let $T$ be any $\Sh_K$-scheme; then, for any prime $\ell$, we have a natural map
\[
  \End(A^\KS_T)_{(p)}\rightarrow H^0\bigl(T,\bm{H}_{\ell}^{\otimes(1,1)}\bigr).
\]

\begin{defn}\label{special:defn:ell-special}
Fix a prime $\ell\neq p$. An endomorphism $f\in\End(A^\KS_T)_{(p)}$ is \defnword{$\ell$-special} if it gives rise to a section of $\bm{L}_{\ell}\subset\bm{H}_{\ell}^{\otimes(1,1)}$ under the above map. We say that $f$ is \defnword{$p$-special} if it gives rise to a section of $\bm{L}_p\subset\bm{H}_p^{\otimes(1,1)}$ under the corresponding $p$-adic realization map.

For any prime $\ell$, we denote the space of $\ell$-special endomorphisms by $L_{\ell}(A^\KS_T)$.
\end{defn}

One immediately sees that $f$ is $\ell$-special if and only if, in every connected component of $T$, there exists a point $s$ such that the fiber $f_s$ at $s$ is $\ell$-special. In particular, $\ell$-specialness is a condition that can be checked at geometric points.

\begin{lem}\label{special:lem:complex}
Suppose that $T$ is an $\Sh_K$-scheme and that $f\in\End(A^\KS_T)_{(p)}$. Then the following are equivalent:
\begin{enumerate}
  \item $f$ is $\ell$-special for all primes $\ell$.
  \item $f$ is $\ell$-special for some prime $\ell$.
  \item The restriction of $f$ over $T\otimes_{\Rat}\Comp$ is special.
\end{enumerate}
\end{lem}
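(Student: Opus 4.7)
The implication (1) $\Rightarrow$ (2) is immediate. For the other equivalences I would first reduce to a complex point: $\ell$-specialness, and by the remark following (\ref{special:defn:complex}) also complex specialness, are pointwise conditions verifiable on a single geometric point per connected component of $T$, and every component of the $\Rat$-scheme $T$ admits a complex point $s$.

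At such an $s$, the constructions of (\ref{spin:subsec:lrealizations}) restrict to comparison identifications
\[
\bm{L}_{B,s}\otimes\Rat_\ell \xrightarrow{\simeq} \bm{L}_{\ell,s}\;(\ell\neq p),\qquad \bm{L}_{B,s}\otimes\Int_p \xrightarrow{\simeq} \bm{L}_{p,s},
\]
compatible with the Betti, $\ell$-adic and $p$-adic realizations out of $\End(A^\KS_T)_{(p)}$. Thus, for $\ell\neq p$, $\ell$-specialness of $f$ translates to $f_{B,s}\otimes 1\in \bm{L}_{B,s}\otimes\Rat_\ell$, equivalent by faithful flatness of $\Rat\to\Rat_\ell$ to the rational condition $f_{B,s}\in \bm{L}_{B,s}\otimes\Rat$; $p$-specialness becomes $f_{B,s}\otimes 1\in \bm{L}_{B,s}\otimes\Int_p$, which by faithful flatness of $\Int_{(p)}\to\Int_p$ descends to the integral condition $f_{B,s}\in \bm{L}_{B,s}$, i.e.\ precisely (3).

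This yields (3) $\Leftrightarrow$ ``$p$-special'' outright, and (3) $\Rightarrow$ (1) (the integral inclusion transfers through each $\alpha_\ell$ and $\alpha_p$), while (1) $\Rightarrow$ (2) is trivial. The remaining and main obstacle is (2) $\Rightarrow$ (3) when (2) is witnessed only by some $\ell\neq p$: the comparison then yields only rational containment $f_{B,s}\in \bm{L}_{B,s}\otimes\Rat$, and this must be upgraded to the integral statement. I would do so by combining the integrality of $f_{B,s}$ in $\bm{H}_{B,s}^{\otimes(1,1)}$, which holds because $f$ is a genuine endomorphism of $A^{\KS}$, with the Rosati-induced integrality of the pairing on $\End(A^{\KS})_{(p)}$; the characterization of $\bm{L}_B$ as the sub-local system of $\dual{\bm{L}}_B$ whose pairing with $\dual{\bm{L}}_B$ lands in $\underline{\Int}_{(p)}$ should then force $f_{B,s}$ into $\bm{L}_{B,s}$ itself rather than into the proper saturation $\dual{\bm{L}}_{B,s}$.
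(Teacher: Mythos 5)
Your reduction to a complex point, the handling of the implications (1)~$\Rightarrow$~(2) and (3)~$\Rightarrow$~(1), and the $p$-adic direction are all fine, and you are right to isolate the passage from the rational containment $f_{B,s}\in\bm{L}_{B,s}\otimes\Rat$ to the integral one $f_{B,s}\in\bm{L}_{B,s}$ as the only real issue when (2) is witnessed only at some $\ell\neq p$. The mechanism you propose for that step, however, would not work, and rests on a misidentification of the lattice geometry: $\dual{\bm{L}}_{B,s}$ is \emph{not} the saturation of $\bm{L}_{B,s}$ inside $\bm{H}_{B,s}^{\otimes(1,1)}$. Viewing an element $v\in\dual{L}\setminus L$ as left multiplication on $H=C(L)$, it sends $1\in C(L)$ to $v\notin C(L)$, so $v$ is not an integral endomorphism of $H$; in other words $\dual{L}\not\subset\End_{\Int_{(p)}}(H)$. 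Consequently the Rosati-integral pairing on $\End(A^\KS_s)_{(p)}$ gives you no control over pairings of $f_{B,s}$ against $\dual{\bm{L}}_{B,s}$, and the argument you sketch cannot be run.

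The fact you actually need is both simpler and stronger: $\bm{L}_{B,s}$ is already a $\Int_{(p)}$-direct summand of $\bm{H}_{B,s}^{\otimes(1,1)}=\End_{\Int_{(p)}}(H)$ itself, not merely of $\dual{\bm{L}}_{B,s}$. Indeed $L\hookrightarrow C(L)$ is a split inclusion of free $\Int_{(p)}$-modules, and $C(L)\hookrightarrow\End_{\Int_{(p)}}(C(L))$ by left multiplication is split by evaluation at $1$; the composite exhibits $L$ as a direct summand of $\End_{\Int_{(p)}}(H)$. Since $f$ is a genuine prime-to-$p$ endomorphism, $f_{B,s}$ lies in $\End_{\Int_{(p)}}(H)$, and then $f_{B,s}\otimes 1\in L\otimes\Rat_\ell$ forces $f_{B,s}\in L$ outright: the image of $f_{B,s}$ in the free $\Int_{(p)}$-module $\End_{\Int_{(p)}}(H)/L$ vanishes after the injective base change $\Int_{(p)}\hookrightarrow\Rat_\ell$, hence vanishes. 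No positivity or pairing argument enters; this is what the paper is compressing when it says the equivalence at a complex point is ``clear from the definitions.''
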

\begin{proof}
  Let $s\to T$ be any $\Comp$-valued point. Then it is clear from the definitions that $f_s$ is special if and only if it is $\ell$-special for some (hence any) prime $\ell$. Using this, the lemma easily follows.
\end{proof}

\begin{defn}\label{special:defn:char0}
  Let $T$ and $f$ be as above. Then we say that $f$ is \defnword{special} if it satisfies any of the equivalent conditions of (\ref{special:lem:complex}). We denote the space of special endomorphisms of $A^\KS_T$ by $L(A^\KS_T)$.
\end{defn}

Over $\Comp$, $L(A^\KS_s)$ can be described purely Hodge theoretically:
\begin{prp}\label{special:prp:complexpoints}
If $s\in \Sh_K(\Comp)$, then
  \[
    L(A^\KS_s)=\bm{L}_{B,s}\cap(\bm{L}_{B,s}\otimes\Comp)^{0,0}.
  \]
In particular, for any $\Sh_K$-scheme $T$, $\rk_{\Int_{(p)}}L(A^\KS_T)\leq n$.
\end{prp}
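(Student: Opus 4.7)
The plan is to combine two ingredients: the classical Hodge-theoretic description of the endomorphism module of a complex abelian variety (adapted to the prime-to-$p$ isogeny category), together with the definition of specialness, which requires the Betti class of the endomorphism to land in $\bm{L}_B$.

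First I would identify $\End(A^{\KS}_s)_{(p)}$ inside $\bm{H}_{B,s}^{\otimes(1,1)}=\End_{\Int_{(p)}}(\bm{H}_{B,s})$. The equivalence of categories between complex abelian varieties and polarizable $\Int$-Hodge structures of type $\{(-1,0),(0,-1)\}$, tensored with $\Int_{(p)}$, identifies $\End(A^{\KS}_s)_{(p)}$ with the $\Int_{(p)}$-module of endomorphisms of the Hodge structure $\bm{H}_{B,s}$, i.e., with those elements of $\bm{H}_{B,s}^{\otimes(1,1)}$ whose complexification lies in the $(0,0)$-piece of the Hodge decomposition of $\bm{H}_{B,s}^{\otimes(1,1)}\otimes\Comp$. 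By Definition~\ref{special:defn:complex}, such an $f$ is special precisely when the associated class lies in the submodule $\bm{L}_{B,s}\subset\bm{H}_{B,s}^{\otimes(1,1)}$. Combining the two gives
\[
L(A^{\KS}_s)\;=\;\bm{L}_{B,s}\cap\bigl(\bm{H}_{B,s}^{\otimes(1,1)}\otimes\Comp\bigr)^{0,0}.
\]
Since $\bm{L}_B\hookrightarrow\bm{H}_B^{\otimes(1,1)}$ is obtained as the image of the idempotent $\pr_B$, it is a strict sub-variation of Hodge structures, so
\[
(\bm{L}_{B,s}\otimes\Comp)^{0,0}\;=\;(\bm{L}_{B,s}\otimes\Comp)\cap\bigl(\bm{H}_{B,s}^{\otimes(1,1)}\otimes\Comp\bigr)^{0,0},
\]
which yields the displayed identity of the proposition.

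For the rank bound I would read off the Hodge numbers of $\bm{L}_B$ from the description of the Hodge structure $L_{\mb{h}}$ in~(\ref{spin:subsec:complex}): the only nonzero pieces are $L_{\mb{h}}^{-1,1}$, $L_{\mb{h}}^{0,0}$, $L_{\mb{h}}^{1,-1}$, of dimensions $1$, $n$, $1$, respectively, since the orthogonal complement in $L_{\Comp}$ of an oriented negative-definite $2$-plane in $L_{\Real}$ has $\Comp$-dimension $n$. Therefore $(\bm{L}_{B,s}\otimes\Comp)^{0,0}$ is $n$-dimensional over $\Comp$ and $\rk_{\Int_{(p)}}L(A^{\KS}_s)\leq n$. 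For a connected $\Sh_K$-scheme $T$ and any $\Comp$-point $s$ of $T$, the restriction $L(A^{\KS}_T)\to L(A^{\KS}_s)$ is injective, because a global section of a local system on a connected base is determined by any single fiber (applied to the inclusion $\End(A^{\KS}_T)_{(p)}\hookrightarrow\Gamma(T^{\an},\bm{H}_B^{\otimes(1,1)})$); this transports the bound $\leq n$ from $s$ to $T$.

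The main subtlety I expect is step one: one must be careful that the equivalence of categories is phrased \emph{integrally} after inverting all primes other than $p$, so the output is an element of $\End(A^{\KS}_s)_{(p)}$ and not merely of $\End(A^{\KS}_s)\otimes\Rat$. Once this is secured, the rest of the argument is a formal combination of the definition of specialness with the Hodge-number count for the orthogonal Hodge structure $\bm{L}_B$.
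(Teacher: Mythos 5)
Your argument is correct and is exactly the definitional unpacking the paper has in mind (the paper simply writes ``This is clear from the definitions''): identify $\End(A^\KS_s)_{(p)}$ with the $(0,0)$-Hodge classes in $\bm{H}_{B,s}^{\otimes(1,1)}$, intersect with $\bm{L}_{B,s}$, use that $\bm{L}_B$ is a sub-variation via $\pr_B$ so its Hodge pieces are induced from the ambient ones, and read off $\dim_\Comp(\bm{L}_{B,s}\otimes\Comp)^{0,0}=n$ from~(\ref{spin:subsec:complex}). Two small points worth tightening: the rank bound tacitly requires $T$ to be connected (otherwise $L(A^\KS_T)$ is a product over components and the bound fails), and in the last step $T$ is a $\Rat$-scheme so $T^{\an}$ and $\bm{H}_B$ only make sense after base change to $\Comp$; it is cleaner to invoke directly that for connected $T$ the restriction $\End(A^\KS_T)_{(p)}\to\End(A^\KS_s)_{(p)}$ is injective for any geometric point $s$ of $T$, by unramifiedness of the endomorphism scheme.
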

\begin{proof}
  This is clear from the definitions.
\end{proof}

\subsection{}
We now assume that $(L,Q)$ is self-dual, and turn to the investigation of specialness over the integral canonical model $\Ss_K$. For any $\Ss_K$-scheme $T$ and any $\ell\neq p$, the definition of an $\ell$-special endomorphism carries over directly from (\ref{special:defn:ell-special}). We will now develop a version of $p$-specialness that works also over the integral model.
\begin{defn}\label{lifts:defn:p-special-point}
Suppose that $s\to \Ss_{K,\Field_p}$ is a $k$-valued point. Then we obtain a map
\[
  \End(A^\KS_s)_{(p)}\rightarrow \End\bigl(\bm{H}_{\cris,s}\bigr).
\]
An endomorphism $f\in\End(A^\KS_s)_{(p)}$ is \defnword{$p$-special} if it gives rise to an element of $\bm{L}_{\cris,s}$ under the above map.
\end{defn}

\begin{lem}\label{lifts:lem:p-special}
Let $T$ be a $\Ss_K$-scheme in which $p$ is locally nilpotent, and suppose that we have $f\in\End(A^\KS_T)_{(p)}$. Then the following are equivalent:
\begin{enumerate}
  \item For every point $s\to T$ valued in a perfect field, the fiber $f_s\in\End(A^\KS_s)_{(p)}$ is $p$-special.
  \item In every connected component of $T$, there exists a point $s$ valued in a perfect field such that the fiber $f_s$ is $p$-special.
\end{enumerate}
\end{lem}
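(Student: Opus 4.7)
$(1) \Rightarrow (2)$ is immediate. For $(2) \Rightarrow (1)$, I argue one connected component at a time, so assume $T$ is connected. Let $s \to T$ be the given perfect-field-valued point at which $f_s$ is $p$-special, and let $s' \to T$ be an arbitrary perfect-field-valued point; I must show $f_{s'}$ is $p$-special.

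By functoriality of crystalline cohomology, $f$ induces a global section $f_{\cris}$ of the crystal $\bm{H}_{\cris}^{\otimes(1,1)}$ on $(T_{\Field_p}/\Int_p)_{\cris}$. Together with the restriction $\pr_{\cris,T}$ of the global idempotent $\pr_{\cris}$ of Section~\ref{spin:subsec:crysreal}, form
\[
\eta \;=\; f_{\cris} - \pr_{\cris,T}(f_{\cris})\;\in\; H^0\bigl((T_{\Field_p}/\Int_p)_{\cris},\, \bm{H}_{\cris}^{\otimes(1,1)}\bigr).
\]
Since $\bm{L}_{\cris,T} = \im \pr_{\cris,T}$, the condition that $f_t$ be $p$-special at a perfect-field-valued point $t$ is exactly $\eta_t = 0$ in $\bm{H}_{\cris,t}^{\otimes(1,1)}$. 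The hypothesis gives $\eta_s = 0$; the goal is $\eta_{s'} = 0$. The essential structural feature is that $\eta$ commutes with the Frobenius $\bm{F}_T$ of $\bm{H}_{\cris,T}$: the operator $\pr_{\cris,T}$ is $F$-equivariant because the tensor $\pr_{\cris}$ is $F$-invariant (cf.~(\ref{spin:prp:kisinpadichodge})(\ref{intphodge:comp}) applied pointwise, together with the fact that the global $\pr_{\cris}$ was defined so that its restriction to each perfect-field-valued $t$ agrees with $\pr_{\cris,t}$), and $f_{\cris}$ is $F$-equivariant because $f$ is a bona fide endomorphism of the abelian scheme $A^{\KS}_T$.

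The main obstacle is then to establish the following rigidity: on a connected scheme $T$ with $p$ locally nilpotent, an $F$-equivariant global section of the $F$-crystal $\bm{H}_{\cris,T}^{\otimes(1,1)}$ whose restriction to one perfect-field-valued point of $T$ vanishes must restrict to $0$ at every perfect-field-valued point. I would Zariski-localize and use Artinian approximation to reduce to the case $T = \Spec R$ with $R$ an Artinian local ring of residue characteristic $p$ with perfect residue field $k$, taking $s$ to be the closed point. Choosing a formally smooth lift $\widetilde{R}$ of $R$ to characteristic zero with a compatible Frobenius lift $\tilde\varphi$ and evaluating the crystal on the p.d.~thickening $\Spec R \into \Spf \widetilde{R}$ yields a finite free $\widetilde{R}$-module $M$ with Frobenius $\bm{F}_M:\tilde\varphi^*M \to M$. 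The $F$-equivariance of $\eta$ becomes a fixed-point condition under a semi-linear Frobenius; since $\tilde\varphi(\mx_{\widetilde{R}}) \subset \mx_{\widetilde{R}}^p + p\widetilde{R}$, iterating (in the isogeny category, to circumvent the Tate twist by which $\bm{F}_M$ is not an integral isomorphism) and applying Krull's intersection theorem forces $\eta \in \bigcap_n \mx_{\widetilde{R}}^n M = 0$. The delicate bookkeeping lies in connecting $s$ and $s'$ through a chain of Artinian local rings and tracking the $F$-structure through these reductions.
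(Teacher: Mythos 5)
You correctly recast the statement as a propagation of vanishing for a single crystalline section: your $\eta = f_{\cris} - \pr_{\cris}(f_{\cris})$ is, under the splitting $\bm{H}^{\otimes(1,1)}_{\cris} = \bm{L}_{\cris}\oplus\ker\pr_{\cris}$, exactly the image of $f_{\cris}$ in the quotient crystal $\bm{H}^{\otimes(1,1)}_{\cris}/\bm{L}_{\cris}$, and that quotient is precisely what the paper feeds into Lemma~\ref{lifts:lem:crystalline_sections}. So your first step matches the paper's.

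Where you diverge is in the rigidity. The paper's Lemma~\ref{lifts:lem:crystalline_sections} is a statement about arbitrary crystals of vector bundles and uses only the connection: one chains through codimension-one specializations to reduce to $R = k\pow{t}$, evaluates the crystal on $\Spec(k\pow{t})\into\Spf W(k)\pow{t}$ to get a finite free module $M$ with derivation $D$, and shows directly that $M^{\nabla=0}\to M/tM$ is injective — the key input being that $M/tM$ is free over $W(k)$, hence torsion-free, so the relation $nm'\in tM$ forces $m'\in tM$. You propose instead to use the $F$-structure and the $F$-equivariance of $\eta$ (which does hold: $f_{\cris}$ is the realization of a genuine endomorphism and $\pr_{\cris}$ is pointwise $F$-invariant by (\ref{spin:prp:kisinpadichodge})).

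There are two concrete issues with your sketch. First, the reduction to Artinian local rings cannot work as stated: an Artinian local ring has exactly one point, its closed point, so that case cannot connect the vanishing at $s$ to the vanishing at a distinct $s'$. What is needed, as in the paper, is the chaining through specializations whose quotient local ring is one-dimensional, normalizing and completing to $k\pow{t}$. Second, the Frobenius-contraction estimate must be pinned down. After the correct reduction, pick $\tilde\varphi(t)=t^p$ on $W(k)\pow{t}$: if $\eta\in t^nM$ then $\tilde\varphi^*\eta\in t^{pn}M$, and the conjugation Frobenius on $\End(\bm{H}_{\cris})$ carries the integral lattice into $p^{-1}\End(\bm{H}_{\cris})$ because $pF^{-1}$ is integral on the $H^1$ of an abelian scheme; hence $p\eta\in t^{pn}M$, and since $M/t^{pn}M$ is finite free over $W(k)[t]/(t^{pn})$, hence $p$-torsion-free, $\eta\in t^{pn}M$. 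Iterating kills $\eta$. So your approach is workable once the reduction is corrected, but it leans on the $F$-structure, the $F$-equivariance of $\eta$, and the slope bound special to $H^1$ — none of which the paper's connection-based argument needs. The paper's route is strictly more general and more elementary; yours is an interesting alternative tailored to this $F$-crystal.
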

\begin{proof}
This is an immediate consequence of the definition, the fact that the endomorphism scheme $\underline{\End}(A^\KS)_{(p)}$ of $A^\KS$ is locally Noetherian, and (\ref{lifts:lem:crystalline_sections}) below, applied to the crystal $\bm{H}^{\otimes(1,1)}_{\cris}/\bm{L}_{\cris}$.
\end{proof}

\begin{lem}\label{lifts:lem:crystalline_sections}
Suppose that $T$ is a connected, locally Noetherian $\Field_p$-scheme, and that $\bm{M}$ is a crystal of vector bundles over $T$ equipped with a global section $e\in \Gamma\bigl((T/\Int_p)_{\cris},\bm{M}\bigr)$. Suppose that, for some point $x\rightarrow T$, the induced global section
\[
e_x\in \Gamma \bigl((\Spec k(x)/\Int_p)_{\cris},\bm{M}\vert_x\bigr)
\]
vanishes. Then, for every point $y\rightarrow T$, the induced global section
\[
e_y\in \Gamma \bigl((\Spec k(y)/\Int_p)_{\cris},\bm{M}\vert_y\bigr)
\]
also vanishes.
\end{lem}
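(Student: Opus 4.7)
The plan is to show that the vanishing locus $Z\coloneqq\{y\to T:\;e_y=0\}$ is both open and closed in $T$, so that together with connectedness and the hypothesis $x\in Z$ this will force $Z=T$. I would work locally: cover $T$ by affine opens $T_0=\Spec A$, each admitting a closed embedding into a smooth affine $\Int_p$-scheme $\widetilde{T}_0$. By Berthelot's theorem, $\bm{M}\vert_{T_0}$ is then described by a locally free module $\widetilde{M}$ with topologically quasi-nilpotent integrable connection $\nabla$ on the $p$-adically completed PD envelope of $T_0$ in $\widetilde{T}_0$, and $e$ corresponds to a horizontal section $\widetilde{e}\in\widetilde{M}$. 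For any $y\to T_0$ equipped with a formal lift $\widetilde{y}:\Spf W(k(y))\to\widetilde{T}_0$, the crystal structure identifies $\bm{M}\vert_y(W(k(y)))$ with $\widetilde{M}\otimes_{\widetilde{y}}W(k(y))$ and sends $e_y$ to the evaluation $\widetilde{e}(\widetilde{y})$, independently of the chosen lift.

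The central local step would be to show that, at a closed point $x\in T_0$ with $e_x=0$, the section $\widetilde{e}$ vanishes in the formal neighborhood of $x$ in $\widetilde{T}_0$. Smoothness identifies $\widehat{\Rg}_{\widetilde{T}_0,x}$ with $W(k(x))\pow{t_1,\ldots,t_r}$, in which a $W$-lift of $x$ corresponds to $t_i\mapsto\alpha_i$ with $\alpha_i\in pW(k(x))$, so the continuous substitution $t_i\mapsto t_i+\alpha_i$ lets me assume $\widetilde{x}=0$. In a local frame, horizontality of $\widetilde{e}=\sum_i f_i\xi_i$ becomes the integrable system $\partial_k f_j=-\sum_i f_i\omega^{(k)}_{ij}$, and the hypothesis becomes $f_i(0)=0$. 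If some $f_i$ were nonzero, I would let $d\geq 1$ be the minimum degree of a nonzero monomial appearing in any $f_i$, and let $g_j$ denote the degree-$d$ homogeneous part of $f_j$; comparing lowest degrees in the horizontality equation would force $\partial_k g_j=0$ for all $j,k$. Since $W(k(x))$ is $\Int$-torsion-free and every multi-index of positive degree has at least one positive entry, this in turn forces each $g_j=0$, contradicting the choice of $d$. Hence $\widetilde{e}=0$ in the formal neighborhood of $x$.

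Faithful flatness of completion then gives vanishing of $\widetilde{e}$ in the local ring $\widetilde{M}\otimes\Rg_{\widetilde{T}_0,x}$, hence on a Zariski affine open of $\widetilde{T}_0$ containing $x$. Restricting to the connected component of $\widetilde{T}_0$ containing $x$ (irreducible, since $\widetilde{T}_0$ is smooth over $\Int_p$) and using that the locally free $\widetilde{M}$ is torsion-free, I would conclude that a horizontal section vanishing on a nonempty open must vanish on the whole component: so $\widetilde{e}\equiv 0$ there, which yields $e_y=0$ for every $y\in T_0$. To conclude $Z=T$, I would use the connectedness of $T$: given an affine cover with connected nerve, the argument re-applied at any point of a nonempty overlap between $T_0$ and an adjacent piece spreads the vanishing to that piece, and inductively to all of $T$.

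The main obstacle will be the Taylor-coefficient argument in the local step: I will need to verify carefully that the coordinate change by an arbitrary $W$-lift respects the $p$-adic and formal topologies, that integrability of $\nabla$ really does yield the clean degree-by-degree recursion used above, and that the arithmetic input---$\Int$-torsion-freeness of $W(k(x))$ together with the combinatorial fact about positive multi-indices---is exactly what makes the induction terminate. Once this local rigidity of horizontal sections is in hand, everything else reduces to formal applications of faithful flatness, torsion-freeness of locally free modules, and connectedness propagation.
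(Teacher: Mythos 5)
Your proposal takes a genuinely different route from the paper's. The paper reduces to the one-dimensional case $T=\Spec k\pow{t}$ (by chaining together height-$1$ specializations, then normalizing and completing), and there it shows that the restriction $M^{\nabla=0}\to M/tM$ is injective by the same lowest-degree computation you use; the reductions to perfect residue fields and to generic points are handled by \cite{berthelot_messing}*{1.3.5}. You instead embed each affine chart in a smooth $\Int_p$-scheme, realize the crystal as a free module with integrable connection over the completed PD envelope, and run the rigidity argument directly in $r$ variables. The core computation --- horizontality forces the lowest nonzero homogeneous part of $\widetilde{e}$ to vanish, because $W(k)$ is $\Int$-torsion-free and hence $\alpha_kc_\alpha=0\Rightarrow c_\alpha=0$ --- is correct, and it is precisely the engine behind the paper's DVR step (there: $D(t^nm')=0\Rightarrow nm'\in tM\Rightarrow m'\in tM$). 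Your globalization via formal $\Rightarrow$ Zariski-local $\Rightarrow$ component-wise vanishing, then propagation by connectedness, is a sound alternative to the paper's chain of specializations, provided you are careful that the $p$-adically complete PD envelope is not a local ring, so the ``faithful flatness of completion'' step must be run on its localization at the prime above $x$.

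There is, however, one genuine gap. Your central local step is formulated only at a closed point $x$, and it implicitly requires $k(x)$ to be \emph{perfect}: you use a $W$-lift $\widetilde{x}:\Spf W(k(x))\to\widetilde{T}_0$ and an identification $\widehat{\Rg}_{\widetilde{T}_0,x}\cong W(k(x))\pow{t_1,\ldots,t_r}$, both of which fail if $k(x)$ is not perfect (Witt vectors of an imperfect field are the wrong object, and no canonical Teichm\"uller-type lift exists). The lemma's hypothesis allows an arbitrary point $x\to T$, and even closed points of a locally Noetherian $\Field_p$-scheme can have imperfect residue fields. So your argument cannot get started at the given $x$ as stated, nor do you have a way to pass from $x$ to a closed point with perfect residue field before the Taylor-coefficient argument kicks in. The paper removes exactly this obstacle by invoking \cite{berthelot_messing}*{1.3.5} (compatibility with perfect closure, and injectivity of restriction to generic points); you will need a comparable preliminary reduction, or else rework the local step over a Cohen ring and argue that the conclusion is independent of the non-canonical choices involved.
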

\begin{proof}
  Since $T$ is connected, locally Noetherian, the lemma will follow if we can prove the following claim.
  \begin{claim}
    Suppose that $x$ and $y$ are points of $T$ such that $x$ is a specialization of $y$, and such that the prime ideal corresponding to $y$ has height $1$ in $\Reg{T,x}$; then $e_x$ vanishes if and only if $e_y$ vanishes.
  \end{claim}
  Let $x^{\mathrm{perf}}$ be the point attached to a perfect closure of $k(x)$. By \cite{berthelot_messing}*{1.3.5}, $e_x$ vanishes if and only if $e_{x^{\mathrm{perf}}}$ vanishes. So we can assume that $k=k(x)$ is perfect.

  Let $\mathfrak{p}_y\subset\Reg{T,x}$ be the prime ideal corresponding to $y$, and let $R$ be the completion of the normalization of $\Reg{T,x}/\mathfrak{p}_y$. By our hypotheses, $R$ is an equicharacteristic complete DVR with residue field $k$, and so is isomorphic to $k\pow{t}$. By pulling $\bm{M}$ back to $\Spec R$, we are further reduced to the situation where $T=\Spec k\pow{t}$, $x=\Spec k$ and $y=\Spec k((t))$.

  Let $M$ be the evaluation of $\bm{M}$ along the formal divided power thickening $\Spec k\pow{t}\into\Spf W\pow{t}$: It is equipped with a flat, topologically quasi-nilpotent connection $\nabla:M\to M\otimes\widehat{\Omega}^1_{W(k)\pow{t}}$. In other words, we have a derivation $D:M\to M$ over $\frac{d}{dt}$ such that a sufficiently large iteration of $D$ carries $M$ into $pM$. The crystal $\bm{M}$ is determined by the finite free $W(k)\pow{t}$-module $M$ and   the connection $\nabla$. In particular, the global sections of $\bm{M}$ are identified with the module of horizontal elements $M^{\nabla=0}$.

  The corresponding crystal over $x$ is the one attached to the $W(k)$-module $M_0=M/tM$. The restriction from global sections over $T$ to global sections over $x$ is just the reduction-mod-$t$ map $M^{\nabla=0}\to M_0$. It is easy to see that this map is injective: Indeed, suppose that we have $m\in tM$ such that $D(m)=0$, and suppose that $n\in\Int_{>0}$ is the largest integer such that $m\in t^nM$ (such an $n$ exists if and only if $m$ is non-zero). Write $m=t^nm'$, for some $m'\notin tM$. We then have:
  $$0=D(m)=D(t^nm')=nt^{n-1}m'+t^nD(m').$$
  Dividing by $t^{n-1}$, this gives us $nm'\in tM$, which implies that $m'\in tM$, contradicting our assumption that $m$ is non-zero. So we find that a global section of a crystal over $T$ is $0$ precisely if it restricts to $0$ over $x$.

  By \cite{berthelot_messing}*{1.3.5} again, restriction from global sections over $T$ to global sections over $y$ is injective. Therefore, a global section of a crystal over $T$ is $0$ precisely when it restricts to $0$ over $y$. This proves the claim and the lemma.
\end{proof}

\begin{defn}\label{lifts:defn:p-special}
Let $T$ be an $\Ss_K$-scheme and let $f\in\End(A^\KS_T)_{(p)}$. If $p\Reg{T}=0$, we will say that $f$ is \defnword{$p$-special} if it satisfies the equivalent conditions of (\ref{lifts:lem:p-special}). In general, we will say that $f$ is $p$-special, if its restriction to $T\otimes\Field_p$ and $T\otimes\Rat$ are both $p$-special.

We will say that $f$ is \defnword{special} if it is $\ell$-special for \emph{every} prime $\ell$.

Given any prime $\ell$, write $L_{\ell}(A^\KS_T)$ for the space of $\ell$-special endomorphisms, and $L(A^\KS_T)$ for the space of special endomorphisms. By definition, we have:
\[
 L(A^\KS_T)=\bigcap_{\ell\text{ prime}}L_{\ell}(A^\KS_T).
\]
\end{defn}

\begin{lem}\label{special:lem:rosati}
The space $L(A^\KS_T)\subset\End(A^\KS_T)_{(p)}$ is point-wise fixed by the Rosati involution induced from the polarization $\lambda^\KS$. In particular, $f\mapsto f\circ f$ defines a positive definite $\Int_{(p)}$-quadratic form on $L(A^\KS_T)$.
\end{lem}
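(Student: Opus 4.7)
The statement is local on $T$ and compatible with specialization, so both claims reduce to pointwise verification at geometric points, allowing us to assume $T = \Spec k$ for $k$ algebraically closed. Pick a prime $\ell$ invertible in $k$ and use the $\ell$-adic realization, which is faithful on $\End(A^\KS_T)_\Rat$. Via the level structure $[\eta_G]$ of (\ref{spin:prp:etalerealization}) we identify $\bm H_\ell$ with $H \otimes \Rat_\ell = C(L,Q)\otimes\Rat_\ell$ and, by (\ref{cliff:lem:psidelta}), the Weil pairing induced by $\lambda^\KS$ with $\psi_\delta$ scaled by a totally positive constant (which does not affect adjoints). Under these identifications, the Rosati involution on $\End(A^\KS_T)_\Rat$ corresponds to the $\psi_\delta$-adjoint on $C$-equivariant $\Int/2\Int$-graded endomorphisms of $H$.

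\textbf{Key computation.} For $v \in L$ acting by left multiplication $\ell_v$ on $H = C$, the cyclicity of the reduced trace together with the defining identity $v^* = v$ for the principal anti-involution restricted to $L \subset C$ gives
\[
\psi_\delta(\ell_v x, y) = \trd(vx\,\delta\,y^*) = \trd(x\,\delta\,y^*\,v) = \trd\bigl(x\,\delta\,(vy)^*\bigr) = \psi_\delta(x,\ell_v y),
\]
so $\ell_v$ is $\psi_\delta$-self-adjoint. By definition, $f \in L(A^\KS_T)$ is an endomorphism whose $\ell$-adic realization lies in $\bm L_\ell \subset \bm H_\ell^{\otimes(1,1)}$; identifying $\bm L_\ell \cong L\otimes\Rat_\ell$ with elements acting on $H$ by left multiplication, this realization equals $\ell_v$ for some $v \in L\otimes\Rat_\ell$. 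Hence the $\ell$-adic realization of $f^\dagger$ is $\ell_v^{\mathrm{adj}} = \ell_v$, and by faithfulness $f^\dagger = f$.

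\textbf{Quadratic form and positivity.} Since $v^2 = Q(v)\cdot 1$ in $C$ by the universal property of the Clifford algebra, $f\circ f$ has $\ell$-adic realization $Q(v)\cdot \id_H$. By faithfulness, $f \circ f = Q(v) \cdot \id_{A^\KS_T}$ for a well-defined $Q(v) \in \Int_{(p)}$ (cutting out a subring of $\End(A^\KS_T)_{(p)}$), so $f \mapsto Q(v)$ is a $\Int_{(p)}$-valued quadratic form on $L(A^\KS_T)$. Positive definiteness follows from the classical positivity of the Rosati trace form on $\End^0(A^\KS_T)$: for any nonzero $f \in L(A^\KS_T)$,
\[
0 < \tr(f \circ f^\dagger) = \tr(f^2) = \dim_{\Rat_\ell}(\bm H_\ell)\cdot Q(v),
\]
forcing $Q(v) > 0$. (Alternatively, in characteristic zero one invokes (\ref{special:prp:complexpoints}) and the fact that $Q$ is positive definite on the rank-$n$ positive-definite subspace $L_\Real^{0,0} \subset L_\Real$ selected by the Hodge structure.) The entire argument hinges on the identity $v^* = v$ for $v \in L$, which is built into the definition of the principal anti-involution; the one technical compatibility to verify is that the $\ell$-adic Weil pairing matches $\psi_\delta$ up to a positive scalar under the level structure, and this is precisely (\ref{cliff:lem:psidelta}).
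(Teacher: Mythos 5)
Your proof is correct and follows essentially the same route as the paper's: both hinge on the Clifford-algebra identity $\psi_\delta(vz_1,z_2)=\trd(vz_1\delta z_2^*)=\trd(z_1\delta(vz_2)^*)=\psi_\delta(z_1,vz_2)$ for $v\in L$ (using $v^*=v$ and cyclicity of the reduced trace), followed by the classical positivity of the Rosati involution. The paper packages this as a $G$-equivariant involution $i_\delta$ on $H_\Rat^{\otimes(1,1)}$ restricting to the identity on $L_\Rat$ and then descends it to a sheaf map $\bm i_\delta$ with $\bm i_\delta\vert_{\bm L_\ell}\equiv 1$, whereas you localize at a geometric point via the level structure — which is the same computation in a different dress — and you usefully make explicit that $f\circ f$ is a rational scalar, a point the paper leaves implicit.
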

\begin{proof}
 Consider the $G$-equivariant symplectic pairing $\psi_{\delta}:H_{\Rat}\otimes H_{\Rat}\to\Rat(\nu)$: By construction, for any prime $\ell\neq p$, the corresponding pairing of $\ell$-adic sheaves $\bm{H}_{\ell}\otimes\bm{H}_{\ell}\to\underline{\Rat}_{\ell}(-1)$ is, up to scalars, identified with the polarization pairing induced from $\lambda^\KS$.

 $\psi_{\delta}$ gives rise to an isomorphism of $G$-representations $\lambda(\psi_{\delta}):H_{\Rat}\xrightarrow{\simeq}\dual{H}_{\Rat}(\nu)$.
 If we identify $H^{\otimes(1,1)}$ with $\End(H)$, we now obtain a $G$-equivariant involution:
 \begin{align*}
  i_{\delta}:H_{\Rat}^{\otimes(1,1)}&\to H_{\Rat}^{\otimes(1,1)};\\
  f&\mapsto\lambda(\psi_{\delta})^{-1}\circ\dual{f}(\nu)\circ\lambda(\psi_{\delta}).
 \end{align*}
 Further, for every $v\in L_{\Rat}$, $v^*=v\in C_{\Rat}$. Therefore, for $z_1,z_2\in H_{\Rat}$, we have:
 \[
  \psi_{\delta}(vz_1,z_2)=\trd(vz_1\delta z_2^*)=\trd(z_1\delta z_2^*v)=\trd(z_1\delta(vz_2)^*)=\psi_{\delta}(z_1,vz_2).
 \]
 This shows that $i_{\delta}(v)=v$, for all $v\in L_{\Rat}$.

 For any prime $\ell\neq p$, the $G$-equivariant map $i_{\delta}$ induces a map of sheaves over $\Sh_K$ (and hence $\Ss_K$): $\bm{i}_{\delta}:\bm{H}_{\ell}^{\otimes(1,1)}\to\bm{H}_{\ell}^{\otimes(1,1)}$. By construction, $\bm{i}_{\delta}\rvert_{\bm{L}_{\ell}}\equiv 1$.

 Now, the restriction of $\bm{i}_{\delta}$ to the $\ell$-adic realization of any endomorphism of $A^\KS$ is exactly the Rosati involution arising from $\lambda^\KS$. This gives us the first assertion. The second follows from the positivity of the Rosati involution.
\end{proof}

We have:
\begin{lem}\label{lifts:lem:p-special-consistent}
Let $T$ be an $\Ss_K$-scheme such that every generic point of $T\otimes\Field_p$ is the specialization of a point in $T\otimes\Rat$. Then an endomorphism $f\in\End(A^\KS_T)_{(p)}$ is $p$-special over $T\otimes\Rat$ if and only if it is $p$-special over $T\otimes\Field_p$. In particular, in this situation, if $f$ is $p$-special, then it is in fact special.
\end{lem}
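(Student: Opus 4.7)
The plan is to use the identification of $\pr_p$ with $\pr_{\cris}$ under the crystalline comparison isomorphism from Proposition~\ref{spin:prp:kisinpadichodge}(\ref{intphodge:comp}), propagated along specializations from characteristic zero to characteristic $p$ in $T$ that are supplied by the hypothesis.

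First I would reduce to a single specialization. Both notions of $p$-specialness are locally constant on connected components --- the crystalline version by Lemma~\ref{lifts:lem:p-special}, and the \'etale version because $\bm{\pi}_p$ is a morphism of $p$-adic local systems on $T\otimes\Rat$, so ``being fixed by $\bm{\pi}_p$'' is a locally constant condition on the $p$-adic realization of $f$. It therefore suffices to prove the following local claim: let $\Rg$ be a complete discrete valuation ring with algebraically closed residue field $k$ of characteristic $p$ and fraction field $E$ of characteristic zero, and let $\Spec\Rg\to T$ be a map whose closed point $s$ lies in $T\otimes\Field_p$ and whose generic point $\widetilde{s}_E$ lies in $T\otimes\Rat$; then $f_s$ is $p$-special in the crystalline sense if and only if $f_{\widetilde{s}_E}$ is $p$-special in the \'etale sense. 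Under the hypothesis on $T$, every generic point of $T\otimes\Field_p$ admits such a lift from $T\otimes\Rat$, and the local equivalence then yields both directions of the iff.

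For the local claim, after approximating $\Rg$ by a DVR finite over $W(k)$ so as to enter the setting of Proposition~\ref{spin:prp:kisinpadichodge}, functoriality of cohomology applied to the endomorphism $f_\Rg$ of $A^\KS_\Rg$ shows that the crystalline comparison isomorphism
\[
\bm{H}_{p,\widetilde{s}_{\overline{E}}}\otimes_{\Int_p}\Bcris\xrightarrow{\simeq}\bm{H}_{\cris,s}\otimes_{W(k)}\Bcris
\]
carries $f_{p,\widetilde{s}_{\overline{E}}}\otimes 1$ to $f_{\cris,s}\otimes 1$, and by \emph{loc. cit.} it also carries $\bm{\pi}_{p,\widetilde{s}_{\overline{E}}}\otimes 1$ to $\bm{\pi}_{\cris,s}\otimes 1$. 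Since $\bm{L}_p=\bm{\pi}_p(\bm{H}_p^{\otimes(1,1)})$ and $\bm{L}_{\cris}=\bm{\pi}_{\cris}(\bm{H}_{\cris}^{\otimes(1,1)})$ in the self-dual case, ``being in $\bm{L}_{\bullet}$'' is equivalent to being fixed by the integral projector, and this condition is transported cleanly across the comparison isomorphism. This yields the equivalence.

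Finally, for the ``in particular'' clause: once $f$ is $p$-special in the mixed-characteristic sense, the restriction to $T\otimes\Rat$ is $\ell$-special for every $\ell$ by Lemma~\ref{special:lem:complex}. For $\ell\neq p$ the sheaf $\bm{L}_{\ell}\subset\bm{H}_{\ell}^{\otimes(1,1)}$ is a local system on all of $\Ss_K$ with $\ell$ invertible, so $\ell$-specialness is locally constant on $T$; combined with the hypothesis it extends from $T\otimes\Rat$ to the rest of $T$. I expect the main obstacle to be the bookkeeping of the local claim --- in particular, approximating a general $\Rg$ by one compatible with Proposition~\ref{spin:prp:kisinpadichodge}, and checking that the integrality of the match between $\bm{L}_p$ and $\bm{L}_{\cris}$ survives the passage through $\Bcris$.
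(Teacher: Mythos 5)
Your proof is correct and follows essentially the same route as the paper's: reduce to a complete DVR of mixed characteristic, then use the crystalline comparison isomorphism of Proposition~\ref{spin:prp:kisinpadichodge}(\ref{intphodge:comp}) to transport the integrality condition between $\bm{L}_{p}$ and $\bm{L}_{\cris}$, and then globalize via the hypothesis on specializations. The only cosmetic difference is that you spell out the mechanism (matching of the idempotent $\pr$ under the comparison and flatness of $\Bcris$) and the locally-constant argument for $\ell\neq p$ in the ``in particular'' clause, whereas the paper compresses both to one sentence each; also note that the approximation step you worry about is unnecessary, since any complete DVR of mixed characteristic $(0,p)$ with perfect residue field $k$ is automatically finite over $W(k)$.
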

\begin{proof}
  First assume that $T=\Spec\Reg{E}$, for some complete discrete valuation ring $\Reg{E}$ with characteristic $0$ fraction field $E$ and characteristic $p$ perfect residue field $k$. Let $s=\Spec k$, and let $\overline{\eta}=\Spec\overline{E}$, for some algebraic closure $\overline{E}/E$. Then the result holds because the $p$-adic comparison isomorphism for $A^\KS_T$ carries $\bm{L}_{p,\overline{\eta}}\otimes\Bcris$ into $\bm{L}_{\cris,s}\otimes\Bcris$ (cf.~(\ref{spin:prp:kisinpadichodge})(\ref{intphodge:comp})).

  For general $T$, in every connected component of $T\otimes\Field_p$, we can find a point $s$ valued in an algebraically closed field that is the specialization of a point valued in a complete discrete valuation field of mixed characteristic $(0,p)$. Now we can apply the result of the previous paragraph to $s$.

  The last assertion follows because $\ell$-specialness is independent of the prime $\ell$ in characteristic $0$.
\end{proof}

\subsection{}
Fix a point $x_0\in\Ss_K(k)$, where, as always, $k$ is a perfect field of characteristic $p$. Set $W=W(k)$, and let $R_{x_0}$ be the complete local ring of ${\Ss_{K,W}}$ at $x_0$; let $\mx\subset R_{x_0}$ be its maximal ideal. Also, let $\widehat{U}=\Spf R_{x_0}$ be the corresponding formal scheme over $W$.

Let $\Lambda$ be a quadratic space over $\Int_{(p)}$; we assume that $\Lambda$ is finite free over $\Int_{(p)}$. Suppose that we have an isometric map $\iota_0:\Lambda_0\to L(A^{\KS}_{x_0})$. We will consider the deformation functor $\Def_{(x_0,\iota_0)}$, defined as follows: For any $B\in\Art_W$, we have
\[
  \Def_{(x_0,\iota_0)}(B)=\bigl\{(x,\iota):\; x\in\widehat{U}(B);\;\iota:\Lambda_0\to L(A^{\KS}_x)\text{ isometric map lifting $\iota_0$}\bigr\}.
\]
Here, $\Art_W$ is the category of local artinian $W$-algebras with residue field $k$.

If $\Lambda=\{v\}$ consists of a single element, and if $\iota_0(v)=f_0$, we will write $\Def_{(x_0,f_0)}$ for the corresponding deformation functor.

The functor $\Def_{(x_0,\iota_0)}$ is represented by a closed formal sub-scheme $\widehat{U}_{\iota_0}\subset\widehat{U}$. This can be seen from the fact that the endomorphism scheme of an abelian scheme is representable and unramified over the base. Again, if $\Lambda=\{v\}$ is a singleton with $\iota_0(v)=f_0$, we will write $\widehat{U}_{f_0}$ for the corresponding formal sub-scheme.

\subsection{}
Suppose that we have a surjection $\Rg\rightarrow \overline{\Rg}$ in $\Art_W$, whose kernel $I$ admits nilpotent divided powers. Suppose also that we have $(\overline{x},\overline{\iota})\in\widehat{U}_{\iota_0}(\overline{\Rg})$ giving rise to an abelian scheme $A^{\KS}_{\overline{x}}$ over $\overline{\Rg}$ equipped with an isometric map $\overline{\iota}:\Lambda\to L(A^{\KS}_{\overline{x}})$ lifting $\iota_0$.

Let $\bm{H}_{\Rg}$ be the $\Rg$-module obtained by evaluating $\bm{H}_{\cris,\Spec\overline{\Rg}}$ on $\Spec\Rg$, and let $\bm{L}_{\Rg}\subset\End(\bm{H}_{\Rg})$ be the corresponding quadratic space. Denote by $\bm{H}_{\overline{\Rg}}$ and $\bm{L}_{\overline{\Rg}}$ the induced modules over $\overline{\Rg}$; then $\bm{H}_{\overline{\Rg}}=\bm{H}_{\dR,\overline{x}}$ is equipped with its Hodge filtration $F^1\bm{H}_{\overline{\Rg}}$.

The crystalline realization $L(A^{\KS}_{\overline{x}})\to \bm{L}_{\Rg}$ composed with $\overline{\iota}$ gives us a map $\iota_{\Rg}:\Lambda\to\bm{L}_{\Rg}$. The image of $\iota_{\Rg}(\Lambda)$ in $\bm{L}_{\overline{\Rg}}$ preserves the Hodge filtration $F^1\bm{H}_{\overline{\Rg}}$ and so lands in $F^0\bm{L}_{\overline{\Rg}}$.

By Serre-Tate (cf.~\cite{katz:serre-tate}*{1.2.1}) and Grothendieck-Messing (cf.~\cite{messing}*{V.1.6}), we have a natural bijection
\begin{align*}
  \begin{pmatrix}
    \text{Isomorphism classes of }\\\text{abelian schemes over $\Rg$ lifting $A^\KS_{\overline{x}}$}
  \end{pmatrix}&\xrightarrow{\simeq}
  \begin{pmatrix}
    \text{Direct summands $F^1\bm{H}_{\Rg}\subset \bm{H}_{\Rg}$}\\\text{ lifting $F^1\bm{H}_{\overline{\Rg}}$}.
  \end{pmatrix}
\end{align*}
This works as follows: For any lift $A_x$ of $A^\KS_{\overline{x}}$, we have a canonical identification $H^1_{\dR}(A_x/\Rg)=\bm{H}_{\Rg}$ that carries the Hodge filtration on $H^1_{\dR}(A_x/\Rg)$ to the corresponding summand $F^1_x\bm{H}_{\Rg}\subset \bm{H}_{\Rg}$.

\begin{prp}\label{lifts:prp:nillifts}
The bijection above induces further bijections
\begin{align*}
  \begin{pmatrix}    \texte{Lifts $x\in\widehat{U}(\Rg)$ of $\overline{x}$}  \end{pmatrix}&
  \xrightarrow{\simeq}\begin{pmatrix} \texte{Isotropic lines $F^1\bm{L}_{\Rg}\subset\bm{L}_{\Rg}$ lifting $F^1\bm{L}_{\overline{\Rg}}$}  \end{pmatrix};
\end{align*}
\begin{align*}
  \begin{pmatrix}    \texte{Lifts $(x,\iota)\in\widehat{U}_{\iota_0}(\Rg)$ of $(\overline{x},\overline{\iota})$}  \end{pmatrix}&\xrightarrow{\simeq}\begin{pmatrix}
    \texte{Isotropic lines $F^1\bm{L}_{\Rg}\subset\bm{L}_{\Rg}$ lifting $F^1\bm{L}_{\overline{\Rg}}$}\\
    \texte{and orthogonal to the sub-space $\iota_{\Rg}(\Lambda)$}\end{pmatrix}.\\
\end{align*}
In particular, both the sets in the latter bijection are empty, unless $F^1\bm{L}_{\overline{\Rg}}$ lies in the image in $\bm{L}_{\overline{\Rg}}$ of the sub-module $\iota_{\Rg}(\Lambda)^{\perp}\subset\bm{L}_{\Rg}$.
\end{prp}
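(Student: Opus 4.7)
The plan is to derive both bijections by combining the Grothendieck-Messing/Serre-Tate correspondence already recalled with Kisin's description of the formal neighbourhood of $x_0$ in $\Ss_K$ and the relationship between the Hodge filtration on $\bm{H}$ and an isotropic line in $\bm{L}$ supplied by \ref{spin:prp:kisinpadichodge}(\ref{intphodge:hodge}) and the action calculation in \ref{cliff:subsec:parabolic}.

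For the first bijection, I would start from the equivalence between lifts of $A^\KS_{\overline{x}}$ to $\Rg$ and lifts of the Hodge filtration $F^1\bm{H}_{\overline{\Rg}} \subset \bm{H}_{\overline{\Rg}}$, and then single out those lifts that arise from points of $\widehat{U}(\Rg)$. By \ref{spin:thm:kisinprec}(\ref{kisinprec:kisin}), $\widehat{U}_{x_0}$ is identified with $\widehat{U}_G$, the completion at the identity of the opposite unipotent $U_G \subset \GSpin(\bm{L}_{\cris,x_0})$. Combined with the explicit description of the universal deformation in \ref{spin:subsec:explicit}, this says that the lifts of $F^1\bm{H}_{\overline{\Rg}}$ coming from $\widehat{U}(\Rg)$ are precisely those obtained by acting on $F^1\bm{H}_{\overline{\Rg}}$ via an element of $\widehat{U}_G(\Rg)$; by \ref{spin:prp:kisinpadichodge}(\ref{intphodge:hodge}), these lifts are in turn in bijection with isotropic lines $F^1\bm{L}_{\Rg} \subset \bm{L}_{\Rg}$ lifting $F^1\bm{L}_{\overline{\Rg}}$, via the identity $F^1\bm{H}_{\Rg} = \ker(F^1\bm{L}_{\Rg}) = \im(F^1\bm{L}_{\Rg})$.

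For the second bijection, given such a lift $x$, I would apply Grothendieck-Messing to endomorphisms to deduce that $\overline{\iota}(v)$ lifts to an endomorphism of $A^\KS_x$ if and only if $\iota_{\Rg}(v) \in \bm{L}_{\Rg} \subset \End(\bm{H}_{\Rg})$ preserves the lifted Hodge filtration $F^1\bm{H}_{\Rg}$. The element-wise action computation in \ref{cliff:subsec:parabolic}, applied to the three-step filtration $F^1\bm{L}_{\Rg} \subset F^0\bm{L}_{\Rg} = (F^1\bm{L}_{\Rg})^{\perp} \subset \bm{L}_{\Rg}$ determined by our isotropic line, shows that preservation of $F^1\bm{H}_{\Rg}$ under the Clifford action of $w \in \bm{L}_{\Rg}$ is equivalent to $w \in F^0\bm{L}_{\Rg}$, i.e.\ to orthogonality with $F^1\bm{L}_{\Rg}$. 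This produces the orthogonality condition in the proposition. One must still check that the lifted endomorphism is genuinely special and that $\iota$ remains isometric: the first holds because, over a nilpotent PD-thickening, the $\ell$-adic étale realization for $\ell \neq p$ agrees with that on the special fiber, while the crystalline realization is literally $\iota_{\Rg}(v) \in \bm{L}_{\Rg}$ by construction; the second is automatic since the quadratic form on $L(A^\KS_{\bullet})$ is the restriction of the parallel form on $\bm{L}$.

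I expect the main obstacle to lie in matching the two pictures in the first step. Grothendieck-Messing alone parametrises all deformations of the Hodge filtration on $\bm{H}$, whereas only a $\GSpin$-shaped sub-family corresponds to deformations along $\Ss_K$. Bridging this gap requires Kisin's theorem \ref{spin:thm:kisinprec}(\ref{kisinprec:kisin}) combined with the explicit unipotent description of $\widehat{U}_G$ in \ref{spin:subsec:explicit}, and is what allows one to promote lifts of $F^1\bm{H}$ to lifts of the isotropic line $F^1\bm{L}$.
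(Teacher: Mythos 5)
Your treatment of the second bijection is essentially identical to the paper's: lift $\overline{\iota}(v)$ exists over $\Rg$ iff the crystalline realization preserves $F^1\bm{H}_\Rg$, and since $F^1\bm{H}_\Rg = \ker(F^1\bm{L}_\Rg) = \im(F^1\bm{L}_\Rg)$ (the Clifford-parabolic computation of (\ref{cliff:subsec:parabolic})), this is equivalent to orthogonality with $F^1\bm{L}_\Rg$. No objection there.

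For the first bijection your strategy diverges from the paper's. The paper avoids any explicit comparison of the unipotent parametrization with Grothendieck--Messing: it simply defines the natural map $x \mapsto F^1\bm{L}_{\dR,x}$, notes it is injective (because $F^1\bm{H}_\Rg$ is recovered from $F^1\bm{L}_\Rg$), and then reduces by successive PD thickenings to the case $I^2 = 0$, $\mx_\Rg I = 0$, where both sides are $k$-vector spaces of dimension $n\cdot\dim_k I$ (the left side because $\widehat{U}$ is formally smooth of relative dimension $n$ over $W$, the right side because it is the tangent space of a smooth quadric of dimension $n$ tensored with $I$). A soft dimension count then forces bijectivity.

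Your version instead wants to identify the set of lifts of $F^1\bm{H}_{\overline{\Rg}}$ coming from $\widehat{U}(\Rg)$ with a $\widehat{U}_G(\Rg)$-orbit, citing (\ref{spin:thm:kisinprec})(\ref{kisinprec:kisin}) and the explicit description in (\ref{spin:subsec:explicit}). This is where a real gap sits. In the Moonen--Kisin description, the filtration on $\bm{H}_{R_G}$ is \emph{constant}; the variation of the Hodge filtration on the de Rham realization $\bm{H}_{\dR}$ is absorbed into the nontrivial isomorphism $\alpha_{R_G}$ of $F$-crystals (cf.\ (\ref{spin:thm:kisinprec})(\ref{kisinprec:faltings})), which is \emph{not} simply the $\widehat{U}_G$-translate of a fixed trivialization. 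Translating between the unipotent coordinate on $\widehat{U}_G$ and the Grothendieck--Messing lift of the Hodge filtration in $\bm{H}_\Rg$ requires identifying the Kodaira--Spencer map with the Lie algebra action of $U_G$; this is exactly the nontrivial input cited from Moonen in the proof of (\ref{spin:prp:kodairaspencer}), and it is not directly readable off from (\ref{spin:subsec:explicit}) as you suggest. Without it, the assertion that the lifts are ``precisely those obtained by acting on $F^1\bm{H}_{\overline{\Rg}}$ via an element of $\widehat{U}_G(\Rg)$'' is unproven. Your approach can be made to work, but it requires strictly more structural input than the paper's; the dimension count in the paper only needs the numerical fact that $\widehat{U}$ is formally smooth of relative dimension $n$, and sidesteps the explicit comparison entirely.
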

\begin{proof}
 In the first of the claimed bijections, there is a natural map in one direction: Given a lift $x\in\widehat{U}(\Rg)$ and the identification $\bm{L}_{\dR,x}=\bm{L}_{\Rg}$, the Hodge filtration $F^1\bm{L}_{\dR,x}$ gives us an isotropic line $F^1\bm{L}_{\Rg}$ lifting $F^1\bm{L}_{\overline{\Rg}}$. Further, an endomorphism $f\in L(A^{\KS}_{\overline{x}})$ lifts to an endomorphism of $A^\KS_x$ if and only if its crystalline realization $f_{\Rg}\in\bm{L}_{\Rg}$ preserves the Hodge filtration $F^1\bm{H}_{\Rg}$. Since $F^1\bm{H}_{\Rg}$ is the kernel of any generator of $F^1\bm{L}_{\Rg}$, it is easy to see that $f_{\Rg}$ preserves $F^1\bm{H}_{\Rg}$ if and only if it is orthogonal to $F^1\bm{L}_{\Rg}$.

 So it is enough to show that the first map is a bijection. For this, we can work successively with the thickenings $\Rg/I^{[r-1]}\twoheadrightarrow\Rg/I^{[r]}$ (where $I^{[r]}$ denotes the $r^{\on{th}}$-divided power of $I$), and assume that $I^2=0$. If $\mx_{\Rg}\subset\Rg$ is the maximal ideal, we can even work with successively with the thickenings $\Rg/\mx^{r-1}_{\Rg}I\twoheadrightarrow\Rg/\mx_{\Rg}^rI$, and further assume that $\mx_{\Rg} I=0$. In this case, we find that both sides of the map in question are vector spaces over $k$ of the same dimension, namely $n\cdot\dim_kI$, and that the map is a map of $k$-vector spaces. Since it is clearly injective, we see that the map must in fact be a bijection.
 
 As for the final assertion, we have only included it to highlight the fact that, when $\iota_{\Rg}(\Lambda)$ is not a direct summand of $\bm{L}_{\Rg}$, the formation of its orthogonal complement is not well-behaved with respect to arbitrary base-change. So, even though $F^1\bm{L}_{\overline{\Rg}}$ is orthogonal to $\iota_{\overline{\Rg}}(\Lambda)$, it need not be in the image of the orthogonal complement of $\iota_{\Rg}(\Lambda)$. 
\end{proof}

\begin{corollary}\label{lifts:cor:locus}
Let the notation be as above. Suppose that we have a lift $x\in\widehat{U}(\Rg)$ of $\overline{x}$ corresponding to an isotropic line $F^1\bm{L}_{\Rg}\subset\bm{L}_{\Rg}$. Let $J\subset \Rg$ be the smallest ideal such that $\overline{\iota}$ lifts to an isometric map $\Lambda\to L\bigl(A^\KS_x\otimes_{\Rg}(\Rg/J)\bigr)$. Then $J$ is generated by the elements $[\overline{\iota}(v),w]$, where $v$ varies over the elements of $\Lambda$, and $w$ is any basis element of $F^1\bm{L}_{\Rg}$. In particular, $\widehat{U}_{\iota_0}\subset\widehat{U}$ is cut out by $r=\rank\iota_0(\Lambda)$ equations.
\end{corollary}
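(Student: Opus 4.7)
My plan is a direct application of Proposition \ref{lifts:prp:nillifts} to the surjection $\Rg \twoheadrightarrow \Rg/J$, whose kernel, being nilpotent in an Artin local ring, acquires divided powers compatible with the canonical ones on $p\Rg$.

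Since $\Rg$ is local and $F^1\bm{L}_{\Rg} \subset \bm{L}_{\Rg}$ is a direct summand of rank $1$, it is free on a single generator $w$, and its reduction $F^1\bm{L}_{\Rg/J}$ is generated by $w \bmod J$. By Proposition \ref{lifts:prp:nillifts}, a lift $(x \bmod J, \iota) \in \widehat{U}_{\iota_0}(\Rg/J)$ of $(\overline{x},\overline{\iota})$ extending $x \bmod J$ corresponds to an isotropic line in $\bm{L}_{\Rg/J}$ that lifts $F^1\bm{L}_{\overline{\Rg}}$ and is orthogonal to $\iota_{\Rg/J}(\Lambda)$; the first condition together with the requirement that the underlying lift be $x \bmod J$ pins the line down to the reduction of $F^1\bm{L}_{\Rg}$, so only orthogonality remains. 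Using the generator $w$, this reads $[\iota_{\Rg/J}(v), w \bmod J] = 0$ in $\Rg/J$ for every $v \in \Lambda$, i.e.\ $[\iota_{\Rg}(v), w] \in J$. The minimal such $J$ is therefore exactly the ideal generated by these pairings, proving the first claim.

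For the final assertion, I would specialize to $\Rg = R_{x_0}$, $\overline{\Rg} = k$, and $\overline{\iota} = \iota_0$, so that $\widehat{U}_{\iota_0} \subset \widehat{U}$ is cut out by the ideal $I = \bigl([\iota_{\Rg}(v), w] : v \in \Lambda\bigr)$ of $R_{x_0}$ computed above. In this universal situation the crystalline realization $\iota_{\Rg} \colon \Lambda \to \bm{L}_{R_{x_0}}$ is the $\Int_{(p)}$-linear composition $\Lambda \xrightarrow{\iota_0} L(A^\KS_{x_0}) \to \bm{L}_{R_{x_0}}$, and therefore vanishes on $\ker \iota_0$. Since $\iota_0(\Lambda) \subset L(A^\KS_{x_0})$ is torsion-free, hence free of rank $r$ over the PID $\Int_{(p)}$, the short exact sequence $0 \to \ker \iota_0 \to \Lambda \to \iota_0(\Lambda) \to 0$ splits; lifting a basis of $\iota_0(\Lambda)$ to elements $v_1, \ldots, v_r \in \Lambda$ then yields $I = \bigl([\iota_{\Rg}(v_i), w] : 1 \leq i \leq r\bigr)$, so that $\widehat{U}_{\iota_0}$ is cut out by $r$ equations.

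I do not anticipate any real obstacle: the content is the translation of the statement to Proposition \ref{lifts:prp:nillifts} via orthogonality, the only non-formal step being the observation that in the universal setting $\iota_{\Rg}$ factors through $\iota_0$, which is immediate from the functoriality of the crystalline realization.
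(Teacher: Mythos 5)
Your argument for the first assertion is correct and makes explicit what the paper dismisses as ``immediate'': the orthogonality criterion of (\ref{lifts:prp:nillifts}), applied to the surjection $\Rg/J\twoheadrightarrow\overline{\Rg}$, shows that $\overline{\iota}$ lifts over $\Rg/J$ exactly when $[\iota_{\Rg}(v),w]\in J$ for every $v\in\Lambda$. Your opening sentence, however, says you apply the proposition to $\Rg\twoheadrightarrow\Rg/J$; that is the wrong surjection---$\overline{\iota}$ is given over $\overline{\Rg}$, not over $\Rg/J$, so the relevant lifting problem is along $\Rg/J\twoheadrightarrow\overline{\Rg}$, and the divided-power remark should concern its kernel $I/J$ rather than $J$. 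Your subsequent sentences do the right thing, so this is only a slip of framing.

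For the final assertion there is a real gap. You ``specialize to $\Rg=R_{x_0}$,'' but $R_{x_0}$ is a complete local Noetherian ring of positive Krull dimension, not Artinian; it is not an object of $\Art_W$, so neither (\ref{lifts:prp:nillifts}) nor the first assertion of this corollary applies to it as written. The paper's fix is to let $I\subset R_{x_0}$ be the ideal defining $\widehat{U}_{\iota_0}$, apply the first assertion with $\Rg=R_{x_0}/\mx I$ and $\overline{\Rg}=k$ to conclude that $I/\mx I$ is generated by $r$ elements, and then invoke Nakayama's lemma. Your observation that $r=\rank\iota_0(\Lambda)$ (rather than $\rank\Lambda$) generators suffice---because $\iota_{\Rg}$ kills $\ker\iota_0$ and $\iota_0(\Lambda)$, being torsion-free over the PID $\Int_{(p)}$, has a basis that lifts back to $\Lambda$---is a correct and useful explication of what the paper leaves implicit, but it must be carried out in the Artinian quotient $R_{x_0}/\mx I$ and combined with Nakayama, not asserted directly at the non-Artinian level.
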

\begin{proof}
  The assertion about $J$ is immediate.

  Set $R=\widehat{\Rg}_{{\Ss_K},x_0}$. Let $I\subset R$ be the ideal defining $\widehat{U}_{\iota_0}$. Applying the first assertion with $\Rg=R/\mx I$ and $\overline{\Rg}=k$ shows that $I/\mx I$ is generated by $r$ elements. Now the last assertion follows from Nakayama's lemma.
\end{proof}

\subsection{}
Assume that $\iota_0$ maps $\Lambda$ injectively onto a direct summand of $L(A^\KS_{s_0})$. Let $\iota_{\dR}:\Lambda\otimes R_{\iota_0}\to \bm{L}_{\dR,R_{\iota_0}}$ be the de Rham realization of the universal isometry $\iota:\Lambda\to L\bigl(A^\KS\vert_{\widehat{U}_{\iota_0}}\bigr)$: It factors through $F^0\bm{L}_{\dR,R_{\iota_0}}$. Write $\bm{\Lambda}_{\dR}$ for the image of this map, and let $\overline{\bm{\Lambda}}_{\dR}$ be the image of $\bm{\Lambda}_{\dR}$ in $\gr^0_F\bm{L}_{\dR,R_{\iota_0}}$. 

The Kodaira-Spencer map over $R$ restricts to a map
\begin{align*}
 \gr^0_F\bm{L}_{\dR,R_{\iota_0}}&\to\gr^{-1}_F\bm{L}_{\dR,R_{\iota_0}}\otimes\widehat{\Omega}^1_{R_{\iota_0}/W}.
\end{align*}
Since $\bm{\Lambda}_{\dR}$ is generated by sections that are parallel for the connection, this map factors as:
\begin{align}\label{lifts:eqn:ksiota0}
\frac{\gr^0_F\bm{L}_{\dR,R_{\iota_0}}}{\overline{\bm{\Lambda}}_{\dR}}&\to\gr^{-1}_F\bm{L}_{\dR,R_{\iota_0}}\otimes\widehat{\Omega}^1_{R_{\iota_0}/W}.
 \end{align}

As a direct consequence of~\eqref{lifts:prp:nillifts} (use $\Rg=k[\epsilon]$ and $\overline{\Rg}=k$), we have:
\begin{corollary}\label{lifts:cor:smooth}
The formal scheme $\widehat{U}_{\iota_0}$ is formally smooth over $W$ if and only if $\bm{\Lambda}_{\dR}$ maps isomorphically onto $\overline{\bm{\Lambda}}_{\dR}$. In this case, the Kodaira-Spencer map in~\eqref{lifts:eqn:ksiota0} is an isomorphism
\end{corollary}
\qed

\subsection{}
Assume that $\Lambda=\{v\}$ is a singleton with $\iota_0(v)=f_0\neq 0$. For simplicity, set $R=R_{x_0}$, and let $I_{f_0}\subset R$ be the ideal defining $\widehat{U}_{f_0}\subset\widehat{U}$, and set $R_{f_0}=R/I_{f_0}$, so that $\widehat{U}_{f_0}=\Spf R_{f_0}$. It follows from (\ref{lifts:cor:locus}) that $I_{f_0}$ is principal, generated by a single element $a_{f_0}$. Let $f_{0,\dR}\in\bm{L}_{\dR,x_0}$ be the de Rham realization of $f_{0}$. Most of the following result is essentially a retread of \cite{deligne:k3liftings}*{Prop. 1.5}.
\begin{prp}\label{lifts:prp:principal}
\mbox{}
\begin{enumerate}
\item\label{principal:flat}$\widehat{U}_{f_0}$ is flat over $\Int_p$; that is, $p\nmid a_{f_0}$.
\item\label{principal:regular}If $f_{0,\dR}\neq 0$ and $\nu_p(f_0\circ f_0)\neq 1$, then $\widehat{U}_{f_0}$ is formally smooth.
\end{enumerate}
\end{prp}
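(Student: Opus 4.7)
My plan is to analyze the principal generator $a_{f_0}$ of $I_{f_0}$ (principal by Corollary~\ref{lifts:cor:locus}) in explicit local coordinates arising from Theorem~\ref{spin:thm:kisinprec}(\ref{kisinprec:kisin}) and \S\ref{spin:subsec:explicit}.

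Setting up: choose a splitting $\bm{L}_{\cris,x_0} = F^1 \oplus L^0 \oplus \overline{F}^1$ with $F^1 = \langle e_0\rangle$ a $W$-lift of $F^1\bm{L}_{\dR,x_0}$, $\overline{F}^1 = \langle e_0^*\rangle$ isotropic with $[e_0,e_0^*] = 1$, and $L^0$ self-dual of rank $n$ with basis $v_1,\ldots,v_n$. Identify $R \cong W\pow{t_1,\ldots,t_n}$ so that the universal isotropic lift is $\tilde{e} = e_0 + \sum t_iv_i - Q(\sum t_iv_i)e_0^*$. Writing $f_{0,\cris} = \alpha e_0 + \beta e_0^* + \sum_i \gamma_iv_i$, the fact that $f_{0,\dR}$ preserves the Hodge filtration on $\bm{H}_{\dR,x_0}$ forces $\beta\in pW$; by Corollary~\ref{lifts:cor:locus}, $a_{f_0} = [\iota_R(v),\tilde{e}]$ where $\iota_R(v)\in\bm{L}_R$ is the crystal-parallel extension of $f_{0,\cris}$.

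For part~(\ref{principal:flat}), I would show $a_{f_0}\not\in pR$ by analyzing the reduction $\iota_{R/p}(v)\in\bm{L}_{\dR,R/p}$, i.e.\ the parallel extension of $f_{0,\dR}$ in characteristic $p$. If $\bar\gamma_i\neq 0$ for some $i$, the non-degenerate $\gr^0_F \to \gr^{-1}_F\otimes\Omega^1$ Kodaira--Spencer map from Proposition~\ref{spin:prp:kodairaspencer} shows that $\iota_{R/p}(v)$ leaves $F^0\bm{L}_{\dR,R/p}$ at first order, contributing a non-zero linear term to $a_{f_0}\bmod p$. If instead $\bar\gamma_i = 0$ for all $i$ and $\bar\alpha\neq 0$, then $f_{0,\dR}\in F^1$; the injective $F^1 \to \gr^0_F\otimes\Omega^1$ Kodaira--Spencer map pushes $\iota_{R/p}(v)$ into $L^0\otimes (t_1,\ldots,t_n)$ at first order, producing a non-zero quadratic term for $a_{f_0}\bmod p$ via pairing with the linear part of $\tilde{e}$. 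In either event, $a_{f_0}\bmod p\neq 0$ in $k\pow{t_1,\ldots,t_n}$, giving flatness. As a complementary check, Proposition~\ref{lifts:prp:flatlocus} applied with $\Lambda = \{v\}$ yields a $W[\sqrt p]$-point of $\widehat{U}_{f_0}$: the orthogonal $L' = f_{0,\cris}^\perp\subset\bm{L}_{\cris,x_0}$ has rank $\geq n+1$, strictly exceeding the Witt index bound $\lfloor(n+2)/2\rfloor$ of the self-dual quadratic space $\bm{L}_{\cris,x_0}\otimes k$ of rank $n+2$ for $n\geq 1$, so $L'\otimes k$ is not totally isotropic and $Q|_{L'}$ is not divisible by $p$.

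For part~(\ref{principal:regular}), the first subcase above ($\bar\gamma_i\neq 0$ for some $i$) already yields formal smoothness: $a_{f_0}$ acquires a unit linear coefficient modulo $p$, because the self-duality of $L^0$ makes $\gamma\mapsto([\gamma,v_j])_j$ an isomorphism over $k$. The second subcase ($\bar\gamma_i = 0$ for all $i$, $\bar\alpha\neq 0$) is the main obstacle, as the quadratic leading term alone would cut out a singular hypersurface. Here $\alpha\in W^\times$ and $\gamma\in pL^0$, so $Q(f_0) = \alpha\beta + Q_{L^0}(\gamma)\equiv\alpha\beta\pmod{p^2}$, and the hypothesis $\nu_p(Q(f_0))\neq 1$ combined with $\beta\in pW$ forces $\beta\in p^2W$. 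The resolution is that the parallel-extension correction $\iota_R(v) - f_{0,\cris}\otimes 1$, governed by the connection on $\bm{L}_R$ induced by the Frobenius formula $\bm{F}_R = g(\bm{F}_s\otimes 1)$ of~\S\ref{spin:subsec:explicit}, contributes a genuine linear term in $a_{f_0}$ proportional to $\alpha$; since $\bar\alpha\neq 0$, this term is a unit modulo $p$ and $a_{f_0}$ is a regular parameter in $R$. Carrying out this gauge computation explicitly---or equivalently, realizing $\widehat{U}_{f_0}$ as the incidence locus in the projection to the smooth local model $\on{M}^{\loc}_G$ with the universal isotropic line required to be orthogonal to $f_{0,\cris}$---is the technical heart of the argument.
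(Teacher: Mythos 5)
Your proposal takes a genuinely different tack from the paper: you set up explicit coordinates for the formal neighborhood and try to extract the leading term of $a_{f_0} \bmod p$ order by order, whereas the paper works abstractly with Frobenius-invariance and strong divisibility of the $F$-crystal $\bm{L}_{\cris}$, only invoking the Kodaira--Spencer isomorphism at the very end. The abstract route is decisive precisely in the cases you leave open, and I think your approach as written has gaps in both parts.

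For part (\ref{principal:flat}), three issues. First, your case split presupposes $f_{0,\dR}\neq 0$, but a nonzero $f_0\in L(A^\KS_{x_0})\subset\End(A^\KS_{x_0})_{(p)}$ may well have $f_{0,\cris}\in p\bm{L}_{\cris,x_0}$; you would need to first pass to a primitive multiple of $f_0$, or --- as the paper does --- of the parallel extension $f_R$. Second, in your case $\bar\gamma_i=0$, $\bar\alpha\neq 0$, the quadratic term of $a_{f_0}\bmod p$ receives several contributions (from $[g_2(t),e_0]$, from $[g_1(t),\sum t_iv_i]$, and from $-\bar\alpha\,Q(\sum t_iv_i)$, in the notation where $g=\iota_{R/p}(v)=g(0)+g_1+g_2+\cdots$), and you give no argument ruling out cancellation among them. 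Third, the complementary check via Proposition~\ref{lifts:prp:flatlocus} does not deliver what you want: it produces only \emph{one} flat irreducible component of $\Spec R_{f_0}$, while $p\nmid a_{f_0}$ is the assertion that \emph{every} component is flat, i.e.\ that $\widehat{U}_{f_0}$ has no component inside the special fiber. The paper avoids all three problems at once: if $f_0$ propagated to a special endomorphism over $R\otimes\Field_p$, one primitivizes its crystalline realization, uses $F$-invariance and strong divisibility~(\ref{spin:eqn:strongdiv}) to place the result in $F^0\bm{L}_{\dR,R\otimes\Field_p}$, then applies the Kodaira--Spencer isomorphism~(\ref{spin:prp:kodairaspencer}) twice to force it into $F^1$ and then to $0$ --- a contradiction.

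For part (\ref{principal:regular}) the gap is more fundamental. Your first subcase is fine, and your identification of the problematic subcase ($\bar\gamma_i=0$, $\bar\alpha\neq 0$, equivalently $f_{0,\dR}\in F^1\bm{L}_{\dR,x_0}$) matches the paper's reduction, which shows non-smoothness happens exactly when $f_{0,\dR}\in F^1\bm{L}_{\dR,x_0}$. But the ``technical heart'' you then propose --- extracting a unit linear term in $a_{f_0}$ from the gauge correction $\iota_R(v)-f_{0,\cris}\otimes 1$ --- is a red herring. Under the hypothesis $\nu_p(f_0\circ f_0)\neq 1$, this subcase is simply \emph{impossible}: the paper shows, by a short direct calculation at the single point $x_0$, that $0\neq f_{0,\dR}\in F^1\bm{L}_{\dR,x_0}$ already forces $\nu_p(f_0\circ f_0)=1$. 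One writes a generator $w$ of a crystalline lift $F^1\bm{L}_{\cris,x_0}$ as $w=f_{0,\cris}+pv$; strong divisibility together with $F(f_{0,\cris})=f_{0,\cris}$ forces $[f_{0,\cris},v]\in W^\times$, and then isotropy of $w$ gives $[f_{0,\cris},f_{0,\cris}]=-2p[f_{0,\cris},v]-p^2[v,v]$ with valuation exactly $1$. Once this is in hand your second subcase is vacuous and your first subcase finishes the proof. Note that this constraint on $f_{0,\cris}$ uses the Frobenius structure at $x_0$ itself, not the induced connection over $R$; without it, the coordinate data $(\alpha,\beta,\gamma)$ alone is insufficient to produce the unit linear term you are hoping for.
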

\begin{proof}
For the first statement, we use the argument from the proof of \cite{deligne:k3liftings}*{1.6}. As in \emph{loc. cit.}, we reduce immediately to the following assertion: $f_0$ does not propagate to a special endomorphism of $\widetilde{A}^{\KS}_{R\otimes\Field_p}$. Suppose that such a propagation did exist; then we can consider its crystalline realization $f_R\in\bm{L}_{\dR,R}$. Choose $k\in\Int_{\geq 0}$ minimal with respect to the condition that $p^{-k}f_R$ belongs to $\bm{L}_{\dR,R}$.

Fix a Frobenius lift $\varphi:R\to R$; this, combined with the $F$-crystal structure on $\bm{H}_{\dR,R}$, endows $\bm{L}_{\dR,R}[p^{-1}]$ with a $\varphi$-semi-linear Frobenius endomorphism $F$. Now, $F(f_R)=f_R$, which implies that $F(p^{-k}f_R)=p^{-k}f_R$. By strong divisibility~(\ref{spin:prp:moonen})(\ref{moonen:strongdiv}), $p^{-k}f_R$ lies in $F^0\bm{L}_{\dR,R}+p\bm{L}_{\dR,R}$. In particular, the image $\bar{f}_R$ of $p^{-k}f_R$ in $\bm{L}_{\dR,R\otimes\Field_p}$ is a non-zero horizontal element that lies in $F^0\bm{L}_{\dR,R\otimes\Field_p}$.

But (\ref{spin:prp:kodairaspencer}) shows that the connection on $\bm{L}_{\dR,R\otimes\Field_p}$ induces an $R$-linear Kodaira-Spencer isomorphism
\[
 \gr^0_F\bm{L}_{\dR,R\otimes\Field_p}\xrightarrow{\simeq}\gr^{-1}_F\bm{L}_{\dR,R\otimes\Field_p}\otimes\widehat{\Omega}^1_{R\otimes\Field_p/k}.
\]
This shows that $\bar{f}_R$ must actually lie in $F^1\bm{L}_{\dR,R\otimes\Field_p}$. But once again the connection on $\bm{L}_{\dR,R\otimes\Field_p}$ sets up an $R$-linear embedding:
\[
 F^1\bm{L}_{\dR,R\otimes\Field_p}\into\gr^0_F\bm{L}_{\dR,R\otimes\Field_p}\otimes\widehat{\Omega}^1_{R\otimes\Field_p/k}.
\]
This shows that $\bar{f}_R=0$, which is a contradiction.

We move on to (\ref{principal:regular}): Since it is defined by a single equation within the formally smooth formal scheme $\widehat{U}$, $\widehat{U}_{f_0}$ is formally smooth precisely when the map on tangent spaces
\[
 \widehat{U}_{f_0}(k[\epsilon])\to\widehat{U}(k[\epsilon])
\]
is not bijective.

Therefore, by applying (\ref{lifts:prp:nillifts}) with $\Rg=k[\epsilon]$, $\overline{\Rg}=k$ and $\Lambda=\{v\}$, with $\iota_0(v)=f_0$, we see that $\widehat{U}_{f_0}$ fails to be formally smooth precisely when every isotropic lift $F^1(\bm{L}_{\dR,x_0}\otimes k[\epsilon])$ of $F^1\bm{L}_{\dR,x_0}$ is orthogonal to $f_{0,\dR}$. It is easy to check that this can happen if and only if $F^1\bm{L}_{\dR,x_0}$ contains $f_{0,\dR}$.

Suppose therefore that $f_{0,\dR}$ is contained in $F^1\bm{L}_{\dR,x_0}$. Choose any isotropic line $F^1\bm{L}_{\cris,x_0}\subset \bm{L}_{\cris,x_0}$ lifting $F^1\bm{L}_{\dR,x_0}$. If $f_{0,\cris}\in\bm{L}_{\cris,x_0}$ is the crystalline realization of $f_0$, then $F^1\bm{L}_{\cris,x_0}$ is generated by an element $w$ of the form $f_{0,\cris}+pv$, for some $v\in\bm{L}_{\cris,x_0}$. We have:
\begin{align}\label{principal:eqn1}
w-pv=f_{0,\cris}=F(f_{0,\cris})=F(w)-pF(v)
\end{align}

By strong admissibility of $\bm{L}_{\cris,x_0}$, we have $F(w)\in p\bm{L}_{\cris,x_0}$. This shows:
\begin{align}\label{principal:eqn2}
F(v)=\frac{1}{p}F(w)+v-\frac{1}{p}w\in p^{-1}\bm{L}_{\cris,x_0}\backslash\bm{L}_{\cris,x_0}.
\end{align}
Applying strong admissibility once again, we conclude that $v\notin F^0\bm{L}_{\cris,x_0}+p\bm{L}_{\cris,x_0}$; here, $F^0\bm{L}_{\cris,x_0}=(F^1\bm{L}_{\cris,x_0})^{\perp}$. In particular, $[f_{0,\cris},v]$ belongs to $W^\times$.

We now have:
\begin{align}\label{principal:eqn3}
 0=[f_{0,\cris}+pv,f_{0,\cris}+pv]=[f_{0,\cris},f_{0,\cris}]+2p[f_{0,\cris},v]+p^2[v,v].
\end{align}
Since $[f_{0,\cris},v]$ is a unit, this implies that $\nu_p(f_0\circ f_0)=1$.
\end{proof}

\begin{corollary}\label{lifts:cor:pimpliesell}
For every $\Ss_K$-scheme $T$ and every prime $\ell$, we have:
\[
 L_p(A^\KS_T)\into L_{\ell}(A^\KS_T).
\]
In particular, $L(A^\KS_T)=L_p(A^\KS_T)$.
\end{corollary}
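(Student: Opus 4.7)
The plan is to reduce the inclusion to a pointwise check at algebraically closed points of $\Ss_{K,\Field_p}$, lift such a pointwise pair $(x,f)$ to characteristic zero using the flatness supplied by Proposition \ref{lifts:prp:principal}(1), invoke Lemma \ref{special:lem:complex} to obtain $\ell$-specialness over the generic fiber, and then transport the conclusion back to $x$ via the connectedness of the spectrum of the lifting DVR.

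First I would reduce to $T = \Spec k$ with $k$ algebraically closed. For each prime $\ell$, $\ell$-specialness of a global section of $\bm{H}_\ell^{\otimes(1,1)}$ is the vanishing of the complementary section $(1-\bm{\pi}_\ell)(\cdot)$ and thus can be detected at any single geometric point per connected component; together with Lemma \ref{lifts:lem:p-special} for $p$-specialness, this reduces everything to pointwise inclusions. In characteristic zero the inclusion $L_p \subseteq L_\ell$ is Lemma \ref{special:lem:complex}, so assume $\on{char} k = p$ and fix $f \in L_p(A^\KS_x)$ together with a prime $\ell \neq p$. The proof of Proposition \ref{lifts:prp:principal}(1) uses only the crystalline realization of $f$ -- strong divisibility combined with the Kodaira--Spencer isomorphism of \ref{spin:prp:kodairaspencer} -- and so it applies verbatim to the formal deformation subscheme $\widehat{U}_f \subset \widehat{U}$ defined by requiring the lifted endomorphism merely to be $p$-special. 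Hence $\widehat{U}_f$ is flat over $W = W(k)$, and from a closed point of $\Spec R_f\otimes\Rat$ the valuative criterion of properness yields a finite totally ramified extension $\Reg{E}/W$ with fraction field $E$ and a point $\tilde{x} \in \widehat{U}_f(\Reg{E})$ lifting $x$, equipped with a lift $\tilde{f} \in \End(A^\KS_{\tilde{x}})_{(p)}$ of $f$. By Lemma \ref{lifts:lem:p-special-consistent}, $\tilde{f}$ is $p$-special over all of $\Spec \Reg{E}$.

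The restriction $\tilde{f}_E$ is then a $p$-special endomorphism of $A^\KS_{\tilde{x}_E}$ in characteristic zero, so by Lemma \ref{special:lem:complex} it is $\ell$-special; concretely, its $\ell$-adic realization lies in $\tilde{x}_E^*\bm{L}_\ell$. Since $\bm{L}_\ell$ is cut out inside $\bm{H}_\ell^{\otimes(1,1)}$ by an idempotent and $\Spec \Reg{E}$ is connected, membership of a global section of $\tilde{x}^*\bm{H}_\ell^{\otimes(1,1)}$ in $\tilde{x}^*\bm{L}_\ell$ is a condition that can be checked at any single point; specializing, the $\ell$-adic realization of $f$ lies in $\bm{L}_{\ell,x}$, so $f \in L_\ell(A^\KS_x)$. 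Combining over all $\ell$ (the case $\ell=p$ being tautological) yields the identity $L(A^\KS_T) = L_p(A^\KS_T)$. The main subtlety to verify is that the flatness argument of Proposition \ref{lifts:prp:principal}(1) really does go through with $p$-specialness in place of full specialness in the definition of $\widehat{U}_f$, which is necessary because we are appealing to that proposition before we actually know $L_p = L$.
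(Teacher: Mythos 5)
Your argument is correct and follows the same route as the paper: reduce to a geometric point $x_0$, use Proposition (\ref{lifts:prp:principal})(\ref{principal:flat}) to produce a lift over a mixed-characteristic DVR, transport $\ell$-specialness from the characteristic-zero fiber via (\ref{lifts:lem:p-special-consistent}) and (\ref{special:lem:complex}), and specialize back. You are also right to flag the potential circularity in applying the deformation-theoretic machinery to an $f_0$ known only to lie in $L_p$ rather than $L$; the paper leaves this implicit, but as you observe, both (\ref{lifts:prp:nillifts}) and the flatness proof of (\ref{lifts:prp:principal})(\ref{principal:flat}) use only the crystalline realization, so the deformation locus cut out by the $p$-special condition is well-defined and the argument closes without circularity.
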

\begin{proof}
  It follows from the definitions that it suffices to prove the corollary when $T$ is a point $x_0:\Spec k\to\Ss_K$. If $f_0\in L_p(A^\KS_{x_0})$, then by (\ref{lifts:prp:principal}), $\widehat{U}_{f_0}$ is flat. This implies that there exists a finite extension $E/W_{\Rat}$, and a lift $x:\Spec\Reg{E}\to\Ss_K$ of $x_0$ such that $f_0$ lifts to a special endomorphism $f$ of $A^\KS_x$. Now (\ref{lifts:lem:p-special-consistent}) shows that $f$, and hence $f_0$, is $\ell$-special for every $\ell$.
\end{proof}

\begin{rem}\label{lifts:rem:p-specialequalsspecial}
From now on, we will refer to $p$-special endomorphisms simply as special endomorphisms.
\end{rem}

\subsection{}\label{lifts:subsec:ortho}
The special endomorphisms are, in a precise sense, objects defined over the integral model $\Ss_{K_0}$ of the orthogonal Shimura variety $\Sh_{K_0}$. Let $\Delta(K)$ be as in (\ref{spin:subsec:shimura}): it is the Galois group of the finite \'etale cover $\Ss_K\to\Ss_{K_0}$. Given $[z]\in\Delta(K)$ attached to an element $z\in\Adele_f^{p,\times}$, its action on $\Ss_{K}$ carries $(A^{\KS},[\eta^{\KS}_G])$ to $(A^{\KS},[\eta^{\KS}_G\circ z])$. Here, $[\eta^{\KS}_G]$ is the canonical $K^p$-level structure on $A^{\KS}$ (cf.~\ref{spin:prp:etalerealization}).

On the other hand, viewing $z$ as a tuple $(z_{\ell})_{\ell\neq p}$, we can construct the element $n(z)=\prod_{\ell\neq p}\ell^{-\nu_{\ell}(z)}\in\Int_{(p)}^\times$. This acts on $A^{\KS}$ by multiplication and induces an isomorphism of pairs
  \[
   (A^{\KS},[\eta_G^{\KS}\circ z])\xrightarrow{\simeq}(A^{\KS},[\eta^{\KS}_G]).
  \]
Conjugation by $n(z)$ now produces a canonical identification between the endomorphism sheaves $\underline{\End}\bigl([z]^*A^{\KS}\bigr)_{(p)}$ and $\underline{\End}\bigl(A^{\KS}\bigr)_{(p)}$. This gives us a canonical descent datum that allows us to descend the sheaf $\underline{\End}\bigl(A^\KS\bigr)_{(p)}$ to a sheaf $\bm{E}$ of $\Int_{(p)}$-algebras over $\Ss_{K_0}$.

It follows now that, given a map $T\to\Ss_{K_0}$, we can always attach to it the `endomorphism' algebra $\bm{E}(T)$. Moreover, $\bm{E}$ comes equipped with canonical realization functors into $\bm{H}^{\otimes(1,1)}_{p}$ over the generic fiber, and into $\bm{H}^{\otimes(1,1)}_{\cris}$ over the special fiber. In particular, we can speak of the space of `special endomorphisms' $L(T)\subset\bm{E}(T)$, whose realizations at every closed point land in $\bm{L}_?\subset\bm{H}^{\otimes(1,1)}_{?}$, where $?=p$, for a point in the generic fiber, and $?=\cris$, for a point in the special fiber. If $T$ is in fact an $\Ss_{K}$-scheme, then we will have $L(T)=L(A^{\KS}_T)$.

\section{Cycles defined by special endomorphisms}\label{sec:cycles}

In this section $(L,Q)$ will be a quadratic space over $\Int_{(p)}$ of signature $(n,2)$. We will \emph{not} assume it to be self-dual.

\subsection{}\label{special:subsec:embedding}
Suppose that $(\widetilde{L},\widetilde{Q})$ is another $\Int_{(p)}$-quadratic space of signature $(n+d,2)$ equipped with an embedding
\[
 (L,Q)\into (\widetilde{L},\widetilde{Q}),
\]
so that $L$ is a direct summand of $\widetilde{L}$. Let $\Lambda=L^{\perp}\subset\widetilde{L}$; by our assumptions, it is positive definite over $\Real$.
Consider the Shimura datum $(\widetilde{G}_{\Rat},\widetilde{X})$, where $\widetilde{G}$ is the smooth $\Int_{(p)}$-group scheme attached to $\widetilde{L}$: We have an embedding
\[
  (G_{\Rat},X)\into (\widetilde{G}_{\Rat},\widetilde{X}).
\]
Set $\widetilde{K}_p=\widetilde{G}(\Int_p)$, and let $\widetilde{K}^p\subset\widetilde{G}(\Adele_f^p)$ be a compact open with $K^p\subset\widetilde{K}^p$. We then get a map
\[
  \Sh_{K}\to\Sh_{\widetilde{K}}(\widetilde{G}_{\Rat},\widetilde{X})
\]
of Shimura varieties over $\Rat$, which is finite and unramified. Here, as usual, $\widetilde{K}=\widetilde{K}_p\widetilde{K}^p$. For simplicity, write $\Sh_{\widetilde{K}}$ for the second Shimura variety. We will also assume that $\widetilde{K}^p$ is small enough, so that $\Sh_{\widetilde{K}}$ is an algebraic variety, and so that we have the polarized Kuga-Satake abelian scheme $(\widetilde{A}^\KS_{\Sh_{\widetilde{K}}},\widetilde{\lambda}^{\KS})$ over it.

\subsection{}\label{special:subsec:serretensor}
We will need to briefly review the Serre tensor construction. Let $B$ be a semi-simple associative algebra over $\Rat$, and let $\Rg\subset B$ be a $\Int_{(p)}$-order. Let $M$ be a finitely generated, projective $\Rg$-module. Suppose that we are given a $\Int_{(p)}$-scheme $S$ and an abelian scheme $A\to S$ equipped with an action of $\Rg$. Then there is a canonical abelian scheme $\SHom_{\Rg}(M,A)$ over $S$ with the property that, for any $S$-scheme $T$, we have an identification of $\Int_{(p)}$-modules (cf.~\ref{spin:subsec:primetopsheaves}):
\[
 H^0(T,\SHom_{\Rg}(M,A))=\Hom_{\Rg}(M,H^0(T,A)).
\]
This essentially follows from \cite{lan}*{5.2.3.9}. We will only need the construction when $M$ is in fact a free $\Rg$-module, in which case $\SHom_{\Rg}(M,A)$ can be constructed as a product of $\rank_{\Rg}M$ copies of $A$.

Let $\bm{H}(A)$ be a degree-$1$ (de Rham, $p$-adic, $\ell$-adic or crystalline) cohomology sheaf over $S$ attached to $A$: It has a right action by $\Rg$. Then we have a canonical isomorphism:
\[
 \bm{H}(A)\otimes_{\Rg}M\xrightarrow{\simeq}\bm{H}\bigl(\SHom_{\Rg}(M,A)\bigr).
\]

\subsection{}
Now, $\widetilde{C}=C(\widetilde{L})$ is a free module over $C$ via the left multiplication action (this follows for instance from Corollaire 3 of \cite{bourbaki_algebre_ix}*{\S 9, $n^{\circ}$ 3}). So we can apply the Serre tensor construction to obtain the abelian scheme $\SHom_{C}(\widetilde{C},A^{\KS}_{\Sh_K})$.
\begin{lem}\label{special:lem:ks2ks}
$\SHom_{C}(\widetilde{C},A^\KS_{\Sh_K})$ has a natural $\Int/2\Int$-grading, as well as a $\widetilde{C}$ action compatible with the grading. Moreover, there exists a canonical $\widetilde{C}$-equivariant isomorphism of $\Int/2\Int$-graded abelian schemes over $\Sh_K$:
\[
 i:\widetilde{A}^\KS_{\Sh_K}\xrightarrow{\simeq}\SHom_{C}(\widetilde{C},A^\KS_{\Sh_K}).
\]
\end{lem}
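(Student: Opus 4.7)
The plan is as follows. First, I observe that the $\mathbb{Z}/2\mathbb{Z}$-grading and left $\widetilde{C}$-action on $\SHom_{C}(\widetilde{C}, A^{\KS}_{\Sh_K})$ arise by functoriality of the Serre tensor construction from the fact that $\widetilde{C}$ is a $\mathbb{Z}/2\mathbb{Z}$-graded $(C,\widetilde{C})$-bimodule: $C$ acts on the left through the embedding $C\hookrightarrow\widetilde{C}$, and $\widetilde{C}$ acts on the right by right multiplication, both compatibly with the grading on $\widetilde{C}$ and on $A^{\KS}_{\Sh_K}$.

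To construct $i$, I will first work complex-analytically over $\Sh_{K,\Comp}^{\an}$. The tautological identification $C\otimes_{C}\widetilde{C}=\widetilde{C}$ is an isomorphism of $\mathbb{Z}/2\mathbb{Z}$-graded $(G,\widetilde{C})$-bimodules, with $G\subset\widetilde{G}$ acting on both sides by left multiplication through $G\hookrightarrow C^{\times}\hookrightarrow\widetilde{C}^{\times}$. Applying the tensor functor $\bb{V}_{\Comp}$ of~\ref{spin:subsec:variations} and its analogue for $(\widetilde{G}_{\Rat},\widetilde{X})$, using the evident compatibility with restriction of representations along $(G_{\Rat},X)\hookrightarrow(\widetilde{G}_{\Rat},\widetilde{X})$, and combining with the Serre tensor identity $\bm{H}(A^{\KS}_{\Sh_K})\otimes_{C}\widetilde{C}\xrightarrow{\simeq}\bm{H}\bigl(\SHom_{C}(\widetilde{C},A^{\KS}_{\Sh_K})\bigr)$ on Betti sheaves (cf.~\ref{special:subsec:serretensor}), one obtains a canonical $\widetilde{C}$-equivariant, $\mathbb{Z}/2\mathbb{Z}$-graded isomorphism of polarizable variations of $\Int_{(p)}$-Hodge structures of weight~$1$ over $\Sh_{K,\Comp}^{\an}$ between the Betti realizations of the two abelian schemes. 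By the (anti-)equivalence between polarizable weight-$1$ $\Int_{(p)}$-VHS and abelian schemes up to prime-to-$p$ isogeny (as invoked in~\ref{spin:subsec:primetop}), this lifts to a unique $\widetilde{C}$-equivariant, $\mathbb{Z}/2\mathbb{Z}$-graded isomorphism $i_{\Comp}$ of abelian schemes over $\Sh_{K,\Comp}$.

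The main technical step is descending $i_{\Comp}$ from $\Sh_{K,\Comp}$ to the $\Rat$-variety $\Sh_K$, and I expect this to be the principal obstacle. I will mimic the argument used in Proposition~\ref{spin:prp:prdescent}: the graph of $i_{\Comp}$ is a Hodge cycle on a product of $\widetilde{A}^{\KS}_{\Sh_{K,\Comp}}$ and $\SHom_{C}(\widetilde{C},A^{\KS}_{\Sh_{K,\Comp}})$, and because it arises from the manifestly $G$-equivariant isomorphism of representations $C\otimes_{C}\widetilde{C}\xrightarrow{\simeq}\widetilde{C}$, its $\Adele_f^p$-adic realization is compatible with the canonical $K^p$-level structures on both sides, and is therefore stable under the image of $\pi_1$ of every connected component of $\Sh_K$. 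Deligne's absolute Hodge cycle theorem~\cite{dmos}*{Ch.~I}, applied exactly as in the proof of~\ref{spin:prp:prdescent}, then descends $i_{\Comp}$ uniquely to the desired $\widetilde{C}$-equivariant, $\mathbb{Z}/2\mathbb{Z}$-graded isomorphism $i$ over $\Sh_K$.
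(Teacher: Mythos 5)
Your proof is correct and follows essentially the same approach as the paper: the key input is the $G$-equivariant, $\widetilde{C}$-linear, graded identification $H\otimes_C\widetilde{C}\xrightarrow{\simeq}\widetilde{H}$ (which you write equivalently as $C\otimes_C\widetilde{C}=\widetilde{C}$, since $H=C$ and $\widetilde{H}=\widetilde{C}$ as modules), and you feed it through $\bb{V}_{\Comp}$ and the Serre tensor identity to produce $i_{\Comp}$ over $\Sh_{K,\Comp}$, then descend via Galois-equivariance of a cohomological realization — exactly the paper's strategy. One small over-reach: for descending the morphism $i_{\Comp}$, what the proof of Proposition~\ref{spin:prp:prdescent} actually uses is the elementary Galois-descent observation (a morphism of abelian varieties over $\overline{\kappa}$ whose induced map on $p$-adic Tate modules is $\Gal(\overline{\kappa}/\kappa)$-equivariant is defined over $\kappa$), not the absolute Hodge cycle theorem, which in that proof is invoked only to descend the de Rham and auxiliary $\ell$-adic realizations of the tensor $\pr$; since $i_{\Comp}$ is a morphism rather than a Hodge tensor, the lighter tool suffices, and is what the paper relies on. Your choice of the $\Adele_f^p$-adic realization instead of the $p$-adic one is immaterial; either furnishes the required Galois-equivariance.
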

\begin{proof}
 The $\Int/2\Int$-grading is simply the diagonal grading, and the action of $\widetilde{C}$ is via pre-composition by right multiplication.

 The proof is now quite standard, and essentially comes down to the existence of the $\widetilde{C}$-equivariant isomorphism of $\Int/2\Int$-graded $G$-representations:
 \begin{align}
  H\otimes_{C}\widetilde{C}&\rightarrow\widetilde{H}\label{special:eqn:kstoks};\\
  w\otimes z&\mapsto w\cdot z.
 \end{align}

 It gives rise to an isomorphism of tuples $\mathbb{V}_{\Rat}(H\otimes_C\widetilde{C})\xrightarrow{\simeq}\mathbb{V}_{\Rat}(\widetilde{H})$. Over $\Sh_{K,\Comp}$, $\bb{V}_{\Comp}(H\otimes_C\widetilde{C})$ (resp. $\bb{V}_{\Comp}(\widetilde{H})$) is the $\Int_{(p)}$-variation of Hodge structures obtained from the cohomology of $\SHom_{C}(\widetilde{C},A^\KS_{\Sh_{K,\Comp}})$ (resp. $\widetilde{A}^\KS_{\Sh_{K,\Comp}}$). So the induced isomorphism $\bb{V}_{\Comp}(H\otimes_C\widetilde{C})\xrightarrow{\simeq}\bb{V}_{\Comp}(\widetilde{H})$ gives us an isomorphism of abelian schemes
 \[
  i:\widetilde{A}^\KS_{\Sh_{K,\Comp}}\xrightarrow{\simeq}\SHom_{C}(\widetilde{C},A^\KS_{\Sh_{K,\Comp}}).
 \]
 But, by the functoriality of $\bb{V}_{\Rat}$, the $p$-adic realization of $i$ is defined over $\Sh_K$. From this, as in the proof of (\ref{spin:prp:prdescent}), we conclude that $i$ must be defined over $\Sh_K$.
\end{proof}

\begin{prp}\label{special:prp:lambdaemb}
There exists a canonical isometric embedding $\Lambda\into L(\widetilde{A}^\KS_{\Sh_K})$ mapping onto a direct summand. Furthermore, for any $\Sh_K$-scheme $T$, there exists a canonical embedding $L(A^\KS_T)\into L(\widetilde{A}^\KS_T)$ also mapping onto the direct summand. Under the canonical bilinear pairing $f\mapsto f\circ f$ on $L(\widetilde{A}^\KS_T)$, $L(A^\KS_T)$ is identified with the orthogonal complement of $\Lambda$.
\end{prp}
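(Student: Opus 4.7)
My plan is to construct both embeddings via the isomorphism $\widetilde{A}^\KS_{\Sh_K} \cong \SHom_{C}(\widetilde{C}, A^\KS_{\Sh_K})$ from Lemma~\ref{special:lem:ks2ks}, and to verify every claim by reading it off at the level of cohomological realizations. First I would define $\Lambda \to \End(\widetilde{A}^\KS_{\Sh_K})_{(p)}$ by restricting to $\Lambda \subset \widetilde{C}$ the right $\widetilde{C}$-action on $\SHom_{C}(\widetilde{C}, A^\KS_{\Sh_K})$ (pre-composition with left multiplication on $\widetilde{C}$), and then check that the realization on $\widetilde{H} \simeq \widetilde{C}$ of the endomorphism attached to $v \in \Lambda$ is left multiplication by $v$, which is the element $v \in \widetilde{L} \subset \End(\widetilde{H})$ and so lies in the realization sheaf $\widetilde{\bm{L}}$; this gives specialness, and the Clifford identity $v^2 = \widetilde{Q}(v)$ gives isometricity. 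Next, for $f \in L(A^\KS_T)$ I would define $\tilde{f} = i \circ \SHom_{C}(\widetilde{C}, f) \circ i^{-1}$ and verify, via the identification $\widetilde{H} \simeq H \otimes_C \widetilde{C} \simeq \widetilde{C}$, that the realization of $\tilde{f}$ is left multiplication by $f \in L \subset \widetilde{L}$; this exhibits $\tilde{f}$ as a special endomorphism and shows the map $L(A^\KS_T) \into L(\widetilde{A}^\KS_T)$ is isometric.

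For orthogonality I would run a Clifford computation: for $f \in L$ and $v \in \Lambda$, the relation $L \perp \Lambda$ in $\widetilde{L}$ gives $fv + vf = [f,v]_{\widetilde{Q}} = 0$ in $\widetilde{C}$, and since the bilinear form attached to $g \mapsto g \circ g$ on $L(\widetilde{A}^\KS_T)$ corresponds under realizations to the Clifford anti-commutator on $\widetilde{L}$, the images of $L(A^\KS_T)$ and $\Lambda$ are orthogonal.

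The hard part will be showing that $L(A^\KS_T) \into L(\widetilde{A}^\KS_T)$ surjects onto $\Lambda^{\perp}$. My approach is as follows: given $\tilde{f} \in L(\widetilde{A}^\KS_T) \cap \Lambda^{\perp}$, its realization lies in $\widetilde{L} \cap \Lambda^{\perp} = L$ and so is left multiplication by a well-defined element $f \in L \subset C \subset \widetilde{C}$ on $\widetilde{H} \simeq \widetilde{C}$. Fixing a basis $v_1, \ldots, v_d$ of $\Lambda$, one gets a right-$C$-module decomposition $\widetilde{C} = \bigoplus_{S \subset \{1,\ldots,d\}} C \cdot v_S$, with $v_\emptyset = 1$, preserved by left multiplication by any element of $C$. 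Applying $\SHom_{C}(-, A^\KS_T)$ turns this into a direct sum decomposition $\widetilde{A}^\KS_T \cong A^\KS_T \oplus B$, where $A^\KS_T$ is the summand attached to $v_\emptyset$. By faithfulness of the realization functor on $\End_{(p)}$ of abelian schemes, $\tilde{f}$ must preserve this decomposition, so it restricts to an endomorphism $f$ of $A^\KS_T$ whose realization on $H \simeq C$ is left multiplication by $f$; hence $f \in L(A^\KS_T)$, and a final realization comparison shows $\tilde{f}$ is the image of $f$ under the Serre tensor embedding. The direct summand assertions follow formally: since $L(\widetilde{A}^\KS_T) \into \widetilde{L}$ expresses $L(\widetilde{A}^\KS_T)$ as a finite free $\Int_{(p)}$-submodule, and the image of $L(A^\KS_T)$ is the kernel of the pairing map $L(\widetilde{A}^\KS_T) \to \dual{\Lambda}$ into a torsion-free module, this kernel is a direct summand over $\Int_{(p)}$; a parallel argument handles the image of $\Lambda$ in $L(\widetilde{A}^\KS_{\Sh_K})$.

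The obstacle I expect to grapple with is precisely the reconstruction of $f$ from $\tilde{f}$ in the third paragraph. The subtlety is that one must check that the right-$C$-module decomposition of $\widetilde{C}$ does come from a genuine decomposition of $\widetilde{A}^\KS_T$ compatible with the Serre tensor formalism, and that every endomorphism of $\widetilde{A}^\KS_T$ whose realization preserves such a decomposition does restrict to an endomorphism of each summand; this in turn relies on faithfulness of the realization functor on abelian scheme endomorphisms.
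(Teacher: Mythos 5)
Your construction of the first embedding $\Lambda\into L(\widetilde{A}^\KS_{\Sh_K})$ is not correct, for two compounding reasons. First, the $\widetilde{C}$-action on $\SHom_{C}(\widetilde{C}, A^\KS_{\Sh_K})$ exhibited in the proof of (\ref{special:lem:ks2ks}) is pre-composition with \emph{right} multiplication, not left. Pre-composition with left multiplication by an odd $v\in\Lambda$ is not even well-defined on $\SHom_{C}(\widetilde{C},-)$: since $v$ is orthogonal to $L$, it anti-commutes with the odd part of $C$, so left multiplication by $v$ on $\widetilde{C}$ is not a map of left $C$-modules. Second, and more fundamentally, even the correct $\widetilde{C}$-action of $v$ does not produce a special endomorphism. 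The module $\widetilde{H}=\widetilde{C}$ carries a \emph{right} $\widetilde{C}$-module structure, and $\widetilde{L}\into\End_{\widetilde{C}}(\widetilde{H})$ is embedded by \emph{left} multiplication; the $\widetilde{C}$-action on $\widetilde{A}^\KS$ is the abelian-scheme reflection of that right $\widetilde{C}$-module structure on cohomology, so the realization of the action of $v$ on $\widetilde{\bm{H}}\simeq\bm{H}\otimes_C\widetilde{C}$ is $\mathrm{id}\otimes r_v$, i.e.\ \emph{right} multiplication by $v$. Right multiplication by $v$ does not lie in $\widetilde{\bm{L}}$ (it commutes with the left multiplications rather than being one), so the endomorphisms you obtain are not special.

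The paper constructs $\Lambda\into L(\widetilde{A}^\KS_{\Sh_K})$ in a fundamentally different way, with no Clifford-algebra formula at all: $\Lambda$ is a \emph{trivial} $G$-representation, so the $G$-equivariant inclusion $\Lambda\into\widetilde{L}$ produces, via the functor $\bb{V}_{\Rat}$ of (\ref{spin:subsec:derhamtorsor}), morphisms of tuples $\bb{V}_{\Rat}(\Int_{(p)})\to\bb{V}_{\Rat}(\widetilde{L})$. Concretely these are global sections of $\widetilde{\bm{L}}$, of Hodge type $(0,0)$ at every complex point, with compatible $\ell$-adic and de Rham sections; by the theory of absolute Hodge cycles these are exactly the realizations of endomorphisms of $\widetilde{A}^\KS$, special by construction. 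The endomorphism so attached to $v$ has realization \emph{left} multiplication by $v$, and only then does your isometricity argument via $v^{2}=\widetilde{Q}(v)$ go through.

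Once the embeddings are set up correctly, your second paragraph (orthogonality) and third paragraph (surjectivity onto $\Lambda^{\perp}$) are sound, and the third is in fact more explicit than what the paper provides: the paper simply identifies both $L(A^\KS_T)$ and $\Lambda^{\perp}$ with those elements of $L(\widetilde{A}^\KS_T)$ whose $p$-adic realization is a section of $\bm{L}_p\subset\widetilde{\bm{L}}_p\rvert_{\Sh_K}$, without spelling out why such an element comes from an endomorphism of $A^\KS_T$. Your decomposition $\widetilde{C}=\bigoplus_{S}C v_S$ as a free left $C$-module, together with faithfulness of the cohomological realization on $\End(\cdot)_{(p)}$ of abelian schemes, supplies an honest argument for that step.
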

\begin{proof}
 Viewing $\Lambda$ as a trivial representation of $G$, the natural embedding $\Lambda\into\widetilde{L}$ is a map of $G$-representations. Applying the functor $\bb{V}_{\Rat}$, we obtain a map $\bb{V}_{\Rat}(\Lambda)\into\bb{V}_{\Rat}(\widetilde{L})$ and thus an embedding:
 \[
 \Lambda\into \Hom\bigl(\bb{V}_{\Rat}(\Int_{(p)}),\bb{V}_{\Rat}(\widetilde{L})\bigr)=L(\widetilde{A}^\KS_{\Sh_K}).
 \]

 Given an $\Sh_K$-scheme $T$, we obtain an embedding of $\Int_{(p)}$-algebras:
 \[
  \End(A^\KS_T)_{(p)}\into\End(\widetilde{A}^\KS_T)_{(p)}.
 \]
 This is defined as follows: Given an endomorphism $f$ of $A^\KS_T$, we obtain an endomorphism of $\widetilde{A}^\KS_T$ carrying a map $\varphi:\widetilde{C}\to H^0(T,A^\KS)$ to the map $f\circ\varphi$. Here, we are using (\ref{special:lem:ks2ks}) to identify $\widetilde{A}^\KS_T$ with $\SHom_C(\widetilde{C},A^\KS_T)$. This embedding is compatible under cohomological realizations with the map $\bb{V}_{\Rat}(H^{\otimes(1,1)})\into\bb{V}_{\Rat}(\widetilde{H}^{\otimes(1,1)})$ obtained from the inclusion of $G$-representations $H^{\otimes(1,1)}\into H^{\otimes(1,1)}\otimes_C\widetilde{C}\xrightarrow{\simeq}\widetilde{H}^{\otimes(1,1)}$.

 In fact, this embedding carries $L(A^\KS_T)$ onto $\Lambda^{\perp}\subset\widetilde{L}(A^\KS_T)$. To see this, observe that the embedding $L\into\widetilde{L}$ of $G$-representations gives us a canonical map of $p$-adic sheaves $\bm{L}_p\to\widetilde{\bm{L}}_p\vert_{\Sh_K}$. We can now identify both $L(A^\KS_T)$ and $\Lambda^{\perp}$ with those elements of $\widetilde{L}(A^\KS_T)$ whose $p$-adic realization is a section of $\bm{L}_p\subset\widetilde{\bm{L}}_p\vert_{\Sh_K}$.
\end{proof}

\subsection{}
Recall from (\ref{spin:subsec:levelstructure}) that, over $\Sh_{\widetilde{K}}$ (resp. $\Sh_K$), we have the canonical $\widetilde{G}(\Adele_f^p)$-torsor $I^p_{\widetilde{G}}$ (resp. $G(\Adele_f^p)$-torsor $I^p_G$). There is a canonical $G(\Adele_f^p)$-equivariant map $I^p_G\to I^p_{\widetilde{G}}\rvert_{\Sh_K}$, which we can describe explicitly. Recall that, $I^p_G$ parameterizes $C$-equivariant graded isomorphisms
\[
 \eta:H\otimes\underline{\Adele}_f^p\xrightarrow{\simeq}\bm{H}_{\Adele_f^p}
\]
carrying $L\otimes\underline{\Adele}_f^p$ onto $\bm{L}_{\Adele_f^p}$.

Tensoring both sides of such an isomorphism with $\widetilde{C}$ over $C$ and using the isomorphisms $H\otimes_C\widetilde{C}\xrightarrow{\simeq}\widetilde{H}$ and $\bm{H}_{\Adele_f^p}\otimes_C\widetilde{C}\xrightarrow{\simeq}\widetilde{\bm{H}}_{\Adele_f^p}$, we obtain a $\widetilde{C}$-equivariant graded isomorphism
\[
 \widetilde{\eta}:\widetilde{H}\otimes\underline{\Adele}_f^p\xrightarrow{\simeq}\widetilde{\bm{H}}_{\Adele_f^p}.
\]
It carries $L\otimes\underline{\Adele}_f^p\subset\widetilde{H}^{\otimes(1,1)}\otimes\underline{\Adele}_f^p$ onto $\bm{L}_{\Adele_f^p}\subset\bm{\widetilde{H}}^{\otimes(1,1)}_{\Adele_f^p}$. Moreover, if $\iota_0:\Lambda\into\widetilde{L}$ is the natural inclusion, $\widetilde{\eta}$ carries $\iota_0\otimes 1$ to the isometry $\bm{\iota}_{\Adele_f^p}:\Lambda\otimes\underline{\Adele}_f^p\into \widetilde{L}\otimes\underline{\Adele}_f^p$ induced by the inclusion $\Lambda\into L(\widetilde{A}^{\KS}_{\Sh_K})$. This essentially shows:
\begin{prp}\label{special:prp:levelstructures}
There is a canonical map $I^p_G\to I^p_{\widetilde{G}}\vert_{\Sh_K}$. It identifies $I^p_G$ with the sub-sheaf $I^p_{\bm{\iota}}\subset I^p_{\widetilde{G}}\vert_{\Sh_K}$ consisting of the trivializations
\[
 \widetilde{\eta}:\widetilde{H}\otimes\underline{\Adele}_f^p\xrightarrow{\simeq}\widetilde{\bm{H}}_{\Adele_f^p},
\]
which carry $\iota_0\otimes 1$ to $\bm{\iota}_{\Adele_f^p}$.
\end{prp}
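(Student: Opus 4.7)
The map $\eta\mapsto\widetilde\eta$ has essentially been written down in the paragraph preceding the statement: starting from a section $\eta\in I^p_G(T)$, I would form $\widetilde\eta\coloneqq\eta\otimes_C\id_{\widetilde C}$ and use the canonical identifications $H\otimes_C\widetilde C\xrightarrow{\simeq}\widetilde H$ and $\bm H_{\Adele_f^p}\otimes_C\widetilde C\xrightarrow{\simeq}\widetilde{\bm H}_{\Adele_f^p}$. The first is the $G$-equivariant map of (\ref{special:eqn:kstoks}); the second comes from (\ref{special:lem:ks2ks}) via the Serre tensor construction, since the first $\Adele_f^p$-adic cohomology of $\SHom_C(\widetilde C,A^\KS)$ is canonically $R^1f^\KS_*\underline{\Adele}^p_f\otimes_C\widetilde C$. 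The formal properties---$\widetilde C$-equivariance, preservation of the $\Int/2\Int$-grading, and the fact that $L\otimes\underline{\Adele}^p_f\subset\widetilde H^{\otimes(1,1)}\otimes\underline{\Adele}^p_f$ is carried onto $\widetilde{\bm L}_{\Adele^p_f}$---are then immediate from functoriality of the Serre tensor and the definition of the various $\bm L_{\Adele^p_f}$'s as images of the idempotent $\pr_{\Adele^p_f}$, together with the compatibility of the projectors for $L\subset\widetilde L$.

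Next I would check that $\widetilde\eta$ is indeed polarization preserving, so that it defines a section of $I^p_{\widetilde G}\rvert_{\Sh_K}$. The key observation is that under the identification $i$ of (\ref{special:lem:ks2ks}), the polarization $\widetilde\lambda^\KS$ on $\widetilde A^\KS$ corresponds, up to a prime-to-$p$ scalar, to the polarization on $\SHom_C(\widetilde C,A^\KS)$ induced by pairing $A^\KS$ against itself via $\psi_\delta$ and then contracting with a suitable $\widetilde\delta\in\widetilde C$ with $\widetilde\delta^*=-\widetilde\delta$; this can be arranged because $\delta$ and $\widetilde\delta$ are only specified up to choice and both induce the same similitude character via (\ref{cliff:lem:psidelta}). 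Once chosen compatibly, the formation of $\widetilde\psi_{\widetilde\delta}$ from $\psi_\delta$ by the Serre tensor matches the pairing induced by $\widetilde\lambda^\KS$ on first cohomology, which is precisely what is needed for $\widetilde\eta$ to preserve polarizations.

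Now I would verify that $\widetilde\eta$ lands in $I^p_{\bm\iota}$, i.e.\ carries $\iota_0\otimes 1$ to $\bm\iota_{\Adele^p_f}$. By the construction of $\bm\iota$ in (\ref{special:prp:lambdaemb}), $\bm\iota_{\Adele^p_f}$ is obtained by applying the $\Adele^p_f$-adic component of the functor $\bb V_{\Rat}$ to the $\widetilde G$-equivariant map $\Lambda\into\widetilde L$. Since $\widetilde\eta$ is obtained by applying that same functorial machinery to the isomorphism $\eta$ and then restricting along $\Lambda\into\widetilde L$, this compatibility is tautological.

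Finally, I would show the resulting map $I^p_G\to I^p_{\bm\iota}$ is an isomorphism of sheaves. By (\ref{lattice:lem:grpoints}) applied over $\Adele^p_f$, the stabilizer of $\iota_0$ in $\widetilde G(\Adele^p_f)$ is precisely $G(\Adele^p_f)$; hence $I^p_{\bm\iota}$ is a pseudo-torsor under $G(\Adele^p_f)$, and the map constructed is visibly $G(\Adele^p_f)$-equivariant. Since $I^p_G$ is itself a $G(\Adele^p_f)$-torsor, it suffices to check local non-emptiness of $I^p_{\bm\iota}$, which follows once the map is known to be well-defined (any section of $I^p_G$ maps to one). An equivariant map between torsors (one honest, one pseudo-) under the same group is automatically an isomorphism, completing the proof. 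The main subtlety in all of this is the polarization compatibility in the second step; the rest is bookkeeping made transparent by the Serre tensor picture.
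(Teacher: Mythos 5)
Your proof is correct, and it follows the same construction as the paper: the map is defined exactly as in the paragraph preceding the proposition (tensoring with $\widetilde{C}$ over $C$), and you are filling in the verification that the paper explicitly leaves to the reader. Two small remarks. First, the polarization-compatibility step as you sketch it (identifying $\widetilde{\lambda}^{\KS}$ with a pairing built from $\psi_\delta$ by ``contracting with a suitable $\widetilde{\delta}$'') is the one place where the argument is hand-wavy; as written, it is not clear that a compatible pair $(\delta,\widetilde{\delta})$ exists, nor how exactly the Serre tensor transports the pairing. A cleaner route avoids the Serre-tensor computation entirely: the property of preserving polarizations up to an $\underline{\Adele}_f^{p,\times}$-scalar is invariant under the action of $\widetilde{G}(\Adele_f^p)$, since $\widetilde{G}$ scales the line $[\psi_{\widetilde{\delta}}]$ by the spinor norm (\ref{cliff:lem:psidelta}); and any two $\widetilde{C}$-equivariant, $\Int/2\Int$-graded trivializations of $\widetilde{\bm{H}}_{\Adele_f^p}$ carrying $\widetilde{L}$ onto $\widetilde{\bm{L}}_{\Adele_f^p}$ differ by such an element over a connected base. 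Hence it suffices to verify polarization-preservation for a single trivialization (e.g.\ the tautological one coming from the Betti uniformization over $\Sh_{K^p}$, for which it holds by construction). Second, the stabilizer of $\iota_0$ in $\widetilde{G}(\Adele_f^p)$ being $G(\Adele_f^p)$ is not really an application of (\ref{lattice:lem:grpoints}); it is the very definition of $G$ as a closed subscheme of $\widetilde{G}$ in (\ref{lattice:subsec:parahoric}). The rest of your torsor argument (injectivity plus equivariance into a pseudo-torsor) is exactly the right way to close the proof.
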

\begin{proof}
  We will only need the fact that $I^p_G$ maps naturally to $I^p_{\bm{\iota}}$, which we have already seen. The remainder of the proof is left to the reader.
\end{proof}

\begin{lem}\label{regular:lem:maxemb}
There exists a self-dual quadratic lattice $(\widetilde{L},\widetilde{Q})$ over $\Int_{(p)}$, and an embedding
\[
   (L,Q)\into (\widetilde{L},\widetilde{Q})
\]
of quadratic lattices carrying $L$ onto a direct summand of $\widetilde{L}$ such that $\Lambda=L^{\perp}$ is positive definite. If $\disc(L)$ is a cyclic abelian group, we can choose $\widetilde{L}$ such that $\rank\widetilde{L}\leq \rank L+1$.
\end{lem}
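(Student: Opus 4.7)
The plan is to construct $\widetilde{L}$ by gluing $L$ to a suitably chosen positive definite auxiliary lattice $\Lambda$ along isomorphic but oppositely oriented discriminant forms. The signature of $\widetilde{L}$ will automatically be $(n+\rank\Lambda, 2)$, and $\Lambda$ will end up as $L^{\perp}$.

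More precisely, suppose we have produced a positive definite quadratic lattice $(\Lambda, Q_{\Lambda})$ over $\Int_{(p)}$ together with an isometry
\[
\phi:(\disc(L),\overline{Q})\xrightarrow{\simeq}(\disc(\Lambda),-\overline{Q}_{\Lambda}).
\]
Inside $\disc(L)\oplus\disc(\Lambda)$, which carries the orthogonal sum of the two discriminant forms, the graph of $\phi$ is a totally isotropic subgroup of order $|\disc(L)|$, hence of half the total order; a counting argument using non-degeneracy of the discriminant pairing shows it is its own orthogonal. Define $\widetilde{L}\subset\dual{L}\oplus\dual{\Lambda}$ as the preimage of this graph. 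Then $\widetilde{L}$ contains $L\oplus\Lambda$, is stable under the quadratic form $Q\oplus Q_{\Lambda}$, and is self-dual because the graph is Lagrangian. The projection $\widetilde{L}\to\dual{\Lambda}$ has kernel $L$ (any element whose $\Lambda$-component lies in $\Lambda$ must have $\overline{x}=\phi^{-1}(0)=0$, so $x\in L$), and is surjective since $\phi$ is an isomorphism. As $\dual{\Lambda}$ is a free $\Int_{(p)}$-module, the short exact sequence $0\to L\to\widetilde{L}\to\dual{\Lambda}\to 0$ splits, exhibiting $L$ as an $\Int_{(p)}$-module direct summand; and since $L\cap\Lambda=0$ in $\widetilde{L}$ and ranks match, $\Lambda=L^{\perp}$ with its induced form. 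This handles the `gluing' part of the statement.

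The remaining task is to produce $\Lambda$. Assume first that $\disc(L)$ is cyclic, generated by $\bar\eta\in\dual{L}/L$ with $\overline{Q}(\bar\eta)=c/p^a$ for some unit $c\in\Int_{(p)}^{\times}$ (non-degeneracy forces $c$ to be a unit since $p$ is odd). Take $\Lambda=\Int_{(p)}\cdot f$ rank one with $Q_{\Lambda}(f)=p^a u$, where $u$ is a positive integer representing the residue class $-(4c)^{-1}\pmod{p^a}$ (which exists because $(\Int/p^a\Int)^\times$ is a group and we may shift by a large multiple of $p^a$ to ensure positivity). Then $\dual{\Lambda}/\Lambda$ is cyclic of order $p^a$, generated by $(2p^a u)^{-1}f$, on which the induced form takes value $1/(4 p^a u)\equiv -c/p^a\pmod{\Int_{(p)}}$, matching $-\overline{Q}$. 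An isometry $\phi$ is then chosen sending $\bar\eta$ to this generator, and we obtain $\widetilde{L}$ of rank $\rank L+1$ as claimed.

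In the general case, the key input is the classical structure theorem: for odd $p$, every non-degenerate quadratic form on a finite abelian $p$-group admits an orthogonal decomposition into cyclic summands. Applying this to $(\disc(L),\overline{Q})$, we write it as an orthogonal sum of cyclic pieces, construct a rank-one positive definite $\Lambda_i$ for each piece exactly as above, and set $\Lambda=\perp_i\Lambda_i$. This $\Lambda$ is positive definite with $(\disc(\Lambda),-\overline{Q}_{\Lambda})\cong(\disc(L),\overline{Q})$, and the construction of the first paragraph then produces the desired $\widetilde{L}$. The main (though mild) obstacle is the aforementioned orthogonal decomposition statement, which for $p$ odd follows by induction on $|\disc(L)|$: pick any $\bar\eta\in\disc(L)$ of maximal order $p^a$ with $\overline{Q}(\bar\eta)\neq 0$ (such $\bar\eta$ exists by non-degeneracy together with the fact that $2$ is invertible, so $[\bar\eta,\bar\eta]_{Q}=2\overline{Q}(\bar\eta)$ determines $\overline{Q}(\bar\eta)$), split off its cyclic subgroup orthogonally using that $\overline{Q}(\bar\eta)$ has the maximal possible denominator, and recurse on the orthogonal complement.
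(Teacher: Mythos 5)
Your proof is correct, but it takes a genuinely different route from the paper's. The paper first diagonalizes $(L,Q)$ over $\Int_{(p)}$ as an orthogonal sum of rank-one spaces $\langle a_i\rangle$ (positive) and $\langle -b_j\rangle$ (negative), then embeds each $\langle a_i\rangle$ as a direct summand of a positive definite self-dual binary lattice $E_+(a_i)$ (built from a form $a_ix^2+xy+pc'y^2$ of unit discriminant) and each $\langle -b_j\rangle$ as a direct summand of a hyperbolic plane $U$; the ambient lattice is $\widetilde{L}=(\oplus_iE_+(a_i))\oplus U^{\oplus 2}$, and the cyclic-discriminant bound is obtained by observing that at most one diagonal entry is a non-unit, so only that one coordinate needs enlarging. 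Your construction is instead the classical gluing (Nikulin/Wall) construction: build a positive definite $\Lambda$ whose discriminant form is isometric to $-\overline{Q}$, and take $\widetilde{L}$ to be the preimage in $\dual{L}\oplus\dual{\Lambda}$ of the graph of such an isometry, with self-duality following from the Lagrangian/index count. Your approach is structurally cleaner, proves self-duality uniformly, and typically produces a smaller $\widetilde{L}$ (of rank $\rank L+k$, where $k$ is the number of cyclic invariant factors of $\disc(L)$, versus $2\rank L$ for the paper's general construction); the trade-off is that it requires as input the orthogonal decomposition of $(\disc(L),\overline{Q})$ into cyclic blocks, a standard fact for odd $p$ but one the paper's hands-on diagonalization sidesteps at the level of the discriminant form. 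Both approaches achieve $\rank\widetilde{L}\leq\rank L+1$ when $\disc(L)$ is cyclic.
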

\begin{proof}
 For any $a\in\Int_{(p)}$, let $\langle a\rangle$ be the rank $1$-quadratic $\Int_{(p)}$-module $\Int_{(p)}$ equipped with the quadratic form with value $a$ on $1$.

 Let $v\in L$ be such that $\nu_p(Q(v))$ is minimal. Then one easily checks that $\nu_p([w_1,w_2]_Q)\geq\nu_p(Q(v))$, for all $w_1,w_2\in L$. In particular, for any $w\in L$, the projection $w-\frac{2[w,v]_Q}{[v,v]_Q}v$ onto $\langle v\rangle^{\perp}\subset L$ is well-defined, and we obtain an orthogonal decomposition
 \[
 L=\langle v\rangle\oplus\langle v\rangle^{\perp}.
 \]
 Applying this iteratively, we find that $(L,Q)$ can be diagonalized, so that it is isometric to a lattice of the form
 \[
  (\oplus_{i=1}^n\langle a_i\rangle)\bigoplus(\oplus_{j=1}^2\langle -b_j\rangle),
 \]
 where $a_i$ and $b_j$ positive integers for varying $i$ and $j$.

For any $b\in\Int_{(p)}$, the quadratic space $\langle b\rangle$ embeds as a direct summand spanned by the element $e+\frac{1}{2}b\cdot f$ within the hyperbolic plane $U=\Int_{(p)}e\oplus\Int_{(p)}f$ with $e^2=f^2=0$ and $[e,f]=1$. Also, given any positive element $a\in\Int_{(p)}^{>0}$, we can find a self-dual positive definite quadratic space $E_+(a)$ over $\Int_{(p)}$ such that $\langle a\rangle$ embeds as a direct summand of $E_+(a)$. In other words, we need a binary quadratic form over $\Int_{(p)}$, which primitively represents $a$ and whose discriminant is a negative element of $\Int_{(p)}^\times$. Take the form to be $ax^2+xy+cy^2$ with $c=pc'$, for $c'\in\Int^{>0}_{(p)}$ satisfying: $1-4ac'<0$. The discriminant is then $1-4pac'$, which is negative and a unit in $\Int_{(p)}$.

 We now find that $L$ embeds isometrically as a direct summand of $\widetilde{L}=(\oplus_{i=1}^nE_+(a_i))\oplus H^{\oplus 2}$.

 Suppose now that $\disc(L)$ is cyclic. This implies that at most one element in the set $\{a_i:1\leq i\leq n\}\cup\{b_1,b_2\}$ is a non-unit. If every element is a unit, then we can take $\widetilde{L}=L$. Otherwise, without loss of generality, we can assume that exactly one of $a_1$ or $b_1$ is a non-unit. In the former case, we set $\widetilde{L}=E_+(a_1)\bigoplus(\oplus_{i=2}^n\langle a_i\rangle)\bigoplus(\oplus_{j=1}^2\langle -b_j\rangle)$. In the latter, we can take $\widetilde{L}=(\oplus_{i=1}^n\langle a_i\rangle)\oplus\langle -b_2\rangle\oplus U.$
\end{proof}

\subsection{}\label{regular:subsec:fixemb}
We will now fix an embedding $(L,Q)\into (\widetilde{L},\widetilde{Q})$ as in (\ref{regular:lem:maxemb}). In the case where $\disc(L)$ is cyclic, we will assume in addition that $\widetilde{L}$ has been chosen so that $\rank\Lambda\leq 1$.

Let $\Ss_{\widetilde{K}}$ (resp $\Ss_{\widetilde{K}_0}$) be the integral canonical model for $\Sh_{\widetilde{K}}$ (resp. $\Sh_{\widetilde{K}_0}$) over $\Int_{(p)}$ (cf.~\ref{spin:subsec:intcan}). Assume that $\widetilde{K}$ is small enough. Then, over $\Ss_{\widetilde{K}}$, we have the Kuga-Satake abelian scheme $(\widetilde{A}^\KS,\widetilde{\lambda}^\KS,\bigl[\widetilde{\eta}^\KS\bigr])$. Since $\Ss_{\widetilde{K}}$ is smooth, and in particular normal, both the $\widetilde{G}(\Adele_f^p)$-torsor $I^p_{\widetilde{G}}$ and the canonical $K^p$-level structure $[\widetilde{\eta}_{\widetilde{G}}]\in H^0\bigl(\Sh_{\widetilde{K}},I^p_{\widetilde{G}}/\widetilde{K}^p\bigr)$ (cf.~\ref{spin:subsec:levelstructure}) extend over $\Ss_K$: We will denote these extensions by the same symbols.

The $G_0(\Adele_f^p)$-torsor $I^p_{\widetilde{G}}/\Gm(\Adele_f^p)$ has a canonical descent $I^p_{\widetilde{G}_0}$ over $\Ss_{\widetilde{K}_0}$: it parameterizes certain orientation preserving isometries
\[
 \widetilde{\eta}_0:\widetilde{L}\otimes\underline{\Adele}_f^p\xrightarrow{\simeq}\widetilde{\bm{L}}_{\Adele_f^p}.
\]
Again, we have a canonical section $[\widetilde{\eta}_{\widetilde{G}_0}]$ of $I^p_{\widetilde{G}_0}/\widetilde{K}^p_0$ over $\Ss_{\widetilde{K}_0}$.

\subsection{}
For any $\Ss_{\widetilde{K}_0}$-scheme $T$, write $\widetilde{L}(T)$ for the group of special endomorphisms defined as in (\ref{lifts:subsec:ortho}); in particular, if $T$ is actually a scheme over $\Ss_{\widetilde{K}}$, we will have $\widetilde{L}(T)=L(\widetilde{A}^{\KS}_T)$.

Let $\iota_0:\Lambda\into\widetilde{L}$ be the natural embedding. Suppose that we are given an isometric map $\iota:\Lambda\to L(T)$; since $\Lambda$ is positive definite any such map has to be an embedding. Let $I^p_{\iota,0}\subset I^p_{\widetilde{G}_0}\rvert_T$ be the sub-sheaf of isomorphisms $\widetilde{\eta}_0$ that carry $\iota_0$ to $\iota$; then, by Witt's extension theorem, $I^p_{\iota,0}$ is a torsor over $T$ under $G_0(\Adele_f^p)$.

If the map $T\to\Ss_{\widetilde{K}_0}$ arises from a lift $T\to\Ss_{\widetilde{K}}$, then one can also define a sub-sheaf $I^p_{\iota}\subset I^p_{\widetilde{G}}\rvert_T$: This is the pre-image of $I^p_{\iota,0}$ under the natural quotient map $I^p_{\widetilde{G}}\rvert_T\to I^p_{\widetilde{G}_0}\rvert_T$, and is thus a $G(\Adele_f^p)$-torsor over $T$.

\begin{defn}\label{regular:defn:lambdastructure}
For any $\Ss_{\widetilde{K}_0}$-scheme $T$, a \defnword{$\Lambda$-structure} for $T$ is an isometric map $\iota:\Lambda\to L(T)$. Given an $\Ss_{\widetilde{K}}$-scheme $T$ and a $\Lambda$-structure $\iota$ for $T$, a \defnword{$K^p$-level structure} on $(T,\iota)$ is a section $[\eta_{\iota}]$ of $I^p_{\iota}/K^p$ over $T$ mapping to $[\widetilde{\eta}_{\widetilde{G}}]$ under the obvious map
\[
  I^p_{\iota}/K^p\rightarrow \bigl(I^p_{\widetilde{G}}/\widetilde{K}^p\bigr)\rvert_T.
\]
Completely analogously, supposing only that $T$ is a $\Ss_{\widetilde{K}_0}$-scheme one can also define the notion of a $K^p_0$-level structure on $(T,\iota)$ as a section $[\eta_{\iota,0}]$ of $I^p_{\iota,0}/K^p_0$ mapping to $[\widetilde{\eta}_{\widetilde{G}_0}]$ in $H^0(T,I^p_{\widetilde{G}_0}/\widetilde{K}^p_0)$.
\end{defn}

\subsection{}\label{regular:subsec:zkplambda}
Let $Z_{K^p}(\Lambda)$ be the functor on $\Ss_{\widetilde{K}}$-schemes whose value on any $\Ss_{\widetilde{K}}$-scheme $T$ is given by:
\[
  Z_{K^p}(\Lambda)(T)=\left\{
  (\iota,[\eta_{\iota}]):\;\text{$\iota$ a $\Lambda$-structure for $T$; $[\eta_{\iota}]$ a $K^p$-level structure for $(T,\iota)$}
  \right\}.
\]
Similarly, one defines a functor $Z_{K^p_0}(\Lambda)$ on $\Ss_{\widetilde{K}_0}$-schemes employing $K^p_0$-level structures.

\begin{prp}\label{regular:prp:representable}
$Z_{K^p}(\Lambda)$ (resp. $Z_{K_0^p}(\Lambda)$) is represented by a scheme finite and unramified over $\Ss_{\widetilde{K}}$ (resp. over $\Ss_{\widetilde{K}_0}$). Moreover, the natural map $Z_{K^p}(\Lambda)\to Z_{K_0^p}(\Lambda)$ is finite \'etale.
\end{prp}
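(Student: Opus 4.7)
The strategy is to build $Z_{K^p}(\Lambda)$ in two stages: first I forget level structure and construct an auxiliary closed subscheme $Z(\Lambda) \subset \Ss_{\widetilde K}$ parameterizing $\Lambda$-structures alone, then I recover $Z_{K^p}(\Lambda)$ as a finite \'etale cover of $Z(\Lambda)$. I expect the representability and unramifiedness to be formal from properties of the endomorphism scheme of an abelian scheme, while the finiteness assertion is the genuine content.

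\textbf{Step 1 (Representability and unramifiedness of $Z(\Lambda)$).} I would embed $Z(\Lambda)$ as a closed subscheme of $\underline{\End}(\widetilde A^\KS)_{(p)}^{\oplus \rank\Lambda}$ cut out by two types of conditions: (i) the isometry relations for a chosen $\Int_{(p)}$-basis of $\Lambda$, which are polynomial in the entries; (ii) specialness of each $\iota(v)$, which is pointwise a closed condition because the complement of $\widetilde{\bm L}_p$ (resp.\ $\widetilde{\bm L}_{\cris}$) inside $\widetilde{\bm H}_p^{\otimes(1,1)}$ (resp.\ $\widetilde{\bm H}_{\cris}^{\otimes(1,1)}$) is a closed subfunctor of the fibers, and this cuts out a closed subscheme by (\ref{lifts:lem:crystalline_sections}). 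Since $\underline{\End}(\widetilde A^\KS)_{(p)}$ is unramified over $\Ss_{\widetilde K}$ with connected components finite over the base, the same holds for $Z(\Lambda)$.

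\textbf{Step 2 (Finiteness of $Z(\Lambda) \to \Ss_{\widetilde K}$).} I would combine quasi-finiteness at geometric points with the valuative criterion of properness. For quasi-finiteness, note that by (\ref{special:lem:rosati}) the form $f \mapsto f\circ f$ on $L(\widetilde A^\KS_{\bar s})$ is positive definite, so the lattice $L(\widetilde A^\KS_{\bar s})$ has finitely many elements of any bounded norm; since the norms of basis elements of $\iota(\Lambda)$ are determined by the quadratic form on $\Lambda$, there are only finitely many isometric embeddings $\Lambda \hookrightarrow L(\widetilde A^\KS_{\bar s})$. For properness, given a DVR $R$ with fraction field $F$, a map $\Spec R \to \Ss_{\widetilde K}$, and a $\Lambda$-structure $\iota_F$ over $F$, the Néron mapping property for the abelian scheme $\widetilde A^\KS_R$ extends each $\iota_F(v)$ to an element of $\End(\widetilde A^\KS_R)_{(p)}$. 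The isometry relations persist as algebraic identities, and specialness of the extension on the special fiber follows by combining closedness of the crystalline realization condition with (\ref{lifts:lem:p-special-consistent}) and (\ref{lifts:cor:pimpliesell}). Thus $Z(\Lambda) \to \Ss_{\widetilde K}$ is proper, quasi-finite, and unramified, hence finite and unramified.

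\textbf{Step 3 (Adding level structure).} Over $Z(\Lambda)$ we have the tautological $\Lambda$-structure $\iota^{\taut}$, and hence the sub-sheaf $I^p_{\iota^{\taut}} \subset I^p_{\widetilde G}|_{Z(\Lambda)}$, which by (the proof of) (\ref{special:prp:levelstructures}) is a $G(\Adele_f^p)$-torsor. The quotient $I^p_{\iota^{\taut}}/K^p$ is representable and finite \'etale over $Z(\Lambda)$, since $K^p$ is compact open in $G(\Adele_f^p)$. Then $Z_{K^p}(\Lambda)$ is the open-and-closed subscheme of $I^p_{\iota^{\taut}}/K^p$ cut out as the fiber of the canonical section $[\widetilde\eta_{\widetilde G}]$ of $I^p_{\widetilde G}/\widetilde K^p$ under the natural map $I^p_{\iota^{\taut}}/K^p \to (I^p_{\widetilde G}/\widetilde K^p)|_{Z(\Lambda)}$. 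A parallel argument using $G_0$, $K^p_0$, $\widetilde K^p_0$ in place of $G$, $K^p$, $\widetilde K^p$ constructs $Z_{K^p_0}(\Lambda)$ as a finite unramified scheme over $\Ss_{\widetilde K_0}$.

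\textbf{Step 4 (The comparison map).} For finite \'etaleness of $Z_{K^p}(\Lambda) \to Z_{K^p_0}(\Lambda)$, I would observe that both sides are defined from the same underlying $\Lambda$-structure via quotients of compatible torsors, and the fibers are controlled by $\Delta(K) = K_0/K$ (cf.~\ref{spin:subsec:shimura}), which is a finite group acting freely. Concretely, one identifies the map with a base change of the pro-finite \'etale Galois cover $\Ss_K \to \Ss_{K_0}$ restricted to the $\Lambda$-locus, which is automatically finite \'etale. The main technical difficulty is Step 2: ensuring that special endomorphisms extend across the special fiber of a DVR in mixed characteristic, which relies on the consistency of the $p$-special and $\ell$-special conditions provided by (\ref{lifts:cor:pimpliesell}).
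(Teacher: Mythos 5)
Your Step 2 has a genuine gap: the claim that the positive-definite form on $L(\widetilde{A}^{\KS}_{\bar s})$ forces only finitely many isometric embeddings $\Lambda\hookrightarrow L(\widetilde{A}^{\KS}_{\bar s})$ is false. Because you are working in the prime-to-$p$ isogeny category, $L(\widetilde{A}^{\KS}_{\bar s})$ is a finite free $\Int_{(p)}$-module, not a $\Int$-lattice, and $\Int_{(p)}$ is dense in $\Real$. A positive-definite $\Int_{(p)}$-valued quadratic form on a rank-$\ge 2$ free $\Int_{(p)}$-module typically has infinitely many vectors of any fixed norm (e.g. the rational points of norm $2$ on $x^2+y^2$ with denominators prime to $p$). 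So the auxiliary "forgetful" object $Z(\Lambda)$ you construct is genuinely only an ind-scheme, locally of finite type with connected components finite over $\Ss_{\widetilde K}$ — it is not itself finite over $\Ss_{\widetilde K}$, and the two-stage factorization $Z_{K^p}(\Lambda)\to Z(\Lambda)\to \Ss_{\widetilde K}$ with $Z(\Lambda)$ finite cannot work.

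The level structure is indispensable for quasi-finiteness, not a cosmetic extra as your decomposition suggests. The paper proves quasi-finiteness of $Z_{K^p}(\Lambda)\to\Ss_{\widetilde K}$ directly: for a geometric point $s$ and a $\Lambda$-structure $\bm\iota$ that admits a $K^p_0$-level structure compatible with the fixed class $[\widetilde\eta_{\widetilde G_0,s}]$, the adelic realization of $\bm\iota(v)$ is forced to lie in the compact orbit $K^p_0\cdot\iota_0(v)$, and the pre-image under a representative $\widetilde\eta_{0,s}$ lands in the finite set $\widetilde L\cap K^p_0\cdot\iota_0(v)$. It is this adelic compactness-plus-discreteness argument, not positivity of the Rosati form, that yields finiteness. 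Everything else in your proposal (representability as an ind-scheme via $\underline{\End}(\widetilde A^{\KS})_{(p)}$ and the closed condition from (\ref{lifts:lem:crystalline_sections}), unramifiedness from rigidity of endomorphisms under nilpotent thickenings, properness via the N\'eron property together with (\ref{lifts:lem:p-special-consistent}) and (\ref{lifts:cor:pimpliesell}), and the finite-\'etale comparison to $Z_{K^p_0}(\Lambda)$) tracks the paper and is fine. To repair the argument you should drop the intermediate $Z(\Lambda)$ and run the quasi-finiteness check on $Z_{K^p}(\Lambda)$ itself, using the level structure as above.
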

\begin{proof}
  It is clear from the definitions that the map $Z_{K^p}(\Lambda)\to Z_{K_0^p}(\Lambda)$ is finite \'etale, so it is enough to prove the remaining assertions for $Z_{K^p}(\Lambda)$. To show representability, we first note that $\underline{\End}(\widetilde{A}^\KS)_{(p)}$ is representable over $\Ss_{\widetilde{K}}$ by an inductive limit of schemes that are locally of finite type. Indeed, if we fix a representative $\widetilde{A}$ in the prime-to-$p$ isogeny class of $\widetilde{A}^\KS$, then the endomorphism scheme $\underline{\End}(\widetilde{A})$ is known to representable by a scheme locally of finite type over $\Ss_{\widetilde{K}}$, and we can identify $\underline{\End}(\widetilde{A}^\KS)_{(p)}$ with the inductive limit of the system $\{\underline{\End}(\widetilde{A})\}_{m\in\Int\backslash p\Int}$, where, for $m_1\mid m_2$, the transition map from the copy of $\underline{\End}(\widetilde{A})$ in the $m_1^{\text{th}}$ position to that in the $m_2^{\text{th}}$-position is given by multiplication by $m_2/m_1$. Since the property of being a special endomorphism is a closed condition on the base, we see that $Z_{K^p}(\Lambda)$ is also represented by an ind-scheme. To show that it is in fact represented by a scheme, it is enough to show that it is finite and unramified over $\Ss_{\widetilde{K}}$.

  The unramifiedness is a consequence of the fact that endomorphisms of abelian schemes lift uniquely (if at all) over nilpotent thickenings. The Ner\'onian property of abelian schemes over discrete valuation rings combines with the valuative criterion to show that $Z_{K^p}(\Lambda)$ is proper.

  Thus, it only remains to show quasi-finiteness. For this, take any geometric point $s\to\Ss_{\widetilde{K}}$. We view $[\widetilde{\eta}_{\widetilde{G}_0,s}]$ as a $\widetilde{K}_0^p$-orbit of isometries
  \[
   \widetilde{\eta}_{0,s}:\widetilde{L}\otimes\Adele_f^p\xrightarrow{\simeq}\widetilde{\bm{L}}_{\Adele_f^p,s}.
  \]
  If $\bm{\iota}$ is a $\Lambda$-structure for $s$ such that $(s,\bm{\iota})$ admits a $K^p_0$-level structure, then, for any $v\in\Lambda$, the pre-image of the $\Adele_f^p$-realization of $\bm{\iota}(v)$ under $\widetilde{\eta}_{0,s}$ must lie within the set $\widetilde{L}\cap K^p_0\cdot\iota_0(v)$. Since $\widetilde{L}$ is a discrete sub-group of $\widetilde{L}_{\Adele_f^p,s}$, this set is finite. So we see that the possible $\Lambda$-structures for $s$ (admitting a $K^p_0$-level structure) are finite in number. From this, it follows easily that the fiber of $Z_{K^p}(\Lambda)$ over $s$ is finite.
\end{proof}

The following result is presumably well-known, but we include it for lack of reference.
\begin{lem}\label{spin:lem:endderham}
Let $W=W(k)$ and let $E/W_{\Rat}$ be a finite extension of ramification index $e\leq p-1$. Let $A$ be an abelian scheme over $\Reg{E}$ with special fiber $A_0$. Then $\End(A_{E})\otimes W$ embeds in $\End_W(H^1_{\cris}(A_0/W))$ as a direct summand. In particular, $\End(A_E)\otimes k$ maps injectively into $\End_k(H^1_{\dR}(A_0/k))$.
\end{lem}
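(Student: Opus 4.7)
The plan is as follows. By the N\'eron mapping property (an abelian scheme over $\Reg{E}$ is its own N\'eron model), the restriction map $\End(A)\to\End(A_E)$ is an isomorphism; write $R = \End(A)\otimes W$, a finite free $W$-module. The crystalline realization factors as
\[
  R \to \End(A_0)\otimes W \to \End_W\bigl(H^1_{\cris}(A_0/W)\bigr).
\]
The first map is injective, since any $f\in\End(A)$ vanishing on $A_0$ induces the zero endomorphism of the $\ell$-adic Tate module $T_\ell(A_0) = T_\ell(A_E)$ for any $\ell\neq p$ (as $A[\ell^\infty]$ is \'etale over $\Reg{E}$), and hence vanishes. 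The second map is injective through the contravariant Dieudonn\'e embedding $\End(A_0)\otimes\Int_p \hookrightarrow \End(A_0[p^\infty]) \hookrightarrow \End_W\bigl(H^1_{\cris}(A_0/W)\bigr)$. So the composite $\rho\colon R \to \End_W\bigl(H^1_{\cris}(A_0/W)\bigr)$ is injective.

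Since both source and target are finite free $W$-modules, $\rho$ identifies $R$ with a $W$-direct summand precisely when $\rho$ remains injective after reduction modulo $p$; this latter assertion is the ``in particular'' statement. It therefore suffices to prove: if $f\in\End(A)$ acts as zero on $H^1_{\dR}(A_0/k)$, then $f\in pR$.

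Assume $f$ kills $H^1_{\dR}(A_0/k)$, so that its action on $H^1_{\cris}(A_0/W)$ lies in $p\cdot\End_W\bigl(H^1_{\cris}(A_0/W)\bigr)$. Setting $g\coloneqq f/p\in\End(A)\otimes\Rat$, we find that $g$ acts integrally on $H^1_{\cris}(A_0/W)$ and commutes with Frobenius, so the contravariant Dieudonn\'e equivalence produces an endomorphism $g_0\in\End(A_0[p^\infty])$. Moreover, $f$ preserves the Hodge filtration on $H^1_{\dR}(A/\Reg{E}) = H^1_{\cris}(A_0/W)\otimes_W\Reg{E}$; since $H^1_{\dR}(A/\Reg{E})/F^1\cong\Lie(\dual{A})$ is locally free over $\Reg{E}$, $g$ preserves this filtration as well. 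The hypothesis $e\le p-1$ guarantees that the maximal ideal $\mx_E$ carries canonical divided powers compatible with those on $pW$, so Grothendieck-Messing applied to the tower of PD thickenings $\Reg{E}/\mx_E^n\to k$ lifts $g_0$ to an endomorphism of the $p$-divisible group $A[p^\infty]$ over $\Reg{E}$.

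For each prime $\ell\neq p$, $g = f/p$ acts integrally on $T_\ell(A)$ automatically, since $p$ is a unit in $\Int_\ell$. Combined with the integrality on $A[p^\infty]$ just established, the standard saturation argument (an element of $\End(A_E)\otimes\Rat$ preserving $A[n]$ for every positive integer $n$ must lie in $\End(A)$) yields $g\in\End(A)$, and hence $f = pg\in pR$, as desired. The main difficulty is the Grothendieck-Messing step: one must verify that the bound $e\le p-1$ endows the ideal $(\pi)\subset\Reg{E}$ (where $\pi$ is a uniformizer) with a divided-power structure compatible with the canonical one on $pW$, and this is exactly the point at which the ramification hypothesis is used.
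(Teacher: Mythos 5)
Your overall strategy is genuinely different from the paper's. The paper factors the embedding $\End(A)\otimes\Int_p \hookrightarrow (M^0)^{F=1} \hookrightarrow M^{\on{\acute{e}t}} \hookrightarrow M$ and uses the Dieudonn\'e--Manin classification of $F$-crystals to show the unit-root summand $M^{\on{\acute{e}t}}$ satisfies $(M^{\on{\acute{e}t}})^{F=1}\otimes_{\Int_p}W\xrightarrow{\simeq}M^{\on{\acute{e}t}}$, whence the direct-summand claim. You instead aim to prove directly that $\rho\otimes k$ is injective via a Grothendieck--Messing lifting argument in characteristic zero. That lifting step is correct in spirit and is an appealing alternative to the paper's elementary ``kills $A[p]$ $\Rightarrow$ divisible by $p$'' argument.

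However, there is a real gap in the reduction, and it occurs twice. First, for injectivity of $\rho$: you prove $\End(A)\otimes\Int_p\hookrightarrow\End_W(H^1_{\cris}(A_0/W))$ is injective and declare ``so the composite $\rho$ is injective''; but injectivity of a $\Int_p$-linear map into a $W$-module does \emph{not} imply injectivity of its $W$-linear extension (e.g.\ $\Int_p^2\to W$, $(1,0)\mapsto 1$, $(0,1)\mapsto\alpha$ with $\alpha\notin\Int_p$). Second, and more seriously, you claim it ``suffices to prove: if $f\in\End(A)$ acts as zero on $H^1_{\dR}(A_0/k)$, then $f\in pR$.'' This shows only that $\ker(\rho\otimes k)\cap(\End(A)\otimes\Field_p)=0$, i.e.\ that the $\Field_p$-linear map $\End(A)\otimes\Field_p\to M/pM$ is injective. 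That does not yield injectivity of the $k$-linear map $\rho\otimes k:\End(A)\otimes k\to M/pM$: a nonzero $k$-subspace of $\End(A)\otimes k$ need not meet the $\Field_p$-rational points. (Again, $\End(A)\otimes\Field_p=\Field_p^2\to k$, $e_1\mapsto 1$, $e_2\mapsto\alpha$ with $\alpha\in k\setminus\Field_p$, is injective over $\Field_p$ but its $k$-linearization is not.) The target $M/pM=\End_k(H^1_{\dR}(A_0/k))$ carries no canonical $\Field_p$-structure, so there is no general principle that lets you descend $k$-linear dependence relations to $\Field_p$.

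What makes the paper's argument work is precisely the structure you omit: the crystalline realizations of $\End(A)\otimes\Int_p$ are $F$-invariant and hence land in the unit-root summand $M^{\on{\acute{e}t}}\subset M$, on which $F$ induces a genuine $\sigma$-semilinear bijection. This gives $M^{\on{\acute{e}t}}/pM^{\on{\acute{e}t}}$ a canonical $\Field_p$-structure, and then $\ker(\rho\otimes k)$ is $\sigma$-stable and hence spanned by its $\Field_p$-points, which is exactly the Galois/Frobenius descent your reduction silently assumes. You have also somewhat misdiagnosed where the difficulty lies: the divided-power check on $\mx_E$ for $e\le p-1$ is routine and both proofs use it in the same way to invoke the Berthelot--Ogus comparison; the essential input is the Dieudonn\'e--Manin slope decomposition, which your write-up never invokes.
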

\begin{proof}
  Without loss of generality, we can assume that $k$ is algebraically closed. There is a canonical comparison isomorphism:
  \begin{align}\label{spin:eqn:derham}
   H^1_{\cris}(A_0/W)\otimes_W\Reg{E}\xrightarrow{\simeq}H^1_{\dR}(A/\Reg{E}).
  \end{align}
  Note that this is where we need the hypothesis $e\leq p-1$, which ensures that the map $\Reg{E}\to k$ is a divided power thickening.

  Set $M=\End_W(H^1_{\cris}(A_0/W))$: We have a direct summand $M^0\subset M$ consisting of endomorphisms that preserve the Hodge filtration $F^1H^1_{\dR}(A/\Reg{E})$. Here, we use the canonical isomorphism (\ref{spin:eqn:derham}) to view $M$ as a group of endomorphisms of $H^1_{\dR}(A/\Reg{E})$. The conjugation action of the semi-linear Frobenius on $H^1_{\cris}(A_0/W)$ induces an isomorphism $F:\sigma^*M\left[\frac{1}{p}\right]\xrightarrow{\simeq}M\left[\frac{1}{p}\right]$ such that $F(\sigma^*M^0)\subset M$. The last condition holds because the image of $F^1H^1_{\dR}(A/\Reg{E})$ in $H^1_{\dR}(A_0/k)$ has the following property: Its pull-back via $\sigma$ is precisely the kernel of the map $F:\on{Fr}^*H^1_{\dR}(A_0/k)\to H^1_{\dR}(A_0/k)$.

  One can deduce from classical Dieudonn\'e theory and Grothendieck-Messing theory that there is a canonical isomorphisms of $\Int_p$-modules:
  \[
   \End(A[p^\infty])\xrightarrow{\simeq}(M^0)^{F=1}.
  \]
  Moreover, by the Ner\'onian property, $\End(A_{E})=\End(A)$. Also, $\End(A)\otimes\Int_p$ is a direct summand of $\End(A[p^\infty])$, since any element of $\End(A)$ that kills the $p$-torsion $A[p]$ has to be divisible by $p$.

  So, to finish the proof, we have to show that $(M^0)^{F=1}\otimes W$ maps onto a direct summand of $M$. For this, set
\[
 M'=\{m\in M^0:F^i(m)\in M^0\text{, for all $i\in\Int_{\geq 0}$}\}.
\]
  We need to explain what we mean by $F^i(m)$. One defines this inductively: We set $F(m)=F(\sigma^*m)$ and $F^i(m)=F(\sigma^*F^{i-1}(m))$, where we are using the assumption that $F^{i-1}(m)\in M^0$ in each inductive step.

  Now, $F$ restricts to a (necessarily injective) map $\sigma^*M'\to M'$. Moreover, $M'$ is a direct summand of $M$, and $(M^0)^{F=1}\otimes W$ clearly maps into $M'$. It follows from the Dieudonn\'e-Manin classification of $F$-crystals over $W$~\cite{manin} that there exists a largest $F$-stable direct summand $M^{\et}\subset M'$ such that $F$ induces an isomorphism $\sigma^*M^{\et}\xrightarrow{\simeq}M^{\et}$. Moreover, it is known~\cite{katz:p-adic}*{4.1.1} that the map
  \[
   (M^{\et})^{F=1}\otimes_{\Int_p}W\to M^{\et}
  \]
  is an isomorphism. It follows \emph{a fortiori} that the map $(M^0)^{F=1}\otimes W\to M$ identifies its source with the direct summand $M^{\et}\subset M$.
\end{proof}

\subsection{}\label{regular:subsec:zkpprps}
The map $\Sh_{K_0}\to\Sh_{\widetilde{K}_0}$ canonically lifts to a map $\Sh_{K_0}\to Z_{K_0^p}(\Lambda)$. Indeed, it is enough to show that $\Sh_K\to\Sh_{\widetilde{K}}$ lifts canonically to a map $\Sh_K\to Z_{K^p}(\Lambda)$. This follows from (\ref{special:prp:lambdaemb}) and (\ref{special:prp:levelstructures}).

Let $\bm{\iota}$ be the tautological $\Lambda$-structure over $Z_{K^p_0}(\Lambda)$ and let $\bm{\Lambda}_{\dR}=\Lambda\otimes\Reg{Z_{K^p_0}(\Lambda)}$. The map $\bm{\iota}$ induces a map of vector bundles:
\[
\bm{\iota}_{\dR}:\bm{\Lambda}_{\dR}\to\widetilde{\bm{L}}_{\dR}\rvert_{Z_{K^p_0}(\Lambda)} .
\]
\begin{lem}\label{regular:lem:vectbundle}
\mbox{}
\begin{enumerate}
\item\label{vectbundle:prim}There exists an open sub-scheme $Z^{\on{pr}}_{K^p_0}(\Lambda)\subset Z_{K^p_0}(\Lambda)$ such that $T\to Z_{K^p_0}(\Lambda)$ factors through $Z^{\on{pr}}_{K^p_0}(\Lambda)$ if and only if the restriction of $\coker\bm{\iota}_{\dR}$ to $T$ is a vector bundle of rank $n+2$.
\item\label{vectbundle:primchar0} $Z^{\on{pr}}_{K^p_0}(\Lambda)$ contains the generic fiber $Z_{K^p_0}(\Lambda)_{\Rat}$.
\item\label{vectbundle:flat}Let $W=W(\overline{\Field}_p)$, and suppose that we have $\widetilde{s}:\Spec W\to Z_{K^p_0}(\Lambda)$ such that the restriction to $\Spec W_{\Rat}$ factors through $\Sh_{K_0}$. Then $\widetilde{s}$ factors through $Z^{\on{pr}}_{K^p_0}(\Lambda)$.
\item\label{vectbundle:smoothext}Suppose that $T$ is smooth over $\Int_{(p)}$ and that $f:T\to Z_{K^p_0}(\Lambda)$ is such that $f\vert_{T_{\Rat}}$ factors through $\Sh_{K_0}$. Then $f$ factors through $Z^{\on{pr}}_{K^p_0}(\Lambda)$.
\item\label{vectbundle:maximal}If $L$ has square-free discriminant, then $Z^{\on{pr}}_{K^p_0}(\Lambda)=Z_{K^p_0}(\Lambda)$.
\end{enumerate}
\end{lem}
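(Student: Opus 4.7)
The plan is to treat the five parts in sequence, with the main weight on (\ref{vectbundle:flat}) and (\ref{vectbundle:maximal}). Part (\ref{vectbundle:prim}) will follow from the general fact that the locus where a morphism of locally free sheaves on a locally Noetherian scheme has locally free cokernel of given rank is open; applied to the evident map $\Lambda\otimes_{\Int_{(p)}}\Reg{Z_{K^p_0}(\Lambda)}\to\widetilde{\bm{L}}_{\dR}$, whose image is $\bm{\Lambda}_{\dR}$, this characterizes $Z^{\on{pr}}_{K^p_0}(\Lambda)$. Part (\ref{vectbundle:primchar0}) is then immediate by checking on $\Comp$-points: via $\alpha_{\dR}$, the de~Rham realization of $\bm{\iota}$ is identified with the Betti realization $\Lambda\hookrightarrow L(\widetilde{A}^\KS_s)\subset\widetilde{\bm{L}}_{B,s}$, which is an injection of free $\Int_{(p)}$-modules.

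For (\ref{vectbundle:flat}), I will apply Lemma~\ref{spin:lem:endderham} (with ramification index $e=1\leq p-1$) to obtain a direct-summand embedding $\End(\widetilde{A}^\KS_{\widetilde{s}})\otimes W\hookrightarrow\End_W(\widetilde{\bm{H}}_{\cris,s})$ and hence, on reducing modulo $p$, an injection $\End(\widetilde{A}^\KS_{\widetilde{s}})_{(p)}\otimes k\hookrightarrow\End_k(\widetilde{\bm{H}}_{\dR,s})$. The hypothesis that the generic fiber lies in $\Sh_{K_0}$, together with the canonical lift of~(\ref{regular:subsec:zkpprps}) and the direct-summand statement of~(\ref{special:prp:lambdaemb}), provides the saturation of $\iota(\Lambda)$ in $\End(\widetilde{A}^\KS_{\widetilde{s}})_{(p)}$; composing yields an injection $\Lambda\otimes k\hookrightarrow\widetilde{\bm{L}}_{\dR,s}$, so that $\bm{\Lambda}_{\dR}$ has rank $d$ at the special point, and together with (\ref{vectbundle:primchar0}) this places $\widetilde{s}$ in $Z^{\on{pr}}$. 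Part (\ref{vectbundle:smoothext}) I reduce to (\ref{vectbundle:flat}): for any characteristic-$p$ closed point $t\in T$, smoothness of $T$ over $\Int_{(p)}$ supplies a map $\Spec W(\overline{k(t)})\to T$ with closed point $t$ and generic fiber in $T_\Rat$, and composition with $f$ produces a $W$-point of $Z_{K^p_0}(\Lambda)$ satisfying the hypothesis of (\ref{vectbundle:flat}).

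Part (\ref{vectbundle:maximal}) will be the main step and the hardest part of the proof. I reduce to proving injectivity of $\Lambda\otimes k\to\widetilde{\bm{L}}_{\dR,s}$ at every geometric characteristic-$p$ point $s$, and the strategy is to use maximality to rule out divisibility in the crystal. By (\ref{spin:prp:kisinpadichodge})(\ref{intphodge:trivial}) applied to the self-dual $\widetilde{L}$, I choose an isometric $W$-module identification $\widetilde{\bm{L}}_{\cris,s}\cong\widetilde{L}\otimes W$, so that crystalline realization gives an isometric embedding $\iota\colon\Lambda\hookrightarrow\widetilde{L}\otimes W$. If $w\in\Lambda$ had $\iota(w)\in p(\widetilde{L}\otimes W)$, then
\[
Q(w)=Q(\iota(w))\in p^2W\cap\Int_{(p)}=p^2\Int_{(p)},\qquad [u,w]=[\iota(u),\iota(w)]\in pW\cap\Int_{(p)}=p\Int_{(p)}
\]
for every $u\in\Lambda$, whence $w/p\in\Lambda^\vee$ and $Q(w/p)\in\Int_{(p)}$. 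Since $L$ maximal implies $\Lambda$ maximal by (\ref{lattice:lem:maximal})(2), the anisotropy of $\overline{Q}$ on $\disc(\Lambda)$ from (\ref{lattice:lem:maximal})(\ref{maximal:equiv}) forces $w/p\in\Lambda$, i.e.\ $w\in p\Lambda$, contradicting $w\notin p\Lambda$. The main obstacle is exactly this last step: bridging the global maximality hypothesis on the integer quadratic lattice $\Lambda$ and the pointwise non-divisibility of individual crystalline realizations of $\iota$, which requires the isometric identification of the crystal with the constant lattice together with the anisotropy criterion.
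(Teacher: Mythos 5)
Parts (\ref{vectbundle:prim})--(\ref{vectbundle:smoothext}) of your argument are essentially the approach the paper takes (the paper realizes $Z^{\on{pr}}$ as the non-vanishing locus of a section of $\wedge^d\widetilde{\bm{L}}_{\dR}$ rather than appealing to openness of the maximal-rank locus, and it phrases the $\Int_p$-saturation of $\iota(\Lambda)$ via the direct-summand embedding of $G_0$-representations $\Lambda\into\widetilde{L}$ rather than via~(\ref{special:prp:lambdaemb}), but these are cosmetic differences).

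For part~(\ref{vectbundle:maximal}) your argument has a genuine gap in the $t=2$ case. Your lattice computation proves that if $w\in\Lambda$ satisfies $\iota(w)\in p\bigl(\widetilde{L}\otimes W\bigr)$ then $w\in p\Lambda$; equivalently, that the $\Field_p$-linear map $\Lambda\otimes\Field_p\to\widetilde{L}\otimes k$ is injective. But what you must show is injectivity of the $k$-linear extension $\Lambda\otimes k\to\widetilde{\bm{L}}_{\dR,s}$, i.e.\ that the $W$-linear map $\Lambda\otimes W\to\widetilde{L}\otimes W$ has image a direct summand. For that you would need the corresponding statement with $w$ ranging over $\Lambda\otimes W$, which requires the induced form on $\disc(\Lambda)\otimes_{\Field_p}k=\disc(\Lambda\otimes W)$ to be anisotropic. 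When $t\leq 1$ this is automatic, but when $t=2$ the form $\overline{Q}$ on the two-dimensional $\disc(\Lambda)$, though anisotropic over $\Field_p$, becomes isotropic over $\Field_{p^2}\subset k$ (this is exactly the source of the irregular points in~(\ref{lattice:subsec:irregular})). So anisotropy of $\overline{Q}$ over $\Int_{(p)}$ does not rule out a kernel for $\Lambda\otimes k\to\widetilde{L}\otimes k$. (An injective $\Field_p$-linear map $V\to W$ with $W$ a $k$-vector space need not have injective $k$-linear extension $V\otimes_{\Field_p}k\to W$.)

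The paper circumvents this precisely by not arguing directly in the crystal over $W$: it uses~(\ref{lifts:prp:flatlocus}) to lift $s$ to a $\Reg{L}$-point with ramification index $e\leq 2\leq p-1$, applies the anisotropy argument to the \emph{$p$-adic \'etale} realization at the geometric generic fiber --- where the ambient lattice is $\widetilde{L}\otimes\Int_p$ and $\overline{Q}$ really is anisotropic over $\Field_p$, so that $\bm{\iota}(\Lambda)\otimes\Int_p$ is a direct summand of $\widetilde{\bm{L}}_p$ --- and then transfers the saturation statement to de~Rham cohomology at the special fiber via~(\ref{spin:lem:endderham}), exactly as in part~(\ref{vectbundle:flat}). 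You should replace the crystalline-over-$W$ argument with this $p$-adic-\'etale-then-transfer argument, or at least restrict your direct argument to $t\leq 1$ and handle $t=2$ separately.
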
\begin{proof}
  We begin with (\ref{vectbundle:prim}): Choose a basis element for the top exterior power $\wedge^d\Lambda$, identifying it with $\Int_{(p)}$. Then the map $\bm{\iota}_{\dR}$ induces a map
  \[
  \Reg{Z_{K^p_0}(\Lambda)}=\wedge^d\bm{\Lambda}_{\dR}\xrightarrow{\wedge^d\bm{\iota}_{\dR}}\wedge^d\widetilde{\bm{L}}_{\dR},
  \]
  and hence a section $e\in H^0(Z_{K^p_0}(\Lambda),\wedge^d\widetilde{\bm{L}}_{\dR})$. One easily checks now that $Z^{\on{pr}}_{K^p_0}(\Lambda)$ is the open locus where this section does not vanish.

  To prove (\ref{vectbundle:primchar0}), it suffices to show that $Z^{\on{pr}}_{K^p_0}(\Lambda)$ contains every point $s\in Z_{K^p_0}(\Lambda)(\Comp)$. But the map $\bm{\Lambda}_{\dR,s}\to\widetilde{\bm{L}}_{\dR,s}$ arises form an isometric embedding of $\Int_{(p)}$-Hodge structures $\bm{\iota}_s:\Lambda\into\widetilde{\bm{L}}_{B,s}$. In particular, it has to be an embedding of $\Comp$-vector spaces.

  Define $Z^{\on{pr}}_{K^p}(\Lambda)$ to be the pre-image in $Z_{K^p}(\Lambda)$ of $Z^{\on{pr}}_{K^p_0}(\Lambda)$. It suffices to prove the remaining assertions for $\Ss_{\widetilde{K}}$-schemes with $Z_{K^p_0}^{\on{pr}}(\Lambda)$ replaced everywhere by $Z^{\on{pr}}_{K^p}(\Lambda)$.

  We will now consider (\ref{vectbundle:flat}). Choose an algebraic closure $\overline{E}/W(k)_{\Rat}$. By (\ref{spin:lem:endderham}), it suffices to show that the \'etale realization of $\bm{\iota}(\Lambda)$ generates a direct summand of $\widetilde{\bm{L}}_{p,\widetilde{s}_{\overline{E}}}$. But, over $\Sh_{K_0}$, the sub-space generated by this realization is globally a direct summand of $\widetilde{\bm{L}}_p$. Indeed, its inclusion in the latter is induced by the map of $G_0$-representations $\Lambda\into\widetilde{L}$.

  (\ref{vectbundle:smoothext}) now follows: Indeed, (\ref{vectbundle:prim}) and (\ref{vectbundle:flat}) show that $U\coloneqq f^{-1}\bigl(Z^{\on{pr}}_{K^p}(\Lambda)\bigr)$ is an open sub-scheme of $T$ containing $T_{\Rat}$, and through which all the $W(\overline{\Field}_p)$-valued points of $T$ factor. Since $T$ is smooth over $\Int_{(p)}$, this implies that all the $\overline{\Field}_p$-points of $T$ factor through $U$, and so $U$ must be all of $T$.

  Now assume that $L$ has square-free discriminant; then $\on{disc}(\Lambda)$ is either trivial or isomorphic to $\Int/p\Int$. In the first case, $Z_{K_0^p}(\Lambda)$ is smooth, and so we are done by~\eqref{vectbundle:smoothext}. In the second case, by our assumptions, $\Lambda$ is of rank $1$, and is generated by an element $v$ satisfying $\ord_p(Q(v))=1$. In particular, given any $\overline{\Field}_p$-point $x_0$ of $Z_{K^p}(\Lambda)$, the crystalline realization of the associated special endomorphism $f_0\in L(\widetilde{A}^\KS_{x_0})$ must necessarily generate a direct summand of $\bm{L}_{\cris,x_0}$. This shows~\eqref{vectbundle:maximal}.
\end{proof}

The following result is directly inspired by~\cite{ogus:ss}*{Theorem 2.9}.
\begin{prp}\label{regular:prp:smooth}
Let $Z^{\on{sm}}$ be the complement in $Z_{K^p}(\Lambda)$ of the non-smooth locus in $Z_{K^p}(\Lambda)_{\Field_p}$. Let $\overline{\eta}$ be a generic point of $Z^{\on{sm}}_{\Field_p}$ with algebraically closed residue field. Suppose that $n>r=\rank(\Lambda)$. Then the abelian variety $\widetilde{A}^{\KS}_{\overline{\eta}}$ is ordinary and the map:
\[
 \iota:\Lambda\to L\bigl(\widetilde{A}^{\KS}_{\overline{\eta}}\bigr)
\]
is an isomorphism.
\end{prp}
\begin{proof}
It follows from~\eqref{lifts:cor:smooth} that $\Lambda\otimes k(\overline{\eta})$ maps injectively into $\gr^0_F\bm{\widetilde{L}}_{\dR,\overline{\eta}}$. Therefore, over a sufficiently small \'etale neighborhood $Z_0$ of $\overline{\eta}$ in $Z^{\on{sm}}_{\Field_p}$, the de Rham realization $\bm{\Lambda}_{\dR}$ of $\iota(\Lambda)$ will map isomorphically onto a direct summand $\overline{\bm{\Lambda}}_{\dR}$ of $\gr^0_F\bm{\widetilde{L}}_{\dR}\vert_{Z_0}$. 

Now, consider the Frobenius map $\on{Fr}^*\bm{H}_{\dR}\vert_{Z_0}\to\bm{H}_{\dR}\vert_{Z_0}$: Its image is a Lagrangian sub-space $\bm{C}\subset\bm{H}_{\dR}\vert_{Z_0}$, and the locus where $\widetilde{A}^{\KS}\vert_{Z_0}$ is ordinary is precisely the open sub-space where $\bm{C}+F^1\bm{H}_{\dR}\vert_{Z_0}=\bm{H}_{\dR}\vert_{Z_0}$. By the discussion in~\eqref{spin:subsec:canonical_filt}, after localizing on $Z_0$ if required, the annihilator of $\bm{C}$ is a parallel isotropic line $\bm{N}\subset\widetilde{\bm{L}}_{\dR}\vert_{Z_0}$. So we can also describe the non-ordinary locus as the closed sub-space where we have:
\[
\bm{F}^1\widetilde{L}_{\dR}\vert_{Z_0}\subset F^0\widetilde{\bm{L}}_{\dR}\vert_{Z_0}\cap\bm{\Lambda}_{\dR}^{\perp}\cap\bm{N}^{\perp}.
\]

Now, by~\eqref{spin:lem:conj_hodge}, we can assume that $\bm{N}+\bm{\Lambda}_{\dR}$ is a horizontal direct summand of rank $r+1$ in $\widetilde{\bm{L}}_{\dR}\vert_{Z_0}$. Then, arguing as in the proof of~\cite{ogus:ss}*{Theorem 2.9}, we find that the locus where $\bm{F}^1\bm{L}_{\dR}\vert_{Z_0}$ is contained in this direct summand has dimension at most $r$. Since $n>r$, we can throw this locus out and assume that the summand does not contain $\bm{F}^1\bm{L}_{\dR}\vert_{Z_0}$.

Equivalently, we can assume that $\bm{N}$ is not contained in $\bm{F}^1\bm{L}_{\dR}\vert_{Z_0}+\bm{\Lambda}_{\dR}$; or that $\bm{F}^0\bm{L}_{\dR}\vert_{Z_0}\cap\bm{\Lambda}_{\dR}^{\perp}$ is not contained in $\bm{N}^{\perp}$. Again, arguing as in~\cite{ogus:ss}*{Theorem 2.9}, we now find that the non-ordinary locus is smooth of dimension $n-1$. This proves that $\widetilde{A}^{\KS}_{\overline{\eta}}$ must be ordinary.

Suppose that $\iota$ is not an isomorphism. Then we can find a non-zero element $f\in L\bigl(\widetilde{A}^{\KS}_{\overline{\eta}}\bigr)$ such that $\iota(\Lambda)+\langle f\rangle\subset L\bigl(\widetilde{A}^{\KS}_{\overline{\eta}}\bigr)$ is a direct summand of rank $r+1$.  

Now, by shrinking $Z_0$ if necessary, we can assume that $f$ extends to an element in $L\bigl(\widetilde{A}^{\KS}_{Z_0}\bigr)$. Let $\bm{f}_{\dR}$ be the de Rham realization of $f$, and let $\bm{\Lambda}_{\dR}\subset\widetilde{\bm{L}}_{\dR}\vert_{Z_0}$ be that of $\iota(\Lambda)$. 

Since $\widetilde{A}^{\KS}_{\overline{\eta}}$ is ordinary, the map
\begin{align}\label{regular:eqn:injective}
 L\bigl(\widetilde{A}^{\KS}_{\overline{\eta}}\bigr)\otimes k(\overline{\eta})\to\widetilde{\bm{L}}_{\dR,\overline{\eta}}
\end{align}
is injective. Therefore, after further localizing on $Z_0$, we can assume that the sub-sheaf:
\[
 \bm{\Lambda}_{\dR}+\langle\bm{f}_{\dR}\rangle\subset\bm{\widetilde{L}}_{\dR}\vert_{Z_0}
\]
is a local direct summand of rank $r+1$.

Using~\eqref{lifts:cor:smooth} and~\cite{ogus:ss}*{Remark 2.8}, we find that the locus in $Z_0$ where $F^1\widetilde{\bm{L}}_{\dR}$ is contained in $\bm{\Lambda}_{\dR}+\langle\bm{f}_{\dR}\rangle$ is a closed sub-space of dimension at most $1(r+1-1)=r$. Since $n>r$ by hypothesis, and since $\dim Z_0=n$, this implies that we can shrink $Z_0$ further and assume that $F^1\widetilde{\bm{L}}_{\dR}\vert_{Z_0}$ is not contained in $\bm{\Lambda}_{\dR}+\langle\bm{f}_{\dR}\rangle$. Since $F^1\widetilde{\bm{L}}_{\dR}\vert_{Z_0}$ and $\langle\bm{f}_{\dR}\rangle$ are both local direct summands in $\widetilde{\bm{L}}_{\dR}\vert_{Z_0}$ of rank $1$, it follows that $\langle\bm{f}_{\dR}\rangle$ is not contained in $F^1\widetilde{\bm{L}}_{\dR}\vert_{Z_0}+\bm{\Lambda}_{\dR}$. 

Therefore, if $\overline{\bm{\Lambda}}_{\dR}$ is the image of $\bm{\Lambda}_{\dR}$ in $\gr^0_F\widetilde{\bm{L}}_{\dR}\vert_{Z_0}$, we see that the map
\[
 \langle\bm{f}_{\dR}\rangle\to \frac{\gr^0_F\widetilde{\bm{L}}_{\dR}\vert_{Z_0}}{\overline{\bm{\Lambda}}_{\dR}}
\]
is non-zero. But, on the other hand, $\bm{f}_{\dR}$ is horizontal for the Gauss-Manin connection, which contradicts the fact~\eqref{lifts:cor:smooth} that the Kodaira-Spencer map:
\[
 \frac{\gr^0_F\widetilde{\bm{L}}_{\dR}\vert_{Z_0}}{\overline{\bm{\Lambda}}_{\dR}}\to\gr^{-1}_F\widetilde{\bm{L}}_{\dR}\otimes\Omega^1_{Z_0/\Field_p}
\]
is an isomorphism.
\end{proof}

\begin{corollary}\label{regular:cor:smooth}
Set $\Lambda'=\Lambda\perp\langle m\rangle$, for some $m\in\Int_{(p)}^{>0}$. The map $Z_{K^p}(\Lambda')\to\Ss_{\widetilde{K}}$ factors through $Z_{K^p}(\Lambda)$ in the obvious way. Suppose that $n>\rank(\Lambda)$; then the scheme
\[
 Z_{K^p}(\Lambda')\times_{Z_{K^p}(\Lambda)}Z^{\on{sm}}
\]
is flat over $\Int_{(p)}$.
\end{corollary}
\begin{proof}
Let $Z'$ be the fiber product under consideration: It is, \'etale locally on $Z'$, an effective Cartier divisor on $Z^{\on{sm}}$. Since $Z^{\on{sm}}$ is smooth over $\Int_{(p)}$, $Z'$ fails to be flat precisely when its image in $Z^{\on{sm}}$ contains an irreducible component $Z_0\subset Z^{\on{sm}}_{\Field_p}$. Suppose this is the case, and let $\overline{\eta}$ be a geometric generic point of $Z_0$. Then we find that the map $\Lambda\to L(\widetilde{A}^{\KS}_{\overline{\eta}})$ extends to an isometry on $\Lambda'$. But, by~\eqref{regular:prp:smooth}, this is impossible. 
\end{proof}

\subsection{}
Recall from (\ref{spin:cor:derhamrealization}) that we have a canonical $\widetilde{G}$-torsor $\widetilde{\mathcal{P}}_{\dR}$ over $\Ss_{\widetilde{K}}$ consisting of $\widetilde{G}$-structure preserving trivializations of $\widetilde{\bm{H}}_{\dR}$. Let $\bm{\iota}$ be the tautological $\Lambda$-structure over $Z(\Lambda)$, and let $\bm{\iota}_{\dR}:\bm{\Lambda}_{\dR}\into\widetilde{\bm{L}}_{\dR}$ be the de Rham realization of $\bm{\iota}$. Define $\mathcal{P}_{\dR,\Lambda}\subset\widetilde{\mathcal{P}}_{\dR,Z_{K^p}(\Lambda)}$ to be the $G$-equivariant sub-functor such that, for any $Z_{K^p}(\Lambda)$-scheme $T$, we have:
\[
 \mathcal{P}_{\dR,\Lambda}(T)=\{\xi\in\widetilde{\mathcal{P}}_{\dR}(T):\; \xi\circ(\iota_0\otimes 1)=\bm{\iota}_{\dR}\}.
\]

\begin{prp}\label{regular:prp:locmodel}
\mbox{}
\begin{enumerate}[itemsep=0.12in]
  \item\label{plamgtorsor}The restriction of $\mathcal{P}_{\dR,\Lambda}$ over $Z^{\on{pr}}_{K^p}(\Lambda)$ is a $G$-torsor.
  \item\label{p2smooth}The map $p_2:\mathcal{P}_{\dR,\Lambda}\to\on{M}^{\loc}_G$, given, for any $\Int_{(p)}$-scheme $T$, by:
  \begin{align*}
   \mathcal{P}_{\dR,\Lambda}(T)&\rightarrow\on{M}^{\loc}_G(T)\\
   (x,\xi)&\mapsto\xi^{-1}\bigl(F^1\widetilde{\bm{L}}_{\dR,x}\bigr).
   \end{align*}
   is $G$-equivariant and smooth of relative dimension $\dim G_{\Rat}$. Here, $(x,\xi)\in\mathcal{P}_{\dR,\Lambda}(T)$ lies over a point $x\in Z_{K^p}(\Lambda)(T)$.
  \item\label{etalesection}Around any point $x\in Z^{\on{pr}}_{K^p}(\Lambda)$ there exists an \'etale neighborhood $U\to Z^{\on{pr}}_{K^p}(\Lambda)$ and a section $s:U\to\mathcal{P}_{\dR,\Lambda}$ of the $G$-torsor $\mathcal{P}_{\dR,\Lambda}$ such that the induced map $p_2\circ s: U\to \on{M}^{\loc}_G$ is \'etale.
\end{enumerate}
\end{prp}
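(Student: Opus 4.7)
The plan is to verify the three claims in order. Part (\ref{plamgtorsor}) is a standard pseudo-torsor plus \'etale-local non-emptiness argument using Lemma \ref{lattice:lem:torsor}; part (\ref{p2smooth}) follows from Grothendieck-Messing-style infinitesimal lifting using the crystal structure on $\widetilde{\bm{L}}$; and part (\ref{etalesection}) combines them with a careful, crystal-compatible choice of section.

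For (\ref{plamgtorsor}), I would first observe that $\mathcal{P}_{\dR,\Lambda}$ is a pseudo-torsor under $G$: given two sections $\xi_1, \xi_2 \in \mathcal{P}_{\dR,\Lambda}(T)$, the composition $\xi_2^{-1} \circ \xi_1 \in \widetilde{G}(T)$ fixes $\iota_0 \otimes 1$ pointwise, and hence lies in $G(T)$ by (\ref{lattice:lem:grpoints}). For local non-emptiness over $Z^{\on{pr}}_{K^p}(\Lambda)$, work \'etale-locally: since $\Ss_{\widetilde{K}}$ is smooth, the $\widetilde{G}$-torsor $\widetilde{\mathcal{P}}_{\dR}$ admits a local trivialization $\widetilde{\xi}$, and then $\widetilde{\xi}^{-1} \circ \bm{\iota}_{\dR}$ is an isometric embedding of $\Lambda \otimes \Reg{T}$ onto a local direct summand of $\widetilde{L} \otimes \Reg{T}$, precisely by the primitivity condition of (\ref{regular:lem:vectbundle}). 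Lemma \ref{lattice:lem:torsor} then furnishes a smooth, faithfully flat torsor parameterizing $g \in \widetilde{G}$ with $g \circ \iota_0 = \widetilde{\xi}^{-1} \circ \bm{\iota}_{\dR}$; an \'etale-local section $g$ of this torsor, composed with $\widetilde{\xi}$, yields the desired section of $\mathcal{P}_{\dR,\Lambda}$.

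For (\ref{p2smooth}), $G$-equivariance is tautological. For formal smoothness of $p_2$, I would take a small surjection $\Rg \twoheadrightarrow \overline{\Rg}$ in $\Art_W$ whose kernel $I$ admits nilpotent divided powers, a section $(\overline{x}, \overline{\xi}) \in \mathcal{P}_{\dR,\Lambda}(\overline{\Rg})$, and an isotropic line $F^1 L_\Rg \subset L_\Rg$ lifting $\overline{\xi}^{-1}(F^1 \widetilde{\bm{L}}_{\dR, \overline{x}})$, and construct a lift $(x, \xi) \in \mathcal{P}_{\dR,\Lambda}(\Rg)$ with $p_2(x, \xi) = F^1 L_\Rg$. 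The crystal $\widetilde{\bm{L}}_{\cris}$ evaluates on the PD thickening to give $\widetilde{\bm{L}}_\Rg$ with a canonical isomorphism $\widetilde{\bm{L}}_\Rg \otimes_\Rg \overline{\Rg} \simeq \widetilde{\bm{L}}_{\dR, \overline{x}}$, and by parallel transport the sub-module $\bm{\iota}_{\dR, \overline{x}}(\Lambda)$ canonically extends to $\bm{\iota}_{\cris, \Rg}(\Lambda) \subset \widetilde{\bm{L}}_\Rg$. Applying Lemma \ref{lattice:lem:torsor} over $\Rg$, I choose a lift $\xi_\Rg : \widetilde{L}_\Rg \xrightarrow{\simeq} \widetilde{\bm{L}}_\Rg$ of $\overline{\xi}$ that is $\widetilde{G}$-structure-preserving and carries $\iota_0 \otimes 1$ to $\bm{\iota}_{\cris, \Rg}$. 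Then $\xi_\Rg(F^1 L_\Rg) \subset \widetilde{\bm{L}}_\Rg$ is an isotropic line orthogonal to $\bm{\iota}_{\cris, \Rg}(\Lambda)$ and lifting $F^1 \widetilde{\bm{L}}_{\dR, \overline{x}}$; by Proposition \ref{lifts:prp:nillifts}, it corresponds to a unique lift $x \in Z^{\on{pr}}_{K^p}(\Lambda)(\Rg)$ with $F^1 \widetilde{\bm{L}}_{\dR, x} = \xi_\Rg(F^1 L_\Rg)$, so that $(x, \xi_\Rg)$ is the desired lift. The relative dimension equals $\dim G_{\Rat}$ because $\mathcal{P}_{\dR,\Lambda}$ is a $G$-torsor over $Z^{\on{pr}}_{K^p}(\Lambda)$, which has the same relative dimension over $\Int_{(p)}$ as $\on{M}^{\loc}_G$.

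For (\ref{etalesection}), (\ref{plamgtorsor}) furnishes, around any point, an \'etale neighborhood $U \to Z^{\on{pr}}_{K^p}(\Lambda)$ admitting a section $s: U \to \mathcal{P}_{\dR,\Lambda}$ of $p_1$. Since source and target of $f := p_2 \circ s : U \to \on{M}^{\loc}_G$ have the same relative dimension over $\Int_{(p)}$, it suffices to show that $f$ is formally \'etale. The construction in (\ref{p2smooth}) yields, via Proposition \ref{lifts:prp:nillifts}, a canonical bijection between the formal deformation functor of $\overline{x} \in Z^{\on{pr}}_{K^p}(\Lambda)(k)$ and that of $\overline{\xi}^{-1}(F^1 \widetilde{\bm{L}}_{\dR, \overline{x}}) \in \on{M}^{\loc}_G(k)$, relative to the choice of trivialization $\overline{\xi}$. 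The crucial step is choosing $s$ so that its formal germ at each closed point is given precisely by the crystalline parallel transport $\xi_\Rg$ used in (\ref{p2smooth}); for this I would use the tautological identification $\widetilde{\bm{H}}_{R_{\widetilde{G}}} = \widetilde{H} \otimes R_{\widetilde{G}}$ provided by Proposition \ref{spin:prp:moonen} to obtain a canonical formal section of $\widetilde{\mathcal{P}}_{\dR}$ on $\Spf R_{\widetilde{G}}$, restrict to the sub-locus cutting out the $\Lambda$-structure, and algebraize via the smoothness of the $G$-torsor $p_1$ together with Artin approximation. With this choice, the formal germ of $f$ is the bijection of (\ref{lifts:prp:nillifts}), hence an isomorphism, and $f$ is \'etale. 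The main obstacle I anticipate is making the algebraization step precise, together with identifying the sub-locus of $\Spf R_{\widetilde{G}}$ cut out by the $\Lambda$-structure with the formal completion of $Z^{\on{pr}}_{K^p}(\Lambda)$ at the given point (which one expects from Corollary \ref{lifts:cor:locus} and Proposition \ref{spin:prp:moonen}).
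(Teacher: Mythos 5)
Your arguments for parts (1) and (2) are essentially the paper's: part (1) via the pseudo-torsor observation together with Lemma~(\ref{lattice:lem:torsor}) applied \'etale-locally, and part (2) via the Grothendieck--Messing lifting criterion, with the crystal structure furnishing the canonical identification across a square-zero divided power thickening and (\ref{lattice:lem:torsor}) supplying the lift of the trivialization. Two minor imprecisions in (2): the functor $\mathcal{P}_{\dR,\Lambda}$ parameterizes trivializations of $\widetilde{\bm{H}}_{\dR}$, not of $\widetilde{\bm{L}}_{\dR}$, so the lifting argument should be run on $\widetilde{\bm{H}}$ rather than $\widetilde{\bm{L}}$ (applying (\ref{lattice:lem:torsor}) at the level of $\widetilde{G}$); and the relative-dimension count should not be deduced from an a priori knowledge of $\dim Z^{\on{pr}}_{K^p}(\Lambda)$ --- the paper instead identifies the fiber of the lifting map over a $k[\epsilon]$-point as a torsor under $\Lie G\otimes k$ and reads off the relative dimension directly, which then \emph{proves} $\dim Z^{\on{pr}}_{K^p}(\Lambda) = \dim\on{M}^{\loc}_G$.

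Part (3), however, has a genuine gap. Your plan is to build a \emph{formal} section of $\mathcal{P}_{\dR,\Lambda}$ on $\Spf R_{\widetilde{G}}$ from the tautological identification $\widetilde{\bm{H}}_{R_{\widetilde{G}}}=\widetilde{\bm{H}}_{\cris,s}\otimes R_{\widetilde{G}}$ of (\ref{spin:prp:moonen}) (transported along the Faltings--Kisin isomorphism $\alpha_R$ of (\ref{spin:thm:kisinprec})) and then algebraize. But by (\ref{spin:thm:kisinprec})(\ref{kisinprec:faltings})(a), $\alpha_R$ carries the \emph{constant} filtration $F^1\bm{H}_{R_{\widetilde{G}}}=F^1\bm{H}_{\cris,s}\otimes R_{\widetilde{G}}$ onto the Hodge filtration $F^1\bm{H}_{\dR,R_s}$. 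Under the resulting trivialization the Hodge filtration is therefore constant, so $p_2$ of this section is the \emph{constant} map to $\on{M}^{\loc}_G$ --- the opposite of \'etale. (This is why Moonen/Faltings encode the deformation in a varying $F$-structure, not a varying filtration.) The crystalline parallel-transport section you rightly used in (2) is a different trivialization, and it is only canonically defined over square-zero PD thickenings; extending it naively to $\Spf R_{\widetilde{G}}$ reproduces exactly the Faltings section. The paper sidesteps all of this: it picks \emph{any} section $s_0$ over the closed point, takes its constant extension $\xi_1$ over the first-order neighborhood $T_1=\Spec R/(\mx_R^2+(p))$ (this \emph{is} the crystal transport, since the ideal is square-zero), checks via (\ref{lifts:prp:nillifts}) that $p_2\circ s_1:T_1\to T'_1$ is an isomorphism, and then invokes Hensel's lemma for the smooth $G$-torsor together with the equality of dimensions of $R$ and $R'$ (already known from (2)) to conclude --- no algebraization or Artin approximation is needed, and no claim about the higher-order behavior of the section is required. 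You should replace the Moonen-based global-formal-section idea with this first-order rigidity argument.
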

\begin{proof}
  We first note that the basic ideas for the proof can already be found in \cite{rapzink}*{\S 3} and \cite{pappas:jag}*{Thm. 2.2}.

  (\ref{plamgtorsor}) is an easy consequence of the definition of $Z^{\on{pr}}_{K^p}(\Lambda)$ and (\ref{lattice:lem:torsor}). (\ref{p2smooth}) is essentially a consequence of (\ref{lifts:prp:nillifts}) and the formal lifting criterion for smoothness of a finitely presented morphism. Here are the details: It is enough to check that $p_2$ is smooth over the closed points of $\on{M}^{\loc}_G$. Suppose therefore that we are given a surjection of $\Int/p^n\Int$-algebras $\Rg\to\overline{\Rg}$ with square zero kernel. We need to show that the map:
  \begin{align}\label{locmodel:eqn1}
   \varphi:\mathcal{P}_{\dR,\Lambda}(\Rg)\to\mathcal{P}_{\dR,\Lambda}(\overline{\Rg})\times_{\on{M}^{\loc}_G(\overline{\Rg})}\on{M}^{\loc}_G(\Rg)
  \end{align}
  is surjective. So suppose that we have a pair $(\overline{x},\overline{\xi})\in\mathcal{P}_{\dR,\Lambda}(\overline{\Rg})$. We obtain an isotropic line
  \[
  F^1L_{\overline{\Rg}}=\overline{\xi}^{-1}\bigl(F^1\widetilde{\bm{L}}_{\dR,\overline{x}}\bigr)\subset L_{\overline{\Rg}}.
  \]

  Let $\widetilde{\bm{H}}_{\Rg}$ is the evaluation of the crystal $\widetilde{\bm{H}}_{\cris}$ on $\Spec\overline{\Rg}\into\Spec\Rg$. As usual, if $\widetilde{\bm{L}}_{\Rg}$ is the evaluation of the crystal $\widetilde{\bm{L}}_{\cris}$ along the same thickening, we obtain an embedding $\widetilde{\bm{L}}_{\Rg}\subset\End(\widetilde{\bm{H}}_{\Rg})$. We also have the crystalline realization $\bm{\iota}_{\Rg}:\Lambda\otimes\Rg\into\widetilde{\bm{L}}_{\Rg}$ of $\bm{\iota}$; write $\bm{\iota}_{\overline{\Rg}}$ for its change of scalars along $\Rg\to\overline{\Rg}$.

  There is a canonical isomorphism $\widetilde{\bm{H}}_{\dR,\overline{x}}\xrightarrow{\simeq}\widetilde{\bm{H}}_{\Rg}\otimes_{\Rg}\overline{\Rg}$. Composing this with the trivialization $\overline{\xi}$ gives us a $\widetilde{G}$-structure preserving isomorphism $\xi_{\overline{\Rg}}:\widetilde{H}\otimes\overline{\Rg}\xrightarrow{\simeq}\widetilde{\bm{H}}_{\overline{\Rg}}$ carrying $\iota_0\otimes 1$ to $\bm{\iota}_{\overline{\Rg}}$.

  Suppose now that we are also given $F^1L_{\Rg}\in\on{M}^{\loc}_G(\Rg)$ lifting $F^1L_{\overline{\Rg}}$. Let $\mathcal{P}(\Rg\to\overline{\Rg})$ be the set of $\widetilde{G}$-structure preserving isomorphisms $\xi_{\Rg}:\widetilde{H}\otimes\Rg\xrightarrow{\simeq}\widetilde{\bm{H}}_{\Rg}$ carrying $\iota_0\otimes 1$ to $\bm{\iota}_{\Rg}$, and lifting $\xi_{\overline{\Rg}}$.

  Observe now that this set is non-empty: We can lift $\overline{x}$ to any $\Rg$-valued point $x'\in\Ss_{\widetilde{K}}(\Rg)$. This gives a canonical identification $\widetilde{\bm{H}}_{\dR,x'}\xrightarrow{\simeq}\widetilde{\bm{H}}_{\overline{\Rg}}$. Since $\widetilde{\mathcal{P}}_{\dR}$ is a $\widetilde{G}$-torsor over $\Ss_{\widetilde{K}}$, there now exists a $\widetilde{G}$-structure preserving trivialization $\xi'_{\Rg}:\widetilde{H}\otimes\Rg\xrightarrow{\simeq}\widetilde{\bm{H}}_{\Rg}$ lifting $\xi_{\overline{\Rg}}$. If $\xi'_{\Rg}$ carries $\iota_0\otimes 1$ to $\bm{\iota}_{\Rg}$, we are done. Otherwise, it follows from (\ref{lattice:lem:torsor}) that we can compose it with an element of $\widetilde{G}(\Rg)$ to ensure that it lies in $\mathcal{P}(\Rg\to\overline{\Rg})$. In fact, \emph{loc. cit.} implies that $\mathcal{P}(\Rg\to\overline{\Rg})$ is a non-empty torsor under $\ker(G(\Rg)\to G(\overline{\Rg}))$.

  We claim that $\mathcal{P}(\Rg\to\overline{\Rg})$ in canonical bijection with the fiber $\varphi^{-1}((\overline{x},\overline{\xi}),F^1L_{\Rg})$. This will show that $\varphi$ is smooth. If we set $\Rg=k[\epsilon]$ and $\overline{\Rg}=k$, we find that the fiber of $\varphi$ over any $k$-valued point is a torsor under $\ker(G(k[\epsilon])\to G(k))=\Lie G\otimes k$. So our claim would also show that $\varphi$ has relative dimension $\dim G_{\Rat}$.

  Let us prove the claim: Given such any $\xi_{\Rg}\in \mathcal{P}(\Rg\to\overline{\Rg})$, $\xi_{\Rg}(F^1L_{\Rg})\subset\widetilde{\bm{L}}_{\Rg}$ is an isotropic line lifting $F^1\widetilde{\bm{L}}_{\dR,\overline{x}}$ and isotropic to the image of $\bm{\iota}_{\Rg}$. By (\ref{lifts:prp:nillifts}), this determines a lift $x\in Z^{\on{pr}}_{K^p}(\Lambda)(\Rg)$ of $\overline{x}$. Furthermore, there exists a canonical isomorphism $\widetilde{\bm{H}}_{\Rg}\xrightarrow{\simeq}\widetilde{\bm{H}}_{\dR,x}$ carrying $F^1\widetilde{\bm{L}}_{\Rg}$ onto $F^1\widetilde{\bm{L}}_{\dR,x}$. The composition $\xi$ of this isomorphism with $\xi_{\Rg}$ gives us a lift $(x,\xi)\in \varphi^{-1}((\overline{x},\overline{\xi}),F^1L_{\Rg})$. Using the bijectivity of the correspondence in (\ref{lifts:prp:nillifts}), one can check that the assignment defined in this fashion is in fact a bijection from $\mathcal{P}(\Rg\to\overline{\Rg})$ to $\varphi^{-1}((\overline{x},\overline{\xi}),F^1L_{\Rg})$.

  For (\ref{etalesection}), it suffices to prove the result in a neighborhood of a closed point $x\in Z^{\on{pr}}_{K^p}(\Lambda)(\overline{\Field}_p)$. Fix any section $s_0:\Spec\overline{\Field}_p\to\mathcal{P}_{\dR,\Lambda}$ over $x$, and let $\xi_0:\widetilde{H}\otimes\overline{\Field}_p\xrightarrow{\simeq}\widetilde{\bm{H}}_{\dR,x}$ be the corresponding $\widetilde{G}$-structure preserving isomorphism. The point $y=p_2(s_0(x))\in\on{M}^{\loc}_G(\overline{\Field}_p)$ corresponds to the isotropic line $\xi_0^{-1}(F^1\widetilde{\bm{L}}_{\dR,x})\subset L\otimes\overline{\Field}_p$.

  Let $T=\Spec R$ (resp. $T'=\Spec R'$) be the Henselization of $Z^{\on{pr}}_{K^p}(\Lambda)$ at $x$ (resp. $\on{M}^{\loc}_G$ at $y$). It is enough to show that there exists a section $s:T\to\mathcal{P}_{\dR,\Lambda}$ lifting $s_0$ such that the induced map $p_2\circ s:T\to T'$ is an isomorphism. 
  
  Let $\mx_R\subset R$ (resp. $\mx_{R'}\subset R'$) be the maximal ideal, and set $R_1=R/(\mx_R^2+(p))$ (resp. $R'_1=R'/(\mx_{R'}^2+(p))$). Also, set $T_1=\Spec R_1$ and $T'_1=\Spec R'_1$. 
  
  By \cite{rapzink}*{3.33}, it is enough to find a section $s$ as above such that the induced map $T_1\to T'_1$ is an isomorphism.\footnote{In the language of \cite{rapzink}*{\S 3} such a section is `rigid of the first order'.} 
  
By Hensel's lemma, it now suffices to find a section $s_1:T_1\to\mathcal{P}_{\dR,\Lambda}$ lifting $s_0$ such that the induced map $p_2\circ s_1:T_1\to T'_1$ is an isomorphism.

  Composing the isomorphism $\xi_0$ with the obvious identification $\widetilde{\bm{H}}_{\dR,x}\otimes_kR_1=\widetilde{\bm{H}}_{\dR,R_1}$ gives us a $\widetilde{G}$-structure preserving isomorphism
  \[
   \xi_1:\widetilde{H}\otimes R_1\xrightarrow{\simeq}\widetilde{\bm{H}}_{\dR,R_1},
  \]
  which corresponds to a section $s_1:T_1\to\mathcal{P}_{\dR,\Lambda}$ lifting $s_0$. It now follows from (\ref{lifts:prp:nillifts}) that the induced map $q\circ s_1:T_1\to T'_1$ is an isomorphism.
\end{proof}

\begin{rem}
  The proposition gives us a \emph{local model diagram} in the terminology of \cites{rapzink,deligne-pappas,pappas:jag}.
\end{rem}

\begin{corollary}\label{regular:cor:locprps}
$Z^{\on{pr}}_{K^p}(\Lambda)$, and hence $Z^{\on{pr}}_{K^p_0}(\Lambda)$, is lci and flat of relative dimension $n$ over $\Int_{(p)}$. Moreover, $Z^{\on{pr}}_{K^p}(\Lambda)_{\Field_p}$ is reduced if $n\geq t$, where $t$ is the dimension of the radical of $L_{\Field_p}$. It is normal if $n\geq t+1$, and is smooth if $t=0$. In particular, if $n\geq t$, then $Z^{\on{pr}}_{K^p}(\Lambda)$ is normal.
\end{corollary}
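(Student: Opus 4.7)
The proof assembles the local model diagram of Proposition~(\ref{regular:prp:locmodel}) with the structural analysis of $\on{M}^{\loc}_G$ in Lemma~(\ref{lattice:lem:mloc}). By~(\ref{regular:prp:locmodel})(\ref{etalesection}), every point of $Z^{\on{pr}}_{K^p}(\Lambda)$ admits an \'etale neighborhood equipped with an \'etale map to $\on{M}^{\loc}_G$. Since flatness, being lci, relative dimension, and the conditions $R_k$, $S_k$ (either on fibers or on the total scheme) are all \'etale-local, the corresponding properties of $Z^{\on{pr}}_{K^p}(\Lambda)$ over $\Int_{(p)}$ are inherited directly from those of $\on{M}^{\loc}_G$. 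The analogous assertions for $Z^{\on{pr}}_{K^p_0}(\Lambda)$ then follow from the finite \'etale cover $Z^{\on{pr}}_{K^p}(\Lambda)\to Z^{\on{pr}}_{K^p_0}(\Lambda)$ exhibited in~(\ref{regular:prp:representable}).

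Substituting $r=n+2$ in~(\ref{lattice:lem:mloc}) gives $s=n+1-t$. Part~(\ref{mlocflat}) then yields that $\on{M}^{\loc}_G$ is flat of relative dimension $n$ over $\Int_{(p)}$; as it is cut out in $\mathbb{P}^{n+1}_{\Int_{(p)}}$ by a single quadratic equation, it is lci. Part~(\ref{mlocnorm}) translates directly into the special-fiber assertions: reduced iff $s\geq 1$ iff $n\geq t$; normal iff $s\geq 2$ iff $n\geq t+1$; smooth iff $t=0$.

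It remains to deduce the "in particular" statement, for which I verify Serre's criterion $R_1+S_2$ on $Z^{\on{pr}}_{K^p}(\Lambda)$ under the hypothesis $n\geq t$. Since the scheme is lci and hence Cohen-Macaulay, $S_2$ is automatic. For $R_1$, the generic fiber is \'etale-locally a smooth quadric over $\Rat$ (as $(L_{\Rat},Q)$ is non-degenerate) and therefore smooth, so the singular locus is contained in the special fiber. By hypothesis the special fiber is reduced, so at any generic point $\xi$ of an irreducible component of the special fiber, the local ring of the special fiber at $\xi$ is a field; combined with flatness, this forces $\Reg{Z^{\on{pr}}_{K^p}(\Lambda),\xi}$ to be one-dimensional with maximal ideal $(p)$, hence a DVR, and in particular regular. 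The singular locus therefore has dimension at most $n-1$, i.e.\ codimension at least $2$ in the total scheme of dimension $n+1$, which gives $R_1$ and completes the proof. The whole argument is a direct assembly of preceding results; the only mildly subtle point is this last observation that a flat family over a DVR with reduced special fiber is automatically regular at the generic points of the fiber.
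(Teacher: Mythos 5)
Your argument is correct and follows exactly the route the paper indicates in its one-line proof: combine the \'etale local model diagram from~(\ref{regular:prp:locmodel})(\ref{etalesection}) with the structural analysis of the quadric $\on{M}^{\loc}_G$ from~(\ref{lattice:lem:mloc}), using that the relevant properties are \'etale-local and that $r=\dim L=n+2$ gives $s=n+1-t$. The one thing worth noting is that the paper's proof is just ``Follows from (\ref{lattice:lem:mloc}) and (\ref{regular:prp:locmodel}),'' so the deduction of the closing ``in particular'' statement (normality of the total scheme over $\Int_{(p)}$ under the weaker hypothesis $n\geq t$) is left implicit there; your Serre-criterion argument — $S_2$ from lci/Cohen-Macaulay, $R_1$ from smoothness of the generic fiber together with the observation that a flat $\Int_{(p)}$-scheme with reduced special fiber is regular at the generic points of that fiber — is precisely the standard reasoning the paper is suppressing, and it is correct.
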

\begin{proof}
Follows from (\ref{lattice:lem:mloc}) and (\ref{regular:prp:locmodel}).
\end{proof}

\begin{corollary}\label{regular:cor:healthy1}
Suppose that $L$ is maximal with $t\leq 1$. Then $Z_{K^p}(\Lambda)$ (and hence $Z_{K^p_0}(\Lambda)$) is regular and locally healthy.
\end{corollary}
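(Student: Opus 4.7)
The plan is to reduce the statement directly to the corresponding assertion for the local model $\on{M}^{\loc}_G$, which has already been established in Proposition \ref{lattice:prp:mlochealthy}. The maximality hypothesis on $L$ and the transfer of geometric properties through an \'etale local model diagram will do essentially all the work.

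First I would observe that since $(L,Q)$ is maximal, Lemma \ref{regular:lem:vectbundle}(\ref{vectbundle:maximal}) gives
\[
 Z_{K^p_0}(\Lambda)=Z^{\on{pr}}_{K^p_0}(\Lambda),\qquad\text{hence}\qquad Z_{K^p}(\Lambda)=Z^{\on{pr}}_{K^p}(\Lambda).
\]
Thus the whole scheme $Z_{K^p}(\Lambda)$ falls within the scope of the \'etale local model construction in Proposition \ref{regular:prp:locmodel}. Concretely, \emph{loc. cit.}(\ref{etalesection}) produces, around any point $x\in Z_{K^p}(\Lambda)$, an \'etale neighborhood $U\to Z_{K^p}(\Lambda)$ together with an \'etale map $U\to\on{M}^{\loc}_G$. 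On the other hand, since $L$ is maximal with $t\leq 1$, Proposition \ref{lattice:prp:mlochealthy} tells us that $\on{M}^{\loc}_G$ is regular and locally healthy.

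The next step is to transfer these two properties along the \'etale local model diagram. Regularity is manifestly \'etale-local, so $Z_{K^p}(\Lambda)$ is regular. For local healthiness, let $u\in Z_{K^p}(\Lambda)_{\Field_p}$ be a point of codimension $\geq 2$, and let $y$ be its image in $\on{M}^{\loc}_G$ under the chosen \'etale map from an \'etale neighborhood of $u$; since \'etale morphisms preserve codimension, $y$ is again a codimension $\geq 2$ point of the special fiber. The complete local ring $\widehat{\Rg}_{Z_{K^p}(\Lambda),u}$ is a finite \'etale extension of $\widehat{\Rg}_{\on{M}^{\loc}_G,y}$ (possibly involving a residue field extension). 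The latter is quasi-healthy by Proposition \ref{lattice:prp:mlochealthy}; by the faithfully flat descent remark following Definition \ref{lattice:defn:healthy}, so is the former. Hence $Z_{K^p}(\Lambda)$ is locally healthy.

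Finally, the assertion for $Z_{K^p_0}(\Lambda)$ is a direct consequence of that for $Z_{K^p}(\Lambda)$: by Proposition \ref{regular:prp:representable}, the map $Z_{K^p}(\Lambda)\to Z_{K^p_0}(\Lambda)$ is finite \'etale, and both regularity and quasi-healthiness of complete local rings descend along finite \'etale covers by a standard faithfully flat descent argument. I do not expect any substantial obstacle; the only mild subtlety is checking that the \'etale transfer of quasi-healthiness is covered by the framework of Theorem \ref{lattice:thm:vasiuzink} and the remark after Definition \ref{lattice:defn:healthy}, which it is.
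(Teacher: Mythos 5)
Your plan is the same as the paper's, which simply cites (\ref{lattice:prp:mlochealthy}) and (\ref{regular:prp:locmodel}), and you correctly identify the role of maximality (via (\ref{regular:lem:vectbundle})(\ref{vectbundle:maximal})), the \'etale local model diagram of (\ref{regular:prp:locmodel})(\ref{etalesection}), and the finite \'etale cover for $Z_{K^p_0}(\Lambda)$. The regularity transfer and your final paragraph are fine. However, the step transferring local healthiness has two problems. First, the diagram of (\ref{regular:prp:locmodel})(\ref{etalesection}) does \emph{not} exhibit $\widehat{\Rg}_{Z_{K^p}(\Lambda),u}$ as a finite \'etale extension of $\widehat{\Rg}_{\on{M}^{\loc}_G,y}$: it gives a \emph{common roof}, with $U\to Z_{K^p}(\Lambda)$ and $U\to\on{M}^{\loc}_G$ both \'etale, so that if $u'\in U$ lies over $u$ and over $y$, both $\widehat{\Rg}_{Z_{K^p}(\Lambda),u}\to\widehat{\Rg}_{U,u'}$ and $\widehat{\Rg}_{\on{M}^{\loc}_G,y}\to\widehat{\Rg}_{U,u'}$ are finite \'etale local inclusions, but there is no direct map between the two bottom rings. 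Second, and more seriously, the descent in Remark \ref{lattice:rem:healthy} passes quasi-healthiness from the \emph{larger} ring to the \emph{smaller} one; you invoke it to go from $\widehat{\Rg}_{\on{M}^{\loc}_G,y}$ to a (purportedly) larger ring, which is the wrong direction and is not what that remark says.

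To repair this, either (a) follow the paper's own practice in the proof of (\ref{lattice:prp:mlochealthy}): reduce to points valued in algebraically closed fields, where the \'etale maps in the local model diagram induce \emph{isomorphisms} of complete local rings so no transfer is needed, and then descend to arbitrary codimension $\geq 2$ points via Remark \ref{lattice:rem:healthy}; or (b) observe that the criteria of Theorem \ref{lattice:thm:vasiuzink} actually used to prove (\ref{lattice:prp:mlochealthy}) --- namely $p\notin\mx^p$, and formal smoothness --- manifestly \emph{ascend} along a faithfully flat unramified local extension because $\mx'=\mx R'$, so $\widehat{\Rg}_{U,u'}$ is quasi-healthy, and then descend to $\widehat{\Rg}_{Z_{K^p}(\Lambda),u}$ via the roof.
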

\begin{proof}
  This is clear from (\ref{lattice:prp:mlochealthy}) and (\ref{regular:prp:locmodel}).
\end{proof}

\subsection{}\label{regular:subsec:teq2}
Assume now that $L$ is maximal with $t=2$. As in (\ref{lattice:subsec:mref}), we will fix a quadratic extension $F/\Rat$ in which $p$ is inert, but we will place the additional constraint that $F$ be \emph{real}. 

Fix a self-dual $\Reg{F,(p)}$-lattice $L^{\diamond}\subset F\otimes L$ containing $\Reg{F,(p)}\otimes L$. By \emph{loc. cit.}, this gives us a proper $G$-equivariant map $\on{M}^{\on{ref}}_G\to\on{M}^{\loc}_G$, whose source is regular and locally healthy.
\begin{prp}\label{regular:prp:healthy2}
There exists an algebraic space $Z^{\on{ref}}_{K^p}(\Lambda)$ over $\Int_{(p)}$ and a proper morphism $Z^{\on{ref}}_{K^p}(\Lambda)\to Z^{\on{pr}}_{K^p}(\Lambda)$, determined uniquely up to unique isomorphism, with the following properties:
\begin{enumerate}
  \item~\label{healthyreg:tis2}There exists a $G$-equivariant isomorphism:
      \[
       \mathcal{P}_{\dR,\Lambda}\times_{Z^{\on{pr}}_{K^p}(\Lambda)}Z^{\on{ref}}_{K^p}(\Lambda)\xrightarrow{\simeq}\mathcal{P}_{\dR,\Lambda}\times_{\on{M}^{\loc}_G}\on{M}^{\on{ref}}_G.
      \]
      Here, the $G$-action on the left hand side is via its action on $\mathcal{P}_{\dR,\Lambda}$, and the action on the right is the diagonal action. 
  \item~\label{healthyreg:tis1locmod}Every point of $Z^{\on{ref}}_{K^p}(\Lambda)$ has an \'etale neighborhood $U\to Z^{\on{ref}}_{K^p}(\Lambda)$ equipped with a section $s:U\to\mathcal{P}_{\dR,\Lambda}\times_{Z^{\on{pr}}_{K^p}(\Lambda)}Z^{\on{ref}}_{K^p}(\Lambda)$ such that the composition
      \[
       U\xrightarrow{s}\mathcal{P}_{\dR,\Lambda}\times_{Z^{\on{pr}}_{K^p}(\Lambda)}Z^{\on{ref}}_{K^p}(\Lambda)\xrightarrow{\simeq}\mathcal{P}_{\dR,\Lambda}\times_{\on{M}^{\loc}_G}\on{M}^{\on{ref}}_G\rightarrow\on{M}^{\on{ref}}_G
      \]
      is \'etale. 
\end{enumerate}
In particular, $Z^{\on{ref}}_{K^p}(\Lambda)$ is regular and locally healthy, and the map $Z^{\on{ref}}_{K^p}(\Lambda)\to Z^{\on{pr}}_{K^p}(\Lambda)$ is an isomorphism over the regular locus of the target.
\end{prp}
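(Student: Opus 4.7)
The plan is to construct $Z^{\on{ref}}_{K^p}(\Lambda)$ as a quotient, by pulling the refined local model $\on{M}^{\on{ref}}_G$ back along the local model diagram of (\ref{regular:prp:locmodel}) and then dividing by $G$. Concretely, form
\[
\widetilde{\mathcal{P}} \coloneqq \mathcal{P}_{\dR,\Lambda}\times_{\on{M}^{\loc}_G}\on{M}^{\on{ref}}_G,
\]
where $\mathcal{P}_{\dR,\Lambda}\to\on{M}^{\loc}_G$ is the smooth $G$-equivariant map $p_2$ of (\ref{regular:prp:locmodel})(\ref{p2smooth}), and $\on{M}^{\on{ref}}_G\to\on{M}^{\loc}_G$ is the proper $G$-equivariant morphism of (\ref{lattice:prp:mrefhealthy}). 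The diagonal $G$-action on $\widetilde{\mathcal{P}}$ is free, because it is free on the first factor (a $G$-torsor over $Z^{\on{pr}}_{K^p}(\Lambda)$, which equals $Z_{K^p}(\Lambda)$ by (\ref{regular:lem:vectbundle})(\ref{vectbundle:maximal}) since $L$ is maximal). I would then define $Z^{\on{ref}}_{K^p}(\Lambda)$ to be the algebraic space $\widetilde{\mathcal{P}}/G$; projection onto the first factor, followed by the quotient map, yields a proper morphism $Z^{\on{ref}}_{K^p}(\Lambda)\to Z_{K^p}(\Lambda)$, since properness is preserved under base change and under $\on{fppf}$ descent along the $G$-torsors.

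Property (\ref{healthyreg:tis2}) is tautological: $\widetilde{\mathcal{P}}$ is both a $G$-torsor over $Z^{\on{ref}}_{K^p}(\Lambda)$ and identified with $\mathcal{P}_{\dR,\Lambda}\times_{\on{M}^{\loc}_G}\on{M}^{\on{ref}}_G$, while $\mathcal{P}_{\dR,\Lambda}\times_{Z_{K^p}(\Lambda)}Z^{\on{ref}}_{K^p}(\Lambda)$ is also a $G$-torsor over $Z^{\on{ref}}_{K^p}(\Lambda)$ mapping compatibly to both factors; the canonical $G$-equivariant morphism between them is therefore an isomorphism. For property (\ref{healthyreg:tis1locmod}), I would take a point of $Z^{\on{ref}}_{K^p}(\Lambda)$, project it to $Z_{K^p}(\Lambda)$, and invoke (\ref{regular:prp:locmodel})(\ref{etalesection}) to obtain an \'etale neighborhood $U'\to Z_{K^p}(\Lambda)$ with a section $s'\colon U'\to\mathcal{P}_{\dR,\Lambda}$ whose composition $p_2\circ s'\colon U'\to\on{M}^{\loc}_G$ is \'etale. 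Setting $U\coloneqq U'\times_{\on{M}^{\loc}_G}\on{M}^{\on{ref}}_G$ then provides, via (\ref{healthyreg:tis2}), an \'etale neighborhood of our point in $Z^{\on{ref}}_{K^p}(\Lambda)$ together with a tautological section into $\widetilde{\mathcal{P}}$, and the induced map $U\to\on{M}^{\on{ref}}_G$ is \'etale by base change of the \'etale morphism $p_2\circ s'$.

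The remaining assertions follow formally. Both regularity and local healthiness are \'etale-local, so the description just proved transfers these properties from $\on{M}^{\on{ref}}_G$ (given by (\ref{lattice:prp:mrefhealthy})(\ref{mref:healthy})) to $Z^{\on{ref}}_{K^p}(\Lambda)$. Similarly, since $\on{M}^{\on{ref}}_G\to\on{M}^{\loc}_G$ is an isomorphism over the regular locus of $\on{M}^{\loc}_G$ by (\ref{lattice:prp:mrefhealthy})(\ref{mref:birat}), the same holds \'etale-locally for $Z^{\on{ref}}_{K^p}(\Lambda)\to Z_{K^p}(\Lambda)$; but by the local model diagram, the regular locus of $Z_{K^p}(\Lambda)$ is precisely the pre-image of the regular locus of $\on{M}^{\loc}_G$, so the map is an isomorphism over the regular locus of the target. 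Uniqueness up to unique isomorphism is forced by (\ref{healthyreg:tis2}): any two candidates would arise as quotients by $G$ of the same $G$-torsor $\widetilde{\mathcal{P}}$.

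The main technical point to handle carefully is the descent argument used to pass between the $G$-equivariant pulled-back object $\widetilde{\mathcal{P}}$ and its quotient---specifically, that properness, regularity and local healthiness of $Z^{\on{ref}}_{K^p}(\Lambda)$ can be verified after \'etale-local pullback using the local model sections. This is standard once one accepts the local model formalism of (\ref{regular:prp:locmodel}), but it is precisely where that formalism is crucially exploited.
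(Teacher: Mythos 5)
Your proof is correct and follows the same route as the paper: the paper simply cites Pappas's Prop.\ 2.4 (the ``linear modification'' construction) and notes that its proof transfers to this setting, and your argument unpacks exactly what that proof does---form $\widetilde{\mathcal{P}} = \mathcal{P}_{\dR,\Lambda}\times_{\on{M}^{\loc}_G}\on{M}^{\on{ref}}_G$, use freeness of the diagonal $G$-action to take the quotient algebraic space, and deduce the listed properties by Cartesianness of the relevant squares and \'etale descent. The one point worth making explicit (you gesture at it) is that properness of $Z^{\on{ref}}_{K^p}(\Lambda)\to Z_{K^p}(\Lambda)$ is verified by fppf descent along the $G$-torsor $\mathcal{P}_{\dR,\Lambda}\to Z_{K^p}(\Lambda)$, whose base change is precisely the proper map $\widetilde{\mathcal{P}}\to\mathcal{P}_{\dR,\Lambda}$.
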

\begin{proof}
  This is a consequence of (\ref{regular:prp:locmodel}) and \cite{pappas:jag}*{Prop. 2.4}. The statement of the cited result does not apply directly in our setting, but it is easily seen that its \emph{proof} does.
\end{proof}

\begin{rem}\label{regular:rem:linmod}
$Z^{\on{ref}}_{K^p}(\Lambda)\to Z^{\on{pr}}_{K^p}(\Lambda)$ is a \defnword{linear modification} in the sense of \cite{pappas:jag}*{2.6}. Although our construction does not fit strictly within the framework of \emph{loc. cit.}, it is inspired by obvious analogy.
\end{rem}

\subsection{}\label{regular:subsec:artin_invariant}
We will continue with the assumption that $t=2$. As explained in the introduction, in this situation, $Z^{\on{pr}}_{K^p}(\Lambda)$ is strictly contained in $Z_{K^p}(\Lambda)$. In other words, there exist points in $Z_{K^p}(\Lambda)(\overline{\Field}_p)$ lying over $s_0\in\Ss_{\widetilde{K}}(\overline{\Field}_p)$ such that the image of the associated map $\iota_0:\Lambda\to L(\widetilde{A}^{\KS}_{s_0})$ does not generate a saturated sub-space of $\widetilde{\bm{L}}_{\cris,s_0}$. 

To understand this phenomenon, it will be helpful to have a better handle on the structure of $\widetilde{\bm{L}}_{\cris,s_0}$. In the language of~\cite{ogus:ss}*{\S 3}, this is (up to twist) a \defnword{K3 crystal} of rank $\widetilde{n}+2$: That is, it is strongly divisible in the sense of~\eqref{spin:subsec:strongdiv}, and it carries an $\bm{F}_{s_0}$-invariant self-dual quadratic form. 

The structure of such objects is well-understood. For simplicity, set $\widetilde{\bm{L}}=\widetilde{\bm{L}}_{\cris,s_0}$, and $\widetilde{L}_p(s_0)=\widetilde{\bm{L}}^{\bm{F}_{s_0}=1}$. One associates with $\widetilde{\bm{L}}$ a Hodge polygon and a Newton polygon; cf.~(1.2) and (1.3) of~\cite{katz:slope}. Essentially, the Hodge polygon encodes information about the Hodge filtration on $\widetilde{\bm{L}}_{\dR,s_0}$ and the Newton polygon encodes information about the slopes of $\bm{F}_{s_0}$. Both polygons are convex by construction, and by a theorem of Mazur~\cite{katz:slope}*{Theorem 1.4.1}, they have the same end-points, and the Newton polygon lies above the Hodge polygon.

In our situation, the Hodge polygon is very simple: It begins at $(0,0)$ and ends at $(\widetilde{n}+2,0)$, and the slope $0$ segment has length $\widetilde{n}$. This reflects the fact that $F^1\widetilde{\bm{L}}_{\dR,s_0}$ has rank $1$ and that $\gr^F_0\widetilde{\bm{L}}_{\dR,s_0}$ has rank $\widetilde{n}$.

There are now two possibilities for the Newton polygon, with different flavors:
\begin{itemize}
\item The Newton polygon is non-constant: In this case, it admits a break at $x$-co-ordinate $h$, where $h$ is an integer between $1$ and $\left\lfloor\frac{\widetilde{n}+2}{2}\right\rfloor$. We say then that $\widetilde{\bm{L}}$ has \defnword{finite height} $h$.  

By a theorem of Katz~\cite{katz:slope}*{1.6.1}, $\widetilde{\bm{L}}$ admits a Newton-Hodge decomposition:
\[
 \widetilde{\bm{L}} = \widetilde{\bm{L}}_{-h}\oplus \widetilde{\bm{L}}_0 \oplus \widetilde{\bm{L}}_h,
\]
where each of the summands is $\bm{F}_{s_0}$-stable after inverting $p$. The summand $\widetilde{\bm{L}}_{-h}$ has rank $h$ and the quadratic form on $\widetilde{\bm{L}}$ induces a perfect pairing between $\widetilde{\bm{L}}_{-h}$ and $\widetilde{\bm{L}}_h$. The summand $\widetilde{\bm{L}}_0$ is a unit root $F$-crystal (that is, it is generated as a $W$-module by its $\bm{F}_{s_0}$-invariant elements), orthogonal to $\widetilde{\bm{L}}_{-h}\oplus\widetilde{\bm{L}}_h$. 

We have $\widetilde{L}_p(s_0) = \widetilde{\bm{L}}_0^{\bm{F}_s=1}$, and $W\otimes_{\Int_p}\widetilde{L}_p(s_0)=\widetilde{\bm{L}}_0$ is a direct summand of $\widetilde{\bm{L}}$. In particular, any point of $Z_{K^p}(\Lambda)$ lying above $s_0$ will be in $Z^{\on{pr}}_{K^p}(\Lambda)$. 

\item The Newton polygon has constant slope $0$: In this case, we say that $\widetilde{\bm{L}}$ is \emph{supersingular}. By a result of Ogus~\cite{ogus:ss}*{Theorem 3.4}, $\widetilde{\bm{L}}$ admits an orthogonal decomposition:
\[
 \widetilde{\bm{L}} = \widetilde{\bm{L}}_1 \perp \widetilde{\bm{L}}_0,
\]
which is again $\bm{F}_{s_0}$-stable after inverting $p$. In this decomposition, $\widetilde{\bm{L}}_0$ is a unit root $F$-crystal, which is self-dual with respect to the quadratic form.. Moreover, $\widetilde{L}_p(s_0)$ is a free module of rank $\widetilde{n}+2$ over $\Int_p$, and it inherits an orthogonal decomposition:
\[
\widetilde{L}_p(s_0) = \widetilde{L}_1 \perp \widetilde{L}_0,
\]
where $\widetilde{L}_i = \widetilde{\bm{L}}_i^{\bm{F}_{s_0}=1}$, for $i=0,1$. The quadratic form on $\widetilde{L}_1$ is $p$-times a self-dual form. 

We can say more: the sub-space $W\otimes_{\Int_p}\widetilde{L}_p(s_0)\subset\widetilde{\bm{L}}$ has index $p^{\sigma}$, where $2\sigma = \rank\widetilde{L}_1$, and the discriminant of $\widetilde{L}_p(s_0)$ has $p$-adic valuation $\sigma$. The integer $\sigma$, which lies between $1$ and $\left\lfloor\frac{\widetilde{n}+2}{2}\right\rfloor$, is the \emph{Artin invariant} of $\widetilde{\bm{L}}$. 

Now, since $\dual{\Lambda}/\Lambda$ is isomorphic to $\Int/p\Int\oplus\Int/p\Int$, we can find an orthogonal decomposition $\Lambda = \Lambda_1\perp\Lambda_0$, where $\Lambda_0$ is self-dual, $\Lambda_1$ has rank $2$, and the restriction of the quadratic form to $\Lambda_1$ is divisible by $p$. If $2\sigma\geq \widetilde{n}$, then one can check that there can never be an isometric embedding $\Lambda_1\into\widetilde{L}_p(s_0)$ whose image generates a direct summand of $\widetilde{\bm{L}}$. Therefore, any point of $Z_{K^p}(\Lambda)$ lying above $s_0$ will be outside of $Z^{\on{pr}}_{K^p}(\Lambda)$.
\end{itemize}

\subsection{}\label{regular:subsec:shimura_curve}
Consider the special case where $\widetilde{n}=3$, $d=2$, and $\widetilde{L}$ admits a maximal isotropic sub-space of rank $2$: In this situation, $\Ss_{\widetilde{K}}$ is a moduli space of polarized abelian surfaces (the Kuga-Satake abelian scheme $\widetilde{A}^{\KS}$ is isomorphic to a power of the universal abelian surface), and $Z_{K^p}(\Lambda)$ is a union of integral models for compact Shimura curves with non-maximal parahoric level structure. The Artin invariant $\sigma$ is either $1$ or $2$: It is $1$ when $\widetilde{A}^{\KS}_{s_0}$ is isomorphic to a product of supersingular elliptic curves (the \emph{superspecial} case), and it is $2$ if $A^{\KS}_{s_0}$ is supersingular, but not superspecial, which should be the `generic' situation on $Z_{K^p}(\Lambda)$. The above discussion shows that the open locus $Z^{\on{pr}}_{K^p}(\Lambda)$ will not contain any of the non-superspecial points, and so misses out on what should be the most interesting part of the geometry of $Z_{K^p}(\Lambda)$. Similar caveats apply in higher dimensions. We intend to return to the general question of the structure of $Z_{K^p}(\Lambda)$ in future work.

\comment{
\emph{From now on, until indicated otherwise, we will assume that $t=2$}.

\subsection{}\label{regular:subsec:zrefdeform}
Fix a compact open $K^p\subset\widetilde{K}^p$ and set $K=K_pK^p$. We will begin by studying the deformation theory of $\Ss_{K}$. Recall that we have the $G$-torsor $\mathcal{P}_{\dR,\Lambda}\to Z_{K^p}(\Lambda)$. For simplicity, denote the pull-back of this torsor over $\Ss_K$ also by $\mathcal{P}_{\dR,\Lambda}$. Then by construction (cf.~\ref{regular:prp:healthy2}), there exists a smooth $G$-equivariant map $p_2:\mathcal{P}_{\dR,\Lambda}\to\on{M}^{\on{ref}}_G$.

Over $\on{M}^{\on{ref}}_G$, the constant vector bundle $L^{\diamond}\otimes\Reg{\on{M}^{\on{ref}}_G}$ acquires an isotropic $\Reg{F,(p)}$-stable sub-vector bundle $F^1(L^{\diamond}\otimes\Reg{\on{M}^{\on{ref}}_G})$; as a $\Reg{F,(p)}\otimes\Reg{\on{M}^{\on{ref}}_G}$-module, this is a local direct summand of rank $1$. The pair $(L^{\diamond}\otimes\Reg{\on{M}^{\on{ref}}_G},F^1(L^{\diamond}\otimes\Reg{\on{M}^{\on{ref}}_G}))$ is $G$-equivariant. Therefore, its pull-back over $\mathcal{P}_{\dR,\Lambda}$ is also $G$-equivariant and so descends to a pair $(\bm{L}^{\on{ref}}_{\dR},F^1\bm{L}^{\on{ref}}_{\dR})$ over $\Ss_K$.

Here, $\bm{L}^{\on{ref}}_{\dR}$ is an $\Reg{F,(p)}\otimes\Reg{\Ss_K}$-linear vector bundle equipped with a non-degenerate bilinear form with values in $\Reg{F,(p)}\otimes\Reg{\Ss_K}$; $F^1\bm{L}^{\on{ref}}_{\dR}\subset\bm{L}^{\on{ref}}_{\dR}$ is an isotropic $\Reg{F,(p)}\otimes\Reg{\Ss_K}$-linear vector sub-bundle that is locally of rank $1$.

Furthermore, let $\bm{L}_{\dR}\subset\widetilde{\bm{L}}_{\dR}$ be the orthogonal complement of $\bm{\Lambda}_{\dR}$. The isotropic line $F^1\bm{\widetilde{L}}_{\dR}\subset\widetilde{\bm{L}}_{\dR}$ is a actually a sub-bundle of $\bm{L}_{\dR}$; we will therefore denote it by $F^1\bm{L}_{\dR}$. Observe that, over $\on{M}^{\on{ref}}_G$, the constant vector bundle $L\otimes\Reg{\on{M}^{\on{ref}}_G}$ admits a tautological isotropic line $F^1(L\otimes\Reg{\on{M}^{\on{ref}}_G})$. By the definition of $\mathcal{P}_{\dR,\Lambda}$, the pair $(\bm{L}_{\dR},F^1\bm{L}_{\dR})$ is obtained by descending the pull-back of the $G$-equivariant pair $(L\otimes\Reg{\on{M}^{\on{ref}}_G},F^1(L\otimes\Reg{\on{M}^{\on{ref}}_G}))$ along the $G$-torsor $\mathcal{P}_{\dR,\Lambda}\to\Ss_K$.

Therefore, there exists a canonical $\Reg{F,(p)}\otimes\Reg{\Ss_K}$-linear isometric map
\begin{align}\label{regular:eqn:ltolref}
 \Reg{F,(p)}\otimes\bm{L}_{\dR}\to\bm{L}^{\on{ref}}_{\dR},
\end{align}
carrying $\Reg{F,(p)}\otimes F^1\bm{L}_{\dR}$ to $F^1\bm{L}^{\on{ref}}_{\dR}$.

Suppose now that we have $x_0\in\Ss_K(k)$ (with $k$ perfect of characteristic $p$). Let $\widehat{U}_{x_0}$ (resp. $\widehat{U}_{x_0}^{\on{nv}}$) be the completion of $\Ss_{K,W(k)}$ (resp. $\Ss^{\on{nv}}_{K,W(k)}$) at $x_0$ (resp. the image of $x_0$). Then it follows from (\ref{lifts:prp:nillifts}) that we have a canonical bijection:
\begin{align}\label{regular:eqn:naivedeform}
 \widehat{U}^{\on{nv}}_{x_0}\bigl(k[\epsilon]\bigr)\xrightarrow{\simeq}\begin{pmatrix}
    \text{Isotropic lines $F^1(\bm{L}_{\dR,x_0}\otimes k[\epsilon])\subset\bm{L}_{\dR,x_0}\otimes k[\epsilon]$ lifting $F^1\bm{L}_{\dR,x_0}$}\end{pmatrix}.
\end{align}

From the definition of $\on{M}^{\on{ref}}_G$ and $\Ss_K$, we now see the set $\widehat{U}_{x_0}(k[\epsilon])$ is canonically identified with the set of pairs
\[
\bigl(F^1(\bm{L}_{\dR,x_0}\otimes k[\epsilon]),F^1(\bm{L}^{\on{ref}}_{\dR,x_0}\otimes k[\epsilon])\bigr),
\]
where:
\begin{itemize}
  \item $F^1(\bm{L}_{\dR,x_0}\otimes k[\epsilon])\in\widehat{U}^{\on{nv}}_{x_0}(k[\epsilon])$;
  \item $F^1(\bm{L}^{\on{ref}}_{\dR,x_0}\otimes k[\epsilon])\subset \bm{L}^{\on{ref}}_{\dR,x_0}\otimes k[\epsilon]$ is an isotropic $\Reg{F,(p)}\otimes_{\Int_{(p)}}k[\epsilon]$-sub-module that is free of rank $1$ and lifts $F^1\bm{L}^{\on{ref}}_{\dR,x_0}$;
  \item Via the natural map $\Reg{F,(p)}\otimes_{\Int_{(p)}}\bm{L}_{\dR,x_0}\to \bm{L}^{\on{ref}}_{\dR,x_0}$, $\Reg{F,(p)}\otimes F^1(\bm{L}_{\dR,x_0}\otimes k[\epsilon])$ maps to $F^1(\bm{L}^{\on{ref}}_{\dR,x_0}\otimes k[\epsilon])$.
\end{itemize}

\subsection{}\label{regular:subsec:auxiliary}
Let $G'$ be the reductive $\Int_{(p)}$-group $\Res_{\Reg{F,(p)}/\Int_{(p)}}\GSpin(L^{\diamond})$: It is equipped with the spinor norm $G'\to\Res_{\Reg{F,(p)}/\Int_{(p)}}\Gm$. Let $G^{\diamond}\subset G'$ be the pre-image of the scalars $\Gm\subset\Res_{\Reg{F,(p)}/\Int_{(p)}}\Gm$: It is again a reductive $\Int_{(p)}$-group scheme. The tautological isometry $F\otimes L\xrightarrow{\simeq}F\otimes_{\Reg{F,(p)}}L^{\diamond}$ gives us an isomorphism
\[
 G_F=\GSpin(F\otimes L)\xrightarrow{\simeq}\GSpin(F\otimes_{\Reg{F,(p)}}L^{\diamond}).
\]
The resulting embedding $G_{\Rat}\into\Res_{F/\Rat}G_F\xrightarrow{\simeq}G'_{\Rat}$ maps into $G^{\diamond}_{\Rat}$.

\begin{lem}\label{regular:lem:gembgdiamond}
The embedding $G_{\Rat}\into G^{\diamond}_{\Rat}$ above extends to a map of group schemes $G\to G^{\diamond}$.
\end{lem}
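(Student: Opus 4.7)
The plan is to invoke the criterion that smooth affine $\Int_{(p)}$-group schemes are controlled by their points over strictly henselian discrete valuation rings. Specifically, by \cite{bruhat_tits_ii}*{1.7.6} (as used in the proof of Proposition \ref{lattice:prp:smoothparahoric}), since both $G$ and $G^{\diamond}$ are smooth affine over $\Int_{(p)}$, the rational morphism $G_{\Rat} \to G^{\diamond}_{\Rat}$ extends to a morphism of integral schemes if and only if for every strictly henselian discrete valuation ring $R$ over $\Int_{(p)}$ the induced map $G_{\Rat}(R_{\Rat}) \to G^{\diamond}_{\Rat}(R_{\Rat})$ carries $G(R)$ into $G^{\diamond}(R)$.

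Setting $R^{\diamond} = R \otimes_{\Int_{(p)}} \Reg{F,(p)}$, the self-duality of $L^{\diamond}$ combined with Subsection \ref{cliff:subsec:tensors} (applied to the self-dual $\Reg{F,(p)}$-lattice $L^{\diamond}$) gives
\[
G^{\diamond}(R) = \{z \in C(L^{\diamond})_{R^{\diamond}}^{\times} : z L^{\diamond}_{R^{\diamond}} z^{-1} = L^{\diamond}_{R^{\diamond}}\ \text{and}\ z^{*}z \in R^{\times}\}.
\]
Now fix $g \in G(R)$. By Lemma \ref{lattice:lem:grpoints}, $g \in C(L)_{R}^{\times}$ with $gL_Rg^{-1} = L_R$, $g$ acts trivially on $\dual{L}_R/L_R$, and $\nu(g) = g^{*}g \in R^{\times}$. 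The containment $g \in C(L^{\diamond})_{R^{\diamond}}^{\times}$ follows at once from the inclusion $C(L_{\Reg{F,(p)}}) \subset C(L^{\diamond})$ induced by $L_{\Reg{F,(p)}} \subset L^{\diamond}$. The spinor condition $g^{*}g \in R^{\times}$ agrees with $\nu(g) \in R^{\times}$ because the canonical anti-involution on $C(L^{\diamond})$ is uniquely characterized by restricting to the identity on $L^{\diamond}$, hence restricts to the anti-involution on $C(L_{\Reg{F,(p)}})$ used to define $\nu$.

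The main obstacle is showing $gL^{\diamond}_R g^{-1} = L^{\diamond}_R$. The strategy is to realize $L^{\diamond}/L_{\Reg{F,(p)}}$ inside the discriminant. Indeed, the self-duality $L^{\diamond} = \dual{L^{\diamond}}$ applied to $L_{\Reg{F,(p)}} \subset L^{\diamond}$ gives $L^{\diamond} \subset \dual{L}_{\Reg{F,(p)}}$, so that $L^{\diamond}/L_{\Reg{F,(p)}}$ embeds canonically into $\dual{L}_{\Reg{F,(p)}}/L_{\Reg{F,(p)}} = \disc(L) \otimes_{\Int_{(p)}} \Reg{F,(p)}$. Since $g$ acts trivially on $\disc(L)$, it acts trivially on $L^{\diamond}_R/L_{\Reg{F,(p)},R}$, giving $gvg^{-1} - v \in L_{\Reg{F,(p)},R} \subset L^{\diamond}_R$ for every $v \in L^{\diamond}_R$, and hence $gvg^{-1} \in L^{\diamond}_R$. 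Combined with the previous two observations this shows $g \in G^{\diamond}(R)$ and completes the proof.
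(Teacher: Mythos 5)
Your proof is correct, and it uses the same crucial tool as the paper --- reduction via \cite{bruhat_tits_ii}*{1.7.6} to an inclusion of $R$-points over strictly henselian discrete valuation rings --- but the two verifications of that inclusion run differently. The paper reduces the problem to constructing a map $G_{\Reg{F,(p)}}\to\GSpin(L^{\diamond})$ over $\Reg{F,(p)}$, and then applies (\ref{lattice:lem:grpoints}) to both sides to write $G(R)=G(R_{\Rat})\cap C(L)_R^{\times}$ and $\GSpin(L^{\diamond})(R)=G(R_{\Rat})\cap C(L^{\diamond})_R^{\times}$, so that the desired containment is immediate from $C(L_{\Reg{F,(p)}})\subset C(L^{\diamond})$; the lattice-stabilization and spinor-norm conditions are thereby absorbed into the Clifford-algebra characterization and never checked separately. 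You instead work directly over $\Int_{(p)}$ and unpack the definition of $G^{\diamond}(R)$, checking the Clifford membership, the spinor-norm constraint, and the stabilization of $L^{\diamond}$ by hand; your key step --- that $g$ acts trivially on $\disc(L)$, hence on $L^{\diamond}/L_{\Reg{F,(p)}}\subset\disc(L)\otimes\Reg{F,(p)}$, hence preserves $L^{\diamond}$ --- is precisely the content that the paper's citation of (\ref{lattice:lem:grpoints}) sweeps under the rug. Your version is longer but more transparent about where the discriminant-kernel property of $G$ is actually used; the paper's is shorter at the cost of re-routing through $\Reg{F,(p)}$. Two cosmetic remarks: you should write $(C(L^{\diamond})^+_{R^{\diamond}})^{\times}$ rather than $C(L^{\diamond})_{R^{\diamond}}^{\times}$ in the description of $G^{\diamond}(R)$ (though this is harmless since $g\in (C(L)^+_R)^\times$ automatically lands in the even part), and the ``if and only if'' framing of the reduction via 1.7.6 is stronger than what you need --- the ``if'' direction is what is actually invoked.
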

\begin{proof}
  It suffices to show that the isomorphism $G_F=\GSpin(F\otimes_{\Int_{(p)}}L)\xrightarrow{\simeq}\GSpin(F\otimes_{\Reg{F,(p)}}L^{\diamond})$ arises from a map $G_{\Reg{F,(p)}}\to\GSpin(L^{\diamond})$. But, for any flat $\Reg{F,(p)}$-algebra $R$, we have~(\ref{lattice:lem:grpoints}):
  \[
   G(R)=G(R_{\Rat})\cap\bigl(R\otimes_{\Int_{(p)}}C(L))^\times\;;\;\GSpin(L^{\diamond})(R)=G(R_{\Rat})\cap (R\otimes_{\Reg{F,(p)}}C(L^{\diamond}))^\times.
  \]
  So, as sub-groups of $G(R_{\Rat})$, we clearly have $G(R)\subset\GSpin(L^{\diamond})(R)$. The lemma now follows from \cite{bruhat_tits_ii}*{1.7.6}.
\end{proof}

\subsection{}\label{regular:subsec:auxshimuradatum}
Since $F$ is real quadratic, $G^{\diamond}_{\Real}$ can be identified with the sub-group of $G_{\Real}\times G_{\Real}$ consisting of pairs $(g_1,g_2)$ with $\nu(g_1)=\nu(g_2)$. As such, it acts on the space $X^{\diamond}=X\times X$ (notation as in (\ref{spin:subsec:complex})). It is easily seen that $(G^{\diamond}_{\Rat},X^{\diamond})$ is a Shimura datum with reflex field $\Rat$ and that we have an embedding of Shimura data
\[
 (G_{\Rat},X)\into (G^{\diamond}_{\Rat},X^{\diamond}).
\]

In fact, $(G^{\diamond}_{\Rat},X^{\diamond})$ is of Hodge type: Set $H^{\diamond}=C(L^{\diamond})$, viewed as a $\Int_{(p)}$-representation of $G^{\diamond}$ via left multiplication. For $\beta\in C(L^{\diamond})^\times$ with $\beta^*=-\beta$, we obtain an $\Reg{F,(p)}$-valued symplectic form $\psi_\beta$ on $H^{\diamond}$~(\ref{cliff:subsec:ksemb}). Composing this form with the trace map $\Tr:\Reg{F,(p)}\to\Int_{(p)}$ equips $H^{\diamond}$ with a $\Int_{(p)}$-valued symplectic form $\psi^{\diamond}_{\beta}$.

Set $\mathcal{G}^{\diamond}_{\beta}=\GSp(H^{\diamond},\psi^{\diamond}_{\beta})$; then the representation of $G^{\diamond}$ on $H^{\diamond}$ gives rise to an embedding $G^{\diamond}\into\mathcal{G}^{\diamond}_{\beta}$.

Now, let $\mathcal{X}^\diamond$ be the union of Siegel half-spaces attached to $(H^{\diamond},\psi^{\diamond}_{\beta})$: This is defined as $\mathcal{X}$ was in (\ref{spin:subsec:hodgeemb}). For an appropriate choice of $\beta$, we obtain an embedding of Shimura data
\[
 (G^{\diamond}_{\Rat},X^{\diamond})\into(\mathcal{G}^{\diamond}_{\beta,\Rat},\mathcal{X}^{\diamond}).
\]

For future use, observe that $G^{\diamond}\subset\mathcal{G}^{\diamond}_{\beta}$ can be realized as the stabilizer of the following objects:
\begin{itemize}
  \item The $C(L^{\diamond})\otimes\Reg{F,(p)}$-action on $H^{\diamond}$ via right multiplication;
  \item The $\Int/2\Int$-grading on $H^{\diamond}$;
  \item The $\Reg{F,(p)}$-linear sub-space $L^{\diamond}\subset(H^{\diamond})^{\otimes(1,1)}$.
\end{itemize}

\subsection{}\label{regular:subsec:auxshimvariety}
Let $K^{\diamond}\subset G^{\diamond}(\Adele_f)$ be a compact open sub-group containing $K\subset G(\Adele_f)$. Then we obtain a finite map of Shimura varieties:
\[
 \Sh_K\to \Sh_{K^{\diamond}}\coloneqq\Sh_{K^{\diamond}}(G^{\diamond}_{\Rat},X^{\diamond}).
\]

We will assume that the $p$-primary part $K^{\diamond}_p$ is the hyperspecial compact open $G^{\diamond}(\Int_p)\subset G^{\diamond}(\Rat_p)$. Note that (\ref{regular:lem:gembgdiamond}) shows that the embedding $G_{\Rat}\into G^{\diamond}_{\Rat}$ carries $K_p$ into $K^{\diamond}_p$. As above, we will consider the pro-variety:
\[
 \Sh_{K_p^{\diamond}}=\varprojlim_{K^{\diamond,p}\subset G^{\diamond}(\Adele_f^p)}\Sh_{K^{\diamond}_pK^{\diamond,p}}
\]

By the main theorem of~\cite{kis3}, $\Sh_{K_p^{\diamond}}$ admits an integral canonical model $\Ss_{K_p^{\diamond}}$ over $\Int_{(p)}$ to which the prime-to-$p$ Hecke action of $G^{\diamond}(\Adele_f^p)$ extends.

\subsection{}\label{regular:subsec:diamond}
The ideas from \cite{kis3} explained in Section~\ref{sec:self-dual} apply equally well to the integral model $\Ss_{K_p^{\diamond}}$. Instead of giving all the details once again, we will only discuss what they give us in this setting.

First, the symplectic embedding $G^{\diamond}\into\mathcal{G}^{\diamond}_{\beta}$ provides us with a polarized abelian scheme $(A^{\diamond},\lambda^{\diamond})$ over $\Ss_{K_p^{\diamond}}$. This abelian scheme is equipped with a right $\Reg{F,(p)}\otimes_{\Int_{(p)}}C(L^{\diamond})$-action, as well as a compatible $\Int/2\Int$-grading. Moreover, if $\bm{H}_{\dR}^{\diamond}$ is the degree-$1$ de Rham cohomology of $A^{\diamond}$, there is a $\Reg{F,(p)}$-linear quadratic sub-space $\bm{L}^{\diamond}_{\dR}\subset(\bm{H}^{\diamond}_{\dR})^{\otimes(1,1)}$. The symplectic form $\psi_{\beta}^\diamond$ on $H^{\diamond}$ gives rise to a symplectic form $\bm{\psi}^{\diamond}$ on $\bm{H}_{\dR}^{\diamond}$ with values in $\Reg{\Ss_{K^{\diamond}}}$.

There are two key properties these objects enjoy:

First, we have a $G^{\diamond}$-torsor $\mathcal{P}_{\dR}^{\diamond}\to \Ss_{K_p^{\diamond}}$. It is the functor that attaches to every $\Ss_{K_p^{\diamond}}$-scheme $T$ the set
\[
 \mathcal{P}_{\dR}^{\diamond}(T)=\begin{pmatrix}C(L^{\diamond})\otimes\Reg{F,(p)}\text{-equivariant $\Int/2\Int$-graded $\Reg{T}$-module isomorphisms }\\

 \xi^{\diamond}:H^{\diamond}\otimes_{\Int_{(p)}}\Reg{T}\xrightarrow{\simeq}\bm{H}^{\diamond}_{\dR,T}\\

 \text{carrying $\psi^{\diamond}_{\beta}\otimes 1$ to an $\Reg{T}^\times$-multiple of $\bm{\psi}^{\diamond}$, and}\\

 \text{carrying $L^{\diamond}\otimes\Reg{T}\subset (H^{\diamond})^{\otimes(1,1)}\otimes\Reg{T}$ onto $\bm{L}^{\diamond}_{\dR}\subset(\bm{H}^{\diamond}_{\dR})^{\otimes(1,1)}$}.\end{pmatrix}.
\]

Secondly, the Hodge filtration $F^1\bm{H}^{\diamond}_{\dR}\subset\bm{H}^{\diamond}_{\dR}$ induces a three-step $\Reg{F,(p)}\otimes\Reg{T}$-linear filtration
\[
 0\subset F^1\bm{L}^{\diamond}_{\dR}\subset F^0\bm{L}^{\diamond}_{\dR}=(F^1\bm{L}^{\diamond}_{\dR})^{\perp}\subset \bm{L}^{\diamond}_{\dR}.
\]
Here, $F^1\bm{L}^{\diamond}_{\dR}$ is isotropic and a local direct summand of rank $1$ as an $\Reg{F,(p)}\otimes\Reg{T}$-module: It governs the deformation theory of $\Ss_{K_p^{\diamond}}$. To wit, the map $\xi^{\diamond}\mapsto(\xi^{\diamond})^{-1}(F^1\bm{L}^{\diamond}_{\dR})$ produces a smooth $G^{\diamond}$-equivariant map $\mathcal{P}^{\diamond}_{\dR}\to \on{M}^{\loc}_{G^{\diamond}}$ (cf. (\ref{lattice:prp:mrefhealthy}) for notation), and there exist \'etale local sections $U\to\mathcal{P}^{\diamond}_{\dR}$ on $\Ss_{K^{\diamond}}$ such that the composition $U\to\mathcal{P}^{\diamond}_G\to\on{M}^{\loc}_{G^{\diamond}}$ is \'etale.

Let us extract what this means for tangent spaces: Suppose that we have a point $x_0\in\Ss_{K_p^{\diamond}}(k)$. Let $\widehat{U}^{\diamond}_{x_0}$ be the completion of $\Ss_{K_p^{\diamond},W(k)}$ at $x_0$. Then via Grothendieck-Messing theory (cf.~\ref{lifts:prp:nillifts}), we obtain a canonical bijection:
\begin{align}\label{regular:eqn:diamonddeform}
 \widehat{U}^{\diamond}_{x_0}\bigl(k[\epsilon]\bigr)\xrightarrow{\simeq}\begin{pmatrix}
    \text{Isotropic $\Reg{F,(p)}\otimes k[\epsilon]$-linear lines}\\\text{$F^1(\bm{L}^{\diamond}_{\dR,x_0}\otimes k[\epsilon])\subset\bm{L}^{\diamond}_{\dR,x_0}\otimes k[\epsilon]$ lifting $F^1\bm{L}^{\diamond}_{\dR,x_0}$}\end{pmatrix}.
\end{align}

\subsection{}\label{regular:subsec:lambdaisogeny}
Recall that $L^{\diamond}$ was obtained as the pull-back in $\Reg{F,(p)}\otimes \dual{L}$ of an isotropic line $I\subset\Field_{p^2}\otimes\on{disc}(L)$. We can canonically identify $\on{disc}(L)$ with the radical $N\subset L_{\Field_p}$ via the isomorphisms:
\[
 \dual{L}/L\xrightarrow{\simeq}p\dual{L}/pL=N\subset L_{\Field_p}.
\]
So we can view $I$ as an isotropic line in $N_{\Field_{p^2}}\subset\widetilde{L}_{\Field_{p^2}}$. Lift it to any isotropic line $\tilde{I}\subset\widetilde{L}_{\Int_{p^2}}$. Using the recipe in (\ref{cliff:subsec:parabolic}), this produces a two-step descending filtration $0\subset \widetilde{H}^1=\ker(\tilde{I})\subset\widetilde{H}_{\Int_{p^2}}$. Choose a co-character $\lambda:\Gm\to\Int_{p^2}\otimes\GSpin(L)$ that splits this filtration. Set
\[
\widetilde{L}_{\lambda}=\widetilde{L}_F\cap\lambda(p)^{-1}\widetilde{L}_{\Int_{p^2}}.
\]
This is abstractly isometric to $\widetilde{L}_{\Reg{F,(p)}}$ and so is a self-dual quadratic space over $\Reg{F,(p)}$.

\begin{lem}\label{regular:lem:lambdaisogeny}
The embedding $L^{\diamond}\into L_F\into\widetilde{L}_F$ factors through $\widetilde{L}_{\lambda}$.
\end{lem}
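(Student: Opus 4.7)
The plan is to reduce the assertion to a purely lattice-theoretic statement after $p$-adic completion, and then to verify the required containment on a single explicit generator of $L^{\diamond}/L_{\Reg{F,(p)}}$.

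First, since both $\widetilde{L}_{F}$ and $\lambda(p)^{-1}\widetilde{L}_{\Int_{p^2}}$ contain $\widetilde{L}_{\Reg{F,(p)}}$, and since $\Reg{F,(p)}\to \Int_{p^2}$ is the completion at the unique prime above $p$, it suffices to check that, after extending scalars to $\Int_{p^2}$, one has
\[
 L^{\diamond}\otimes_{\Reg{F,(p)}}\Int_{p^2}\subset\lambda(p)^{-1}\widetilde{L}_{\Int_{p^2}},
\]
i.e.\ that $\lambda(p)\cdot L^{\diamond}_{\Int_{p^2}}\subset\widetilde{L}_{\Int_{p^2}}$.

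Next, recall from the construction that $L^{\diamond}$ is the pre-image in $\Reg{F,(p)}\otimes\dual{L}$ of the isotropic line $I\subset\disc(L)_{\Field_{p^2}}$, and that $I\subset N_{\Field_{p^2}}\subset L_{\Field_{p^2}}$ under the identification $p\dual{L}/pL\xrightarrow{\simeq}\disc(L)=N$. Choosing a generator $v$ of the isotropic line $\widetilde{L}^1\subset L_{\Int_{p^2}}$ lifting $I$, one checks directly that the element $p^{-1}v\in L_{F}$ lies in $L^{\diamond}_{\Int_{p^2}}$ and that, together with $L_{\Int_{p^2}}$, it generates $L^{\diamond}_{\Int_{p^2}}$ as an $\Int_{p^2}$-module. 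Hence it is enough to verify that $\lambda(p)$ carries both $L_{\Int_{p^2}}$ and $p^{-1}v$ into $\widetilde{L}_{\Int_{p^2}}$.

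For the first, $\lambda$ factors through $G\subset\widetilde{G}$, so it preserves both $L_{\Int_{p^2}}$ (via its action on $\GSpin(L)$) and $\Lambda_{\Int_{p^2}}$ (on which $G$ acts trivially, by the definition of $G$ as the pointwise stabilizer of $\Lambda$ in $\widetilde{G}$). So $\lambda(p)\cdot \widetilde{L}_{\Int_{p^2}}=\widetilde{L}_{\Int_{p^2}}$; in particular $\lambda(p)L_{\Int_{p^2}}\subset\widetilde{L}_{\Int_{p^2}}$. For the second, since $\lambda$ splits the filtration $0\subset \widetilde{L}^1\subset(\widetilde{L}^1)^\perp\subset L_{\Int_{p^2}}$, the element $v\in\widetilde{L}^1$ lies in the weight-$1$ eigenspace of $\lambda$, so $\lambda(p)v=pv$ and therefore
\[
 \lambda(p)(p^{-1}v)=p^{-1}\cdot pv=v\in L_{\Int_{p^2}}\subset\widetilde{L}_{\Int_{p^2}}.
\]
Combining these two computations yields $\lambda(p)L^{\diamond}_{\Int_{p^2}}\subset\widetilde{L}_{\Int_{p^2}}$, which gives the desired factorization.

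The only mild subtlety is to ensure that $\widetilde{L}^1$ can be chosen inside $L_{\Int_{p^2}}$ (so that $\lambda$ genuinely lands in $\GSpin(L)$ rather than only in $\GSpin(\widetilde{L})$). This is immediate from the explicit normal form $\sum X_i^2+pYZ$ for $L_{\Int_{p^2}}$ produced in the proof of Proposition \ref{lattice:prp:mlochealthy}, which exhibits isotropic lifts in $L_{\Int_{p^2}}$ of both isotropic lines in $N_{\Field_{p^2}}$.
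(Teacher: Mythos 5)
Your overall outline is the same as the paper's: reduce to $\Int_{p^2}$, write $L^{\diamond}_{\Int_{p^2}}=p^{-1}\tilde I+L_{\Int_{p^2}}$ where $\tilde I=\widetilde L^1$, and check the two generators separately. The computation for $p^{-1}\tilde I$ is fine: $\lambda(p)$ acts by $p$ on $\tilde I$, so $p^{-1}\tilde I\subset\lambda(p)^{-1}\widetilde L_{\Int_{p^2}}$.

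However, the step handling $L_{\Int_{p^2}}$ contains a genuine error. You claim that because $\lambda$ factors through $G\subset\widetilde G$, the element $\lambda(p)$ preserves $L_{\Int_{p^2}}$ and $\Lambda_{\Int_{p^2}}$, and hence $\lambda(p)\widetilde L_{\Int_{p^2}}=\widetilde L_{\Int_{p^2}}$. This is false: $p$ is not a unit in $\Int_{p^2}$, so $\lambda(p)$ lies in $G(\Rat_{p^2})$ but not in $G(\Int_{p^2})$, and it does not stabilize the lattice. Concretely, $\lambda(p)$ acts by $p$, $1$, $p^{-1}$ on $\tilde I$, $(\tilde I\oplus\tilde I')^{\perp}$, $\tilde I'$ respectively, so $\lambda(p)\widetilde L_{\Int_{p^2}}=p\tilde I\oplus(\tilde I\oplus\tilde I')^{\perp}\oplus p^{-1}\tilde I'\neq\widetilde L_{\Int_{p^2}}$. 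In fact, if $\lambda(p)$ did preserve $\widetilde L_{\Int_{p^2}}$, then $\widetilde L_\lambda$ would equal $\widetilde L_{\Reg{F,(p)}}$, and the lemma would be \emph{false}, since $L^{\diamond}$ strictly contains $L_{\Reg{F,(p)}}$ at $p$. So the conclusion of the step you are trying to prove, $\lambda(p)L_{\Int_{p^2}}\subset\widetilde L_{\Int_{p^2}}$, is exactly the nontrivial content, and it needs an actual argument.

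The correct reasoning (which is the one the paper gives) is to use that $I$ is contained in the radical $N$ of $L_{\Field_{p^2}}$. Writing $v\in L_{\Int_{p^2}}$ as $v=v_1+v_0+v_{-1}$ in the $\lambda$-eigenspace decomposition of $\widetilde L_{\Int_{p^2}}$, the component $v_{-1}\in\tilde I'$ is detected by pairing against $\tilde I$. Since $\tilde I\bmod p$ lies in $N$, which is the radical of $L_{\Field_{p^2}}$, the pairing $[v,\tilde I]_{\widetilde Q}$ is divisible by $p$; hence $v_{-1}\in p\tilde I'$. Then $\lambda(p)v=pv_1+v_0+p^{-1}v_{-1}$ has all three components integral, so $\lambda(p)v\in\widetilde L_{\Int_{p^2}}$. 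Without this use of the radical, the step simply does not go through.
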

\begin{proof}
  It suffices to show this after changing scalars along $\Reg{F,(p)}\into \Int_{p^2}$. By construction, $L^{\diamond}_{\Int_{p^2}}=p^{-1}\tilde{I}+L_{\Int_{p^2}}\subset\widetilde{L}_{\Rat_{p^2}}$.

  The co-character $\lambda$ produces a splitting $\widetilde{L}_{\Int_{p^2}}=\tilde{I}\oplus (\tilde{I}\oplus\tilde{I}')^{\perp}\oplus\tilde{I}'$. Here, $\tilde{I}'$ pairs non-degenerately with $\tilde{I}$. Observe that $\tilde{I}$ (resp. $\tilde{I}'$) is the eigenspace on which $\lambda(p)$ acts as multiplication by $p$ (resp. $p^{-1}$). Therefore, $p^{-1}\tilde{I}$ is contained in $\widetilde{L}_{\lambda}\otimes_{\Reg{F,(p)}}\Int_{p^2}=\lambda(p)^{-1}\widetilde{L}_{\Int_{p^2}}$.

  Since $I$ pairs trivially with $L_{\Field_{p^2}}$, we have $L_{\Int_{p^2}}\cap\tilde{I}'\subset p\tilde{I}'$. Therefore, since $\lambda(p)$ acts trivially on $(\tilde{I}\oplus\tilde{I}')^{\perp}$, we find that $L_{\Int_{p^2}}$ is also contained in $\lambda(p)^{-1}\widetilde{L}_{\Int_{p^2}}$.
\end{proof}

\subsection{}
Set $\Lambda^{\diamond}=(L^{\diamond})^{\perp}\subset\widetilde{L}_{\lambda}$ and $\widetilde{H}_{\lambda}=C(\widetilde{L}_{\lambda})$. The orthogonal decomposition $\widetilde{L}_{\lambda}=\Lambda^{\diamond}\perp L^{\diamond}$ affords an action of $G^{\diamond}$ on both $\widetilde{H}_{\lambda}$ and $\widetilde{L}_{\lambda}$: the action on $\widetilde{H}_{\lambda}$ is via left multiplication, and that on $\widetilde{L}_{\lambda}$ is via the trivial action on $\Lambda^{\diamond}$.

Now, we can view $\widetilde{H}_{\lambda}$ as a $G$-representation via the natural map $G\to G^{\diamond}$. As such, it admits a different description: Observe that $I$ is contained in $L^{\perp}_{\Field_{p^2}}=\Lambda_{\Field_{p^2}}\subset\widetilde{L}_{\Field_{p^2}}$. Let $\tilde{v}\in\Lambda\otimes\Reg{F,(p)}$ be any element mapping to a generator of $I$. Since $\tilde{v}$ belongs to $p\dual{\Lambda}_{\Reg{F,(p)}}\backslash p\Lambda_{\Reg{F,(p)}}$, we must have $\nu_p(\tilde{v}\circ\tilde{v})=1$. In particular, left multiplication by $\tilde{v}$ is an automorphism of $\widetilde{H}_F$.
\begin{lem}\label{regular:lem:visom}
Left multiplication by $\tilde{v}^{-1}$ induces a $G$-equivariant isomorphism $\widetilde{H}_{\Reg{F,(p)}}\xrightarrow{\simeq}\widetilde{H}_{\lambda}$.
\end{lem}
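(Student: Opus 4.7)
The plan is to verify bijectivity and $G$-equivariance separately, by working inside the common ambient algebra $C(\widetilde{L}_F) = \widetilde{H}_F$, into which both $\widetilde{H}_{\Reg{F,(p)}} = C(\widetilde{L}_{\Reg{F,(p)}})$ and $\widetilde{H}_\lambda = C(\widetilde{L}_\lambda)$ embed as $\Reg{F,(p)}$-subalgebras via the natural inclusions $\widetilde{L}_{\Reg{F,(p)}}, \widetilde{L}_\lambda \subset \widetilde{L}_F$.

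For $G$-equivariance, the key input is equation~\eqref{lattice:eqn:commutant} in the proof of~\eqref{lattice:lem:grpoints}, which characterizes $C(L)^+$ as the centralizer of $\Lambda$ inside $C(\widetilde{L})^+$. Since $G \subset C(L)^+$ and $\tilde{v} \in \Lambda \otimes \Reg{F,(p)}$, every element of $G(R)$ commutes with $\tilde{v}$, and hence with $\tilde{v}^{-1} = \tilde{v}/\widetilde{Q}(\tilde{v})$, in $\widetilde{H}_F \otimes R$. Both the $G$-action on $\widetilde{H}_{\Reg{F,(p)}}$ (via $G \hookrightarrow C(\widetilde{L})^+$) and the $G$-action on $\widetilde{H}_\lambda$ (via $G \to G^\diamond \hookrightarrow C(L^\diamond)^+ \hookrightarrow C(\widetilde{L}_\lambda)^+$) realize each $g \in G$ as the \emph{same} element of $C(\widetilde{L}_F)$, using the canonical identification $L_F = F \otimes_{\Reg{F,(p)}} L^\diamond$. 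Commutation with $\tilde{v}^{-1}$ then yields the claimed $G$-equivariance.

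For bijectivity, I would argue after $p$-adic completion. Using the $\lambda$-grading $C(\widetilde{L}_{\Int_{p^2}}) = \bigoplus_k C^{(k)}$, one obtains the identification $\widetilde{H}_\lambda \otimes \Int_{p^2} = C(\lambda(p)^{-1} \widetilde{L}_{\Int_{p^2}}) = \bigoplus_k p^{-k} C^{(k)}$. Decomposing $\tilde{v} = v_1 + v_0 + v_{-1}$ along the $\lambda$-eigenspaces, the constraint that $\tilde{v}$ reduces mod $p$ to a generator of $I \subset \tilde{L}^1 \otimes \Field_{p^2}$ forces $v_0 \in p \tilde{L}^0$ and $v_{-1} \in p \tilde{L}^{-1}$, with $v_1 \in \tilde{L}^1$ a lift of the generator of $I$. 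Combining this with $\nu_p(\widetilde{Q}(\tilde{v})) = 1$, one computes $\tilde{v}^{-1} = \tilde{v}/\widetilde{Q}(\tilde{v})$ directly and verifies that $\tilde{v}^{-1}$ lands in $\widetilde{L}_\lambda \otimes \Int_{p^2}$. Consequently, left multiplication by $\tilde{v}^{-1}$ carries $C(\widetilde{L}_{\Int_{p^2}})$ into $C(\widetilde{L}_\lambda \otimes \Int_{p^2})$. A parallel computation (using that $\tilde{v} \in \widetilde{L}_{\Reg{F,(p)}} \subset \widetilde{L}_\lambda$) shows that left multiplication by $\tilde{v}$ sends $\widetilde{H}_\lambda$ into $\widetilde{H}_{\Reg{F,(p)}}$, and since $\tilde{v} \cdot \tilde{v}^{-1} = \tilde{v}^{-1} \cdot \tilde{v} = 1$ in $\widetilde{H}_F$, the two maps are mutually inverse.

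The main technical obstacle is the verification that $\tilde{v}^{-1}$ itself lies in $\widetilde{L}_\lambda$ (rather than merely in $p^{-1} \widetilde{L}_{\Reg{F,(p)}}$). This requires careful accounting of the $p$-adic valuations of the three graded components $v_1, v_0, v_{-1}$ of $\tilde{v}$, and uses essentially that $\widetilde{Q}(\tilde{v})$ has valuation exactly $1$—which in turn is forced by the hypothesis $\tilde{v} \in p\dual{\Lambda}_{\Reg{F,(p)}} \setminus p\Lambda_{\Reg{F,(p)}}$.
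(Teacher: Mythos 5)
Your $G$-equivariance argument is correct and coincides with the paper's: since $G$ fixes $\Lambda$ point-wise, it commutes with $\tilde{v}$ (and hence with $\tilde{v}^{-1}$) inside the Clifford algebra.

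The bijectivity argument, however, has a genuine gap, because the pivotal claim that $\tilde{v}^{-1}$ lies in $\widetilde{L}_\lambda \otimes \Int_{p^2}$ is \emph{false} --- and the valuation bookkeeping you identify as the ``main technical obstacle'' in fact refutes the claim rather than confirming it. With $\tilde{v} = v_1 + v_0 + v_{-1}$ in the $\lambda$-eigenspaces, one has $\widetilde{Q}(\tilde{v}) = \widetilde{Q}(v_0) + [v_1, v_{-1}]_{\widetilde{Q}}$. Since $v_0 \in p\widetilde{L}^0$ gives $\nu_p(\widetilde{Q}(v_0)) \geq 2$, the hypothesis $\nu_p(\widetilde{Q}(\tilde{v})) = 1$ \emph{forces} $\nu_p([v_1, v_{-1}]_{\widetilde{Q}}) = 1$ exactly. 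Because $v_1$ generates $\widetilde{L}^1$ and the pairing between $\widetilde{L}^1$ and $\widetilde{L}^{-1}$ is perfect, this gives $v_{-1} \in p\widetilde{L}^{-1} \setminus p^2\widetilde{L}^{-1}$, so the $\widetilde{L}^{-1}$-component of $\tilde{v}^{-1} = \tilde{v}/\widetilde{Q}(\tilde{v})$ is a generator of $\widetilde{L}^{-1}$; but $\widetilde{L}_\lambda \otimes \Int_{p^2} = p^{-1}\widetilde{L}^1 \oplus \widetilde{L}^0 \oplus p\widetilde{L}^{-1}$ only meets $\widetilde{L}^{-1}\otimes\Rat_{p^2}$ in $p\widetilde{L}^{-1}$. (Your parenthetical $\widetilde{L}_{\Reg{F,(p)}} \subset \widetilde{L}_\lambda$ fails for the same reason.) Separately, even if $\tilde{v}^{-1}$ did lie in $\widetilde{L}_\lambda$, the deduction ``left multiplication by $\tilde{v}^{-1}$ carries $C(\widetilde{L}_{\Int_{p^2}})$ into $C(\widetilde{L}_\lambda\otimes\Int_{p^2})$'' would still be a non sequitur, since $\widetilde{H}_{\Int_{p^2}} \not\subset \widetilde{H}_\lambda$: multiplication by an element of $\widetilde{L}_\lambda$ preserves $\widetilde{H}_\lambda$, but says nothing about where a different lattice is sent. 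The paper sidesteps both difficulties by taking the coarser mod-$p$ decomposition $\tilde{v} = \tilde{u} + p\tilde{w}$ with $\tilde{u}$ a generator of $\widetilde{L}^1$, extracting the unit $\alpha = [\tilde{u}, \tilde{w}]_{\widetilde{Q}} \in \Int_{p^2}^\times$, and computing $\tilde{v}^{-1}z$ directly for a general $z = \tilde{u}z_1 + z_2$ with $z_1, z_2 \in \widetilde{H}^0$, thereby describing the image lattice $\tilde{v}^{-1}\widetilde{H}_{\Int_{p^2}}$ explicitly; reducing to a statement about where $\tilde{v}^{-1}$ itself lies is not an available shortcut.
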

\begin{proof}
  Here, both $\Reg{F,(p)}$-modules in question are being viewed as $G$-invariant lattices in $\widetilde{H}\otimes F$: For $\widetilde{H}\otimes\Reg{F,(p)}$, this is in the evident way; and for $\widetilde{H}_{\lambda}$, it is via the natural isomorphism $C(\widetilde{L}_{\lambda})[p^{-1}]\xrightarrow{\simeq}C(\widetilde{L})_F$, induced by the identification $\widetilde{L}_{\lambda}[p^{-1}]=\widetilde{L}_F$.

  Since $G$ stabilizes $\Lambda$ point-wise, multiplication by $\tilde{v}^{-1}$ is $G$-equivariant. We only have to check that it carries $\widetilde{H}_{\Reg{F,(p)}}$ onto $\widetilde{H}_{\lambda}$. We can do this over $\Int_{p^2}$.

  The co-character $\lambda$ provides a splitting $\widetilde{H}_{\Int_{p^2}}=\widetilde{H}^0\oplus\widetilde{H}^1$. Write $\tilde{v}=\tilde{u}+p\tilde{w}\in\widetilde{L}_{\Int_{p^2}}$, where $\tilde{u}\circ\tilde{u}=0$ and $\widetilde{H}^1=\ker(\tilde{u})=\im(\tilde{u})$. Note that multiplication by $\tilde{u}$ is an isomorphism $\widetilde{H}^0\xrightarrow{\simeq}\widetilde{H}^1$. We also have:
  \[
   [\tilde{v},\tilde{v}]_{\widetilde{Q}}=[\tilde{u}+p\tilde{w},\tilde{u}+p\tilde{w}]_{\widetilde{Q}}=2p[\tilde{u},\tilde{w}]_{\widetilde{Q}}+p^2[\tilde{w},\tilde{w}]_{\widetilde{Q}}.
  \]
  Since $\nu_p\bigl([\tilde{v},\tilde{v}]_{\widetilde{Q}}\bigr)=\nu_p(\tilde{v}\circ\tilde{v})=1$, $\alpha\coloneqq[\tilde{u},\tilde{w}]_{\widetilde{Q}}$ must belong to $\Int_{p^2}^\times$.

  Let $\beta\in\Int^\times_{(p)}$ be such that $p\beta=\tilde{v}\circ\tilde{v}$. Given $z\in\widetilde{H}_{\Int_{(p^2)}}$, we can write it uniquely in the form $\tilde{u}z_1+z_2$, for $z_1,z_2\in\widetilde{H}^0$. Then we find:
  \[
   \tilde{v}^{-1}z=\beta^{-1}p^{-1}\tilde{v}(\tilde{u}z_1+z_2)=\beta^{-1}(p^{-1}\tilde{u}+\tilde{w})(\tilde{u}z_1+z_2)=\beta^{-1}p^{-1}\tilde{u}(z_2-p\tilde{w}z_1)+\beta^{-1}\alpha z_1.
  \]
  From this description, we find that $\tilde{v}^{-1}\widetilde{H}_{\Int_{p^2}}=p^{-1}\widetilde{H}^1+\widetilde{H}^0$, which is precisely the space generated by $\widetilde{H}_{\lambda}$ in $\widetilde{H}_{\Rat_{p^2}}$.
\end{proof}

\subsection{}
Since $\Ss_{K_p}$ is locally healthy, the extension property of $\Ss_{K_p^{\diamond}}$ tells us that the map $\Sh_{K_p}\to\Sh_{K_p^{\diamond}}$ extends to a map $\Ss_{K_p}\to\Ss_{K_p^{\diamond}}$. Fix a compact open sub-group $K^{p,\diamond}\subset G^{\diamond}(\Adele_f^p)$, and let $K^p=G(\Adele_f^p)\cap K^{\diamond,p}$. Set $K=K_pK^p$ and $K^{\diamond}=K^{\diamond}_pK^{\diamond,p}$; we will assume that these sub-groups are neat. We then obtain a map $\Ss_K\to\Ss_{K^{\diamond}}\coloneqq\Ss_{K_p^{\diamond}}/K^{\diamond,p}$ of quasi-projective $\Int_{(p)}$-schemes.

Using the argument in (\ref{special:lem:ks2ks}) and then applying \cite{falchai}*{I.2.7}, we obtain an isomorphism of abelian schemes over $\Ss_K$:
\[
\bm{j}:\SHom_{C(L^{\diamond})}\bigl(C(\widetilde{L}_{\lambda}),A^{\diamond}_{\Ss_K}\bigr)\xrightarrow{\simeq}\SHom\bigl(\Reg{F,(p)},\widetilde{A}^\KS_{\Ss_K}\bigr)
\]
This is characterized by the property that its cohomological realization over $\Sh_K$ is the isomorphism $\bb{V}_{\Rat}(\widetilde{H}_{\lambda})\xrightarrow{\simeq}\bb{V}_{\Rat}(\widetilde{H}\otimes\Reg{F,(p)})$ obtained by applying the functor $\bb{V}_{\Rat}$ to the isomorphism from (\ref{regular:lem:visom}).

\begin{lem}\label{regular:lem:torsorsemb}
There is a canonical $G$-equivariant map of $\Ss_K$-schemes $\gamma:\mathcal{P}_{\dR,\Lambda}\rvert_{\Ss_K}\to\mathcal{P}_{\dR}^{\diamond}\rvert_{\Ss_K}$. It is characterized by the property that, given an $\Ss_K$-scheme $T$ and a section $\tilde{\xi}\in\mathcal{P}_{\dR,\Lambda}(T)$, we have a commuting diagram of isomorphisms:
\begin{diagram}
  (\widetilde{H}\otimes\Reg{T})\otimes\Reg{F,(p)}&\rTo^{\simeq}&(\widetilde{H}\otimes\Reg{F,(p)})\otimes\Reg{T}&\rTo^{\simeq}_{(\ref{regular:lem:visom})}&\widetilde{H}_{\lambda}\otimes\Reg{T}&\rTo^{\simeq}&(H^{\diamond}\otimes\Reg{T})\otimes_{C(L^{\diamond})}C(\widetilde{L}_{\lambda})\\
  \dTo^{\simeq}_{\tilde{\xi}\otimes 1}&&&&&&\dTo^{\simeq}_{\gamma(\tilde{\xi})\otimes 1}\\
  \widetilde{\bm{H}}_{\dR}\rvert_T\otimes   \Reg{F,(p)}&&&\rTo_{\simeq}^{\bm{j}_{\dR}}&&&\bm{H}_{\dR}^{\diamond}\rvert_T\otimes_{C(L^{\diamond})}C(\widetilde{L}_{\lambda}).
\end{diagram}
\end{lem}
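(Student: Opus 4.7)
The plan is to construct $\gamma(\tilde{\xi})$ by descent along the extension $C(L^{\diamond}) \hookrightarrow C(\widetilde{L}_{\lambda})$, using the orthogonal decomposition $\widetilde{L}_{\lambda} = \Lambda^{\diamond} \perp L^{\diamond}$. By Corollaire~3 of~\cite{bourbaki_algebre_ix}*{\S 9}, this decomposition realizes $C(\widetilde{L}_{\lambda})$ as a free right $C(L^{\diamond})$-module; the identification $\widetilde{H}_{\lambda} = H^{\diamond} \otimes_{C(L^{\diamond})} C(\widetilde{L}_{\lambda})$ together with the right $C(\widetilde{L})_F$-equivariant Lemma~\ref{regular:lem:visom} then supplies the top row of the diagram. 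Passing to de Rham cohomology in $\bm{j}$ and applying the Serre tensor formula of~(\ref{special:subsec:serretensor}) on both sides yields a right $C(\widetilde{L}_{\lambda})$-equivariant, $\Int/2\Int$-graded isomorphism
\[
  \bm{j}_{\dR}: \widetilde{\bm{H}}_{\dR}\vert_T \otimes_{\Int_{(p)}} \Reg{F,(p)} \xrightarrow{\simeq} \bm{H}^{\diamond}_{\dR}\vert_T \otimes_{C(L^{\diamond})} C(\widetilde{L}_{\lambda}).
\]

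Going around the top and right of the diagram then produces an isomorphism
\[
  \Theta: (H^{\diamond} \otimes \Reg{T}) \otimes_{C(L^{\diamond})} C(\widetilde{L}_{\lambda}) \xrightarrow{\simeq} \bm{H}^{\diamond}_{\dR}\vert_T \otimes_{C(L^{\diamond})} C(\widetilde{L}_{\lambda})
\]
of right $C(\widetilde{L}_{\lambda}) \otimes \Reg{T}$-modules: the right equivariance follows because $\tilde{\xi}$ is right $C(\widetilde{L})$-equivariant (as a section of $\mathcal{P}_{\dR,\Lambda}$) and every other ingredient is right $C(\widetilde{L}_{\lambda})$-equivariant. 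Since source and target are each obtained by extending right $C(L^{\diamond}) \otimes \Reg{T}$-modules along the faithfully flat embedding $C(L^{\diamond}) \hookrightarrow C(\widetilde{L}_{\lambda})$, the map $\Theta$ descends uniquely to a right $C(L^{\diamond}) \otimes \Reg{F,(p)}$-linear isomorphism $\gamma(\tilde{\xi}): H^{\diamond} \otimes \Reg{T} \xrightarrow{\simeq} \bm{H}^{\diamond}_{\dR}\vert_T$. This both constructs $\gamma$ and establishes its characterization by the diagram.

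It then remains to verify that $\gamma(\tilde{\xi}) \in \mathcal{P}^{\diamond}_{\dR}(T)$ and that $\gamma$ is $G$-equivariant. Grading compatibility is immediate. The remaining conditions---that $\gamma(\tilde{\xi})$ carries $L^{\diamond} \otimes \Reg{T}$ onto $\bm{L}^{\diamond}_{\dR}\vert_T$ and sends $\psi_{\beta}^{\diamond} \otimes 1$ to an $\Reg{T}^{\times}$-multiple of $\bm{\psi}^{\diamond}$---are closed conditions on $T$, so by flatness of $\Ss_K$ over $\Int_{(p)}$ they may be checked after inverting $p$. On the generic fiber both $\widetilde{\bm{L}}_{\dR,\Rat}$ and $\bm{L}^{\diamond}_{\dR,\Rat}$ admit direct descriptions via the Hodge-theoretic constructions of~(\ref{spin:subsec:hodgetorsor}) and~(\ref{regular:subsec:diamond}), and the claim reduces to verifying that multiplication by $\tilde{v}^{-1}$ intertwines the relevant tensors; this is exactly what the specific construction of $\tilde{v}$ and $\widetilde{L}_{\lambda}$ in~(\ref{regular:subsec:lambdaisogeny}) provides. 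The $G$-equivariance of $\gamma$ then follows from the $G$-equivariance of every map appearing in the diagram combined with Lemma~\ref{regular:lem:gembgdiamond}. The main obstacle, and the only non-formal step, is this generic-fiber intertwining, where the \emph{ad hoc}-looking choice of $\widetilde{L}_{\lambda}$ pays off.
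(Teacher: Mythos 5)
The weak point of your argument is the assertion that $\Theta$ ``descends uniquely'' along $C(L^{\diamond})\hookrightarrow C(\widetilde{L}_{\lambda})$. Faithfully flat extension of scalars gives an \emph{injection} $\Hom_{C(L^{\diamond})\otimes\Reg{T}}(H^{\diamond}\otimes\Reg{T},\bm{H}^{\diamond}_{\dR}\rvert_T)\into\Hom_{C(\widetilde{L}_{\lambda})\otimes\Reg{T}}(\cdot,\cdot)$ on the extended modules, so uniqueness of a descent is automatic --- but it does \emph{not} give surjectivity, and $\Theta$ need not lie in the image. (Toy example: for $A=\Int\subset B=\Int[i]$ and $M=N=\Int$, multiplication by $i$ is a $B$-linear automorphism of $M_B$ that descends to no $A$-linear map $M\to N$.) By adjunction a $C(\widetilde{L}_{\lambda})$-linear map $M\otimes_{C(L^{\diamond})}C(\widetilde{L}_{\lambda})\to N\otimes_{C(L^{\diamond})}C(\widetilde{L}_{\lambda})$ is merely a $C(L^{\diamond})$-linear map $M\to N\otimes_{C(L^{\diamond})}C(\widetilde{L}_{\lambda})$, and landing in $N\otimes 1$ is a non-trivial extra condition which you have not verified. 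The $\Reg{F,(p)}$-linearity and grading you invoke do not close this gap, since they are already absorbed into the $C(L^{\diamond})$-module structure.

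This is exactly the non-formal content of the lemma, and the paper handles it differently: it observes that the \emph{existence} of a section $\gamma(\tilde{\xi})\in\mathcal{P}^{\diamond}_{\dR}(T)$ with $\tilde{\gamma}(\tilde{\xi})=\gamma(\tilde{\xi})\otimes 1$ is a \emph{closed condition} on $T$, reduces via flatness of $T=\mathcal{P}_{\dR,\Lambda}\rvert_{\Ss_K}$ over $\Int_{(p)}$ to checking it over $T_{\Rat}$, and there constructs $\gamma(\tilde{\xi})$ directly from the restriction $\xi:H\otimes\Reg{T_{\Rat}}\xrightarrow{\simeq}\bm{H}_{\dR,\Rat}\rvert_{T_{\Rat}}$ via the canonical $G$-equivariant identification $H_F=H^{\diamond}[p^{-1}]$. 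Your second paragraph does invoke the same closed-condition/generic-fiber strategy, but only for the \emph{residual} conditions defining $\mathcal{P}^{\diamond}_{\dR}$; you need to fold the very existence of the descent into that argument rather than take it for granted. Once you do, the rest of your sketch --- the Serre tensor identification of $\bm{j}_{\dR}$, the $C(\widetilde{L}_{\lambda})$-equivariance of the top row, and the generic-fiber verification of the $\mathcal{P}^{\diamond}_{\dR}$-conditions --- goes through and aligns with the paper.
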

\begin{proof}
Given $T$ and $\tilde{\xi}$ as in the lemma, there is a unique isomorphism of vector bundles:
\[
 \tilde{\gamma}(\tilde{\xi}):(H^{\diamond}\otimes\Reg{T})\otimes_{C(L^{\diamond})}C(\widetilde{L}_{\lambda})\xrightarrow{\simeq}\bm{H}_{\dR}^{\diamond}\rvert_T\otimes_{C(L^{\diamond})}C(\widetilde{L}_{\lambda})
\]
making the diagram commute. The existence of a section $\gamma(\tilde{\xi})\in\mathcal{P}_{\dR,\Lambda}(T)$ such that $\tilde{\gamma}(\tilde{\xi})=\gamma(\tilde{\xi})\otimes 1$ is a closed condition on $T$. The lemma amounts to showing that this condition holds everywhere for $T=\mathcal{P}_{\dR,\Lambda}\vert_{\Ss_K}$, with $\xi\in\mathcal{P}_{\dR,\Lambda}(T)$ the tautological section. In this case, since $T$ is flat over $\Int_{(p)}$, it is enough to check that the condition holds everywhere over $T_{\Rat}$.

But then $\tilde{\xi}$ restricts to a $G$-structure preserving isomorphism $\xi:H\otimes\Reg{T_{\Rat}}\xrightarrow{\simeq}\bm{H}_{\dR,\Rat}\vert_{T_{\Rat}}$.

Set:
\[
 \gamma(\tilde{\xi}):H^{\diamond}\otimes\Reg{T_{\Rat}}\xrightarrow{\simeq}(H\otimes\Reg{T_{\Rat}})\otimes F\xrightarrow[\xi\otimes 1]{\simeq}\bm{H}_{\dR,\Rat}\vert_{T_{\Rat}}\otimes F\xrightarrow{\simeq}\bm{H}^{\diamond}_{\dR}\rvert_{T_{\Rat}}.
\]
Here, the last isomorphism arises from the isomorphism of tuples $\bb{V}_{\Rat}(H_F)\xrightarrow{\simeq}\bb{V}_{\Rat}(H^{\diamond}[p^{-1}])$ obtained from the canonical $G$-equivariant identification $H_F=H^{\diamond}[p^{-1}]$.

It is now an exercise to check that $\gamma(\tilde{\xi})$ is a section of $\mathcal{P}_{\dR}^{\diamond}(T_{\Rat})$ with $\tilde{\gamma}(\tilde{\xi})=\gamma(\tilde{\xi}\otimes 1)$.
\end{proof}

\begin{prp}\label{regular:prp:mreflocmodel}
The product map $\iota:\Ss_{K}\to\Ss_{K}^{\on{nv}}\times_{\Spec\Int_{(p)}}\Ss_{K^{\diamond}}$ is an isomorphism onto the normalization of its image. In particular, $\Ss_K$ is a quasi-projective scheme over $\Int_{(p)}$.
\end{prp}
\begin{proof}
It is enough to show that $\iota$ is quasi-finite. Indeed, since the map $\Ss_{K}\to\Ss^{\on{nv}}_{K}$ is proper by construction, so is $\iota$. Therefore, if $\iota$ is quasi-finite, it is finite, and we conclude from Zariski's main theorem that it must be an isomorphism onto the normalization of its image.

We will actually show that $\iota$ is unramified.

Consider the pairs $(\bm{L}^{\on{ref}}_{\dR},F^1\bm{L}^{\on{ref}}_{\dR})$ and $(\bm{L}^{\diamond}_{\dR},F^1\bm{L}^{\diamond}_{\dR})\rvert_{\Ss_K}$: They both consist of a vector bundle over $\Reg{F,(p)}\otimes\Reg{\Ss_K}$ equipped with a non-degenerate $\Reg{F,(p)}\otimes\Reg{\Ss_K}$-valued quadratic form, as well as an isotropic local direct summand of rank $1$.

We claim that these pairs are canonically isomorphic. Indeed, over $\Ss_{K^{\diamond}}$, the pair $(\bm{L}^{\diamond}_{\dR},F^1\bm{L}^{\diamond}_{\dR})$ is obtained by pulling back the $G^{\diamond}$-equivariant pair $\bigl(L^{\diamond}\otimes\Reg{\on{M}^{\on{loc}}_{G^{\diamond}}},F^1(L^{\diamond}\otimes\Reg{\on{M}^{\on{loc}}_{G^{\diamond}}})\bigr)$ along the $G^{\diamond}$-equivariant map $\mathcal{P}_{\dR}^{\diamond}\to\on{M}^{\on{loc}}_{G^{\diamond}}$ and then descending along the $G^{\diamond}$-torsor $\mathcal{P}_{\dR}^{\diamond}\to\Ss_{K^{\diamond}}$. We find from (\ref{regular:lem:torsorsemb}) that, over $\Ss_K$, $\mathcal{P}_{\dR}^{\diamond}$ admits a canonical reduction of structure group to the $G$-torsor $\mathcal{P}_{\dR,\Lambda}$. Our claim now follows from the very construction of $(\bm{L}^{\on{ref}}_{\dR},F^1\bm{L}^{\on{ref}}_{\dR})$ in (\ref{regular:subsec:zrefdeform}).

Given $x_0\in\Ss_K(k)$, let $\widehat{U}_{x_0}$ (resp. $\widehat{U}^{\on{nv}}_{x_0}$, $\widehat{U}^{\diamond}_{x_0}$) be the completion of $\Ss_{K,W(k)}$ (resp. $\Ss_{K,W(k)}^{\on{nv}}$, $\Ss_{K^{\diamond},W(k)}$) at $x_0$ (resp. the image of $x_0$). To prove that $\iota$ is unramified, we have to show that the map on tangent spaces
\[
 \widehat{U}_{x_0}(k[\epsilon])\to\widehat{U}^{\on{nv}}_{x_0}(k[\epsilon])\times \widehat{U}^{\diamond}_{x_0}(k[\epsilon])
\]
is injective. This follows from the previous paragraph and the explicit description of these tangent spaces in (\ref{regular:subsec:zrefdeform}) and (\ref{regular:eqn:diamonddeform}).
\end{proof}
}
\section{Integral canonical models II}\label{sec:regular}

The notation will be as in the previous section. Recall from (\ref{regular:subsec:zkpprps}) that we have canonical maps $\Sh_{K_0}\to Z_{K^p_0}(\Lambda)_{\Rat}$ and $\Sh_K\to Z_{K^p}(\Lambda)_{\Rat}$.
\begin{lem}\label{regular:lem:openclosed}
Both these maps are isomorphisms onto open and closed sub-schemes of the targets.
\end{lem}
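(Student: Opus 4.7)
Both maps fit in commutative triangles over $\Sh_{\widetilde K}$ (resp.\ $\Sh_{\widetilde K_0}$). Since $Z_{K^p}(\Lambda)_\Rat\to\Sh_{\widetilde K}$ is finite unramified by (\ref{regular:prp:representable}), and the morphism $\Sh_K\to\Sh_{\widetilde K}$ induced by the embedding of Shimura data is unramified, the lift $\Sh_K\to Z_{K^p}(\Lambda)_\Rat$ is itself unramified. Both source and target are smooth over $\Rat$ of dimension $n=\dim X$: for $\Sh_K$ this is standard, and for $Z_{K^p}(\Lambda)_\Rat$ one uses (\ref{regular:lem:vectbundle})(\ref{vectbundle:primchar0}) to identify it with $Z^{\on{pr}}_{K^p}(\Lambda)_\Rat$, combined with the local model diagram (\ref{regular:prp:locmodel}) and the smoothness of $\on{M}^{\loc}_G\otimes\Rat$ (cf.\ (\ref{regular:cor:locprps})). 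An unramified morphism of smooth $\Rat$-varieties of equal dimension is étale.

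Next I would establish injectivity on $\Comp$-points via the Hodge-theoretic description (\ref{special:prp:complexpoints}). A $\Comp$-point of $Z_{K^p}(\Lambda)$ is a triple $(s,\iota_s,[\eta_{\iota_s}])$: $s=[(\widetilde{\mb h},\widetilde g\widetilde K)]\in\Sh_{\widetilde K}(\Comp)$, an isometric embedding $\iota_s:\Lambda\to\widetilde L$ landing in the $(0,0)$-part of the Hodge structure on $\widetilde L=\widetilde{\bm L}_{B,s}$ determined by $\widetilde{\mb h}$, and a $K^p$-level structure (a $K^p$-orbit reduction of $[\widetilde\eta_{\widetilde G}]$ carrying $\iota_0$ to the $\Adele_f^p$-realization of $\iota_s$). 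Using Witt's extension theorem together with the $\widetilde G(\Rat)$-equivalence in the uniformization of $\Sh_{\widetilde K,\Comp}$, one can arrange $\iota_s=\iota_0$; the Hodge structure then pointwise fixes $\iota_0(\Lambda)=\Lambda$, forcing $\widetilde{\mb h}$ to factor through the pointwise stabilizer $G_\Real\subset\widetilde G_\Real$ (cf.\ (\ref{lattice:subsec:parahoric})). The $K^p$-level structure then yields a unique preimage in $\Sh_K(\Comp)$. Hence $\Sh_K\to Z_{K^p}(\Lambda)_\Rat$ is an étale monomorphism, i.e., an open immersion.

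To conclude that the image is open and closed, I would argue properness of $\Sh_K\to Z_{K^p}(\Lambda)_\Rat$: since $X\into\widetilde X$ is a closed embedding of Hermitian symmetric domains, on each connected component the map $\Sh_K\to\Sh_{\widetilde K}$ is proper, and separation of $Z_{K^p}(\Lambda)_\Rat\to\Sh_{\widetilde K}$ then forces $\Sh_K\to Z_{K^p}(\Lambda)_\Rat$ to be proper; a proper open immersion is open and closed. The assertion for $\Sh_{K_0}\to Z_{K^p_0}(\Lambda)_\Rat$ follows either by the same argument or by descent along the finite étale Galois $\Delta(K)$-covers $\Sh_K\to\Sh_{K_0}$ and $Z_{K^p}(\Lambda)\to Z_{K^p_0}(\Lambda)$. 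The main obstacle is verifying closedness of the image --- i.e.\ properness of $\Sh_K\to Z_{K^p}(\Lambda)_\Rat$; an alternative approach is a moduli-theoretic construction of an inverse on the image by descending the Kuga--Satake abelian scheme along $C\into\widetilde C$ via a Serre-tensor-type construction (\ref{special:subsec:serretensor}), sidestepping the properness issue but requiring careful tracking of the level structures.
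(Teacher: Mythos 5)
Your overall plan matches the paper's: reduce to $\Sh_K\to Z_{K^p}(\Lambda)_{\Rat}$, show the map is étale (both smooth of dimension $n$, unramified over $\Sh_{\widetilde{K}}$), then reduce to injectivity on $\Comp$-points, and close with finiteness/properness. Your treatment of the first and third steps is fine; in particular making the properness explicit fills a step the paper glosses over (the published proof invokes only ``unramified'' and ``smooth of dimension $n$,'' leaving the reader to supply finiteness of $\Sh_K\to\Sh_{\widetilde{K}}$ to get closedness of the image).

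Where you diverge is in the injectivity step, and this is where you should be more careful. The paper takes two points $x,x'\in\Sh_K(\Comp)$ with lifts $(\mb{h},g),(\mb{h}',g')$, assumes they have the same image $(\tilde{x},\iota_{x},[\widetilde{\eta}_{x}])$, and then extracts $\gamma=j_{x'}^{-1}\circ j_x$, observes that $\gamma$ fixes $\Lambda$ pointwise (since both $\iota_x$ and $\iota_{x'}$ are the composite of the \emph{same} inclusion $\Lambda\into\widetilde{L}_{\mb{h}}$ with $j_x$ resp.\ $j_{x'}$), so $\gamma\in G(\Int_{(p)})$, and concludes $x=x'$ from the uniformization. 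You instead try to normalize an arbitrary $\Comp$-point of $Z_{K^p}(\Lambda)$ so that $\iota_s=\iota_0$ via Witt's theorem and the $\widetilde{G}(\Rat)$-equivalence, and deduce a unique preimage. This can be made to work, but as stated it conflates two things: (i) normalization via Witt requires lifting an isometry of $\widetilde{L}_{\Rat}$ from $\Orth$ to $\GSpin$ and arranging compatibility with the level-structure class, neither of which you verify; and (ii) you start from a \emph{general} triple $(s,\iota_s,[\eta_{\iota_s}])$ and conclude a preimage exists, which would wrongly give surjectivity onto all of $Z_{K^p}(\Lambda)_\Rat$, not just an open-and-closed piece. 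For injectivity you should restrict to points already in the image (where the normalization $\iota_s=\iota_0$ is automatic from the definition of the map) and then track the remaining freedom, which lands you exactly in the paper's direct comparison. The paper's argument is cleaner because it sidesteps any Witt-type extension entirely: it never needs to construct $\gamma$, it simply reads it off from the two given preimages.
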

\begin{proof}
By construction, it is enough to prove this for $\Sh_K\to Z_{K^p}(\Lambda)_{\Rat}$. Since both varieties in question are smooth of dimension $n$ and are unramified over $\Sh_{\widetilde{K}}$, it is enough to show that the map is injective on $\Comp$-valued points.

Pick a point $x\in\Sh_K(\Comp)$, and let $(\mb{h},g)\in X\times G(\Adele_f^p)$ be a lift of $x$. We can describe the corresponding point of $Z_{K^p}(\Lambda)(\Comp)$ as follows: First, we can view $(\mb{h},g)$ as a point in $\widetilde{X}\times\widetilde{G}(\Adele_f^p)$; write $\tilde{x}$ for its image in $\Sh_{\widetilde{K}}(\Comp)$. Then $\mb{h}$ induces a Hodge structure $\widetilde{H}_{\mb{h}}$ on $\widetilde{H}$, which preserves $\widetilde{L}\subset\widetilde{H}^{\otimes(1,1)}$, and is such that the induced Hodge structure on $\Lambda\subset\widetilde{L}$ is trivial. There is a $\widetilde{G}$-structure preserving isomorphism of Hodge structures $j_x:\widetilde{H}_{\mb{h}}\xrightarrow{\simeq}\bm{\widetilde{H}}_{B,\tilde{x}}$ and the composition:
\begin{align}\label{openclosed:eqn:1}
 \iota_x:\Lambda\into\widetilde{L}_{\mb{h}}\cap(\widetilde{L}_{\mb{h}}\otimes\Comp)^{0,0}\xrightarrow{j_x}\widetilde{\bm{L}}_{B,\tilde{x}}\cap(\widetilde{\bm{L}}_{B,\tilde{x}}\otimes\Comp)^{0,0}=L(\widetilde{A}^\KS_{\tilde{x}}).
\end{align}
is the $\Lambda$-structure attached to $x$. Further, the $K^p$-level structure on $(\tilde{x},\iota_x)$ attached to $x$ is the $K^p$-orbit $[\widetilde{\eta}_x]$ of the isomorphism:
\begin{align}\label{openclosed:eqn:2}
 \widetilde{\eta}_x:\widetilde{H}\otimes\Adele_f^p\xrightarrow[\simeq]{g}\widetilde{H}\otimes\Adele_f^p\xrightarrow[\simeq]{j_x\otimes 1}\bm{\widetilde{H}}_{B,\tilde{x}}\otimes\Adele_f^p\xrightarrow{\simeq}\bm{\widetilde{H}}_{\Adele_f^p,\tilde{x}}.
\end{align}
Note that the triple $(\tilde{x},\iota_x,[\widetilde{\eta}_x])$ determines the image of $x$ in $Z_{K^p}(\Lambda)(\Comp)$.

Suppose that $x'\in\Sh_K(\Comp)$ lifts to $(\mb{h}',g')\in X\times G(\Adele_f^p)$ and is such that $(\tilde{x},\iota_{x},[\widetilde{\eta}_{x}])=(\tilde{x}',\iota_{x'},[\widetilde{\eta}_{x'}])$. Then, using (\ref{openclosed:eqn:1}) we find that $\mb{h}'=\gamma\cdot \mb{h}$, where $\gamma=j_{x'}^{-1}\circ j_x\in G(\Int_{(p)})$. Also, using (\ref{openclosed:eqn:2}), we see that $\gamma g$ and $g'$ are in the same $K^p$-orbit in $G(\Adele_f^p)$. This shows $x'=x$ and finishes the proof of the lemma.
\end{proof}

\begin{defn}\label{regular:defn:models}
Let $\Ss^{\on{pr}}_K$ be the Zariski closure of $\Sh_K$ in $Z^{\on{pr}}_{K^p}(\Lambda)$. If $L$ is maximal with $t\leq 1$, set $\Ss_K\coloneqq\Ss^{\on{pr}}_K$. If $L$ is maximal and $t=2$, fix $F$ and $L^{\diamond}$ as in (\ref{regular:subsec:teq2}) and let $\Ss^{\on{ref}}_K$ be the proper resolution of $\Ss^{\on{pr}}_K$ obtained by taking the Zariski closure of $\Sh_K$ in $Z^{\on{ref}}_{K^p}(\Lambda)$.
\end{defn}
 
\subsection{}\label{regular:subsec:prodefn}
We will now assume that $L$ is maximal with $t\leq 1$.

We set:
\[
  \Ss_{K_p}=\varprojlim_{K^p\subset\widetilde{K}^p}\Ss_{K_pK^p}.
\]
In this inverse system, for $K^p_1\subset K^p_2$, the map $\Ss_{K_pK^p_1}\to\Ss_{K_pK^p_2}$ is induced by the obvious map $Z_{K^p_1}(\Lambda)\to Z_{K^p_2}(\Lambda)$. \comment{In the case $t=2$, we also need to observe that this latter map lifts canonically to a map $Z^{\on{ref}}_{K^p_1}(\Lambda)\to Z^{\on{ref}}_{K^p_2}(\Lambda)$. In fact, it follows from (\ref{regular:prp:healthy2}) that there is a canonical isomorphism
\[
 Z^{\on{ref}}_{K^p_1}(\Lambda)\xrightarrow{\simeq}Z^{\on{ref}}_{K^p_2}(\Lambda)\times_{Z_{K^p_2}(\Lambda)}Z_{K^p_1}(\Lambda).
\]
}
Then $\Ss_{K_p}$ is actually the projective limit of quasi-projective $\Int_{(p)}$-schemes along finite \'etale morphisms, and so is itself a scheme over $\Int_{(p)}$.

\begin{thm}\label{regular:thm:main}
\mbox{}
\begin{enumerate}
\item $\Ss_{K_p}$ (resp. $\Ss_{K_{0,p}}$) is an integral canonical model for the pro-Shimura variety $\Sh_{K_p}$ (resp. $\Sh_{K_{0,p}}$). The map $\Ss_{K_p}\to\Ss_{K_{0,p}}$ is a pro-\'etale cover.
\item For any neat compact open $K\subset G(\Adele_f^p)$ with $p$-primary part $K_p$, there exists a projective regular $\Int_{(p)}$-scheme $\overline{\Ss}_K$ and a fiberwise dense open immersion $\Ss_K\into\overline{\Ss}_K$. Moreover, $\overline{\Ss}_K$ has reduced special fiber.
\end{enumerate}
\end{thm}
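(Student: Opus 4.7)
The plan is to deduce part (1) from the machinery of Sections~\ref{sec:cycles} and \ref{sec:tis2}, and to obtain the compactification in part (2) by taking a suitable closure inside a toroidal compactification of the larger self-dual Spin Shimura variety.

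The regularity and local healthiness of $\Ss_{K_p}$ require no new input: when $t\leq 1$ this is Corollary~\ref{regular:cor:healthy1} (since $\Ss_{K}=\Ss^{\on{nv}}_K$ is by definition the Zariski closure of $\Sh_K$ inside $Z_{K^p}(\Lambda)$, and the image lands in $Z^{\on{pr}}_{K^p}(\Lambda)$ by Lemma~\ref{regular:lem:vectbundle}(\ref{vectbundle:primchar0})), while for $t=2$ this is part of Proposition~\ref{regular:prp:healthy2} applied to $Z^{\on{ref}}_{K^p}(\Lambda)$. That $\Ss_{K_p}\to\Ss_{K_{0,p}}$ is pro-\'etale with Galois group $\Delta(K)$ follows from the finite \'etaleness of $Z_{K^p}(\Lambda)\to Z_{K^p_0}(\Lambda)$ in Proposition~\ref{regular:prp:representable}, combined with the description of $\Sh_{K_p}\to\Sh_{K_{0,p}}$ from (\ref{spin:subsec:shimura}).

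The substance is the extension property. Given a regular, locally healthy $\Int_{(p)}$-scheme $S$ and a map $f_{\Rat}\colon S_{\Rat}\to\Sh_{K_p}$, I would compose with the embedding of Shimura data to obtain $S_{\Rat}\to\Sh_{\widetilde K_p}$, then extend this to $\widetilde f\colon S\to\Ss_{\widetilde K_p}$ using Theorem~\ref{spin:thm:kisin}. Over $S_{\Rat}$ we have the tautological $\Lambda$-structure and a $K^p$-level structure on $\widetilde A^{\KS}$; since $Z_{K^p}(\Lambda)\to\Ss_{\widetilde K_p}$ is finite and unramified (Proposition~\ref{regular:prp:representable}), and since $S$ is normal, the pair extends uniquely along $\widetilde f$ to an $S$-point of $Z_{K^p}(\Lambda)$ by the valuative criterion together with the Ner\'onian property of $\widetilde A^{\KS}$. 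Lemma~\ref{regular:lem:vectbundle}(\ref{vectbundle:smoothext}) (or its refinement for locally healthy, rather than smooth, bases, which one checks from the argument given) places this extension in $Z^{\on{pr}}_{K^p}(\Lambda)$, and hence in $\Ss^{\on{nv}}_K$. When $t=2$ one must further lift through $Z^{\on{ref}}_{K^p}(\Lambda)\to Z_{K^p}(\Lambda)$; for this I would apply Theorem~\ref{spin:thm:kisin} a second time to the auxiliary Hodge-type datum $(G^{\diamond}_{\Rat},X^{\diamond})$ constructed in (\ref{regular:subsec:auxshimuradatum}) to get an extension $S\to\Ss_{K^{\diamond}_p}$, and then use Proposition~\ref{regular:prp:mreflocmodel}, which identifies $\Ss_K$ with the normalization of the Zariski closure of $\Sh_K$ inside $\Ss^{\on{nv}}_K\times_{\Int_{(p)}}\Ss_{K^{\diamond}}$, to glue the two partial extensions. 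The main obstacle here is verifying that this glued map factors through the normalization, which uses normality of $S$; one should also check that the extension of the $\Lambda$-structure over the codimension-$\geq 2$ locus in the special fiber is controlled by Theorem~\ref{lattice:thm:vasiuzink} applied to the complete local rings of $\Ss^{\on{nv}}_K$ at irregular points.

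For part (2), I would take $\overline{\Ss}_K$ to be the normalization of the Zariski closure of $\Sh_K$ inside a projective toroidal compactification $\overline{\Ss}_{\widetilde K}$ of $\Ss_{\widetilde K}$ as constructed in \cite{mp:toroidal}, with a subsequent refinement using $\on{M}^{\on{ref}}_G$ when $t=2$. Projectivity follows since $\overline{\Ss}_{\widetilde K}$ is projective and $\Ss_K\to\Ss_{\widetilde K}$ is finite; fiberwise density is by construction. For regularity, the toroidal compactification is smooth in a neighborhood of the boundary, so the local model analysis of Proposition~\ref{regular:prp:locmodel} (adapted to include the horizontal, toroidal directions) continues to apply and the refinement of Proposition~\ref{regular:prp:healthy2} produces regularity along the boundary. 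For the reducedness of $\overline{\Ss}_{K,\Field_p}$ when $n\geq t$, I would combine the reducedness of $Z^{\on{pr}}_{K^p}(\Lambda)_{\Field_p}$ from Corollary~\ref{regular:cor:locprps} with the smoothness of the boundary strata inherited from the toroidal construction. The hardest step in this part is the compatibility of the $Z^{\on{ref}}$-refinement with the toroidal boundary in the $t=2$ case: one must verify that the construction of (\ref{regular:subsec:teq2}) can be performed in a family over a toroidal chart without destroying projectivity, which reduces to showing that the linear modification of (\ref{regular:rem:linmod}) extends across the boundary.
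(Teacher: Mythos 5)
Your proposal for the extension property of $\Ss_{K_p}$ captures the paper's essential idea---extend to $\Ss_{\widetilde K_p}$, propagate the $\Lambda$-structure and level structure using \cite{falchai}*{I.2.7}, then for $t=2$ pass through the product $\Ss^{\on{nv}}_K\times\Ss_{K^{\diamond}}$ via (\ref{regular:prp:mreflocmodel})---but it includes two spurious concerns. First, the worry about needing a ``refinement'' of (\ref{regular:lem:vectbundle})(\ref{vectbundle:smoothext}) to locally healthy bases is moot: the theorem assumes $L$ maximal, and then (\ref{regular:lem:vectbundle})(\ref{vectbundle:maximal}) gives $Z^{\on{pr}}_{K^p}(\Lambda)=Z_{K^p}(\Lambda)$ outright, so there is nothing to check. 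Second, the invocation of (\ref{lattice:thm:vasiuzink}) to ``control the extension of the $\Lambda$-structure over codimension-$\geq 2$ loci'' mixes up the roles: Vasiu--Zink is what makes the local rings quasi-healthy, i.e. it enters in proving that $\Ss_{K_p}$ \emph{is} locally healthy, not in the extension-property argument itself.

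There are two genuine gaps. You give no argument for the extension property of $\Ss_{K_{0,p}}$---your remarks only address pro-\'etaleness of $\Ss_{K_p}\to\Ss_{K_{0,p}}$, which is the easy half. The paper handles this by the argument of Moonen: given $S_0$ regular locally healthy and $f_0:S_{0,\Rat}\to\Sh_{K_{0,p}}$, Nagata--Zariski purity produces a pro-finite pro-\'etale cover $S\to S_0$ over which $f_0$ lifts to $\Sh_{K_p}$; $S$ is again regular locally healthy, so the extension property of $\Ss_{K_p}$ yields $S\to\Ss_{K_p}$, which descends to $S_0\to\Ss_{K_{0,p}}$. Without this (or a substitute), part (1) is incomplete.

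For part (2) your route---take the Zariski closure of $\Sh_K$ in a toroidal compactification $\overline{\Ss}_{\widetilde K}$ and then ``refine using $\on{M}^{\on{ref}}_G$'' along the boundary---diverges from the paper and leaves the hard step unresolved. You yourself flag the issue: extending the linear modification of (\ref{regular:rem:linmod}) across the toroidal boundary is not addressed by anything in the paper, and it is not clear the modification interacts well with the toroidal charts. The paper avoids this entirely by quoting Theorem 1 of \cite{mp:toroidal}, applied to the Hodge-type datum $(G_{\Rat},X)$ with the symplectic lattice $U_{(p)}=H$ (when $t\leq 1$) or $U_{(p)}=H\oplus H^{\diamond}$ (when $t=2$). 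The point is that (\ref{regular:prp:mreflocmodel}) exhibits $\Ss_K$ as the normalization of the Zariski closure of $\Sh_K$ in $\Ss^{\on{nv}}_K\times\Ss_{K^{\diamond}}$, hence as a finite normal cover of the relevant Mumford moduli scheme; the cited theorem then supplies a projective compactification with singularities no worse than those of $\Ss_K$, in particular regular, and with reduced special fiber when $\Ss_K$ has reduced special fiber. Reducedness of $\Ss_{K,\Field_p}$ for $n\geq t$ then follows from (\ref{regular:cor:locprps}) (for $t\leq 1$) and from the explicit affine charts of $\on{M}^{\on{ref}}_{G,\Field_p}$ in (\ref{lattice:prp:mrefhealthy}) (for $t=2$). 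You should replace the hand-built compactification with this citation, or else resolve the boundary-compatibility problem directly.
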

\begin{proof}
By (\ref{regular:lem:openclosed}), (\ref{regular:cor:healthy1}) and (\ref{regular:prp:healthy2}), $\Ss_{K_p}$ is regular and locally healthy. We need to show that it has the extension property. We already know that $\Ss_{\widetilde{K}_p}$ has the extension property by \cite{kis3}*{2.3.8}. So, using~\cite{falchai}*{I.2.7}, we find that $\Ss_{K_p}$ also has the extension property. If $t\leq 1$, we are now done.

Now, let $\Ss_{K_{0,p}}$ be the Zariski closure of $\Sh_{K_{0,p}}$ in
\[
  \varprojlim_{K^p_0\subset G_0(\Adele^p_f)}Z_{K^p_0}(\Lambda).
\]
Then there is a pro-finite, pro-\'etale map $\Ss_{K_p}\to\Ss_{K_{0,p}}$.

To show that $\Ss_{K_{0,p}}$ has the extension property, we can use the argument from the proof of \cite{moonen}*{3.21.4}: Given a regular, locally healthy $\Int_{(p)}$-scheme $S_0$ and a map $f_0:S_{0,\Rat}\to\Sh_{K_{0,p}}$, it follows from the Nagata-Zariski purity theorem that there exists a pro-finite pro-\'etale cover $S$ of $S_0$ and a map $f:S_{\Rat}\to\Sh_{K_p}$ lifting $f_0$. Now, $S$ is also regular and locally healthy, so, by the extension property of $\Ss_{K_p}$, $f$ extends to a map $S\to\Ss_{K_p}$ that descends to a map $S_0\to\Ss_{K_{0,p}}$.

The existence of the regular projective compactification $\overline{\Ss}_K$ is a special case of Theorem 1 of \cite{mp:toroidal}, which says the following: Suppose that we have a symplectic embedding $(G_{\Rat},X)\into (\GSp(U),\mathcal{X}(U))$ of Shimura data, and suppose that $U_{(p)}\subset U$ is a symplectic lattice such that $G(\Int_p)$ is the stabilizer of $U_{(p)}\otimes\Int_p$ in $G(\Rat_p)$. Fix $\mathcal{K}(U)\subset\GSp(U)(\Adele_f)$ such that $K$ maps to $\mathcal{K}(U)$. Let $\mathcal{S}_{\mathcal{K}(U)}$ be the associated Mumford integral model over $\Int_{(p)}$ for the Siegel Shimura variety $\Sh_{\mathcal{K}(U)}$.\footnote{This is defined as a moduli space of polarized abelian schemes as in (\ref{spin:subsec:primetop}).} Let $\Ss$ be the unique normal integral model for $\Sh_K$ over $\Int_{(p)}$ such that $\Sh_K\to\Sh_{\mathcal{K}(U)}$ extends to a finite map $\Ss\to\mathcal{S}_{\mathcal{K}(U)}$: it does not depend on the choice of $\mathcal{K}(U)$. Then $\Ss$ admits a fiberwise dense open immersion $\Ss\into\overline{\Ss}$, where $\overline{\Ss}$ is projective over $\Int_{(p)}$ and with singularities no worse than that of $\Ss$. In particular, $\overline{\Ss}$ is regular (resp. has reduced special fiber) whenever $\Ss$ is regular (resp. has reduced special fiber).

The required hypothesis holds for the model $\Ss_K$ in our situation: We take $U_{(p)}=H$. 

To finish, it is now enough to show that $\Ss_K$ has reduced special fiber. This follows from (\ref{regular:cor:locprps}).
\end{proof}

\begin{rem}\label{regular:rem:smoothpr}
In particular, $\Ss_{K_p}$ and $\Ss_{K_{0,p}}$ are uniquely determined by their generic fibers: they depend only on $L$ and not on the choice of self-dual quadratic space $\widetilde{L}$ containing $L$.
\end{rem}

\begin{corollary}\label{regular:cor:conncomp}
Suppose that $n\geq t$; then, for any neat compact open sub-group $K\subset G(\Adele_f^p)$ with $p$-primary part $K_p$, there is a canonical bijection between the connected components of $\Sh_{K,\overline{\Rat}}$ (resp. $\Sh_{K_0,\overline{\Rat}}$) and $\Ss_{K,\overline{\Field}_p}$ (resp. $\Ss_{K_0,\overline{\Field}_p}$).
\end{corollary}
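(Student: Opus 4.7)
The plan is to apply Stein factorization to the regular projective compactification $\overline{\Ss}_K$ supplied by Theorem~\ref{regular:thm:main}; the hypothesis $n \geq t$ enters through the reducedness of the special fiber. Write $\pi_0(-)$ for the set of geometric connected components. First I reduce the orthogonal case to the GSpin case: the map $\Ss_K \to \Ss_{K_0}$ is finite \'etale Galois with group $\Delta(K)$ (cf.~\ref{spin:subsec:shimura}), so a canonical $\Delta(K)$-equivariant bijection $\pi_0(\Sh_{K, \overline{\Rat}}) \leftrightarrow \pi_0(\Ss_{K, \overline{\Field}_p})$ descends to one for $K_0$. I then replace $\Ss_K$ by $\overline{\Ss}_K$. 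Since $\overline{\Ss}_K$ is regular, its connected components coincide with its irreducible components and are pairwise disjoint; write $\overline{\Ss}_K = \coprod_\alpha \overline{\Ss}_K^{(\alpha)}$, with each $\overline{\Ss}_K^{(\alpha)}$ integral, regular, and projective over $\Int_{(p)}$ with reduced closed fiber. On the generic fiber, smoothness of $\overline{\Ss}_{K, \overline{\Rat}}^{(\alpha)}$ combined with density of $\Sh_K$ gives $\pi_0(\Sh_{K, \overline{\Rat}}) = \pi_0(\overline{\Ss}_{K, \overline{\Rat}})$; the analogous identification on the special fiber follows from the Stein factorization analysis below.

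Fix $\alpha$ and form the Stein factorization $\overline{\Ss}_K^{(\alpha)} \to \Spec R_\alpha \to \Spec \Int_{(p)}$, with $R_\alpha = H^0(\overline{\Ss}_K^{(\alpha)}, \Reg{\overline{\Ss}_K^{(\alpha)}})$. Since $\overline{\Ss}_K^{(\alpha)}$ is integral and normal, $R_\alpha$ is a normal finite integral extension of $\Int_{(p)}$, hence a DVR; reducedness of the closed fiber of $\overline{\Ss}_K^{(\alpha)}$ forces $R_\alpha \otimes \Field_p$ to be reduced, so $R_\alpha$ is unramified over $\Int_{(p)}$. The map $\overline{\Ss}_K^{(\alpha)} \to \Spec R_\alpha$ has geometrically connected fibers, so both $\pi_0(\overline{\Ss}_{K, \overline{\Rat}}^{(\alpha)})$ and $\pi_0(\overline{\Ss}_{K, \overline{\Field}_p}^{(\alpha)})$ are canonically identified with the geometric points of $\Spec R_\alpha$ over the respective algebraically closed field; a canonical bijection between these finite sets comes from base change to the strict Henselization $W(\overline{\Field}_p)$, which splits $R_\alpha$ as a product of $[R_\alpha : \Int_{(p)}]$ copies of $W(\overline{\Field}_p)$, each contributing exactly one geometric point on each fiber.

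The main obstacle is the transfer of connected components from $\overline{\Ss}_K$ to $\Ss_K$ on the special fiber: removing the boundary divisor from a geometrically connected fiber of $\overline{\Ss}_K^{(\alpha)} \to \Spec R_\alpha$ could a priori disconnect it, since the closed fiber need not be normal even when $\overline{\Ss}_K^{(\alpha)}$ is regular. Verifying that this does not occur amounts to showing that each geometric fiber of $\overline{\Ss}_K^{(\alpha)} \to \Spec R_\alpha$ meets $\Ss_K$ in a connected open --- say by analyzing the toroidal boundary directly, or by appealing to finer structural features of the integral model $\Ss_K$. Granting this point, the remainder is formal Stein-factorization bookkeeping drawing only on Theorem~\ref{regular:thm:main}.
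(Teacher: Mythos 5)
Your argument follows the paper's proof closely: reduce the orthogonal case to the GSpin case by finite \'etale descent, pass to the projective compactification $\overline{\Ss}_K$, run Stein factorization, and use reducedness of the special fiber to conclude that the finite part is \'etale over $\Int_{(p)}$ and therefore splits after base change to $W(\overline{\Field}_p)$. Two remarks. First, a small imprecision: $R_\alpha$ is a normal domain finite over $\Int_{(p)}$, hence a semi-local Dedekind domain, but it need not be a discrete valuation ring (it has several primes over $p$ exactly when $\overline{\Ss}^{(\alpha)}_{K,\overline{\Field}_p}$ is disconnected); the argument is unaffected, since all you use is that $R_\alpha\otimes\Field_p$ is reduced, hence $R_\alpha$ is \'etale over $\Int_{(p)}$. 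Second, and more substantively, the ``main obstacle'' you flag is genuine and is exactly what the paper's proof leaves unaddressed. The paper asserts that it suffices to match up components of $\overline{\Ss}_{K,\overline{\Rat}}$ and $\overline{\Ss}_{K,\overline{\Field}_p}$; the generic-fiber half of this reduction is automatic from smoothness (hence normality) of $\overline{\Ss}_{K,\overline{\Rat}}$, but the special-fiber half---that $\Ss_{K,\overline{\Field}_p}\into\overline{\Ss}_{K,\overline{\Field}_p}$ induces a bijection on connected components---is not a formal consequence of fiberwise density: removing the flat horizontal divisor $V(x-y)$ from $\Spec\Int_p[x,y]/(xy-p)$ disconnects the reduced connected special fiber $\Spec\Field_p[x,y]/(xy)$, even though the ambient scheme is regular and flat over $\Int_p$ with reduced special fiber and the complementary open is fiberwise dense. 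So the reduction does rest on a further structural property of the toroidal boundary drawn from \cite{mp:toroidal}, which neither your write-up nor the paper makes explicit. Apart from this shared loose end, your bookkeeping reproduces the paper's argument.
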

\begin{proof}
  Since $\Ss_{K_0}$ is a finite \'etale quotient of $\Ss_K$, the result for the latter implies that for the former. For $\Ss_K$, it is enough to check that the components of $\overline{\Ss}_{K,\overline{\Rat}}$ and $\overline{\Ss}_{K,\overline{\Field}_p}$ are in bijection.

  But, in general, given any flat, projective map $X\to\Spec\Int_{(p)}$ with reduced special fiber, there is a canonical bijection between the components of $X_{\overline{\Rat}}$ and $X_{\overline{\Field}_p}$. Indeed, using Stein factorization, we reduce to the case where $X$ is finite flat over $\Int_{(p)}$. Having reduced special fiber implies that $X$ is actually finite \'etale over $\Int_{(p)}$, and so the result is immediate.
\end{proof}

For applications, we will need a different kind of canonical model that makes sense in some non-maximal cases.
\begin{defn}\label{regular:defn:extension}
A pro-scheme $X$ over $\Int_{(p)}$ satisfies the \defnword{smooth extension property} if, for any regular, formally smooth $\Int_{(p)}$-scheme $S$, any map $S\otimes\Rat\to X$ extends to a map $S\to X$.
\end{defn}

Suppose that $L$ is non-maximal with cyclic discriminant. We still have the pro-Shimura variety $\Sh_{K_p}$ over $\Rat$, where $K_p=G(\Int_p)$, with $G$ the smooth group scheme attached to $L$.
\begin{defn}\label{regular:defn:intcanonical}
In analogy with (\ref{spin:defn:intcanonical}), we will define a \defnword{smooth integral canonical model} of $\Sh_{K_p}$ to be a regular, formally smooth model $\Ss^{\on{sm}}_{K_p}$ (resp. $\Ss^{\on{sm}}_{K_{0,p}}$) which has the smooth extension property.
\end{defn}

\begin{prp}\label{regular:prp:smoothcan}
$\Sh_{K_p}$ (resp. $\Sh_{K_{0,p}}$) admits a smooth canonical model $\Ss^{\on{sm}}_{K_p}$ (resp. $\Ss^{\on{sm}}_{K_{0,p}}$) such that the map $\Ss^{\on{sm}}_{K_p}\to\Ss^{\on{sm}}_{K_{0,p}}$ is pro-\'etale.
\end{prp}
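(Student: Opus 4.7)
Since $\disc L$ is cyclic, (\ref{regular:lem:maxemb}) furnishes a self-dual embedding $L\into \widetilde{L}$ with $\Lambda = L^\perp$ of rank one (rank zero would force $L$ to be self-dual and therefore maximal). Following Section~\ref{sec:cycles}, I form $Z_{K^p}(\Lambda)$ for each neat compact open $K^p$, take the Zariski closure $\Ss^{\on{nv}}_K\subset Z_{K^p}(\Lambda)$ of $\Sh_K$, and set $\Ss^{\on{nv}}_{K_p}=\varprojlim_{K^p}\Ss^{\on{nv}}_K$. I then define $\Ss^{\on{sm}}_{K_p}$ to be the formally smooth locus of $\Ss^{\on{nv}}_{K_p}$ over $\Int_{(p)}$, equivalently (via the local model diagram of (\ref{regular:prp:locmodel})) the preimage of the $\Int_{(p)}$-smooth locus of $\on{M}^{\loc}_G$. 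The quotient $\Ss^{\on{sm}}_{K_{0,p}}\coloneqq\Ss^{\on{sm}}_{K_p}/\Delta(K_p)$ inherits a pro-\'etale structure from $\Sh_{K_p}\to\Sh_{K_{0,p}}$. Both schemes are regular and formally smooth by construction, with generic fibers $\Sh_{K_p}$ and $\Sh_{K_{0,p}}$ respectively (using (\ref{regular:lem:openclosed}) and smoothness of $\on{M}^{\loc}_{G,\Rat}$).

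To verify the smooth extension property, let $S$ be a regular formally smooth $\Int_{(p)}$-scheme and $f_0\colon S_{\Rat}\to\Sh_{K_p}$. Each complete local ring of $S$ is a formally smooth $W$-algebra, hence quasi-healthy by (\ref{lattice:thm:vasiuzink})(\ref{vz:smooth}), so $S$ is locally healthy. Composing with $\Sh_{K_p}\to\Sh_{\widetilde K_p}$ and invoking the extension property of Kisin's integral canonical model $\Ss_{\widetilde K_p}$ produces $\tilde f\colon S\to\Ss_{\widetilde K_p}$. Pulling back yields $\widetilde A^\KS$ over $S$ together with a $\Lambda$-structure $\iota_0$ over $S_{\Rat}$. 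The Ner\'onian property of abelian schemes over a regular base (\cite{falchai}*{I.2.7}) extends $\iota_0$ to an isometric map $\iota\colon\Lambda\to\End(\widetilde A^\KS_S)_{(p)}$; the condition that each $\iota(v)$ be a special endomorphism is closed on $S\otimes\Field_p$ by (\ref{lifts:lem:crystalline_sections}), so it suffices to check it at the generic points of $S\otimes\Field_p$, where the complete local rings are mixed-characteristic DVRs and (\ref{lifts:lem:p-special-consistent}) transfers specialness from the generic fibre. This produces a map $f\colon S\to\Ss^{\on{nv}}_{K_p}$ extending $f_0$.

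The main obstacle is the last step: showing that $f$ factors through $\Ss^{\on{sm}}_{K_p}$. By (\ref{regular:prp:locmodel}), the non-smooth locus of $\Ss^{\on{nv}}_{K_p}$ lies over the single singular point of each geometric fibre of $\on{M}^{\loc}_G$, namely the line spanned by the radical $N\subset L\otimes\Field_p$; in terms of Hodge data, a point $s\in S(\bar\Field_p)$ is mapped there precisely when $F^1\bm{L}_{\dR,s}=N_{k(s)}$. Using the explicit local equation $X_1^2+\cdots+X_{r-1}^2+p^kX_r^2=0$ with $k\geq 2$ (in the coordinates of the proof of (\ref{lattice:prp:mlochealthy})), the completion of $\on{M}^{\loc}_G$ at this singular point is the hypersurface $W\pow{X_1,\ldots,X_{r-1}}/(X_1^2+\cdots+X_{r-1}^2+p^k)$, which is not formally smooth over $\Int_{(p)}$. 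I would complete the argument by combining the bijection of (\ref{regular:eqn:naivedeform}), the Kodaira--Spencer isomorphism (\ref{spin:prp:kodairaspencer}) for $\bm{L}_{\dR}$, and the formal smoothness of $\widehat\Rg_{S,s}$ to show that such a factorisation would force $\iota(v)$ to have crystalline realisation inside the radical for every $v\in\Lambda$, contradicting the non-vanishing of $\iota(v)$ coming from (\ref{spin:lem:endderham}) and the injectivity of crystalline realisation on special endomorphisms. The hard point is precisely this reconciliation between the formal smoothness of the source and the hypersurface singularity of the target local model at the radical line; once accomplished, $\Ss^{\on{sm}}_{K_p}$ together with its pro-\'etale quotient $\Ss^{\on{sm}}_{K_{0,p}}$ satisfies all the requirements.
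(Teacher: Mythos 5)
Your overall framework — choose $\widetilde{L}$ via (\ref{regular:lem:maxemb}) so that $\Lambda=\langle v\rangle$ has rank one, build the model inside the cycle scheme $Z_{K^p}(\Lambda)$, extend maps from a formally smooth $S$ using Kisin's extension property together with the Ner\'onian property — agrees with the paper. But there is a genuine gap at the step you yourself flag as "the hard point," and the strategy you sketch for closing it cannot succeed. You also define $\Ss^{\on{sm}}_{K_p}$ as the formally smooth locus of $\Ss^{\on{nv}}_{K_p}$ and then must show extensions avoid the complement; the paper instead shows that $Z^{\on{pr}}_{K^p}(\Lambda)$ is \emph{already smooth at every} $\overline{\Field}_p$-point, so no excision is needed at all.

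The missing ingredient is (\ref{lifts:prp:principal})(\ref{principal:regular}). Because $L$ is non-maximal with cyclic discriminant, one has $\nu_p(v\circ v)\geq 2$. At any $\overline{\Field}_p$-point $x_0$ of $Z^{\on{pr}}_{K^p}(\Lambda)$, the special endomorphism $f_0=\bm{\iota}(v)$ satisfies $f_{0,\dR}\neq 0$ (that is what $Z^{\on{pr}}$ means) and $\nu_p(f_0\circ f_0)\neq 1$, so by (\ref{lifts:prp:principal})(\ref{principal:regular}) the complete local ring of $Z^{\on{pr}}_{K^p}(\Lambda)$ at $x_0$ is formally smooth. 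In local-model terms, this is exactly the assertion that $F^1\bm{L}_{\dR,x_0}$ never coincides with the radical line $N$, i.e., the local-model image never hits the singular point of $\on{M}^{\loc}_G$; the proof of that fact is the strong-divisibility computation in (\ref{lifts:prp:principal}), a genuine Frobenius/$p$-adic Hodge theory constraint, and it cannot be deduced from the Kodaira--Spencer isomorphism or from formal smoothness of the source $S$ (there is nothing preventing a map from a smooth scheme from hitting a singular point of the target). The one remaining step — that the extension $S\to Z_{K^p}(\Lambda)$ lands in $Z^{\on{pr}}_{K^p}(\Lambda)$ — is exactly (\ref{regular:lem:vectbundle})(\ref{vectbundle:smoothext}), which you gesture at via (\ref{spin:lem:endderham}) but do not cite or fully carry out.

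Finally, the "contradiction" you propose cannot work. When $\Lambda$ has rank one, the image of $\Lambda\otimes\Field_p$ in $\widetilde{L}\otimes\Field_p$ is $L_{\Field_p}^{\perp}$, which (since $\dim L_{\Field_p}^{\perp}=1=\dim N$ and $N\subset L_{\Field_p}^{\perp}$) is precisely the radical $N$ of $L_{\Field_p}$. So $\iota(v)_{\dR}$ lies in the radical at \emph{every} point of $Z^{\on{pr}}_{K^p}(\Lambda)$, not only at bad ones, and lying in the radical does not mean vanishing. What must be excluded is the coincidence $F^1\bm{L}_{\dR,x_0}=N$; but $N$ is isotropic and hence orthogonal to $\iota(v)_{\dR}$ (which spans $N$), so orthogonality to $\iota(v)_{\dR}$ — the only constraint your Kodaira--Spencer argument could impose — is compatible with $F^1=N$. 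Only the valuation hypothesis $\nu_p(v\circ v)\geq 2$, fed through (\ref{lifts:prp:principal})(\ref{principal:regular}), rules this out.
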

\begin{proof}
  By (\ref{lifts:prp:principal})(\ref{principal:regular}) we find that, for any level sub-group $K^p\subset G(\Adele_f^p)$ contained in $\widetilde{K}^p$, the $\Ss_{\widetilde{K}_p\widetilde{K}^p}$-scheme $Z^{\on{pr}}_{K^p}(\Lambda)$ is smooth at all of its $\overline{\Field}_p$-valued points. In particular, $\Ss^{\on{sm}}_{K_pK^p}\coloneqq\Ss^{\on{pr}}_{K_pK^p}$ must be smooth over $\Int_{(p)}$. Now set:
\[
 \Ss^{\on{sm}}_{K_p}\coloneqq \varprojlim_{K^p\subset\widetilde{K}^p}\Ss^{\on{sm}}_{K_pK^p}.
\]

It remains to show that $\Ss^{\on{sm}}_{K_p}$ has the smooth extension property. Since $\Ss_{\widetilde{K}_p}$ has the extension property, by \cite{falchai}*{I.2.7}, the inverse limit
\[
 \varprojlim_{K^p\subset G(\Adele^p_f)}Z_{K^p}(\Lambda)
\]
also has the extension property.

It is therefore sufficient to make the following observation, which is immediate from (\ref{regular:lem:vectbundle})(\ref{vectbundle:smoothext}): Suppose that we are given a map $x:S\to Z_{K^p}(\Lambda)$, with $S$ smooth over $\Int_{(p)}$. Suppose also that the restriction of $x$ to $S_{\Rat}$ factors through $\Sh_{K_pK^p}$. Then $x$ must necessarily factor through $Z^{\on{pr}}_{K^p}(\Lambda)$.

The construction of $\Ss^{\on{sm}}_{K_{0,p}}$ from $\Ss^{\on{sm}}_{K_{0,p}}$ proceeds as in the maximal case.
\end{proof}

\subsection{}\label{regular:subsec:lcrislderham}
Fix $K^p\subset G(\Adele_f^p)$ sufficiently small with image $K^p_0\subset G_0(\Adele_f^p)$. Set $K=K_pK^p$ and $K_0=K_{0,p}K^p_0$ and write $\Ss=\Ss_{K}$ (resp. $\Ss_0=\Ss_{K_0}$) if $L$ is maximal with $t\leq 1$, $\Ss=\Ss^{\on{ref}}_K$ (resp. $\Ss^{\on{ref}}_{K_0}$) if $L$ is maximal with $t=2$; and $\Ss=\Ss^{\on{sm}}_{K}$ (resp. $\Ss_0=\Ss^{\on{sm}}_{K_{0}}$) for non-maximal $L$ with cyclic discriminant. All these schemes are regular and locally healthy.

Set $\bm{L}_{\dR}=\bm{\Lambda}_{\dR}^{\perp}\subset\widetilde{\bm{L}}_{\dR}\rvert_{\Ss}$ and $\bm{L}_{\cris}=\bm{\Lambda}_{\cris}^{\perp}\subset\widetilde{\bm{L}}_{\cris}$. Here, $\bm{\Lambda}_{\cris}$ is the sub-crystal of $\widetilde{\bm{L}}_{\cris}\rvert_{\Ss}$ generated by the crystalline realization of $\bm{\iota}(\Lambda)$.

Suppose that we are given a point $s\in\Ss(k)$. The evaluation of $\bm{L}_{\cris}$ along $\Spec k\into\Spf W(k)$ is a direct summand $\bm{L}_{\cris,s}=\bm{\Lambda}_{\cris,s}^{\perp}\subset\widetilde{\bm{L}}_{\cris,s}$. Moreover, the $F$-isocrystal structure on $\widetilde{\bm{L}}_{\cris}[p^{-1}]$ induces one on $\bm{L}_{\cris,s}[p^{-1}]$.

Set $W=W(k)$, let $E/W_{\Rat}$ be a finite extension, and suppose that $\tilde{s}\in\Ss(\Reg{E})$ is a lift of $s$. Let $\overline{E}/E$ be an algebraic closure and let $\widetilde{s}_{\overline{E}}$ be the geometric generic fiber of $\widetilde{s}$. From the definition of the sheaves, (\ref{spin:prp:kisinpadichodge})(\ref{intphodge:comp}), and the functoriality of the comparison isomorphism, we obtain:
\begin{prp}\label{regular:prp:cris}
There is a canonical isometric comparison isomorphism compatible with additional structures:
\begin{align*}
       \bm{L}_{p,\widetilde{s}_{\overline{E}}}\otimes_{\Int_p}\Bcris&\xrightarrow{\simeq}\bm{L}_{\cris,s}\otimes_{W}\Bcris.
\end{align*}
\end{prp}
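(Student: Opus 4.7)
The plan is to deduce the claim from the analogous statement for the self-dual overlattice $\widetilde{L}$, and to observe that this comparison restricts compatibly to the orthogonal complement of $\bm{\Lambda}$.

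First, by construction $\widetilde{s}$ lifts canonically to an $\Reg{E}$-point of $\Ss_{\widetilde{K}}$ (the smooth integral canonical model from Section~\ref{sec:self-dual}), and Proposition~\ref{spin:prp:kisinpadichodge}(\ref{intphodge:comp}) applied there produces a comparison isomorphism
\[
\widetilde{\bm{H}}_{p,\widetilde{s}_{\overline{E}}}\otimes_{\Int_p}\Bcris\xrightarrow{\simeq}\widetilde{\bm{H}}_{\cris,s}\otimes_W\Bcris
\]
that is $C(\widetilde{L})$-equivariant, respects the $\Int/2\Int$-grading, and carries $\widetilde{\pr}_{p,\widetilde{s}_{\overline{E}}}$ to $\widetilde{\pr}_{\cris,s}$. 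Restricting to the images of these idempotents yields an isometric comparison isomorphism
\[
\widetilde{\bm{L}}_{p,\widetilde{s}_{\overline{E}}}\otimes_{\Int_p}\Bcris\xrightarrow{\simeq}\widetilde{\bm{L}}_{\cris,s}\otimes_W\Bcris,
\]
compatible with Frobenius, filtration, and the Galois action.

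Second, I would verify that under this restricted comparison, $\bm{\Lambda}_{p,\widetilde{s}_{\overline{E}}}\otimes\Bcris$ is carried onto $\bm{\Lambda}_{\cris,s}\otimes\Bcris$. Both sub-modules are generated by the cohomological realizations of the tautological $\Lambda$-structure $\bm{\iota}(\Lambda)$ on $Z_{K^p}(\Lambda)$, and these realizations are induced by honest endomorphisms of $\widetilde{A}^\KS$ pulled back along $\widetilde{s}$. The fact that the $p$-adic and crystalline realizations of any such endomorphism are intertwined by the comparison isomorphism is precisely the Blasius--Wintenberger functoriality already invoked in the proof of Proposition~\ref{spin:prp:kisinpadichodge}(\ref{intphodge:comp}) via \cite{moonen}*{5.6.3}. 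Note that Lemma~\ref{regular:lem:vectbundle}(\ref{vectbundle:flat})--(\ref{vectbundle:maximal}) guarantees (in both the maximal case and the cyclic-discriminant smooth case) that $\widetilde{s}$ factors through $Z^{\on{pr}}_{K^p}(\Lambda)$, so that $\bm{\Lambda}$ is genuinely a direct summand of $\widetilde{\bm{L}}$ in a neighborhood of $s$ and $\bm{L}=\bm{\Lambda}^\perp$ is well-defined as a sub-object in each of the relevant realizations.

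Third, taking orthogonal complements of $\bm{\Lambda}$ on both sides of the isometric isomorphism for $\widetilde{\bm{L}}$ produces the desired
\[
\bm{L}_{p,\widetilde{s}_{\overline{E}}}\otimes_{\Int_p}\Bcris\xrightarrow{\simeq}\bm{L}_{\cris,s}\otimes_W\Bcris,
\]
and all additional structures transport automatically from the self-dual case, since $\bm{\Lambda}$ is stable under each of them. I do not anticipate a serious obstacle: essentially all of the work is packaged into Proposition~\ref{spin:prp:kisinpadichodge} and the standard functoriality of $p$-adic Hodge theory for endomorphisms of abelian schemes; the only point requiring care is the verification that $\widetilde{s}$ lands in the primitive locus so that passing to orthogonal complements is a legitimate operation that is preserved by the various cohomological realization functors.
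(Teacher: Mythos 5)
Your proposal matches the paper's (very terse) argument: reduce to the self-dual comparison of Proposition~(\ref{spin:prp:kisinpadichodge})(\ref{intphodge:comp}), note that the comparison isomorphism carries $\bm{\Lambda}_p\otimes\Bcris$ onto $\bm{\Lambda}_{\cris}\otimes\Bcris$, and pass to orthogonal complements inside $\widetilde{\bm{L}}$. One small remark: the elements of $\bm{\iota}(\Lambda)$ are actual prime-to-$p$ isogeny endomorphisms of $\widetilde{A}^\KS$ over $\Reg{E}$, so the compatibility of their $p$-adic and crystalline realizations is plain functoriality of the crystalline comparison isomorphism for abelian schemes; the appeal to Blasius--Wintenberger (which is needed only for Hodge cycles not known a priori to be algebraic, as in Proposition~(\ref{spin:prp:kisinpadichodge}) itself) is heavier machinery than the step actually requires.
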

\qed

\subsection{}\label{regular:subsec:intkugasatake}
Consider the `intrinsic' Kuga-Satake abelian scheme $A^{\KS}_{\Sh_K}$ over $\Sh_K$: We claim that it extends uniquely to an abelian scheme $A^{\KS}$ over $\Ss$. Indeed, since $\Ss$ is locally healthy and hence healthy, it suffices to show the following:

Let $x\to\Ss$ be a co-dimension $1$-point such that $\Reg{\Ss_K,x}$ is a discrete valuation ring of mixed characteristic $(0,p)$. Set $T=\Spec\Reg{\Ss,x}$. Then $A^{\KS}_{T_{\Rat}}$ has good reduction over $T$.

To show this, we first observe that, for $\ell\neq p$, the $\ell$-adic cohomology $\bm{H}_{\ell}$ of $A^{\KS}_{\Sh_K}$ is related to $\widetilde{\bm{H}}_{\ell}$ via the formula (cf. proof of~(\ref{special:lem:ks2ks})):
\[
 \bm{H}_{\ell}\otimes_{C(L)}C(\widetilde{L})\xrightarrow{\simeq}\widetilde{\bm{H}}_{\ell}\rvert_{\Sh_K}.
\]
Since $\widetilde{A}^{\KS}_{T_{\Rat}}$ extends to an abelian scheme over $T$, the restriction of $\widetilde{\bm{H}}_{\ell}$ over $T_{\Rat}$ is unramified, which implies that the restriction of $\bm{H}_{\ell}$ to $T_{\Rat}$ is also unramified. Our claim now follows from the usual criterion for good reduction of abelian varieties.

\subsection{}\label{regular:subsec:kstoks}
By \cite{falchai}*{I.2.7}, the isomorphism from (\ref{special:lem:ks2ks}) extends to an isomorphism of abelian schemes over $\Ss$:
\begin{align}\label{regular:eqn:kstoks}
i:\widetilde{A}^{\KS}_{\Ss}\xrightarrow{\simeq}\SHom_{C}(\widetilde{C},A^\KS).
\end{align}

If $\bm{H}_{\dR}$ is the degree-$1$ de Rham cohomology of $A^\KS$ over $\Ss$, the de Rham realization of $i$ gives us a canonical isomorphism of vector bundles with integrable connection:
\begin{align}\label{regular:eqn:ksderham}
\bm{i}_{\dR}:\bm{H}_{\dR}\otimes_C\widetilde{C}\xrightarrow{\simeq}\widetilde{\bm{H}}_{\dR}\rvert_{\Ss}.
\end{align}
Similarly, if $\bm{H}_{\cris}$ is the $F$-crystal over $(\Ss_{\Field_p}/\Int_p)_{\cris}$ obtained from the degree-$1$ crystalline cohomology of $A^{\KS}_{\Field_p}$, then we have a canonical $\widetilde{C}$-equivariant, $\Int/2\Int$-graded isomorphism of $F$-crystals:
\begin{align}\label{regular:eqn:kscris}
\bm{i}_{\cris}:\bm{H}_{\cris}\otimes_C\widetilde{C}\xrightarrow{\simeq}\widetilde{\bm{H}}_{\cris}\rvert_{(\Ss_{\Field_p}/\Int_p)_{\on{cris}}}.
\end{align}

It follows from the argument in (\ref{lattice:lem:grpoints}) that we have:
\begin{align}\label{regular:eqn:cfromtildec}
 C=\{z^++z^{-}\in\widetilde{C}^+\oplus\widetilde{C}^{-}=\widetilde{C}:\;v\cdot(z^++z^-)=(z^+-z^-)\cdot v\text{, for all $v\in\Lambda$}\}.
\end{align}

We can view $\widetilde{\bm{H}}_{\cris}^{\otimes(1,1)}$ as the internal endomorphism object $\underline{\End}(\widetilde{\bm{H}}_{\cris})$ within the category of crystals over $\Ss_{\Field_p}$. As such, within it we have the sub-object $\underline{\End}_{\widetilde{C}}(\widetilde{\bm{H}}_{\cris})$ of $\widetilde{C}$-equivariant endomorphisms. Similarly, within $\bm{H}^{\otimes(1,1)}_{\cris}$ we have the sub-object $\underline{\End}_C(\bm{H}_{\cris})$ of $C$-equivariant endomorphisms. Using (\ref{regular:eqn:cfromtildec}) and (\ref{regular:eqn:kscris}), we find that we can exhibit $\underline{\End}_C(\bm{H}_{\cris})$ as the sub-crystal of $\underline{\End}_{\widetilde{C}}(\widetilde{\bm{H}}_{\cris})\rvert_{\Ss_{\Field_p}}$ that consists of endomorphisms that anti-commute with $\bm{\Lambda}_{\cris}$. By its definition, $\bm{L}_{\cris}$ anti-commutes with $\bm{\Lambda}_{\cris}$ within $\underline{\End}_{\widetilde{C}}(\widetilde{\bm{H}}_{\cris})\rvert_{\Ss_{\Field_p}}$. Therefore, we obtain a commutative diagram of embeddings of crystals:
\begin{align}\label{regular:eqn:lembeddings}
\begin{diagram}
  \bm{L}_{\cris}&&\rInto&&\bm{H}_{\cris}^{\otimes(1,1)}\\
  \dTo_{\simeq}&&&&\dInto\\
  \bm{\Lambda}_{\cris}^{\perp}&\rInto&\bm{\widetilde{L}}_{\cris}\rvert_{\Ss_{\Field_p}}&\rInto&\widetilde{\bm{H}}_{\cris}^{\otimes(1,1)}\rvert_{\Ss_{\Field_p}}.
\end{diagram}
\end{align}

\subsection{}
Given (\ref{regular:eqn:lembeddings}), the notions of $\ell$-specialness and $p$-specialness for an endomorphism of $A^\KS$ carry over verbatim from Section~\ref{sec:special}. For any $\Ss$-scheme $T$, denote the space of $\ell$-special endomorphisms of $A^\KS_T$ by $L_{\ell}(A^\KS_T)$. Let $L(A^\KS_T)$ be the space of special endomorphisms, where `special' means $\ell$-special for every $\ell$. As in (\ref{special:prp:lambdaemb}), we obtain:
\begin{prp}\label{regular:prp:specialgood}
For any scheme $T\to\Ss$, there exists a canonical isometry
\[
 L(A^\KS_T)\xrightarrow{\simeq}\bm{\iota}(\Lambda)^{\perp}\subset L(\widetilde{A}^\KS_T)
\]
compatible with all cohomological realizations.
\end{prp}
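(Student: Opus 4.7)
The plan is to extend the proof of Proposition~\ref{special:prp:lambdaemb} to the integral setting, replacing the $\ell$-adic realizations with crystalline ones. Using the isomorphism $i$ of (\ref{regular:eqn:kstoks}), the formula $f\mapsto f_{*}$ with $f_{*}(\varphi)=f\circ\varphi$ defines an embedding $\End(A^{\KS}_T)_{(p)}\hookrightarrow\End(\widetilde{A}^{\KS}_T)_{(p)}$. The \'etale, de Rham and crystalline realizations of this embedding are all given by the map $\End(\bm{H}_{?})\hookrightarrow\End(\widetilde{\bm{H}}_{?})$ sending $\phi$ to $\phi\otimes 1$ on $\widetilde{\bm{H}}_{?}=\bm{H}_{?}\otimes_{C}\widetilde{C}$ (with the crystalline case being (\ref{regular:eqn:kscris})). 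Together with the commuting diagram (\ref{regular:eqn:lembeddings}), this implies that the embedding sends $L(A^{\KS}_T)$ into $\bm{\iota}(\Lambda)^{\perp}\subset L(\widetilde{A}^{\KS}_T)$ compatibly with all cohomological realizations; the isometry statement follows from (\ref{special:lem:rosati}) together with the fact that $L\hookrightarrow\widetilde{L}$ is itself isometric.

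For surjectivity, both source and target are representable by unramified schemes over $T$, so it suffices to work at each geometric point $s\to T$. For $s$ in characteristic $0$, the assertion is Proposition~\ref{special:prp:lambdaemb}. For $s$ in characteristic $p$, given $g\in L(\widetilde{A}^{\KS}_s)\cap\bm{\iota}(\Lambda)^{\perp}$, I would lift the pair $(s,g)$ to characteristic $0$ as follows. Let $\tilde s\in\Ss_{\widetilde K}$ be the image of $s$ and consider the augmented isometric embedding $(\bm{\iota},g)\colon\Lambda\oplus\langle v_g\rangle\to L(\widetilde{A}^{\KS}_{\tilde s})$. By (\ref{regular:eqn:lembeddings}) the orthogonal complement of its crystalline realization inside $\widetilde{\bm{L}}_{\cris,\tilde s}$ equals $g^{\perp}\cap\bm{L}_{\cris,\tilde s}$, whose quadratic form is non-degenerate after inverting $p$ since $L_{\Rat}$ is non-degenerate, and hence not divisible by $p$. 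Proposition~\ref{lifts:prp:flatlocus} then produces a lift $\tilde s'\in\Ss_{\widetilde K}(\Reg{E})$ of $\tilde s$, for some finite extension $E/W(k)_{\Rat}$, carrying $(\bm{\iota},g)$ along; by the moduli description of $\Ss\to\Ss_{\widetilde K}$ this corresponds to a lift $s'\in\Ss(\Reg{E})$ together with an extension $g'$ of $g$. Applying Proposition~\ref{special:prp:lambdaemb} at the generic fibre gives $f_E\in L(A^{\KS}_{s'_E})$ with $f_{E,*}=g'|_{s'_E}$, which extends uniquely to $\tilde f\in\End(A^{\KS}_{s'})_{(p)}$ by the N\'eronian property. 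Its specialization $f_s$ satisfies $f_{s,*}=g$, and $f_s\in L(A^{\KS}_s)$ by (\ref{lifts:lem:p-special-consistent}).

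The main obstacle is the surjectivity argument in characteristic $p$, whose success rests on being able to lift the pair $(s,g)$ to characteristic $0$. Verifying the non-divisibility hypothesis in (\ref{lifts:prp:flatlocus}) is the key input, and in low-rank edge cases this may require a supplementary argument via an auxiliary embedding of Shimura data, analogously to the trick employed in the proof of (\ref{lifts:cor:pimpliesell}).
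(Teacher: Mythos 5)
Your first paragraph — the construction of the embedding $f\mapsto f_{*}$, its compatibility with all realizations via $\widetilde{\bm{H}}_? = \bm{H}_?\otimes_C\widetilde{C}$, and the isometry statement — follows the paper's approach and is fine. The divergence is in how you establish surjectivity, and there the argument has a genuine gap.

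The paper proves surjectivity by a direct cohomological characterization, not by lifting: the point of setting up (\ref{regular:eqn:cfromtildec}) and the diagram (\ref{regular:eqn:lembeddings}) is that a section $g\in L(\widetilde{A}^{\KS}_T)$ lying in $\bm{\iota}(\Lambda)^{\perp}$ has realizations in $\bm{\Lambda}_?^{\perp}\cap\widetilde{\bm{L}}_?$, i.e.\ realizations that \emph{anti-commute} with $\bm{\Lambda}_?$; by (\ref{regular:eqn:cfromtildec}), such an operator on $\widetilde{\bm{H}}_?=\bm{H}_?\otimes_C\widetilde{C}$ is $C$-equivariant and hence of the form $\phi\otimes 1$, so $g$ is the image of an endomorphism of $A^{\KS}_T$ under the Serre-tensor identification, and one reads off its specialness from (\ref{regular:eqn:lembeddings}). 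This is uniform over $\Ss$ in all characteristics and requires no lifting. Your lifting route is a detour, and the specific step you use to make it work does not hold: you assert that the quadratic form on $(\bm{\iota}(\Lambda)\oplus\langle v_g\rangle)^{\perp}\subset\widetilde{\bm{L}}_{\cris,s}$ is ``not divisible by $p$ because it is non-degenerate after inverting $p$.'' That implication is false — a form such as $pY^2+pZ^2$ is non-degenerate over $\Rat_p$ yet divisible by $p$ — and the situation does arise here, e.g.\ for $L$ maximal with $t=2$ and $g$ of unit norm. This is exactly why the paper's Corollary~\ref{regular:cor:defflat} imposes the explicit inequality $n\geq\lfloor(n+2)/2\rfloor+t$ before invoking (\ref{lifts:prp:flatlocus}); without such a hypothesis the flat lift you need is not guaranteed to exist. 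You flag a possible supplementary argument for ``low-rank edge cases,'' but the justification offered is simply incorrect, so the surjectivity argument does not go through as written. The fix is to drop the lifting strategy and run the Clifford-algebra argument the paper intends, which bypasses the problem entirely.
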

\qed

\begin{rem}\label{regular:rem:ortho}
Just as in (\ref{lifts:subsec:ortho}), the sheaf of endomorphism algebras $\underline{\End}\bigl(A^\KS\bigr)_{(p)}$ canonically descends to a sheaf $\bm{E}$ over $\Ss_0$. Suppose now that $T_0$ is a scheme over $\Ss_0$. then one can canonically attach to $T_0$ a group of `special endomorphisms' $L(T_0)\subset\bm{E}(T_0)$, such that, if $T_0$ is in fact an $\Ss$-scheme, then $L(T_0)=L(A^\KS_{T_0})$.
\end{rem}

\subsection{}
Given that we have the notion of a special endomorphism of $A^\KS$, for any $m\in\Int^{>0}_{(p)}$, just as in~\eqref{regular:subsec:zkplambda}, we can define the finite unramified schemes $Z_{K^p}(m)\to\Ss$ and $Z_{K_0^p}(m)\to\Ss_0$, which parameterize special endomorphisms of $A^\KS$ of degree $m$. From the construction of the schemes $\Ss$,~\eqref{regular:prp:specialgood}, and~\eqref{lifts:cor:locus}, we find that the deformation theory of a special endomorphism of $A^\KS$ is locally governed by a single equation. In particular, \'etale locally on $\Ss$, $Z_{K^p}(m)$ is actually an effective Cartier divisor. 

\begin{prp}\label{regular:prp:flatness_divisor}
Suppose that $n>2$. Then the schemes $Z_{K^p}(m)$ and $Z_{K_0^p}(m)$ are flat over $\Int_{(p)}$
\end{prp}
\begin{proof}
It is enough to show that $Z_{K^p}(m)$ is flat over $\Int_{(p)}$. Since, \'etale locally on $\Ss$, $Z_{K^p}(m)$ is an effective Cartier divisor, if it were not $\Int_{(p)}$-flat, its image in $\Ss$ would contain an entire component of $\Ss_{\Field_p}$. 

On the other hand, it is clear from the description of the local properties of $\Ss$ via local models that the smooth locus of $\Ss_{\Field_p}$ is a dense open in $\Ss_{\Field_p}$. So it suffices to show that the restriction of $Z_{K^p}(m)$ to the complement of the non-smooth locus in $\Ss_{\Field_p}$ is flat over $\Int_{(p)}$. This follows directly from~\eqref{regular:cor:smooth}
\end{proof}

\begin{bibdiv}
\begin{biblist}

\bib{amrt}{book}{
   author={Ash, Avner},
   author={Mumford, David},
   author={Rapoport, Michael},
   author={Tai, Yung-Sheng},
   title={Smooth compactifications of locally symmetric varieties},
   series={Cambridge Mathematical Library},
   edition={2},
   note={With the collaboration of Peter Scholze},
   publisher={Cambridge University Press},
   place={Cambridge},
   date={2010},
   pages={x+230},
   isbn={978-0-521-73955-9},
   review={\MR{2590897}},
   doi={10.1017/CBO9780511674693},
}

\bib{baily_borel}{article}{
   author={Baily, W. L., Jr.},
   author={Borel, A.},
   title={Compactification of arithmetic quotients of bounded symmetric
   domains},
   journal={Ann. of Math. (2)},
   volume={84},
   date={1966},
   pages={442--528},
   issn={0003-486X},
   review={\MR{0216035 (35 \#6870)}},
}

\bib{bass}{article}{
   author={Bass, Hyman},
   title={Clifford algebras and spinor norms over a commutative ring},
   journal={Amer. J. Math.},
   volume={96},
   date={1974},
   pages={156--206},
   issn={0002-9327},
   review={\MR{0360645 (50 \#13092)}},
}

\bib{berthelot_messing}{article}{
   author={Berthelot, Pierre},
   author={Messing, William},
   title={Th\'eorie de Dieudonn\'e cristalline. III. Th\'eor\`emes
   d'\'equivalence et de pleine fid\'elit\'e},
   language={French},
   conference={
      title={The Grothendieck Festschrift, Vol.\ I},
   },
   book={
      series={Progr. Math.},
      volume={86},
      publisher={Birkh\"auser Boston},
      place={Boston, MA},
   },
   date={1990},
   pages={173--247},
   review={\MR{1086886 (92h:14012)}},
}

\bib{blasius}{article}{
   author={Blasius, Don},
   title={A $p$-adic property of Hodge classes on abelian varieties},
   conference={
      title={Motives},
      address={Seattle, WA},
      date={1991},
   },
   book={
      series={Proc. Sympos. Pure Math.},
      volume={55},
      publisher={Amer. Math. Soc.},
      place={Providence, RI},
   },
   date={1994},
   pages={293--308},
   review={\MR{1265557 (95j:14022)}},
}

\bib{bourbaki_algebre_ix}{book}{
   author={Bourbaki, N.},
   title={\'El\'ements de math\'ematique. Alg\`ebre. Chapitre 9},
   language={French},
   note={Reprint of the 1959 original},
   publisher={Springer-Verlag},
   place={Berlin},
   date={2007},
   pages={211},
   isbn={978-3-540-35338-6},
   isbn={3-540-35338-0},
   review={\MR{2325344 (2008f:15001)}},
}

\bib{bruhat_tits_ii}{article}{
   author={Bruhat, F.},
   author={Tits, J.},
   title={Groupes r\'eductifs sur un corps local. II. Sch\'emas en groupes.
   Existence d'une donn\'ee radicielle valu\'ee},
   language={French},
   journal={Inst. Hautes \'Etudes Sci. Publ. Math.},
   number={60},
   date={1984},
   pages={197--376},
   issn={0073-8301},
   review={\MR{756316 (86c:20042)}},
}

\bib{chai_norman}{article}{
   author={Chai, Ching-Li},
   author={Norman, Peter},
   title={Bad reduction of the Siegel moduli scheme of genus two with
   $\Gamma_0(p)$-level structure},
   journal={Amer. J. Math.},
   volume={112},
   date={1990},
   number={6},
   pages={1003--1071},
   issn={0002-9327},
   review={\MR{1081813 (91i:14033)}},
   doi={10.2307/2374734},
}

\bib{deligne_mumford}{article}{
   author={Deligne, P.},
   author={Mumford, D.},
   title={The irreducibility of the space of curves of given genus},
   journal={Inst. Hautes \'Etudes Sci. Publ. Math.},
   number={36},
   date={1969},
   pages={75--109},
   issn={0073-8301},
   review={\MR{0262240 (41 \#6850)}},
}

\bib{deligne:eqsdiff}{book}{
   author={Deligne, Pierre},
   title={\'Equations diff\'erentielles \`a points singuliers r\'eguliers},
   language={French},
   series={Lecture Notes in Mathematics, Vol. 163},
   publisher={Springer-Verlag},
   place={Berlin},
   date={1970},
   pages={iii+133},
   review={\MR{0417174 (54 \#5232)}},
}

\bib{deligneshimura}{article}{
   author={Deligne, Pierre},
   title={Vari\'et\'es de Shimura: interpr\'etation modulaire, et techniques
   de construction de mod\`eles canoniques},
   language={French},
   conference={
      title={Automorphic forms, representations and $L$-functions (Proc.
      Sympos. Pure Math., Oregon State Univ., Corvallis, Ore., 1977), Part
      2},
   },
   book={
      series={Proc. Sympos. Pure Math., XXXIII},
      publisher={Amer. Math. Soc.},
      place={Providence, R.I.},
   },
   date={1979},
   pages={247--289},
   review={\MR{546620 (81i:10032)}},
}

\bib{deligne:k3liftings}{article}{
   author={Deligne, P.},
   title={Rel\`evement des surfaces $K3$ en caract\'eristique nulle},
   language={French},
   note={Prepared for publication by Luc Illusie},
   conference={
      title={Algebraic surfaces},
      address={Orsay},
      date={1976--78},
   },
   book={
      series={Lecture Notes in Math.},
      volume={868},
      publisher={Springer},
      place={Berlin},
   },
   date={1981},
   pages={58--79},
   review={\MR{638598 (83j:14034)}},
}

\bib{dmos}{book}{
   author={Deligne, Pierre},
   author={Milne, James S.},
   author={Ogus, Arthur},
   author={Shih, Kuang-yen},
   title={Hodge cycles, motives, and Shimura varieties},
   series={Lecture Notes in Mathematics},
   volume={900},
   publisher={Springer-Verlag},
   place={Berlin},
   date={1982},
   pages={ii+414},
   isbn={3-540-11174-3},
   review={\MR{654325 (84m:14046)}},
}

\bib{deligne-pappas}{article}{
   author={Deligne, Pierre},
   author={Pappas, Georgios},
   title={Singularit\'es des espaces de modules de Hilbert, en les
   caract\'eristiques divisant le discriminant},
   language={French},
   journal={Compositio Math.},
   volume={90},
   date={1994},
   number={1},
   pages={59--79},
   issn={0010-437X},
   review={\MR{1266495 (95a:11041)}},
}

\bib{falchai}{book}{
   author={Faltings, Gerd},
   author={Chai, Ching-Li},
   title={Degeneration of abelian varieties},
   series={Ergebnisse der Mathematik und ihrer Grenzgebiete (3) [Results in
   Mathematics and Related Areas (3)]},
   volume={22},
   note={With an appendix by David Mumford},
   publisher={Springer-Verlag},
   place={Berlin},
   date={1990},
   pages={xii+316},
   isbn={3-540-52015-5},
   review={\MR{1083353 (92d:14036)}},
}

\bib{faltings}{article}{
   author={Faltings, Gerd},
   title={Integral crystalline cohomology over very ramified valuation
   rings},
   journal={J. Amer. Math. Soc.},
   volume={12},
   date={1999},
   number={1},
   pages={117--144},
   issn={0894-0347},
   review={\MR{1618483 (99e:14022)}},
   doi={10.1090/S0894-0347-99-00273-8},
}

\bib{gortz:siegel}{article}{
   author={G{\"o}rtz, Ulrich},
   title={On the flatness of local models for the symplectic group},
   journal={Adv. Math.},
   volume={176},
   date={2003},
   number={1},
   pages={89--115},
   issn={0001-8708},
   review={\MR{1978342 (2004d:14023)}},
   doi={10.1016/S0001-8708(02)00062-2},
}

\bib{harris:arithmetic_i}{article}{
   author={Harris, Michael},
   title={Arithmetic vector bundles and automorphic forms on Shimura
   varieties. I},
   journal={Invent. Math.},
   volume={82},
   date={1985},
   number={1},
   pages={151--189},
   issn={0020-9910},
   review={\MR{808114 (88e:11046)}},
   doi={10.1007/BF01394784},
}

\bib{harris:oscillator}{article}{
  author={Harris, Michael},
  title={Arithmetic of the oscillator representation},
  eprint={www.math.jussieu.fr/~harris/Arithmetictheta.pdf},
}

\bib{katz:p-adic}{article}{
   author={Katz, Nicholas M.},
   title={$p$-adic properties of modular schemes and modular forms},
   conference={
      title={Modular functions of one variable, III (Proc. Internat. Summer
      School, Univ. Antwerp, Antwerp, 1972)},
   },
   book={
      publisher={Springer},
      place={Berlin},
   },
   date={1973},
   pages={69--190. Lecture Notes in Mathematics, Vol. 350},
   review={\MR{0447119 (56 \#5434)}},
}

\bib{katz:slope}{article}{
   author={Katz, Nicholas M.},
   title={Slope filtration of $F$-crystals},
   conference={
      title={Journ\'ees de G\'eom\'etrie Alg\'ebrique de Rennes},
      address={Rennes},
      date={1978},
   },
   book={
      series={Ast\'erisque},
      volume={63},
      publisher={Soc. Math. France, Paris},
   },
   date={1979},
   pages={113--163},
   review={\MR{563463 (81i:14014)}},
}

\bib{katz:serre-tate}{article}{
   author={Katz, N.},
   title={Serre-Tate local moduli},
   conference={
      title={Algebraic surfaces},
      address={Orsay},
      date={1976--78},
   },
   book={
      series={Lecture Notes in Math.},
      volume={868},
      publisher={Springer},
      place={Berlin},
   },
   date={1981},
   pages={138--202},
   review={\MR{638600 (83k:14039b)}},
}

\bib{kis3}{article}{
   author={Kisin, Mark},
   title={Integral models for Shimura varieties of abelian type},
   journal={J. Amer. Math. Soc.},
   volume={23},
   date={2010},
   number={4},
   pages={967--1012},
   issn={0894-0347},
   review={\MR{2669706 (2011j:11109)}},
   doi={10.1090/S0894-0347-10-00667-3},
}

\bib{kudla:algebraic}{article}{
   author={Kudla, Stephen S.},
   title={Algebraic cycles on Shimura varieties of orthogonal type},
   journal={Duke Math. J.},
   volume={86},
   date={1997},
   number={1},
   pages={39--78},
   issn={0012-7094},
   review={\MR{1427845 (98e:11058)}},
   doi={10.1215/S0012-7094-97-08602-6},
}

\bib{kudrap:hilb}{article}{
   author={Kudla, Stephen S.},
   author={Rapoport, Michael},
   title={Arithmetic Hirzebruch-Zagier cycles},
   journal={J. Reine Angew. Math.},
   volume={515},
   date={1999},
   pages={155--244},
   issn={0075-4102},
   review={\MR{1717613 (2002e:11076a)}},
   doi={10.1515/crll.1999.076},
}

\bib{kudrap:siegel}{article}{
   author={Kudla, Stephen S.},
   author={Rapoport, Michael},
   title={Cycles on Siegel threefolds and derivatives of Eisenstein series},
   language={English, with English and French summaries},
   journal={Ann. Sci. \'Ecole Norm. Sup. (4)},
   volume={33},
   date={2000},
   number={5},
   pages={695--756},
   issn={0012-9593},
   review={\MR{1834500 (2002e:11076b)}},
   doi={10.1016/S0012-9593(00)01051-X},
}

\bib{kudrap:shimura}{article}{
   author={Kudla, Stephen S.},
   author={Rapoport, Michael},
   title={Height pairings on Shimura curves and $p$-adic uniformization},
   journal={Invent. Math.},
   volume={142},
   date={2000},
   number={1},
   pages={153--223},
   issn={0020-9910},
   review={\MR{1784798 (2001j:11042)}},
   doi={10.1007/s002220000087},
}

\bib{kudrapyang}{book}{
   author={Kudla, Stephen S.},
   author={Rapoport, Michael},
   author={Yang, Tonghai},
   title={Modular forms and special cycles on Shimura curves},
   series={Annals of Mathematics Studies},
   volume={161},
   publisher={Princeton University Press},
   place={Princeton, NJ},
   date={2006},
   pages={x+373},
   isbn={978-0-691-12551-0},
   isbn={0-691-12551-1},
   review={\MR{2220359 (2007i:11084)}},
}

\bib{laffaille}{article}{
   author={Laffaille, Guy},
   title={Groupes $p$-divisibles et modules filtr\'es: le cas peu ramifi\'e},
   language={French, with English summary},
   journal={Bull. Soc. Math. France},
   volume={108},
   date={1980},
   number={2},
   pages={187--206},
   issn={0037-9484},
   review={\MR{606088 (82i:14028)}},
}

\bib{lan}{book}{
   author={Lan, Kai-Wen},
   title={Arithmetic compactifications of PEL-type Shimura varieties},
   note={Thesis (Ph.D.)--Harvard University},
   publisher={ProQuest LLC, Ann Arbor, MI},
   date={2008},
   pages={1077},
   isbn={978-0549-61380-0},
   review={\MR{2711676}},
}

\bib{manin}{article}{
   author={Manin, Ju. I.},
   title={On the classification of formal Abelian groups},
   language={Russian},
   journal={Dokl. Akad. Nauk SSSR},
   volume={144},
   date={1962},
   pages={490--492},
   issn={0002-3264},
   review={\MR{0162802 (29 \#106)}},
}

\bib{mp:thesis}{book}{
   author={Madapusi Sampath, Keerthi Shyam},
   title={Toroidal compactifications of integral models of Shimura varieties
   of Hodge type},
   note={Thesis (Ph.D.)--The University of Chicago},
   publisher={ProQuest LLC, Ann Arbor, MI},
   date={2011},
   pages={184},
   isbn={978-1124-71794-4},
   review={\MR{2898617}},
    eprint={http://www.math.harvard.edu/~keerthi/files/thesis-single-spaced.pdf}
}

\bib{mp:toroidal}{article}{
author={Madapusi Pera, Keerthi},
title={Toroidal compactifications of integral models of Shimura varieties of Hodge type},
  note={Preprint, last accessed Dec 2014},
  eprint={http://www.math.harvard.edu/~keerthi/papers/toroidal.pdf},
  date={2013},
  pages={71}
}

\bib{mp:tate}{article}{
 author = {Madapusi Pera, Keerthi},
 title = {The Tate conjecture for K3 surfaces in odd characteristic},
 year={2014},
issn={0020-9910},
 journal={Inventiones mathematicae},
doi={10.1007/s00222-014-0557-5},
 url={http://dx.doi.org/10.1007/s00222-014-0557-5},
publisher={Springer Berlin Heidelberg},

}

\bib{maulik}{article}{
   author={Maulik, Davesh},
   title={Supersingular K3 surfaces for large primes},
   note={With an appendix by Andrew Snowden},
   journal={Duke Math. J.},
   volume={163},
   date={2014},
   number={13},
   pages={2357--2425},
   issn={0012-7094},
   review={\MR{3265555}},
   doi={10.1215/00127094-2804783},
}

\bib{messing}{book}{
   author={Messing, William},
   title={The crystals associated to Barsotti-Tate groups: with applications
   to abelian schemes},
   series={Lecture Notes in Mathematics, Vol. 264},
   publisher={Springer-Verlag},
   place={Berlin},
   date={1972},
   pages={iii+190},
   review={\MR{0347836 (50 \#337)}},
}

\bib{moonen}{article}{
   author={Moonen, Ben},
   title={Models of Shimura varieties in mixed characteristics},
   conference={
      title={Galois representations in arithmetic algebraic geometry
      (Durham, 1996)},
   },
   book={
      series={London Math. Soc. Lecture Note Ser.},
      volume={254},
      publisher={Cambridge Univ. Press},
      place={Cambridge},
   },
   date={1998},
   pages={267--350},
   review={\MR{1696489 (2000e:11077)}},
   doi={10.1017/CBO9780511662010.008},
}

\bib{ogus:ss}{article}{
   author={Ogus, Arthur},
   title={Supersingular $K3$ crystals},
   conference={
      title={Journ\'ees de G\'eom\'etrie Alg\'ebrique de Rennes},
      address={Rennes},
      date={1978},
   },
   book={
      series={Ast\'erisque},
      volume={64},
      publisher={Soc. Math. France},
      place={Paris},
   },
   date={1979},
   pages={3--86},
   review={\MR{563467 (81e:14024)}},
}

\bib{pappas:jag}{article}{
   author={Pappas, Georgios},
   title={On the arithmetic moduli schemes of PEL Shimura varieties},
   journal={J. Algebraic Geom.},
   volume={9},
   date={2000},
   number={3},
   pages={577--605},
   issn={1056-3911},
   review={\MR{1752014 (2001g:14042)}},
}

\bib{platrap}{book}{
author={Platonov, Vladimir},
author={Rapinchuk, Andrei},
   title={Algebraic groups and number theory},
   series={Pure and Applied Mathematics},
   volume={139},
   note={Translated from 1991 Russian original by Rachel Rowen},
   publisher={Academic Press Inc.},
   place={Boston, MA},
   date={1994},
   pages={xii+614},
   isbn={0-12-558180-7},
   review={\MR{1278263 (95b:11039)}},
}

\bib{rapzink}{book}{
   author={Rapoport, M.},
   author={Zink, Th.},
   title={Period spaces for $p$-divisible groups},
   series={Annals of Mathematics Studies},
   volume={141},
   publisher={Princeton University Press},
   place={Princeton, NJ},
   date={1996},
   pages={xxii+324},
   isbn={0-691-02782-X},
   isbn={0-691-02781-1},
   review={\MR{1393439 (97f:14023)}},
}

\bib{vasiu:preab}{article}{
   author={Vasiu, Adrian},
   title={Integral canonical models of Shimura varieties of preabelian type},
   journal={Asian J. Math.},
   volume={3},
   date={1999},
   number={2},
   pages={401--518},
   issn={1093-6106},
   review={\MR{1796512 (2002b:11087)}},
}

\bib{vasiu:orthoii}{article}{
  author={Vasiu, Adrian}
  title={Integral models in unramified mixed characteristic (0,2) of hermitian orthogonal Shimura varieties of PEL type, Part II},
  note={Preprint},
  eprint={http://arxiv.org/abs/math/0606698},
  date={2012},
}

\bib{vasiu:zink}{article}{
   author={Vasiu, Adrian},
   author={Zink, Thomas},
   title={Purity results for $p$-divisible groups and abelian schemes over
   regular bases of mixed characteristic},
   journal={Doc. Math.},
   volume={15},
   date={2010},
   pages={571--599},
   issn={1431-0635},
   review={\MR{2679067}},
}

\end{biblist}
\end{bibdiv}

\end{document}